\let\oldmarginpar\marginpar%
\renewcommand\marginpar[1]{\-\oldmarginpar[\raggedleft\footnotesize #1]%
{\raggedright\footnotesize #1}}
\newcommand{\N}{\mathbb{N}}
\newcommand{\Z}{\mathbb{Z}}
\newcommand{\stab}{\operatorname{Stab}}
\newcommand{\id}{\operatorname{id}}
\newcommand{\im}{\operatorname{im}}
\newcommand{\rank}{\operatorname{rank}}
\renewcommand{\ker}{\operatorname{ker}}
\newcommand{\coker}{\operatorname{coker}}
\newcommand{\dom}{\operatorname{dom}}
\newcommand{\codom}{\operatorname{codom}}
\renewcommand{\L}{\mathscr{L}}
\renewcommand{\H}{\mathscr{H}}
\newcommand{\D}{\mathscr{D}}
\newcommand{\K}{\mathscr{K}}
\newcommand{\J}{\mathscr{J}}
\newcommand{\R}{\mathscr{R}}
\newcommand{\set}[2]{\{#1:#2\}}
\newcommand{\n}{\{1, \ldots, n\}}
\newcommand{\np}{\{1^{\prime}, \ldots, n^{\prime}\}}
\newcommand{\npp}{\{1^{\prime\prime}, \ldots, n^{\prime\prime}\}}
\newcommand{\genset}[1]{\langle#1\rangle}
\newcommand{\floor}[1]{\lfloor#1\rfloor}
\newcommand{\ceiling}[1]{\lceil#1\rceil}
\newcommand{\bigset}[2]{\big\{ {#1}:{#2} \big\}}
\newcommand{\proofrefs}[2]{\noindent \textit{Proof of Theorems\
}\ref{#1}\textit{\ and\ }\ref{#2}.}
\newcommand{\edge}[2]{\draw (#1) -- (#2);}
\newtheorem{thm}{Theorem}[section]
\newtheorem{lem}[thm]{Lemma}
\newtheorem{cor}[thm]{Corollary}
\newtheorem{prop}[thm]{Proposition}
\theoremstyle{definition}
\title{Maximal subsemigroups of finite transformation and diagram monoids}
\author{James East, Jitender Kumar, James D. Mitchell, Wilf A. Wilson}
\begin{document}

\maketitle

\begin{abstract}
  We describe and count the maximal subsemigroups of many well-known monoids of
  transformations and monoids of partitions. More precisely, we find the
  maximal subsemigroups of the full spectrum of monoids of order- or
  orientation-preserving transformations and partial permutations considered by
  V. H. Fernandes and co-authors (12 monoids in total); the partition, Brauer,
  Jones, and Motzkin monoids; and certain further monoids.

  Although descriptions of the maximal subsemigroups of some of the
  aforementioned classes of monoids appear in the literature, we present a
  unified framework for determining these maximal subsemigroups. This approach
  is based on a specialised version of an algorithm for determining the maximal
  subsemigroups of any finite semigroup, developed by the third and fourth
  authors. This allows us to concisely present the descriptions
  of the maximal subsemigroups, and to more clearly see their common features. 
\end{abstract}

{\hypersetup{hidelinks}\tableofcontents}

%%%%%%%%%%%%%%%%%%%%%%%%%%%%%%%%%%%%%%%%%%%%%%%%%%%%%%%%%%%%%%%%%%%%%%%%%%%%%%%%
%%%%%%%%%%%%%%%%%%%%%%%%%%%%%%%%%%%%%%%%%%%%%%%%%%%%%%%%%%%%%%%%%%%%%%%%%%%%%%%%

\section{Introduction, definitions, and summary of results}

A proper subsemigroup of a semigroup $S$
is \textit{maximal} if it is not contained in any other proper subsemigroup of
$S$.  Similarly, a proper subgroup of a group $G$ is \textit{maximal} if it is
not contained in any other proper subgroup of $G$.  If $G$ is a finite group,
then every non-empty subsemigroup of $G$ is a subgroup, and so these notions
are not really distinct in this case. The same is not true if $G$ is an
infinite group. For instance, the natural numbers form a subsemigroup, but not
a subgroup, of the integers under addition.  

Maximal subgroups of finite groups have been extensively studied, in part
because of their relationship to primitive permutation representations, and,
for example, the Frattini subgroup. The maximal subgroups of the symmetric
group are described, in some sense, by the O'Nan-Scott
Theorem~\cite{Scott1980aa} and the Classification of Finite Simple
Groups.  Maximal subgroups of infinite
groups have also been extensively investigated; see~\cite{Ball1966aa,
Ball1968aa, Baumgartner1993aa, Biryukov2000aa, Brazil1994aa, 
Covington1996aa,Macpherson1993aa, Macpherson1990aa,
Mishkin1995aa,Richman1967aa} and the references therein.  

There are also many papers in the literature relating to maximal subsemigroups
of semigroups that are not groups. We describe the finite case in more detail
below; for the infinite case see~\cite{East2015ac} and the references therein.
Maximal subgroups of infinite groups, and maximal subsemigroups of infinite
semigroups, are very different from their finite counterparts. For
example, there exist infinite groups with no maximal subgroups at all, infinite
groups with as many maximal subgroups as subsets, and subgroups that are not
contained in any maximal subgroup.  Analogous statements hold for semigroups
also. 

In~\cite{Graham1968aa}, Graham, Graham, and Rhodes showed that every maximal
subsemigroup of a finite semigroup has certain features, and that every maximal
subsemigroup must be one of a small number of types. As is often the case for
semigroups, this classification depends on the description of maximal subgroups
of certain finite groups.  In~\cite{Donoven2016aa}, Donoven, Mitchell, and
Wilson describe an algorithm for calculating the maximal subsemigroups of an
arbitrary finite semigroup, starting from the results in~\cite{Graham1968aa}.
In the current paper, we use the framework provided by this algorithm to
describe and count the maximal subsemigroups of several families of finite
monoids of partial transformations and monoids of partitions. The maximal
subsemigroups of several of these transformation monoids were described or
counted in~\cite{dimitrova2008maximal, dimitrova2012maximal,
dimitrova2009maximal, dimitrova2012classification, Ganyushkin2003,
gyudzhenov2006maximal}.  However, these results have been somewhat disparate,
their proofs rather ad hoc, and the methods devised in each instance did not
lead to a general theory.  To our knowledge, except for those of the dual
symmetric inverse monoid~\cite{maltcev2007}, the maximal subsemigroups of the
monoids of partitions considered here have not been previously determined.  We
also calculate the maximal subsemigroups of several transformation monoids
related to those in the literature, which were not previously known. We
approach this problem in a concise and consistent way, which could be applied
to many further semigroups.

This paper is structured as follows. In Section~\ref{sec-definitions}, we
describe the notation and definitions relating to semigroups in general that
are used in the paper. In Sections~\ref{sec-trans-definitions}
and~\ref{sec-diagram-definitions}, we define the monoids of transformations
and partitions whose maximal subsemigroups we classify.  These are monoids of
order and orientation preserving and reversing partial transformations; the
partition, Brauer, Jones, and Motzkin monoids; and some related monoids.  In
Section~\ref{sec-summary}, we summarise the results in this paper.  In
Section~\ref{sec-general-results}, we present several results about the maximal
subsemigroups of an arbitrary finite monoid. Many of the results in
Section~\ref{sec-general-results} follow from~\cite{Donoven2016aa},  and
provide a foundation that is adapted to the specific monoids under consideration
in the later sections.  In Sections~\ref{sec-transformation}
and~\ref{sec-diagram}, we classify the maximal subsemigroups of the monoids
defined in Sections~\ref{sec-trans-definitions}
and~\ref{sec-diagram-definitions}, respectively.

%%%%%%%%%%%%%%%%%%%%%%%%%%%%%%%%%%%%%%%%%%%%%%%%%%%%%%%%%%%%%%%%%%%%%%%%%%%%%%%%

\subsection{Background and preliminaries for arbitrary semigroups}
\label{sec-definitions}

A \emph{semigroup} is a set with an associative binary operation. 
A \emph{subsemigroup} of a semigroup is a subset that is 
also a semigroup under the same operation. A subsemigroup of $S$ is
\emph{proper} if it does not equal $S$ and it is \emph{maximal}
if it is a proper subsemigroup of $S$ that is not contained in any other proper
subsemigroup of $S$.
 A \emph{monoid} is
a semigroup $S$ with an \emph{identity} element $1$, which has the property
that $1s = s1 = s$ for all $s \in S$, and a \emph{submonoid} of a monoid
$S$ is a subsemigroup that contains $1$. For a subset $X$ of a semigroup $S$,
the \emph{subsemigroup of $S$ generated by $X$}, denoted by $\genset{X}$, is the
least subsemigroup of $S$, with respect to containment, containing $X$.  More
generally, for a collection of subsets $X_{1}, \ldots, X_{m}$ of $S$ and a
collection of elements $x_{1}, \ldots, x_{n}$ in $S$, we use the notation
$\genset{X_{1}, \ldots, X_{m}, x_{1}, \ldots, x_{n}}$, or some reordering of
this, to denote the subsemigroup of $S$ generated by $X_{1} \cup \cdots \cup
X_{m} \cup \{x_{1}, \ldots, x_{n}\}$. A \emph{generating set} for
$S$ is a subset $X$ of $S$ such that $S = \genset{X}$.

Let $S$ be a semigroup. A \emph{left ideal} of $S$ is a subset $I$ of $S$ such
that $SI = \set{sx}{s\in S,\ x \in I} \subseteq I$. A \emph{right ideal} is
defined analogously, and an \emph{ideal} of $S$ is a subset of $S$ that is both
a left ideal and a right ideal.  Let $x,y\in S$ be arbitrary.  The
\emph{principal left ideal generated by $x$} is the set $Sx \cup \{x\}$, which
is a left ideal of $S$, whereas the \emph{principal ideal generated by $x$} is
the set $SxS \cup Sx \cup xS \cup \{x\}$, and is an ideal. We say that $x$ and
$y$ are $\L$-related if the principal left ideals generated by $x$ and $y$ in
$S$ are equal.  Clearly $\L$ defines an equivalence relation on $S$ --- called
\textit{Green's $\L$-relation} on $S$.  We write $x\L y$ to denote that $(x,y)$
belongs to $\L$.  Green's $\R$-relation is defined dually to Green's
$\L$-relation; Green's $\H$-relation is the meet, in the lattice of equivalence
relations on $S$, of $\L$ and $\R$.  Green's $\D$-relation is the composition
$\L \circ \R = \R \circ \L$, and if $x, y\in S$, then $x\J y$ whenever the
(two-sided) principal ideals generated by $x$ and $y$ are equal.  In a finite
semigroup  $\D = \J$.  We will refer to the equivalence classes of Green's
$\K$-relation, where $\K\in \{\H, \L, \R, \D, \J\}$, as $\mathscr{K}$-classes
where $\mathscr{K}$ is any of $\R$, $\L$, $\H$, or $\J$, and the
$\mathscr{K}$-class of $x\in S$ will be denoted by $K_x$. We write $K_{x}^{S}$
if it is necessary to explicitly refer to the semigroup $S$ on which the
relation is defined.  For a $\J$-class $J$ of $S$ and a Green's relation $\K \in
\{ \H, \L, \R \}$, we denote by $J / \K$ the set of $\K$-classes of $S$
contained in $J$. A partial order on the $\J$-classes of $S$ is induced by
containment of the corresponding principal ideals; more precisely, for arbitrary
elements $x, y \in S$, $J_{x} \leq J_{y}$ if and only if the principal ideal
generated by $y$ contains the principal ideal generated by $x$.  A semigroup $S$
is \emph{$\H$-trivial} if Green's $\H$-relation is the equality relation on $S$.

An \emph{idempotent} is a semigroup element $x$ such that $x^{2} = x$, and the
collection of all idempotents in a semigroup $S$ is denoted by $E(S)$.  An
$\H$-class of $S$ that contains an idempotent is a subgroup of
$S$~\cite[Corollary~2.2.6]{Howie1995aa}.  An element $x \in S$ is
\emph{regular} if there exists $y \in S$ such that $xyx = x$, and a semigroup
is called \emph{regular} if each of its elements is regular.  A $\D$-class is
\emph{regular} if it contains a regular element; in this case, each of
its elements is regular, and each of its $\L$-classes and $\R$-classes contains
an idempotent~\cite[Propositions~2.3.1 and~2.3.2]{Howie1995aa}.

A semigroup $S$ is a \emph{regular $\ast$-semigroup}~\cite{Nordahl1978} if it
possesses a unary operation $^{\ast}$ that satisfies ${(x^{*})}^{*} = x$,
${(xy)}^{*} = y^{*}x^{*}$, and $x = xx^{*}x$ for all $x, y \in S$. Clearly a
regular $\ast$-semigroup is regular. Throughout this paper, for a subset $X$ of
a regular $\ast$-semigroup, we use the notation $X^{*}$ to denote $\set{x^{*}}{x
\in X}$. For a regular $\ast$-semigroup $S$ and elements $x, y \in S$, $x \R y$
if and only if $x^{*} \L y^{*}$. An idempotent $x$ of a regular $\ast$-semigroup
is called a \emph{projection} if $x^{*} = x$.  The idempotent $xx^{*}$ is the
unique projection in the $\R$-class of $x$, and the idempotent $x^{*}x$ is the
unique projection in the $\L$-class of $x$.

An \emph{inverse} semigroup is a semigroup $S$ in which for each element $x \in
S$, there is a unique element $x^{-1} \in S$, called the \emph{inverse} of $x$
in $S$, that satisfies $x=xx^{-1}x$ and $x^{-1} = x^{-1}xx^{-1}$. With the
operation $^{\ast}$ on $S$ defined by $x^{*} = x^{-1}$, an inverse semigroup is
a regular $\ast$-semigroup in which every idempotent is a projection.
A \emph{semilattice} is a commutative semigroup in which every element is an
idempotent; any semilattice is an inverse semigroup.

An element $x$ in a monoid $S$ with identity $1$ is a \emph{unit} if there
exists $x' \in S$ such that $xx' = x'x = 1$. The collection of units in a monoid
is the $\H$-class of the identity, and is called the \emph{group of units} of
the monoid. In a finite monoid, the $\H$-class of the identity is also a
$\J$-class; in this case, it is the unique maximal $\J$-class in the partial
order of $\J$-classes of $S$.

We also require the following graph theoretic notions. A \emph{graph} $\Gamma =
(V, E)$ is a pair of sets $V$ and $E$, called the \emph{vertices} and the
\emph{edges} of $\Gamma$, respectively.  An \emph{edge} $e \in E$ is a pair
$\{u, v\}$ of distinct vertices $u, v \in V$. A vertex $u$ is \emph{adjacent}
to a vertex $v$ in $\Gamma$ if $\{u, v\}$ is an edge of $\Gamma$.  The
\emph{degree} of a vertex $v$ in $\Gamma$ is the number of edges in $\Gamma$
that contain $v$.  An \emph{independent subset} of $\Gamma$ is a subset $K$ of
$V$ such that there are no edges in $E$ of the form $\{k, l\}$, where $k, l \in
K$.  A \emph{maximal independent subset} of $\Gamma$ is an independent subset
that is contained in no other independent subset of $\Gamma$.  A
\emph{bipartite graph} is a graph whose vertices can be partitioned into two
maximal independent subsets.
If $\Gamma = (V, E)$ is a graph, then the
\emph{induced subgraph} of $\Gamma$ on a subset $U \subseteq V$ is the graph
$\left(U,\ \bigset{\{u, v\} \in E}{u, v \in U}\right)$.

In this paper, we define $\N = \{1, 2, 3, \ldots\}$.

%%%%%%%%%%%%%%%%%%%%%%%%%%%%%%%%%%%%%%%%%%%%%%%%%%%%%%%%%%%%%%%%%%%%%%%%%%%%%%%%

\subsection{Partial transformation monoids --- definitions}
\label{sec-trans-definitions}

In this section, we introduce the cast of partial transformation monoids whose
maximal subsemigroups we determine. 

Let $n \in \N$. A \emph{partial transformation of degree $n$} is a partial map
from $\n$ to itself. We define $\mathcal{PT}_{n}$, the \emph{partial
transformation monoid of degree $n$}, to be the monoid consisting of all partial
transformations of degree $n$, under composition as binary relations. The
identity element of this monoid is $\id_{n}$, the \emph{identity transformation}
of degree $n$.

Let $\alpha \in \mathcal{PT}_{n}$. We define
$$\dom(\alpha) = \bigset{i \in \n}{i \alpha\ \text{is defined}},\quad
\im(\alpha) = \bigset{i \alpha}{i \in \dom(\alpha)},\quad
\text{and}\
\rank(\alpha) = |\im(\alpha)|,$$
which are called the \emph{domain}, \emph{image}, and \emph{rank} of $\alpha$,
respectively.  We also define the \textit{kernel} of $\alpha\in \mathcal{PT}_n$
to be the equivalence $$\ker(\alpha) = \bigset{(i, j) \in \dom(\alpha) \times
\dom(\alpha)}{i\alpha = j\alpha}$$
on $\dom(\alpha)$.
If $\dom(\alpha) = \n$, then $\alpha$ is called a \emph{transformation}.
If $\ker(\alpha)$ is the equality relation on $\dom(\alpha)$, i.e.\ if
$\alpha$ is injective, then $\alpha$ is called a \emph{partial permutation}.
A \emph{permutation} is a partial permutation $\alpha\in \mathcal{PT}_n$ such
that $\dom(\alpha)= \{1, \ldots, n\}$.  
We define the following:
\begin{itemize}
  \item
    $\mathcal{T}_{n} = \set{\alpha \in \mathcal{PT}_{n}}{\alpha\ \text{is
    a transformation}}$, the \emph{full transformation monoid of degree $n$};
  \item
    $\mathcal{I}_{n} = \set{\alpha \in \mathcal{PT}_{n}}{\alpha\ \text{is a
    partial permutation}}$, the \emph{symmetric inverse monoid of degree $n$};
    and
  \item
    $\mathcal{S}_{n} = \set{\alpha \in \mathcal{PT}_{n}}{\alpha\ \text{is a
    permutation}}$, the \emph{symmetric group of degree $n$}.
\end{itemize}

The full transformation monoids and the symmetric inverse monoids play a role
analogous to that of the symmetric group, in that every semigroup is isomorphic
to a subsemigroup of some full transformation
monoid~\cite[Theorem~1.1.2]{Howie1995aa}, and every inverse semigroup is
isomorphic to an inverse subsemigroup of some symmetric inverse
monoid~\cite[Theorem~5.1.7]{Howie1995aa}. 

Let $\alpha$ be a partial transformation of degree $n$.
Then $\dom(\alpha) = \{i_{1}, \ldots, i_{k}\} \subseteq \n$, for some $i_{1} <
\cdots < i_{k}$. Throughout this paper, where we refer to an ordering of
natural numbers, we mean the usual ordering $1 < 2 < 3 < \ldots$. We say that
$\alpha$ is \emph{order-preserving} if $i_{1}\alpha \leq \cdots \leq
i_{k}\alpha$, and \emph{order-reversing} if $i_{1}\alpha \geq \cdots \geq
i_{k}\alpha$.  We say that $\alpha$ is \emph{orientation-preserving} if there
exists at most one value $l$, where $1 \leq l \leq k - 1$, such that
$i_{l}\alpha > i_{l + 1}\alpha$, and similarly, we say that $\alpha$ is
\emph{orientation-reversing} if there exists at most one value $1 \leq l \leq k
- 1$ such that $i_{l}\alpha < i_{l + 1}\alpha$. Note that an order-preserving
partial transformation is orientation-preserving, and an order-reversing partial
transformation is orientation-reversing. Having defined these notions, we can
introduce the twelve monoids of partial transformations that we consider here.
These monoids have been extensively studied,
see~\cite{dimitrova2012maximal,dimitrova2012classification} and the references
therein, where the notation used in this paper originates.

We define the following submonoids of $\mathcal{PT}_{n}$:
\begin{itemize}
  \item
    $\mathcal{PO}_{n} = \set{\alpha \in \mathcal{PT}_{n}}{\text{$\alpha$ is
    order-preserving}}$,
  \item
    $\mathcal{POD}_{n} = \set{\alpha \in \mathcal{PT}_{n}}{\text{$\alpha$ is
    order-preserving or order-reversing}}$,
  \item
    $\mathcal{POP}_{n} = \set{\alpha \in \mathcal{PT}_{n}}{\text{$\alpha$ is
    orientation-preserving}}$, and
  \item
    $\mathcal{POR}_{n} = \set{\alpha \in \mathcal{PT}_{n}}{\text{$\alpha$ is
    orientation-preserving or orientation-reversing}}$.
\end{itemize}
We also define the following submonoids of $\mathcal{T}_{n}$ as the
intersections:
\begin{itemize}
  \item
    $\mathcal{O}_{n} = \mathcal{PO}_{n} \cap \mathcal{T}_{n}$,\quad
    $\mathcal{OD}_{n} = \mathcal{POD}_{n} \cap \mathcal{T}_{n}$,\quad
    $\mathcal{OP}_{n} = \mathcal{POP}_{n} \cap \mathcal{T}_{n}$,\quad and\quad
    $\mathcal{OR}_{n} = \mathcal{POR}_{n} \cap \mathcal{T}_{n}$;
\end{itemize}
and we define the following inverse submonoids of $\mathcal{I}_{n}$ as the
intersections:
\begin{itemize}
  \item
    $\mathcal{POI}_{n} = \mathcal{PO}_{n} \cap \mathcal{I}_{n}$,\quad
    $\mathcal{PODI}_{n} = \mathcal{POD}_{n} \cap \mathcal{I}_{n}$,\quad
    $\mathcal{POPI}_{n} = \mathcal{POP}_{n} \cap \mathcal{I}_{n}$,\quad and\quad
    $\mathcal{PORI}_{n} = \mathcal{POR}_{n} \cap \mathcal{I}_{n}$.
\end{itemize}

We require the groups of units of these monoids. We define $\gamma_{n}$ to be
the permutation of degree $n$ that reverses the usual order of $\n$, i.e.
$i\gamma_{n} = n - i + 1$ for all $i \in \n$. Thus when $n \geq 2$, the group
$\genset{\gamma_{n}}$ has order $2$.  We define $\mathcal{C}_{n}$ to be the
cyclic group generated by the permutation $(1\ 2\ \ldots\ n)$, and
$\mathcal{D}_{n} = \genset{(1\ 2\ \ldots\ n),\ \gamma_{n}}$.  When $n \geq 3$,
$\mathcal{D}_{n}$ is a dihedral group of order $2n$. Note that $\mathcal{C}_{2}
= \mathcal{D}_{2} = \genset{\gamma_{2}}$.

The groups of units of $\mathcal{PO}_{n}$, $\mathcal{O}_{n}$, and
$\mathcal{POI}_{n}$ are trivial; the groups of units of $\mathcal{POD}_{n}$,
$\mathcal{OD}_{n}$, and $\mathcal{PODI}_{n}$ are $\genset{\gamma_{n}}$; the
groups of units of $\mathcal{POP}_{n}$, $\mathcal{OP}_{n}$, and
$\mathcal{POPI}_{n}$ are $\mathcal{C}_{n}$; and the groups of units of
$\mathcal{POR}_{n}$, $\mathcal{OR}_{n}$, and $\mathcal{PORI}_{n}$ are
$\mathcal{D}_{n}$. Finally, the symmetric group $\mathcal{S}_{n}$ is the group
of units of $\mathcal{PT}_{n}$, $\mathcal{T}_{n}$, and $\mathcal{I}_{n}$.

See Figure~\ref{fig-PTn-lattice} for an illustration of how these submonoids of
$\mathcal{PT}_{n}$ are interrelated by containment.

%%%%%%%%%%%%%%%%%%%%%%%%%%%%%%%%%%%%%%%%%%%%%%%%%%%%%%%%%%%%%%%%%%%%%%%%%%%%%%%%
% Lattice
%%%%%%%%%%%%%%%%%%%%%%%%%%%%%%%%%%%%%%%%%%%%%%%%%%%%%%%%%%%%%%%%%%%%%%%%%%%%%%%%

\newcommand{\ww}{4}
\newcommand{\xx}{3}
\newcommand{\yy}{4}
\newcommand{\zz}{1}
\newcommand{\nodes}{
\draw(0,3*\zz) node [plain] (Sn) {$\mathcal{S}_{n}$};
\draw(0,2*\zz) node [plain] (Dn) {$\mathcal{D}_{n}$};
\draw(-\xx,1*\zz) node [plain] (Cn) {$\mathcal{C}_{n}$};
\draw(\xx,1*\zz) node [plain] (C2) {$\genset{\gamma_{n}}$};
\draw(0,0*\zz) node [plain] (1n) {$\{\id_{n}\}$};
\draw(0-\ww,3*\zz+\yy) node [plain] (In) {$\mathcal{I}_{n}$};
\draw(0-\ww,2*\zz+\yy) node [plain] (PORIn) {{\small $\mathcal{PORI}_{n}$}};
\draw(-\xx-\ww,1*\zz+\yy) node [plain] (POPIn) {{\small $\mathcal{POPI}_{n}$}};
\draw(\xx-\ww,1*\zz+\yy) node [plain] (PODIn) {{\small $\mathcal{PODI}_{n}$}};
\draw(0-\ww,0*\zz+\yy) node [plain] (POIn) {{\small $\mathcal{POI}_{n}$}};
\draw(0+\ww,3*\zz+\yy) node [plain] (Tn) {$\mathcal{T}_{n}$};
\draw(0+\ww,2*\zz+\yy) node [plain] (ORn) {$\mathcal{OR}_{n}$};
\draw(-\xx+\ww,1*\zz+\yy) node [plain] (OPn) {$\mathcal{OP}_{n}$};
\draw(\xx+\ww,1*\zz+\yy) node [plain] (ODn) {$\mathcal{OD}_{n}$};
\draw(0+\ww,0*\zz+\yy) node [plain] (On) {$\mathcal{O}_{n}$};
\draw(0,3*\zz+\yy+\yy) node [plain] (PTn) {$\mathcal{PT}_{n}$};
\draw(0,2*\zz+\yy+\yy) node [plain] (PORn) {$\mathcal{POR}_{n}$};
\draw(-\xx,1*\zz+\yy+\yy) node [plain] (POPn) {$\mathcal{POP}_{n}$};
\draw(\xx,1*\zz+\yy+\yy) node [plain] (PODn) {$\mathcal{POD}_{n}$};
\draw(0,0*\zz+\yy+\yy) node [plain] (POn) {$\mathcal{PO}_{n}$};
}
\newcommand{\shading}[2]{\filldraw [gray!30] plot [smooth cycle] coordinates
{(0-.5+#1,3*\zz+.5+#2) (0-1+#1,2*\zz+.5+#2) (-\xx-.5+#1,1*\zz+.5+#2)
(-\xx-.5+#1,1*\zz-.5+#2)  (0+#1,0*\zz-.5+#2) (\xx+.5+#1,1*\zz-.5+#2)
(\xx+.5+#1,1*\zz+.5+#2) (0+1+#1,2*\zz+.5+#2) (0+.5+#1,3*\zz+.5+#2) }; }

\begin{figure}[ht]
  \begin{center}
    \begin{tikzpicture}
      [align=center,node distance=2.2cm]
      \tikzstyle{plain}=[fill=white,rounded corners=3pt, draw]

      \nodes

      \draw (PTn)--(PORn)--(POPn)--(POn)--(PODn)--(PORn);
      \draw (Tn)--(ORn)--(OPn)--(On)--(ODn)--(ORn);
      \draw (In)--(PORIn)--(POPIn)--(POIn)--(PODIn)--(PORIn);
      \draw (Sn)--(Dn)--(Cn)--(1n)--(C2)--(Dn);

      \draw (PTn)--(In)--(Sn)--(Tn)--(PTn);
      \draw (PORn)--(PORIn)--(Dn)--(ORn)--(PORn);
      \draw (POPn)--(POPIn)--(Cn)--(OPn)--(POPn);
      \draw (PODn)--(PODIn)--(C2)--(ODn)--(PODn);
      \draw (POn)--(POIn)--(1n)--(On)--(POn);

      \nodes

      \begin{pgfonlayer}{background layer}
        \shading00 \shading{\ww}{\yy} \shading{-\ww}{\yy} \shading0{2*\yy}
      \end{pgfonlayer}

    \end{tikzpicture}
    \caption{Part of the subsemigroup lattice of $\mathcal{PT}_{n}$. This
    diagram shows the monoids defined in Section~\ref{sec-trans-definitions},
    and their groups of units. For a monoid in the top/middle/left/right/bottom
    position of a shaded region, its group of units is shown in the
    corresponding place in the bottom shaded region.}
    \label{fig-PTn-lattice}
  \end{center}
\end{figure}
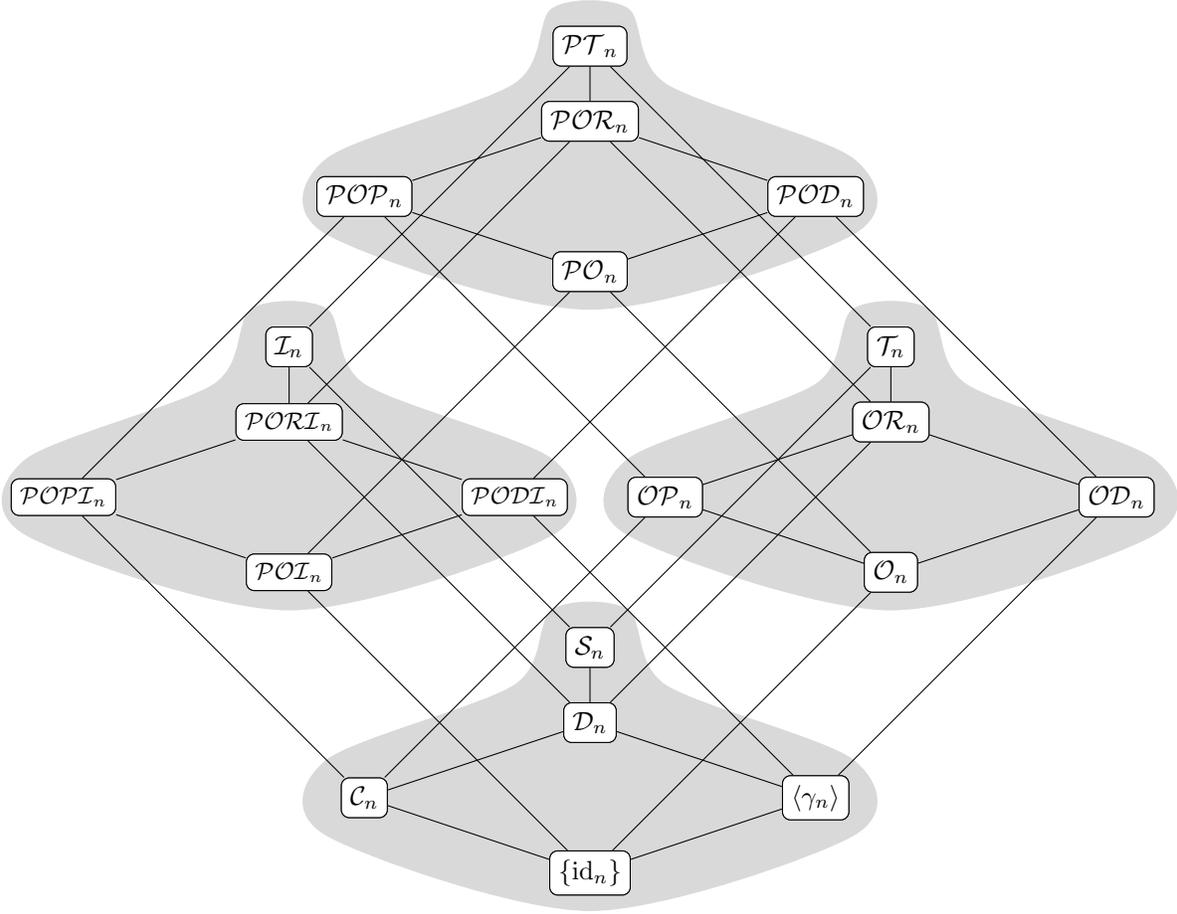

%%%%%%%%%%%%%%%%%%%%%%%%%%%%%%%%%%%%%%%%%%%%%%%%%%%%%%%%%%%%%%%%%%%%%%%%%%%%%%%%

\subsection{Diagram monoids --- definitions}
\label{sec-diagram-definitions}

In this section, we define those monoids of partitions whose maximal
subsemigroups we determine. 

Let $n \in \N$ be arbitrary.  A \emph{partition of degree $n$} is an equivalence
relation on the set $\n \cup \np$.  An equivalence class of a partition is
called a \emph{block}, and a block is \emph{transverse} if it contains points
from both $\n$ and $\np$. A \emph{block bijection} is a partition all of whose
blocks are transverse,  and a block bijection is \emph{uniform} if each of its
blocks contains an equal number of points of $\n$ and $\np$. The \emph{rank} of
a partition is the number of transverse blocks that it contains.  A partition
may be represented visually; see~\cite{Halverson2005869}, for instance.

Let $\alpha$ and $\beta$ be partitions of degree $n$. To calculate the
product $\alpha\beta$, we require three auxiliary partitions, each being a
partition of a different set.  From $\alpha$ we create $\alpha'$ by replacing
every occurrence of each $i'$ by $i''$ in $\alpha$, so that $\alpha'$ is a
partition of $\n \cup \npp$.  Similarly, replacing $i$ by $i''$, we obtain a
partition $\beta'$ of $\npp \cup \np$ from $\beta$.  We define $(\alpha\beta)'$
to be the smallest equivalence on $\n \cup \np \cup \npp$ that contains the
relation $\alpha' \cup \beta'$, which is the transitive closure of $\alpha'
\cup \beta'$.  The product $\alpha\beta$ is the intersection of $(\alpha\beta)'$
and $(\n \cup \np)\times(\n \cup \np)$.
This operation is associative, and so the collection $\mathcal{P}_n$ of all
partitions of degree $n$ forms a semigroup under this operation. The partition
$\id_{n}$, whose blocks are $\{i, i'\}$ for all $i \in \n$, is the identity
element of this semigroup and so $\mathcal{P}_{n}$ is a monoid --- called the
\emph{partition monoid of degree $n$}.  A \emph{diagram monoid} is simply a
submonoid of $\mathcal{P}_{n}$ for some $n \in \N$.

Let $\alpha$ be a partition of degree $n$.  We define $\alpha^{\ast}$ to be the
partition of $\n \cup \np$ created from $\alpha$ by replacing the point $i$ by
$i'$ in the block in which it appears, and by replacing the point $i'$ by
$i$, for all $i \in \n$. For arbitrary partitions $\alpha, \beta \in
\mathcal{P}_{n}$,
$(\alpha^{*})^{*} = \alpha$, $\alpha\alpha^{*}\alpha = \alpha$,
and
$(\alpha\beta)^{*} = \beta^{*}\alpha^{*}$.
In particular, $\mathcal{P}_{n}$ is a regular $\ast$-monoid, as
defined in Section~\ref{sec-definitions}.

There is a canonical embedding of the symmetric group of degree $n$
in $\mathcal{P}_{n}$, where a permutation $\alpha$ is
mapped to the partition with blocks $\{i, (i\alpha)'\}$ for all $i \in \n$. 
Since an element of $\mathcal{P}_{n}$ is a unit if and only if each of its
blocks has the form $\{i, j'\}$ for some $i, j \in \n$, it follows that the
image of this embedding is the group of units of $\mathcal{P}_{n}$.
We reuse the notation $\mathcal{S}_{n}$ to refer to this group.
We define $\rho_{n}$ to be the
partition whose blocks are $\{n, 1'\}$ and $\{i, (i + 1)'\}$ for $i \in
\{1,\ldots,n - 1\}$; in the context of this embedding, $\rho_{n}$ can be thought
of as the $n$-cycle $(1\ 2\ \ldots\ n)$.  Thus the subsemigroup generated by
$\rho_{n}$ is a cyclic group of order $n$.
We define a canonical ordering
$$n' < (n - 1)' < \cdots < 1' < 1 < 2 < \cdots < n$$
on $\n \cup \np$. We say that $\alpha \in \mathcal{P}_{n}$ is \emph{planar} if
there do not exist distinct blocks $A$ and $X$ of $\alpha$, and points $a, b \in
A$ and $x, y \in X$, such that $a < x < b < y$.  More generally, $\alpha$ is
\emph{annular} if $\alpha = \rho_{n}^{k} \beta \rho_{n}^{l}$ for some planar
partition $\beta \in \mathcal{P}_{n}$ and for some $k, l \in \mathbb{Z}$
(note that $\rho_{n}^{n} = \id_{n}$).  For a graphical description of these
properties, see~\cite{auinger2012krohn,Halverson2005869}.

In Section~\ref{sec-diagram}, we determine the maximal subsemigroups of
$\mathcal{P}_n$ and the following submonoids:
\begin{itemize}
  \item
    $\mathcal{PB}_{n} = \set{\alpha \in \mathcal{P}_{n}}{\text{each block of}\
    \alpha\ \text{has size at most 2}}$, the \emph{partial Brauer monoid of
    degree $n$}, introduced in~\cite{Mazorchuk1998aa};
  \item
    $\mathcal{B}_{n} = \set{\alpha \in \mathcal{P}_{n}}{\text{each block of}\
    \alpha\ \text{has size 2}}$, the \emph{Brauer monoid of degree
    $n$}, introduced in~\cite{Mazorchuk1998aa};
  \item
    $\mathcal{I}_{n}^{*} = \set{\alpha \in \mathcal{P}_{n}}{\alpha\ \text{is a
    block bijection}}$, the \emph{dual symmetric inverse monoid of degree $n$},
    introduced in~\cite{fitzgerald1998dual};
  \item
    $\mathfrak{F}_{n} = \set{\alpha \in \mathcal{P}_{n}}{\alpha\ \text{is a
    uniform block bijection}}$, the \emph{uniform block bijection monoid of
    degree $n$}, or the \emph{factorisable dual symmetric inverse
    monoid of degree $n$}, see~\cite{fitzgerald2003presentation} for more
    details;
  \item
    $\mathcal{PP}_{n} = \set{\alpha \in \mathcal{P}_{n}}{\alpha\ \text{is
    planar}}$, the \emph{planar partition monoid of degree $n$}, introduced
    in~\cite{Halverson2005869};
  \item
    $\mathcal{M}_{n} = \set{\alpha \in \mathcal{PB}_{n}}{\alpha\ \text{is
    planar}}$, the \emph{Motzkin monoid of degree $n$},
    see~\cite{Benkart2014aa} for more details;
  \item
    $\mathcal{J}_{n} = \set{\alpha \in \mathcal{B}_{n}}{\alpha\ \text{is
    planar}}$, the \emph{Jones monoid of degree $n$}, also known as the
    \emph{Temperley-Lieb monoid}, introduced in~\cite{Lau2006aa}; and
  \item
    $\mathcal{AJ}_{n} = \set{\alpha \in \mathcal{B}_{n}}{\alpha\ \text{is
    annular}}$, the \emph{annular Jones monoid of degree $n$}, introduced
    in~\cite{auinger2012krohn}.
\end{itemize}

By~\cite{Halverson2005869}, the planar partition monoid of degree $n$ is
isomorphic to the Jones monoid of degree $2n$. Therefore, we will not determine
the maximal subsemigroups of $\mathcal{PP}_{n}$ directly, since their
description can be obtained from the results in Section~\ref{sec-jones}.

The group of units of $\mathcal{PB}_{n}$,
$\mathcal{B}_{n}$, $\mathcal{I}_{n}^{*}$, and $\mathfrak{F}_{n}$ is 
$\mathcal{S}_{n}$, the group of units of $\mathcal{M}_{n}$ and
$\mathcal{J}_{n}$ is the trivial group $\{\id_{n}\}$, and the group of units of
$\mathcal{AJ}_{n}$ is the cyclic group $\mathcal{C}_{n} =
\genset{\rho_{n}}$.

%%%%%%%%%%%%%%%%%%%%%%%%%%%%%%%%%%%%%%%%%%%%%%%%%%%%%%%%%%%%%%%%%%%%%%%%%%%%%%%%

\subsection{Summary of results}\label{sec-summary}

A summary of the results of this paper is shown in Table~\ref{tab-summary}; a
description of the maximal subsemigroups that we count in
Table~\ref{tab-summary} can be found in the referenced results. 

\begin{table}[ht]
  \begin{center}
    \begin{tabular}{llllll}
      \toprule
      Monoid   & Group of units & Number of & OEIS~\cite{OEIS} & Result & \\
       && maximal subsemigroups \\
      \bottomrule
      \toprule

      $\mathcal{POI}_{n}$ &
      Trivial &
      $2^{n} - 1$ 
      & \href{http://oeis.org/A000225}{A000225}
      & Theorem~\ref{thm-POIn}
      & cf.~\cite[Theorem 2]{Ganyushkin2003}
      \\

      $\mathcal{PO}_{n}$ &
      &
      $2^{n} + 2n - 2$ 
      & \href{http://oeis.org/A131520}{A131520}
      & Theorem~\ref{thm-POn} 
      & cf.~\cite[Theorem~1]{dimitrova2012classification}
      \\

      $\mathcal{M}_{n}$ &
      &
      $2^{n} + 2n - 3$ 
      & \href{http://oeis.org/A131898}{A131898}
      & Theorem~\ref{thm-motzkin} 
      & --- \\

      $\mathcal{O}_{n}$ &
      &
      $A_{2n - 1} + 2n - 4$ 
      & \href{http://oeis.org/A000931}{A000931}
      & Theorem~\ref{thm-On} 
      & cf.~\cite[Theorem~2]{dimitrova2008maximal}
      \\

      $\mathcal{J}_{n}$ &
      &
      $2 F_{n - 1} + 2n - 3$ 
      & \href{http://oeis.org/A290140}{A290140}
      & Theorem~\ref{thm-jones} 
      & --- \\

      $\mathcal{PP}_{n}$ &
      &
      $2 F_{2n - 1} + 4n - 3$
      & \href{http://oeis.org/A290140}{A290140}
      & Theorem~\ref{thm-jones} 
      & --- \\

      \midrule

      $\mathcal{PODI}_{2n}$ &
      Order $2$ &
      $3 \cdot 2 ^{n - 1} - 1$ 
      & \href{http://oeis.org/A052955}{A052955}
      & Theorem~\ref{thm-PODIn} 
      & cf.~\cite[Theorem 4]{dimitrova2009maximal}\\

      $\mathcal{PODI}_{2n - 1}$ &
      & 
      $2 ^{n} - 1$ 
      & \href{http://oeis.org/A052955}{A052955}
      & Theorem~\ref{thm-PODIn} 
      & cf.~\cite[Theorem 4]{dimitrova2009maximal}\\
    
      $\mathcal{POD}_{n}$ &
      &
      $2 ^ {\ceiling{n / 2}} + n - 1$ 
      & \href{http://oeis.org/A016116}{A016116}
      & Theorem~\ref{thm-PODn} 
      & --- \\

      $\mathcal{OD}_{n}$ &
      &
      $A_{n} + n - 3$ 
      & \href{http://oeis.org/A000931}{A000931}
      & Theorem~\ref{thm-ODn}
      & cf.~\cite[Theorem~2]{gyudzhenov2006maximal}\\

      \midrule

      $\mathcal{POPI}_{n}$ &
      $\mathcal{C}_n$ (cyclic)&
      $|\mathbb{P}_{n}| + |\mathbb{P}_{n - 1}|$ 
      & \href{http://oeis.org/A059957}{A059957}
      & Theorem~\ref{thm-POPIn} 
      & --- \\

      $\mathcal{POP}_{n}$ &
      &
      $|\mathbb{P}_{n}| + 2$ 
      & \href{http://oeis.org/A083399}{A083399}
      & Theorem~\ref{thm-POPn}
      & --- \\

      $\mathcal{AJ}_{n}$ &
      &
      $|\mathbb{P}_{n}| + 1$ 
      & \href{http://oeis.org/A083399}{A083399}
      & Theorem~\ref{thm-annular-jones} 
      & --- \\

      $\mathcal{OP}_{n}$ &
      &
      $|\mathbb{P}_{n}| + 1$
      & \href{http://oeis.org/A083399}{A083399}
      & Theorem~\ref{thm-OPn}
      & cf.~\cite[Theorem~1.6]{dimitrova2012maximal}\\

      \midrule

      $\mathcal{PORI}_{n}$ &
      $\mathcal{D}_n$ (dihedral)&
      $1 + |\mathbb{P}_{n - 1}| + \sum_{p \in \mathbb{P}_{n}} p$
      & \href{http://oeis.org/A290289}{A290289}
      & Theorem~\ref{thm-PORIn} 
      & --- \\

      $\mathcal{POR}_{n}$ & & 
      $3 + \sum_{p \in \mathbb{P}_{n}} p$
      & \href{http://oeis.org/A008472}{A008472}
      & Theorem~\ref{thm-PORn} 
      & --- \\

      $\mathcal{OR}_{n}$ &
      &
      $2 + \sum_{p \in \mathbb{P}_{n}} p$
      & \href{http://oeis.org/A008472}{A008472}
      & Theorem~\ref{thm-ORn} 
      & cf.~\cite[Theorem~2.6]{dimitrova2012maximal}\\

      \midrule

      $\mathcal{T}_{n}$ &
      $\mathcal{S}_n$ (symmetric) &
      $s_{n} + 1$
      & \href{http://oeis.org/A290138}{A290138}
      & Theorem~\ref{thm-maximals-PTn-Tn-In} 
      & --- \\

      $\mathcal{I}_{n}$ & & 
      $s_{n} + 1$ 
      & \href{http://oeis.org/A290138}{A290138}
      & Theorem~\ref{thm-maximals-PTn-Tn-In} 
      & --- \\

      $\mathcal{I}_{n}^{*}$ &
      &
      $s_{n} + 1$ 
      & \href{http://oeis.org/A290138}{A290138}
      & Theorem~\ref{thm-dual-symmetric} 
      & cf.~\cite[Theorem~19]{maltcev2007}\\

      $\mathfrak{F}_{n}$ &
      &
      $s_{n} + 1$ 
      & \href{http://oeis.org/A290138}{A290138}
      & Theorem~\ref{thm-factorisable} 
      & --- \\

      $\mathcal{B}_{n}$ &
      &
      $s_{n} + 1$ 
      & \href{http://oeis.org/A290138}{A290138}
      & Theorem~\ref{thm-Brauer} 
      & --- \\

      $\mathcal{PT}_{n}$ &
      &
      $s_{n} + 2$ 
      & \href{http://oeis.org/A290138}{A290138}
      & Theorem~\ref{thm-maximals-PTn-Tn-In} 
      & --- \\

      $\mathcal{PB}_{n}$ &
      &
      $s_{n} + 3$ 
      & \href{http://oeis.org/A290138}{A290138}
      & Theorem~\ref{thm-partial-brauer} 
      & --- \\

      $\mathcal{P}_{n}$ &
      &
      $s_{n} + 4$ 
      & \href{http://oeis.org/A290138}{A290138}
      & Theorem~\ref{thm-partition}
      & --- \\

      \bottomrule
    \end{tabular}
    \caption{The maximal subsemigroups of the monoids from this paper, where $n$
    is sufficiently large (usually $n \geq 2$ or $n \geq 3$). For $k \in \N$,
    $s_{k}$ is the number of maximal subgroups of the symmetric group of degree
    $k$~\cite[\href{http://oeis.org/A290138}{A290138}]{OEIS}; $\mathbb{P}_{k}$
    is
    the set of primes that divide $k$; $A_{k}$ is the $k^{\text{th}}$ term of
    the sequence defined by $A_{1} = 1$, $A_{2} = A_{3} = 2$, and $A_{k} = A_{k
    - 2} + A_{k - 3}$ for $k \geq 4$, i.e.\ the $(k + 6)^{\text{th}}$ term
    of the Padovan
    sequence~\cite[\href{http://oeis.org/A000931}{A000931}]{OEIS}; and $F_{k}$
    is the $k^{\text{th}}$ term of the Fibonacci
    sequence~\cite[\href{http://oeis.org/A000045}{A000045}]{OEIS}, defined by
    $F_{1} = F_{2} = 1$, and $F_{k} = F_{k - 1} + F_{k - 2}$ for $k \geq
    3$.}\label{tab-summary}
  \end{center}
\end{table}

%%%%%%%%%%%%%%%%%%%%%%%%%%%%%%%%%%%%%%%%%%%%%%%%%%%%%%%%%%%%%%%%%%%%%%%%%%%%%%%%
%%%%%%%%%%%%%%%%%%%%%%%%%%%%%%%%%%%%%%%%%%%%%%%%%%%%%%%%%%%%%%%%%%%%%%%%%%%%%%%%
\section{The maximal subsemigroups of an arbitrary finite
monoid}\label{sec-general-results}

In this section, we present some results about the maximal subsemigroups of an
arbitrary finite monoid, which are related to those given
in~\cite{Donoven2016aa, Graham1968aa} for an arbitrary finite semigroup.
Since each of the semigroups to which we apply these results is a monoid, we
state the following results in the context of finite monoids. While some
of the results given in this section hold for an arbitrary finite semigroup,
many of them they do not.  

Let $S$ be a finite monoid. By~\cite[Proposition 1]{Graham1968aa}, for
each maximal subsemigroup $M$ of $S$ there exists a single $\J$-class $J$ that
contains $S \setminus M$, or equivalently $S \setminus J \subseteq M$.
Throughout this paper, we call a maximal subsemigroup whose complement is
contained in a $\J$-class $J$ a \emph{maximal subsemigroup arising from $J$}.

We consider the question of which $\J$-classes of $S$ give rise to maximal
subsemigroups.  Let $J$ be a $\J$-class of $S$. There exist maximal
subsemigroups arising from $J$ if and only if every generating set for
$S$ intersects $J$ non-trivially. To see the truth of this statement, let $M$ be
a maximal subsemigroup of $S$ arising from $J$, so that  $S \setminus J
\subseteq M$. For any subset $A$ of $S$ that is disjoint from $J$, it follows
that $$\genset{A} \leq \genset{S \setminus J} \leq M \neq S,$$ and $A$ does not
generate $S$.  Conversely, if $J$ intersects every generating set for $S$
non-trivially, then certainly $S \setminus J$ does not generate $S$. Thus the
subsemigroup $\genset{S \setminus J}$ of $S$ is proper, and is contained in a
maximal subsemigroup.

In order to calculate the maximal subsemigroups of $S$, we identify those
$\J$-classes of $S$ that intersect every generating set of $S$ non-trivially.
Then, for each of these $\J$-classes $J$, we find the maximal subsemigroups of
$S$ arising from $J$.

Let $S$ be a finite monoid, let $J$ be a regular $\J$-class of $S$, and
let $M$ be a maximal subsemigroup of $S$ arising from $J$.  By~\cite[Section
3]{Donoven2016aa}, the intersection $M \cap J$ has one of the following
mutually-exclusive forms:
\begin{enumerate}[label=(M\arabic*), ref=(M\arabic*)]
  \item\label{item-remove-j}
    $M \cap J = \varnothing$.

  \item\label{item-rectangle}
    $M \cap J$ is a non-empty union of both $\L$- and $\R$-classes of $J$;

  \item\label{item-remove-l}
    $M \cap J$ is a non-empty union of $\L$-classes of $J$;

  \item\label{item-remove-r}
    $M \cap J$ is a non-empty union of $\R$-classes of $J$;

  \item\label{item-intersect}
    $M \cap J$ has non-empty intersection with every $\H$-class of $J$;
\end{enumerate}
In general, the collection of maximal subsemigroups arising from a particular
regular $\J$-class $J$ can have any combination of
types~\ref{item-rectangle},~\ref{item-remove-l},~\ref{item-remove-r},
and~\ref{item-intersect}.  However, if $S \setminus J$ is a maximal subsemigroup
of $S$, then it is the only maximal subsemigroup to arise from $J$.  In
other words, there is at most one maximal subsemigroup of
type~\ref{item-remove-j} arising from $J$, and its existence precludes the
occurrence of maximal subsemigroups of
types~\ref{item-rectangle}--\ref{item-intersect}.

It can be most difficult to calculate the maximal subsemigroups of $S$ that
arise from $J$ and have type~\ref{item-intersect} --- we consider a special case
in Section~\ref{sec-intersect-inverse}, which covers the majority of instances
in this paper.  However, in many cases it can be easily shown that no maximal
subsemigroups of type~\ref{item-intersect} exist, such as when $S$ is
$\H$-trivial, or when $S$ is idempotent generated. More generally, since a
maximal subsemigroup of $S$ of type~\ref{item-intersect} contains $E(S)$, the
following lemma holds.

\begin{lem}\label{lem-no-type-intersect}
  Let $S$ be a finite monoid with group of units $G$, and let $J$ be a
  $\J$-class of $S$ that is not equal to $G$.  If
  \begin{enumerate}[label=\emph{(\alph*)}]
    \item
      each $\H$-class of $J$ is trivial, or
    \item
      $J \subseteq \genset{G,\ E(S)}$,
  \end{enumerate}
  then there are no maximal subsemigroups of type~\emph{\ref{item-intersect}}
  arising from $J$.
\end{lem}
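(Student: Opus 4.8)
The plan is to show that, under either hypothesis, a maximal subsemigroup $M$ of type~\ref{item-intersect} arising from $J$ would be forced to contain the whole of $J$, and that this is impossible. Indeed, since $M$ arises from $J$ we have $S \setminus M \subseteq J$, so the containment $J \subseteq M$ would give $S \setminus M \subseteq J \subseteq M$, whence $S \setminus M = \varnothing$ and $M = S$, contradicting the fact that $M$ is a proper subsemigroup. The entire argument therefore reduces to deriving $J \subseteq M$ from each of the two hypotheses. I may assume throughout that $J$ is regular, since type~\ref{item-intersect} is only defined for regular $\J$-classes; if $J$ is non-regular the statement is vacuously true.

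The common tool is the observation, already flagged in the paragraph preceding the lemma, that a type~\ref{item-intersect} subsemigroup contains every idempotent of $S$. To justify this I would argue as follows: any idempotent outside $J$ lies in $S \setminus J \subseteq M$ automatically, and for an idempotent $e$ in a group $\H$-class $H$ of $J$, type~\ref{item-intersect} supplies some $x \in M \cap H$; the powers of $x$ remain both in the subsemigroup $M$ and in the finite group $H$, so $e$, being the identity of the cyclic group $\genset{x} \leq H$, equals some power of $x$ and hence lies in $M$. Thus $E(S) \subseteq M$.

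Under hypothesis~(a) the idempotent fact is not even needed: if every $\H$-class of $J$ is trivial, then each such $\H$-class is a single point, so the defining requirement of type~\ref{item-intersect} that $M \cap J$ meet every $\H$-class of $J$ forces $M$ to contain each element of $J$, i.e.\ $J \subseteq M$. Under hypothesis~(b) I would first note that the group of units $G$ lies in $M$: as $J \neq G$ and distinct $\J$-classes are disjoint, $G \cap J = \varnothing$, so $G \setminus M = G \cap (S \setminus M) \subseteq G \cap J = \varnothing$. Combining $G \subseteq M$ with $E(S) \subseteq M$ and the closure of $M$ gives $\genset{G,\ E(S)} \subseteq M$, and the hypothesis $J \subseteq \genset{G,\ E(S)}$ then yields $J \subseteq M$.

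The only step that requires genuine care is the idempotent containment $E(S) \subseteq M$ that underpins hypothesis~(b); everything else is a direct combinatorial consequence of the definition of type~\ref{item-intersect} together with the fact that $M$ arises from $J$. Consequently the main obstacle is simply to verify that observation cleanly --- in particular that every group $\H$-class of $J$ contributes its idempotent to $M$ --- after which both cases close immediately via the reduction of the first paragraph.
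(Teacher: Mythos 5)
Your proof is correct and follows essentially the same route as the paper, which states the lemma as an immediate consequence of the observation that a maximal subsemigroup of type~\ref{item-intersect} contains $E(S)$ (together with $G$, since its complement lies in $J$), so that under (b) it contains $\genset{G,\ E(S)} \supseteq J$, and under (a) the $\H$-triviality of $J$ forces it to contain $J$ outright --- a contradiction with properness in either case. Your justification that each group $\H$-class of $J$ contributes its idempotent via powers of any element of $M \cap H$ is exactly the argument the paper leaves implicit.
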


When $S$ is a finite regular $\ast$-semigroup and $J$ is a $\J$-class of $S$, it
is routine to verify that the $^{\ast}$ operation permutes the maximal
subsemigroups of $S$ that arise from $J$. In other words, for a subset $M$ of
$S$, $M$ is a maximal subsemigroup of $S$ arising from $J$ if and only if
$M^{*}$ is a maximal subsemigroup of $S$ arising from $J$.  If $M$ is a maximal
subsemigroup of type~\ref{item-remove-j},~\ref{item-rectangle},
or~\ref{item-intersect}, then $M^{*}$ is a maximal subsemigroup of the same
type. However, since the $^{\ast}$ operation of a regular $\ast$-semigroup
transposes $\L$-classes and $\R$-classes, $M$ is a maximal subsemigroup of
type~\ref{item-remove-l} if and only if $M^{*}$ is a maximal subsemigroup of
type~\ref{item-remove-r}.  In particular, the maximal subsemigroups of
type~\ref{item-remove-l} can be deduced from those of type~\ref{item-remove-r},
and vice versa.
These ideas are summarised in the following lemma.

\begin{lem}\label{lem-l-or-r}
  Let $S$ be a finite regular $\ast$-semigroup, let $M$ be a subset of $S$, and
  let $J$ be a $\J$-class of $S$.  Then $M$ is a maximal subsemigroup of $S$
  arising from $J$ if and only if $M^{*}$ is a maximal subsemigroup of $S$
  arising from $J$.  In particular, $M$ is a maximal subsemigroup of $S$ arising
  from $J$ of type~\emph{\ref{item-remove-l}} if and only $M^{*}$ is a maximal
  subsemigroup of $S$ arising from $J$ of type~\emph{\ref{item-remove-r}}.
\end{lem}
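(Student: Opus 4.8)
The plan is to show that the $^{*}$ operation induces an involutory automorphism of the subsemigroup lattice of $S$ that fixes the $\J$-class $J$ setwise; once this is established, every assertion in the lemma follows by formal manipulation, and the identity $(x^{*})^{*} = x$ means it suffices to prove each ``if and only if'' in one direction. First I would record the basic structural facts. Since $(x^{*})^{*} = x$, the map $x \mapsto x^{*}$ is a bijection of $S$ onto itself, so $S^{*} = S$ and $(S \setminus A)^{*} = S \setminus A^{*}$ for every subset $A \subseteq S$. If $M$ is a subsemigroup and $a, b \in M^{*}$, write $a = x^{*}$ and $b = y^{*}$ with $x, y \in M$; then $ab = x^{*}y^{*} = (yx)^{*} \in M^{*}$ because $yx \in M$, so $M^{*}$ is a subsemigroup. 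As $^{*}$ preserves containment and maps $S$ to $S$, it is an automorphism of the subsemigroup lattice of $S$, and in particular it carries proper subsemigroups to proper subsemigroups and maximal subsemigroups to maximal subsemigroups.

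Next I would prove that $J^{*} = J$ for every $\J$-class $J$. Applying the axiom $y = yy^{*}y$ with $y = x^{*}$ and using $(x^{*})^{*} = x$ gives $x^{*} = x^{*}xx^{*} \in Sx S$, while $x = xx^{*}x \in Sx^{*}S$; hence the principal ideals of $x$ and $x^{*}$ coincide, so $x \J x^{*}$ for all $x \in S$, and therefore $J^{*} = J$. Combining this with $(S \setminus M)^{*} = S \setminus M^{*}$ shows that $S \setminus M \subseteq J$ if and only if $S \setminus M^{*} = (S \setminus M)^{*} \subseteq J^{*} = J$, that is, $M$ arises from $J$ exactly when $M^{*}$ does. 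Together with the lattice-automorphism observation, this yields the first claim: $M$ is a maximal subsemigroup of $S$ arising from $J$ if and only if $M^{*}$ is.

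For the statement about types, I would first note that a regular $\ast$-semigroup is regular, so in the finite case $\D = \J$ forces every $\J$-class to be regular and the classification~\ref{item-remove-l}--\ref{item-remove-r} to apply to $M$ and $M^{*}$. Substituting $x^{*}, y^{*}$ into the biconditional $x \R y \iff x^{*} \L y^{*}$ and using the involution gives $x \L y \iff x^{*} \R y^{*}$, whence $(L_{x})^{*} = R_{x^{*}}$; thus $^{*}$ carries $\L$-classes of $J$ to $\R$-classes of $J$. Since $(M \cap J)^{*} = M^{*} \cap J^{*} = M^{*} \cap J$ and $^{*}$ is a bijection, $M \cap J$ is a non-empty union of $\L$-classes of $J$ if and only if $M^{*} \cap J$ is a non-empty union of $\R$-classes of $J$, so $M$ has type~\ref{item-remove-l} if and only if $M^{*}$ has type~\ref{item-remove-r}.

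The only step requiring genuine computation rather than lattice-theoretic bookkeeping is establishing $x \J x^{*}$ (equivalently $J^{*} = J$) in the second paragraph; this is what guarantees that ``arising from $J$'' is preserved by $^{*}$, and without it the whole argument would collapse. Fortunately it reduces to a one-line application of the regular $\ast$-semigroup axioms, so I expect no serious obstacle, only the need to assemble the pieces carefully.
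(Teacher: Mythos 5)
Your proof is correct and follows essentially the same route the paper takes, where the lemma is stated as a summary of the preceding discussion (``it is routine to verify that the $^{*}$ operation permutes the maximal subsemigroups arising from $J$,'' with the type exchange coming from $^{*}$ transposing $\L$- and $\R$-classes). You have simply filled in the details the paper leaves as routine, and your explicit verification that $x \J x^{*}$ (hence $J^{*} = J$) via $x^{*} = x^{*}xx^{*}$ and $x = xx^{*}x$ is exactly the right thing to make ``arising from $J$'' transfer to $M^{*}$.
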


%%%%%%%%%%%%%%%%%%%%%%%%%%%%%%%%%%%%%%%%%%%%%%%%%%%%%%%%%%%%%%%%%%%%%%%%%%%%%%%%
\subsection{Maximal subsemigroups arising from the group of
units}\label{sec-group-of-units}

Let $S$ be a finite monoid with group of units $G$.  Since the subset of
non-invertible elements of a finite monoid is an ideal, the $\J$-class $G$
intersects every generating set for $S$ non-trivially, and so there exist
maximal subsemigroups of $S$ arising from $G$.  Note that $G$ is the unique such
$\J$-class only when $S$ is a group.  Another consequence of the fact that the
non-invertible elements of $S$ form an ideal is that the maximal subsemigroups
arising from the group of units can be calculated from $G$ in isolation. As
shown in the following lemma, they are straightforward to describe in terms of
the maximal subsemigroups of $G$.

\begin{lem}\label{lem-group-of-units}
  Let $S$ be a finite monoid with group of units $G$.  Then the maximal
  subsemigroups of $S$ arising from $G$ are the sets $(S \setminus G) \cup U$,
  for each maximal subsemigroup $U$ of $G$.
\end{lem}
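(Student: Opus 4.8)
The plan is to set $I = S \setminus G$ and exploit the fact, noted immediately before the statement, that $I$ is an ideal of $S$; equivalently, a maximal subsemigroup $M$ of $S$ arises from $G$ precisely when $I \subseteq M \subsetneq S$. The heart of the argument is a correspondence between the subsemigroups of $S$ that contain $I$ and the subsemigroups of $G$, given by $N \mapsto N \cap G$ with inverse $U \mapsto U \cup I$. I would then check that this correspondence preserves inclusion, properness, and hence maximality, and read off the lemma.

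First I would verify that $U \cup I$ is a subsemigroup of $S$ for every subsemigroup $U$ of $G$. Given $a, b \in U \cup I$, if either lies in $I$ then $ab \in I$ because $I$ is an ideal, while if both lie in $U$ then $ab \in U$ as $U$ is closed; in either case $ab \in U \cup I$. Conversely, for any subsemigroup $N$ of $S$ with $I \subseteq N$, the set $N \cap G$ is a subsemigroup of $G$, being the intersection of the subsemigroups $N$ and $G$, and since $G$ is finite it is in fact a subgroup. Because $G \cap I = \varnothing$ and $I \subseteq N$, these two operations are mutually inverse: $(U \cup I) \cap G = U$ and $(N \cap G) \cup I = N$.

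For the forward direction, let $U$ be a maximal subsemigroup of $G$ and set $M = U \cup I = (S \setminus G) \cup U$. Then $M$ is a proper subsemigroup of $S$, proper because $U \neq G$, and $S \setminus M = G \setminus U \subseteq G$, so once $M$ is shown to be maximal it arises from $G$. To see that $M$ is maximal, suppose $M \subseteq N \subsetneq S$ with $N$ a subsemigroup. Intersecting with $G$ gives $U \subseteq N \cap G \subsetneq G$, the strictness because $N \cap G = G$ would force $N \supseteq G \cup I = S$; so maximality of $U$ yields $N \cap G = U$, and then $N = (N \cap G) \cup I = M$. For the reverse direction, let $M$ be a maximal subsemigroup of $S$ arising from $G$, so $I \subseteq M \subsetneq S$; put $U = M \cap G$, a proper subsemigroup of $G$ with $M = U \cup I$. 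If $U \subsetneq V \subseteq G$ for some subsemigroup $V$ of $G$, then $V \cup I$ is a subsemigroup with $M \subsetneq V \cup I \subseteq S$, so maximality of $M$ forces $V \cup I = S$ and hence $V = G$; thus $U$ is maximal in $G$.

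The argument is essentially routine once the ideal property of $I$ is in hand, and no single step presents a serious obstacle; the point requiring the most care is the maximality transfer, namely confirming in both directions that an intermediate subsemigroup of $S$ corresponds under $N \mapsto N \cap G$ to an intermediate subsemigroup of $G$, which relies on the decomposition $N = (N \cap G) \cup I$ valid for every subsemigroup $N \supseteq I$. Here a maximal subsemigroup of a finite group is understood as a maximal proper subgroup, with the convention that when $G$ is trivial the empty set is its unique maximal subsemigroup, so that $M = S \setminus G$ arises from $G$; the argument above covers this case unchanged.
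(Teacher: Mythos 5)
Your proof is correct and follows essentially the same route as the paper, which states this lemma without a formal proof as an immediate consequence of the fact that $S \setminus G$ is an ideal --- precisely the correspondence $U \mapsto U \cup (S \setminus G)$, $N \mapsto N \cap G$ that you make explicit. Your careful handling of the trivial-group case, where the empty set is the unique maximal subsemigroup of $G$, matches the paper's stated convention, so nothing is missing.
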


A \emph{subsemigroup} of a finite group is a \emph{subgroup}, unless it is
empty; the only group to possess the empty semigroup as a maximal subsemigroup
is the trivial group.  Thus, when the group of units of a particular monoid is
known to be non-trivial, we may use the term ``maximal subgroup'' in place of
``maximal subsemigroup'', as appropriate. This observation permits the following
corollary.

\begin{cor}\label{cor-group-of-units}
  Let $S$ be a finite monoid with group of units $G$.  If $G$ is trivial, then
  the unique maximal subsemigroup of $S$ arising from $G$ is $S \setminus G$,
  which has type~\emph{\ref{item-remove-j}}.  If $G$ is non-trivial, then the
  maximal subsemigroups of $S$ arising from $G$ are the sets $(S \setminus G)
  \cup U$, for each maximal subgroup $U$ of $G$, which have
  type~\emph{\ref{item-intersect}}.
\end{cor}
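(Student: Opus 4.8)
The plan is to derive Corollary~\ref{cor-group-of-units} directly from Lemma~\ref{lem-group-of-units}, splitting into the two cases according to whether $G$ is trivial. By Lemma~\ref{lem-group-of-units}, the maximal subsemigroups of $S$ arising from $G$ are exactly the sets $(S \setminus G) \cup U$, where $U$ ranges over the maximal subsemigroups of $G$. So the entire task reduces to identifying the maximal subsemigroups of the finite group $G$ and then reading off the type of the corresponding subsemigroup of $S$.

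First I would treat the case where $G$ is trivial. Here $G = \{1\}$, and the only proper subsemigroup of $G$ is the empty semigroup, which (as remarked just before the corollary) is indeed a maximal subsemigroup of the trivial group. Feeding $U = \varnothing$ into Lemma~\ref{lem-group-of-units} yields the single set $(S \setminus G) \cup \varnothing = S \setminus G$. Since this set has empty intersection with $J = G$, it has type~\ref{item-remove-j} by definition.

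Next I would handle the case where $G$ is non-trivial. As noted in the paragraph preceding the corollary, every non-empty subsemigroup of a finite group is a subgroup, and the empty semigroup is a maximal subsemigroup only of the trivial group; hence for non-trivial $G$ the maximal subsemigroups are precisely the maximal subgroups $U$ of $G$, each of which is non-empty. Applying Lemma~\ref{lem-group-of-units} gives the sets $(S \setminus G) \cup U$ for each maximal subgroup $U$ of $G$. To assign the type, observe that $J = G$ is itself a single $\H$-class (it is the $\H$-class of the identity, as recorded in Section~\ref{sec-definitions}), so any non-empty subset of $G$ meets every $\H$-class of $J$; since $U \neq \varnothing$, the intersection $\big((S\setminus G)\cup U\big)\cap G = U$ is non-empty, giving type~\ref{item-intersect}.

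There is no serious obstacle here: the statement is essentially a bookkeeping consequence of Lemma~\ref{lem-group-of-units} together with the elementary fact about subsemigroups of finite groups. The only point requiring a little care is the type assignment in the non-trivial case, where one must recall that the group of units forms a single $\H$-class (so that "non-empty intersection with $G$" is automatically the same as "non-empty intersection with every $\H$-class of $G$"), and in the trivial case one must correctly match the empty intersection to type~\ref{item-remove-j}.
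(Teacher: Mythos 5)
Your proof is correct and follows essentially the same route as the paper, which derives the corollary from Lemma~\ref{lem-group-of-units} together with the remark that a non-empty subsemigroup of a finite group is a subgroup and that only the trivial group has the empty semigroup as a maximal subsemigroup. Your additional care in assigning types --- noting that $G$ is a single $\H$-class, so a non-empty intersection with $G$ gives type~\ref{item-intersect}, while the empty intersection in the trivial case gives type~\ref{item-remove-j} --- is exactly the bookkeeping the paper leaves implicit.
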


Only three families of non-trivial groups appear as the group of units of a
monoid in this paper: the cyclic groups, the dihedral groups, and the symmetric
groups.  The conjugacy classes of maximal subgroups of the finite symmetric
groups are described in~\cite{LIEBECK1987365} and counted
in~\cite{LIEBECK1996341};
see~\cite[\href{http://oeis.org/A066115}{A066115}]{OEIS}. However, no simple
formula is known for the total number of maximal subgroups. Thus we use the
notation $s_{k}$ to denote the number of maximal subsemigroups of the symmetric
group of degree $k$~\cite[\href{http://oeis.org/A290138}{A290138}]{OEIS}.  For
the maximal subgroups of the cyclic and dihedral groups, we present the
following well-known results.

\begin{lem}\label{lem-maximals-cyclic}
  Let $n \in \N$, $n \geq 2$, and let $G = \genset{\alpha\ |\ \alpha^{n}}$ be a
  cyclic group of order $n$. The maximal subgroups of $G$ are the subgroups
  $\genset{\alpha^{p}}$, for each prime divisor $p$ of $n$.  In particular, the
  total number of maximal subgroups is the number of prime divisors of $n$.
\end{lem}

\begin{lem}\label{lem-maximals-dihedral}
  Let $n \in \N$, $n \geq 3$, and let $G = \genset{\sigma,\ \rho\ |\
  \sigma^{2},\ \rho^{n},\ {(\sigma\rho)}^{2}}$ be a dihedral group of order
  $2n$.  The maximal subgroups of $G$ are $\genset{\rho}$ and the subgroups
  $\genset{\rho^{p},\ \sigma \rho^{i}}$, for each prime divisor $p$ of
  $n$ and for each integer $i$ with $0 \leq i \leq p - 1$.  In particular, the
  total number of maximal subgroups is one more than the sum of the prime
  divisors of $n$.
\end{lem}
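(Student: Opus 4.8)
The plan is to prove the classical description of the maximal subgroups of a dihedral group $G = \genset{\sigma,\ \rho\ |\ \sigma^{2},\ \rho^{n},\ {(\sigma\rho)}^{2}}$ of order $2n$ by a direct structural argument, exploiting the decomposition of $G$ into the cyclic ``rotation'' subgroup $R = \genset{\rho}$ of order $n$ and its unique nontrivial coset $\sigma R$, consisting of the $n$ ``reflections'' $\sigma\rho^{i}$. First I would record the basic facts that every element of $G$ lies either in $R$ or in $\sigma R$, that each reflection is an involution, and that the relation ${(\sigma\rho)}^{2} = 1$ yields the conjugation rule $\rho^{-1}\sigma\rho = \sigma\rho^{2}$ (more generally $\rho^{-j}(\sigma\rho^{i})\rho^{j} = \sigma\rho^{i+2j}$), so that $R$ is normal in $G$ with $G/R$ of order $2$.

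The core of the argument splits according to whether a proposed maximal subgroup $H$ contains a reflection or not. If $H \subseteq R$, then $H$ is a subgroup of the cyclic group $R$, and by Lemma~\ref{lem-maximals-cyclic} the maximal such subgroups of $R$ are the $\genset{\rho^{p}}$ for primes $p \mid n$; but I must check maximality inside $G$, not merely inside $R$. Here the key observation is that $R = \genset{\rho}$ itself is a maximal subgroup of $G$: it has index $2$, so any subgroup strictly containing it equals $G$. Consequently $\genset{\rho^{p}}$, which is properly contained in $R$, is \emph{not} maximal in $G$, and the only rotation-only maximal subgroup is $R$ itself. If instead $H$ contains a reflection, then since $H \cap R$ is a subgroup of $R$, say $H \cap R = \genset{\rho^{d}}$ for some divisor $d \mid n$, and since $H$ is the disjoint union of $H \cap R$ with a nonempty set of reflections, a counting and closure argument shows that $H = \genset{\rho^{d},\ \sigma\rho^{i}}$ has order $2n/d$ and consists of the $n/d$ rotations in $\genset{\rho^{d}}$ together with exactly $n/d$ reflections forming a single coset of $\genset{\rho^{d}}$ in the reflection coset $\sigma R$.

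To finish, I would determine exactly when such a subgroup $\genset{\rho^{d},\ \sigma\rho^{i}}$ is maximal. Enlarging $H$ can be achieved either by adding more rotations (i.e.\ replacing $d$ by a proper divisor $d'$ of $d$) or by adding another reflection not already present. The first kind of enlargement is possible precisely when $d$ is not prime, so maximality forces $d = p$ to be prime. Given $d = p$, adjoining any reflection $\sigma\rho^{j}$ with $j \not\equiv i \pmod{p}$ generates, via the conjugation rule, a strictly larger subgroup, so no proper enlargement of this second kind exists either; hence these are maximal. Finally I would check that the residues $i \in \{0, 1, \ldots, p-1\}$ give distinct subgroups (since $\genset{\rho^{p},\ \sigma\rho^{i}}$ meets $\sigma R$ in the reflections $\sigma\rho^{i + kp}$, which determine $i \bmod p$), yielding exactly $p$ subgroups for each prime $p$, plus the single subgroup $R$. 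Summing gives $1 + \sum_{p \mid n} p$ maximal subgroups, as claimed.

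I expect the main obstacle to be the bookkeeping in the second case: one must argue carefully that a subgroup containing a reflection really does take the normalised form $\genset{\rho^{d},\ \sigma\rho^{i}}$ and that the conjugation relation forces any strictly larger subgroup to absorb an entire arithmetic progression of reflections, thereby collapsing two distinct reflection cosets into $R$'s action. The cleanest route is probably to use the index-$2$ normality of $R$ throughout, reducing questions about reflections to questions about the subgroup $H \cap R$ of the cyclic group $R$, where Lemma~\ref{lem-maximals-cyclic} can be invoked directly; the hypothesis $n \geq 3$ is needed only to ensure $G$ is genuinely dihedral (non-abelian) so that reflections and rotations behave as described.
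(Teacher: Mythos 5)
Your proof is correct, but note that the paper offers no proof of its own to compare against: Lemma~\ref{lem-maximals-dihedral} is stated there (alongside Lemma~\ref{lem-maximals-cyclic} for cyclic groups) as a well-known result, with no argument given. Your structural argument is the standard one and it is sound: the index-$2$ decomposition $G = R \cup \sigma R$ with $R = \genset{\rho}$ normal; the dichotomy that a proper subgroup either lies inside $R$ (whence only $R$ itself can be maximal, and the $\genset{\rho^{p}}$ are discarded) or contains a reflection (whence it has the normalised form $\genset{\rho^{d},\ \sigma\rho^{i}}$ with $d \mid n$, of order $2n/d$); and the final reduction of maximality to $d$ being prime, with distinctness of the $p$ subgroups for each prime read off from their reflection cosets. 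Two small points of precision. First, the mechanism that forces a subgroup $K$ strictly containing $\genset{\rho^{p},\ \sigma\rho^{i}}$ to be all of $G$ is the \emph{product} of two reflections, $(\sigma\rho^{j})(\sigma\rho^{i}) = \rho^{i-j}$: if $j \not\equiv i \pmod{p}$, then $\gcd(p,\, i-j) = 1$, so this rotation together with $\rho^{p}$ generates $\genset{\rho}$, and hence $K \supseteq \genset{\rho,\ \sigma\rho^{i}} = G$; calling this ``the conjugation rule'' slightly mislabels it, since conjugating one reflection by another only yields another reflection. Second, your closing remark that $n \geq 3$ is ``needed'' is not quite right: the statement and your argument go through verbatim for $n = 2$, where the group is the Klein four group with exactly $1 + 2 = 3$ maximal subgroups; the hypothesis merely reflects the paper's convention that $\mathcal{D}_{n}$ is a genuine (non-abelian) dihedral group only for $n \geq 3$. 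Neither point is a gap.
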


%%%%%%%%%%%%%%%%%%%%%%%%%%%%%%%%%%%%%%%%%%%%%%%%%%%%%%%%%%%%%%%%%%%%%%%%%%%%%%%%
\subsection{Maximal subsemigroups arising from a regular $\J$-class covered
by the group of units}\label{sec-covered}

Let $S$ be a finite monoid with group of units $G$, and let $J$ be a
$\J$-class of $S$ that is not equal to $G$. The maximal
subsemigroups that arise from $J$ are, in general, more complicated to describe
than those maximal subsemigroups that arise from $G$. This is because the
elements of $S$ contained in $\J$-classes that are above $J$ (in
the $\J$-class partial order) may act on, or generate, elements within $J$.
Therefore it is not possible to calculate the maximal subsemigroups that arise
from $J$ without considering these other $\J$-classes. 

Certainly $G$ is a $\J$-class of $S$ that is strictly above $J$, since it
is the unique maximal $\J$-class of $S$. When the group of units is the
\emph{only} $\J$-class strictly above $J$, the problem of finding the maximal
subsemigroups that arise from $J$ is simpler than the general case. We say that
such a $\J$-class is \emph{covered} by the group of units.

Suppose that $J$ is a $\J$-class of $S$ that is covered by $G$. Since the
elements contained in $\J$-classes above $J$ are units, their action on $J$ is
easier than understand than the action of arbitrary semigroup elements.
Moreover, since the only $\J$-class above $J$ is closed under
multiplication, it follows that $S \setminus J$ is a subsemigroup of $S$.  In
particular, any generating set for $S$ intersects $J$ non-trivially, and there
exist maximal subsemigroups that arise from $J$.

We summarise this discussion in the following proposition.

\begin{prop}[Maximal subsemigroups of
  type~\ref{item-remove-j}]\label{prop-remove-j}
  Let $S$ be a finite monoid and let $J$ be a $\J$-class of $S$ that is covered
  by the group of units of $S$. Then $S \setminus J$ is a maximal subsemigroup
  of $S$ if and only if no maximal subsemigroups of
  types~\emph{\ref{item-rectangle}--\ref{item-intersect}} arise from $J$.
\end{prop}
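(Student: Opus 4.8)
The plan is to prove the biconditional by first translating the statement about $S \setminus J$ into the language of the types~\ref{item-remove-j}--\ref{item-intersect}, and then arguing each direction using the facts already established for a $\J$-class covered by the group of units. The key observation is that a maximal subsemigroup $M$ arising from $J$ has type~\ref{item-remove-j} precisely when $M = S \setminus J$: indeed, $M$ arising from $J$ means $S \setminus J \subseteq M$, so $M \cap J = \varnothing$ forces $M \subseteq S \setminus J$ and hence $M = S \setminus J$. Thus the assertion that $S \setminus J$ is a maximal subsemigroup of $S$ is equivalent to the assertion that there is a maximal subsemigroup of type~\ref{item-remove-j} arising from $J$, and I will establish the latter holds if and only if no maximal subsemigroup of types~\ref{item-rectangle}--\ref{item-intersect} arises from $J$.

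For the forward direction, suppose $S \setminus J$ is a maximal subsemigroup, and let $M$ be any maximal subsemigroup arising from $J$. Then $S \setminus J \subseteq M$, and $M$ is a proper subsemigroup of $S$; since $S \setminus J$ is itself a proper subsemigroup and is contained in no other proper subsemigroup, this forces $M = S \setminus J$. Hence $M$ has type~\ref{item-remove-j}, and no maximal subsemigroup of types~\ref{item-rectangle}--\ref{item-intersect} can arise from $J$.

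For the converse, I will use that $J$ is covered by $G$. By the discussion preceding the proposition, $S \setminus J$ is then a subsemigroup of $S$, and it is proper because $J \neq \varnothing$; moreover every generating set for $S$ meets $J$. Since $S$ is finite, the proper subsemigroup $S \setminus J$ is contained in some maximal subsemigroup $M$ of $S$. Because $S \setminus J \subseteq M$, the complement $S \setminus M$ is contained in $J$, so $M$ arises from $J$. As $J$ is regular, the intersection $M \cap J$ falls into exactly one of the mutually exclusive types~\ref{item-remove-j}--\ref{item-intersect}; by hypothesis it is none of~\ref{item-rectangle}--\ref{item-intersect}, so $M$ has type~\ref{item-remove-j}, whence $M = S \setminus J$ by the observation above. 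Therefore $S \setminus J$ is a maximal subsemigroup of $S$.

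The argument is essentially bookkeeping once the right facts are in place; the only point requiring care is the converse, where I must ensure that the maximal subsemigroup $M$ containing $S \setminus J$ genuinely arises from $J$ and is therefore subject to the type classification. This is where both the covering hypothesis (guaranteeing that $S \setminus J$ is a proper subsemigroup and that $J$ meets every generating set of $S$) and the regularity of $J$ (guaranteeing that types~\ref{item-remove-j}--\ref{item-intersect} are exhaustive) are used.
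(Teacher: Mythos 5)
Your proof is correct and takes essentially the same route as the paper: the forward direction is the paper's observation that any maximal subsemigroup arising from $J$ contains $S \setminus J$ and therefore coincides with it when $S \setminus J$ is maximal, and the converse combines the fact that $S \setminus J$ is a proper subsemigroup (because $J$ is covered by the group of units) with the exhaustiveness of the classification~\ref{item-remove-j}--\ref{item-intersect} to conclude that a maximal subsemigroup containing $S \setminus J$ must arise from $J$, have type~\ref{item-remove-j}, and hence equal $S \setminus J$. Your explicit appeal to the regularity of $J$ is the right move even though the proposition's statement omits that hypothesis: the types~\ref{item-rectangle}--\ref{item-intersect} are only defined in the paper for regular $\J$-classes, so regularity is implicit in the statement and in the standing assumptions of this part of the paper.
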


For the majority of the monoids considered in this paper, the \emph{only}
$\J$-classes that give rise to maximal subsemigroups are the group of units, and
regular $\J$-classes that are covered by the group of units.

By~\cite[Proposition 2]{Graham1968aa}, a maximal subsemigroup either:
\begin{itemize}
  \item
    is a union of $\H$-classes of the semigroup, or
  \item
    intersects every $\H$-class of the semigroup non-trivially.
\end{itemize}
A maximal subsemigroup of
type~\ref{item-remove-j},~\ref{item-rectangle},~\ref{item-remove-l},
or~\ref{item-remove-r} is a union of $\H$-classes, whereas maximal subsemigroups
of the second kind are those of type~\ref{item-intersect}.  We consider these
cases separately.  We present versions of some results from~\cite{Donoven2016aa}
that are simplified to suit the current context, and that are used for
calculating maximal subsemigroups of
types~\ref{item-rectangle},~\ref{item-remove-l}, and~\ref{item-remove-r}, in
Section~\ref{sec-type-234}.  Only a few of the monoids considered in this paper
exhibit maximal subsemigroups of type~\ref{item-intersect}, and we present
results tailored to some of these monoids in
Section~\ref{sec-intersect-inverse}.

%%%%%%%%%%%%%%%%%%%%%%%%%%%%%%%%%%%%%%%%%%%%%%%%%%%%%%%%%%%%%%%%%%%%%%%%%%%%%%%%
\subsubsection{Maximal subsemigroups that are unions of $\H$-classes:
types~\ref{item-rectangle},~\ref{item-remove-l},
and~\ref{item-remove-r}}\label{sec-type-234}

Let $S$ be a finite monoid and let $J$ be a regular $\J$-class of $S$ that is
covered by the group of units $G$ of $S$.  To find the maximal subsemigroups of
$S$ arising from $J$ that have types~\ref{item-rectangle}--\ref{item-remove-r},
we construct from $J$ a bipartite graph $\Delta(S, J)$, and analyse its
properties according to the forthcoming results. This bipartite graph was
introduced by Donoven, Mitchell, and Wilson in~\cite[Section 3]{Donoven2016aa}.
When the context unambiguously identifies the $\J$-class that is under
consideration, i.e.\ when a monoid possesses only one $\J$-class covered by the
group of units, we will often use the shorter notation $\Delta(S)$ in place of
$\Delta(S, J)$.

By Green's Lemma~\cite[Lemmas~2.2.1 and~2.2.2]{Howie1995aa}, the group of units
$G$ of $S$ acts on the $\L$-classes of $J$ by right multiplication, and on the
$\R$-classes of $J$ by left multiplication.  The vertices of $\Delta(S, J)$ are
the orbits of $\L$-classes of $J$ and the orbits of $\R$-classes of $J$, under
these actions. In the special case that $J$ consists of a single $\H$-class, we
differentiate between the orbit of $\L$-classes $\{J\}$ and the orbit of
$\R$-classes $\{J\}$, so that $\Delta(S, J)$ contains two vertices. There is an
edge in $\Delta(S, J)$ between an orbit of $\L$-classes $A$ and an orbit of
$\R$-classes $B$ if and only if there exists an $\L$-class $L \in A$ and an
$\R$-class $R \in B$ such that the $\H$-class $L \cap R$ is a group.
We define the two \emph{bicomponents} of $\Delta(S, J)$ as follows: one
bicomponent is the collection of all orbits of $\L$-classes of $J$, the
other bicomponent is the collection of all orbits of $\R$-classes of $J$; the
bicomponents of $\Delta(S, J)$ partition its vertices into two maximal
independent subsets.  Note that $\Delta(S, J)$ is isomorphic to a quotient of
the Graham-Houghton graph of the principal factor of $J$, as defined
in~\cite{East201763, Graham1968ab, Houghton1977aa} --- in the case that the
orbits of $\L$- and $\R$-classes are trivial, these graphs are isomorphic.

The following results characterize the maximal subsemigroups of $S$ of
types~\ref{item-rectangle}--\ref{item-remove-r} that arise from $J$ in terms of
the graph $\Delta(S, J)$. These propositions follow from the results
of~\cite[Section 3]{Donoven2016aa}, having been simplified according to the
assumption that $J$
is covered by the group of units of $S$. More specifically, the results
of~\cite[Section 3]{Donoven2016aa} are formulated in terms of two graphs
$\Delta$ and $\Theta$, and two coloured digraphs $\Gamma_{\L}$ and
$\Gamma_{\R}$, that are constructed from the relevant $\J$-class. When the
semigroup in question is a monoid and the $\J$-class is covered by the group of
units, the graph $\Theta$ and the digraphs $\Gamma_{\L}$ and $\Gamma_{\R}$ have
no edges, and each vertex of $\Gamma_{\L}$ and $\Gamma_{\R}$ has colour $0$.
Thus the conditions on $\Theta$, $\Gamma_{\L}$, and $\Gamma_{\R}$ are
immediately satisfied. The graph $\Delta$ in~\cite[Section 3]{Donoven2016aa} is
equivalent to $\Delta(S, J)$.

\begin{prop}[Maximal subsemigroups of type~\ref{item-rectangle};
  \mbox{\text{cf.}~\cite[Corollary~3.13]{Donoven2016aa}}]\label{prop-rectangle}
  Let $T$ be a subset of $S$ such
  that $S \setminus T \subseteq J$.  Then $T$ is a maximal subsemigroup of $S$
  of type~\emph{\ref{item-rectangle}} if and only if there exist
  proper non-empty subsets $A \subsetneq J / \L$ and $B \subsetneq J / \R$ such
  that $T \cap J$ is the union of the $\L$-classes in $A$ and the $\R$-classes
  in $B$, and $A$ and $B$ are unions of vertices that together form a maximal
  independent subset of $\Delta(S, J)$.
\end{prop}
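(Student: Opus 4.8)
The plan is to translate the condition ``$T$ is a subsemigroup of type~\ref{item-rectangle}'' into a purely combinatorial statement about the egg-box structure of $J$, and then to read off both the closure and the maximality of $T$ from the graph $\Delta(S, J)$. Throughout I write $G$ for the group of units of $S$; since $J$ is covered by $G$, the set $S \setminus J$ is a subsemigroup, $T \supseteq (S \setminus J) \cup G$, and every element of $S \setminus J$ is either a unit or lies in a $\J$-class strictly below $J$. The two facts I will lean on are: first, that left (respectively right) multiplication by a unit fixes the $\L$-class (respectively $\R$-class) of an element of $J$ and permutes its $\R$-classes (respectively $\L$-classes), exactly the actions defining the vertices of $\Delta(S, J)$; and second, the standard product theorem for a regular $\J$-class, namely that for $x, y \in J$ the product $xy$ again lies in $J$ if and only if the $\H$-class $L_x \cap R_y$ is a group, in which case $xy \in R_x \cap L_y$.

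First I would reduce closure of $T$ to a single type of product. Products within $S \setminus J$ stay there; a product with one factor in a $\J$-class below $J$ falls below $J$ and so lands in $S \setminus J \subseteq T$; and a product $gx$ or $xg$ with $g \in G$ and $x \in T \cap J$ stays in $J$, so the requirement $G(T \cap J)G \subseteq T$ says precisely that $T \cap J$ is saturated under the unit actions, i.e.\ that $A$ and $B$ are unions of vertices of $\Delta(S, J)$. The only remaining products are $xy$ with $x, y \in T \cap J$, and by the product theorem the sole way such a product can leave $T$ is to land in a removed $\H$-class $R_x \cap L_y$ with $R_x \notin B$ and $L_y \notin A$; since $x, y \in T \cap J$, the conditions $R_x \notin B$ and $L_y \notin A$ force $L_x \in A$ and $R_y \in B$. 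Because every $\L$-class meets every $\R$-class of $J$, such a witnessing pair $x, y$ exists exactly when some $L \in A$ and $R \in B$ satisfy that $L \cap R$ is a group --- that is, exactly when there is an edge of $\Delta(S, J)$ between the orbits comprising $A$ and those comprising $B$. Hence $T$ is a subsemigroup if and only if $A \cup B$ is an independent subset of $\Delta(S, J)$, and it is proper of type~\ref{item-rectangle} precisely when $A$ and $B$ are both non-empty and proper.

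It then remains to show that, among such subsemigroups, maximality of $T$ corresponds to maximality of the independent set $A \cup B$. For the easy direction, if $A \cup B$ is independent but not maximal then I would adjoin a further vertex to obtain a strictly larger independent set; the corresponding rectangular subsemigroup strictly contains $T$, and it is still proper because regularity of $J$ (every $\R$- and $\L$-class contains an idempotent) prevents the enlarged $A$ from exhausting $J / \L$ while $B$ stays non-empty, so $T$ is not maximal. For the converse I would take a subsemigroup $T' \supsetneq T$, fix a removed element $z \in L_z \cap R_z$ of $T'$ with $L_z \notin A$ and $R_z \notin B$, and use maximality of $A \cup B$: the orbit of $L_z$, not lying in $A$, must be adjacent to some orbit in $B$, supplying a kept $\R$-class $R^{*} \subseteq T$ whose group $\H$-class (hence an idempotent) already lies in $T$; multiplying $z$ and its unit-translates by this group $\H$-class and invoking Green's Lemma promotes $z$ to a full $\H$-class of $T'$, and symmetrically the adjacency guaranteed by $R_z \notin B$ lets one continue. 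Propagating in this way should fill every $\H$-class of $J$, forcing $T' = S$.

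The main obstacle is precisely this last propagation: turning the local adjacencies guaranteed by maximality of $A \cup B$ into a global generation statement that reaches every $\L$- and $\R$-class of $J$, not merely those in the unit-orbits of $L_z$ and $R_z$. The bookkeeping --- keeping track of which full $\H$-classes have been produced, and checking that at each stage an idempotent in a kept row or column is available to move into a new $\L$- or $\R$-class via the product theorem --- is the delicate heart of the argument; this is where the simplification afforded by $J$ being covered by $G$ (so that the auxiliary graph and digraphs of~\cite{Donoven2016aa} are edgeless and monochromatic, and only $\Delta(S, J)$ survives) does the real work, and I would either carry out this induction explicitly or quote the corresponding reduction from~\cite[Section~3]{Donoven2016aa}.
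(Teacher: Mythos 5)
Your closure analysis is sound and complete: the location theorem (for $x, y$ in a regular $\J$-class of a finite semigroup, $xy \in J$ if and only if $L_x \cap R_y$ is a group, in which case $xy \in R_x \cap L_y$), combined with the fact that units permute $\L$- and $\R$-classes, does show that $T$ is a proper subsemigroup of the stated shape exactly when $A$ and $B$ are unions of vertices with $A \cup B$ independent in $\Delta(S, J)$; your easy direction of maximality (enlarging a non-maximal independent set, with properness guaranteed by regularity) is also correct. Two remarks: both steps quietly use properness of $A$ and $B$ (to build the witnessing pair $x, y$, and to deduce that an $\L$-class contained in $(\bigcup A) \cup (\bigcup B)$ must lie in $A$), which is legitimate since type~\ref{item-rectangle} demands it; and your claim that every element of $S \setminus J$ is a unit or lies strictly below $J$ is false in general ($\J$-classes incomparable with $J$ may exist) --- what your argument actually needs is that a product with a factor $y \in S \setminus (J \cup G)$ cannot lie in $J$, since $xy \in J$ would force $J \leq J_y$ and the covering hypothesis would then force $J_y \in \{J, G\}$. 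For comparison: the paper gives no direct proof of this proposition at all; it obtains it by specialising \cite[Corollary~3.13]{Donoven2016aa}, the only content being the observation you also make, that when $J$ is covered by the group of units the auxiliary graph $\Theta$ and digraphs $\Gamma_{\L}$, $\Gamma_{\R}$ of that paper are edgeless with all colours $0$, so their conditions are vacuous and only $\Delta(S, J)$ survives. So your fallback of quoting \cite{Donoven2016aa} is precisely the paper's proof.

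The genuine gap is the converse direction, which you flag but do not carry out: maximality of $A \cup B$ must imply maximality of $T$, and neither your sketch nor the citation-as-fallback completes it as written. It can be finished with exactly the ingredients you name, assembled as follows. Given a subsemigroup $T' \supsetneq T$ and $z \in T' \setminus T$, the orbit of $L_z$ is disjoint from $A$, so by maximality it is adjacent to a vertex in $B$: there exist $g \in G$ and an $\R$-class $R' \in B$ such that $H_e = L_{zg} \cap R'$ is a group, and $H_e \subseteq T$. Since $zg \in L_e$ and $zg = (zg)e$, Green's Lemma gives $(zg)H_e = H_{zg}$, so the full $\H$-class $H_z = \big((zg)H_e\big)g^{-1}$ lies in $T'$. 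Next, for each $y \in R'$ the location theorem gives $(zg)y \in R_z \cap L_y$, and as $y$ runs over $R'$ this yields an element of $T'$ in $R_z \cap L$ for every $\L$-class $L$; the dual adjacency (the orbit of $R_z$ meets no vertex of $B$, hence is adjacent to a vertex of $A$) then moves each such representative into $R \cap L$ for every $\R$-class $R$, so $T'$ meets every $\H$-class of $J$; finally, repeating the Green's Lemma step at a representative of each removed $\H$-class shows that every such $\H$-class is wholly contained in $T'$, whence $T' = S$. Without this assembly (or an explicit appeal to \cite[Corollary~3.13]{Donoven2016aa} for the whole statement rather than for one step), the harder half of the equivalence remains unproved.
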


By Proposition~\ref{prop-rectangle}, the maximal subsemigroups of $S$ of
type~\ref{item-rectangle} arising from $J$ are in bijective correspondence
with the maximal independent subsets of $\Delta(S, J)$ --- excluding the
bicomponents of $\Delta(S, J)$.  Thus we deduce the following corollary.

\begin{cor}\label{cor-rectangle}
  The number of maximal subsemigroups of $S$ of type~\emph{\ref{item-rectangle}}
  arising from $J$ is two less than the number of maximal independent subsets of
  $\Delta(S, J)$.
\end{cor}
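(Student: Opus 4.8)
The plan is to extract the count directly from Proposition~\ref{prop-rectangle}: that result already sets up a correspondence between the type~\ref{item-rectangle} maximal subsemigroups arising from $J$ and certain maximal independent subsets of $\Delta(S,J)$, so the only real work is to identify precisely which maximal independent subsets are excluded and to verify that there are exactly two of them. Throughout, I would write $P$ for the bicomponent of $\Delta(S,J)$ consisting of the orbits of $\L$-classes of $J$ and $Q$ for the bicomponent consisting of the orbits of $\R$-classes, so that $P$ and $Q$ partition the vertex set of $\Delta(S,J)$.

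First I would set up the correspondence carefully. Given a maximal subsemigroup $T$ of type~\ref{item-rectangle} arising from $J$, Proposition~\ref{prop-rectangle} furnishes proper non-empty $A \subsetneq J/\L$ and $B \subsetneq J/\R$ with $T \cap J$ equal to the union of the $\L$-classes in $A$ and the $\R$-classes in $B$. I would first check that $A$ and $B$ are recovered from $T$: an $\L$-class $L$ lies entirely in $T \cap J$ exactly when $L \in A$, since otherwise every $\R$-class meeting $L$ would have to lie in $B$, forcing $B = J/\R$ against properness, and symmetrically for $\R$-classes. Hence $T$ determines $A$ and $B$, so the maximal independent subset $I_{T}$ of $\Delta(S,J)$ consisting of the orbits comprising $A$ together with the orbits comprising $B$ is well defined, and $T \mapsto I_{T}$ is injective. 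Conversely, by Proposition~\ref{prop-rectangle} a maximal independent subset $I$ gives rise to such a $T$ precisely when $I \cap P$ and $I \cap Q$ are both non-empty and proper, i.e.\ $\varnothing \neq I \cap P \neq P$ and $\varnothing \neq I \cap Q \neq Q$; so the image of $T \mapsto I_{T}$ is exactly the collection of such $I$.

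The key step is to show that a maximal independent subset $I$ violates this condition if and only if $I \in \{P, Q\}$. Both bicomponents clearly violate it: $P \cap P = P$ is improper and $P \cap Q = \varnothing$, and dually for $Q$. For the converse I would argue by cases. If $I \cap Q = \varnothing$ then $I \subseteq P$, and since $P$ is independent while $I$ is maximal independent, $I = P$; the case $I \cap P = \varnothing$ gives $I = Q$ symmetrically. If instead $I \cap P = P$, then $I \supseteq P$, and here I would use the regularity of $J$: each $\R$-class of $J$ contains an idempotent, so every vertex of $Q$ is adjacent to some vertex of $P \subseteq I$, and independence of $I$ then forces $I \cap Q = \varnothing$, whence $I = P$. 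The case $I \cap Q = Q$ gives $I = Q$ symmetrically. This is the crux of the argument, and is exactly the point at which regularity of $J$ is needed: it guarantees that $\Delta(S,J)$ has no isolated vertices, so that $P$ and $Q$ genuinely are maximal independent subsets.

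Finally I would conclude by counting. The map $T \mapsto I_{T}$ is a bijection from the maximal subsemigroups of type~\ref{item-rectangle} arising from $J$ onto the maximal independent subsets of $\Delta(S,J)$ other than $P$ and $Q$. Since the vertex set of $\Delta(S,J)$ splits into the two non-empty, distinct bicomponents $P$ and $Q$ (the convention for a single-$\H$-class $J$ being built into the definition of $\Delta(S,J)$), deleting $P$ and $Q$ removes exactly two maximal independent subsets. Therefore the number of type~\ref{item-rectangle} maximal subsemigroups arising from $J$ is two less than the total number of maximal independent subsets of $\Delta(S,J)$, as claimed.
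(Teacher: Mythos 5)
Your proof is correct and takes essentially the same approach as the paper: the paper's entire justification of Corollary~\ref{cor-rectangle} is the assertion, immediately before its statement, that Proposition~\ref{prop-rectangle} puts the type~\ref{item-rectangle} maximal subsemigroups arising from $J$ in bijective correspondence with the maximal independent subsets of $\Delta(S,J)$ excluding the two bicomponents. Your argument simply supplies the details the paper leaves implicit --- recovering $A$ and $B$ from $T$ to get injectivity, and using regularity of $J$ (no isolated vertices) to show that the bicomponents are maximal independent and are precisely the excluded subsets.
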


The connection between the graph $\Delta(S, J)$ and the maximal
subsemigroups of $S$ of types~\ref{item-remove-l} and~\ref{item-remove-r}
that arise from $J$ is given in the following propositions.

\begin{prop}[Maximal subsemigroups of type~\ref{item-remove-l};
  \mbox{\text{cf.}~\cite[Corollary~3.15]{Donoven2016aa}}]\label{prop-remove-l}
  Let $T$ be a subset of $S$ such
  that $S \setminus T \subseteq J$. Then $T$ is a maximal subsemigroup of $S$ of
  type~\emph{\ref{item-remove-l}} if and only if there exists a proper non-empty
  subset $A \subsetneq J / \L$ such that $T \cap J$ is the union of the
  $\L$-classes in $A$, and $(J / \L) \setminus A$ is a vertex in $\Delta(S, J)$
  that is not adjacent to a vertex of degree $1$.
\end{prop}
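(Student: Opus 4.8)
The plan is to establish the equivalence in Proposition~\ref{prop-remove-l} by showing directly that the two conditions — being a maximal subsemigroup of type~\ref{item-remove-l}, and the combinatorial condition on $A$ — are each equivalent to the same underlying structural requirement on $T \cap J$. Since a type~\ref{item-remove-l} maximal subsemigroup $M$ has $M \cap J$ equal to a non-empty union of $\L$-classes (but not all of them), I would begin by recording that such a set $T \cap J$ must be a union of \emph{orbits} of $\L$-classes under the right action of the group of units $G$. Indeed, since $J$ is covered by $G$, every element of $S \setminus J$ is a unit, and $G$ permutes the $\L$-classes of $J$; for $T$ to be closed under multiplication by units on the right, the set $A$ of $\L$-classes comprising $T \cap J$ must be $G$-invariant. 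This is what licenses phrasing the condition in terms of the vertex $(J/\L) \setminus A$ of $\Delta(S,J)$, rather than an arbitrary subset of $\L$-classes.

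\smallskip

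\textbf{Closure and maximality.} Next I would verify that, given a single vertex $v = (J/\L)\setminus A$ of $\Delta(S,J)$, the set $T = (S \setminus J) \cup \bigcup_{L \in A} L$ is a subsemigroup precisely when $v$ is not adjacent to a vertex of degree $1$. The key mechanism is that products landing in $J$ are controlled by the group $\H$-classes: an $\L$-class $L$ can be ``generated back'' from elements outside $L$ only through group $\H$-classes $L \cap R$, which correspond to edges of $\Delta(S,J)$. Removing the orbit $v$ of $\L$-classes is safe exactly when no $\R$-class orbit forces those $\L$-classes to reappear — and an orbit of $\R$-classes forces a \emph{single} $\L$-class orbit back if and only if the corresponding vertex has degree $1$ and is adjacent to $v$. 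For maximality, I would argue that any strictly larger subsemigroup must contain an element of the removed $\L$-classes, and since $G$ acts transitively on the orbit while the non-unit elements above $J$ act by multiplication, such an element generates all of $\bigcup_{L \in v} L$ together with $T$, recovering all of $S$; properness follows from $A \subsetneq J/\L$.

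\smallskip

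\textbf{Invoking the machinery.} Rather than rebuild these arguments from scratch, I would invoke \cite[Corollary~3.15]{Donoven2016aa} and perform the translation already outlined in the paragraph preceding Proposition~\ref{prop-rectangle}: under the hypothesis that $J$ is covered by the group of units, the auxiliary graph $\Theta$ and the coloured digraphs $\Gamma_{\L}, \Gamma_{\R}$ of \cite{Donoven2016aa} are edgeless with all vertices coloured $0$, so every side condition in the general statement is vacuously satisfied, and the graph $\Delta$ there coincides with our $\Delta(S,J)$. The substance of the proof is therefore checking that the degenerate form of \cite[Corollary~3.15]{Donoven2016aa} reads exactly as the displayed condition: that $T \cap J$ is a union of $\L$-classes coming from a proper non-empty $A$ whose complement is a single vertex not adjacent to any degree-$1$ vertex.

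\smallskip

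\textbf{The main obstacle.} The delicate point will be the degree-$1$ adjacency condition. I expect this to be where the combinatorics genuinely bite: one must show that it is precisely the existence of an $\R$-class orbit whose \emph{only} group $\H$-classes lie in the removed $\L$-class orbit that breaks either closure or maximality. If such an $\R$-class orbit exists (the degree-$1$ neighbour), then either including those $\L$-classes is forced — destroying properness — or excluding them leaves an $\R$-class orbit with no group $\H$-class at all, which under the monoid structure allows a strictly larger proper subsemigroup, destroying maximality. Disentangling these two failure modes, and confirming they align exactly with the single stated graph-theoretic condition, is the crux; everything else reduces to the orbit bookkeeping and the routine verification that the $\Theta$, $\Gamma_{\L}$, $\Gamma_{\R}$ conditions of \cite{Donoven2016aa} are trivially met in the covered-by-units setting.
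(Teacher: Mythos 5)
Your operative argument --- invoke \cite[Corollary~3.15]{Donoven2016aa}, observing that when $J$ is covered by the group of units the auxiliary graph $\Theta$ and the coloured digraphs $\Gamma_{\L}$, $\Gamma_{\R}$ of that paper are edgeless with every vertex coloured $0$, so that their side conditions hold vacuously and the graph $\Delta$ there coincides with $\Delta(S,J)$ --- is exactly the paper's proof: the paper gives nothing beyond this citation and translation for Propositions~\ref{prop-rectangle}--\ref{prop-remove-r}. In that sense the proposal succeeds to precisely the extent that the paper's own argument does.

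That said, one claim in your supporting sketch is genuinely wrong, and it is worth naming because you single it out as the crux. You assert that $T = (S \setminus J) \cup \bigcup_{L \in A} L$ ``is a subsemigroup precisely when $v = (J/\L)\setminus A$ is not adjacent to a vertex of degree $1$.'' In fact closure is unconditional: for \emph{any} $G$-invariant union $A$ of $\L$-classes, $T$ is a subsemigroup. To see this, note that a factor from $S \setminus (G \cup J)$ forces any product below $J$ (this is where the covering hypothesis enters), so a product $xy$ of elements of $T$ lying in $J$ has both factors in $G \cup (T \cap J)$; if $y \in T \cap J$ then $xy \L y$ (by Lemma~\ref{lem-stability} when $x \in T\cap J$, and trivially when $x$ is a unit), so $L_{xy} = L_y \in A$, while if $y \in G$ then $x \in T \cap J$ and $xy \in L_x y$, which lies in $A$ by $G$-invariance. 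The degree-$1$ condition governs \emph{maximality} alone: if $v$ is adjacent to an orbit $w$ of $\R$-classes of degree $1$, extend the independent set $\{w\} \cup \big((J/\L\text{-orbits}) \setminus \{v\}\big)$ to a maximal independent subset $K$ of $\Delta(S,J)$; then $v \notin K$, $K$ is not a bicomponent (it contains $w$ and, since $A \neq \varnothing$, at least one orbit of $\L$-classes), and the type~\ref{item-rectangle} maximal subsemigroup that $K$ yields via Proposition~\ref{prop-rectangle} strictly contains $T$, because it contains the $\R$-classes of $w$ in full, including their intersections with the $\L$-classes of $v$. Hence $T$ is not maximal. Likewise the first ``failure mode'' of your final paragraph (adjacency forcing the removed $\L$-classes back in, ``destroying properness'') never occurs. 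None of this undermines your appeal to \cite[Corollary~3.15]{Donoven2016aa}, but anyone actually checking that its degenerate form reads as Proposition~\ref{prop-remove-l} would need this attribution of the two sides of the equivalence, and your sketch has it backwards.
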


\begin{prop}[Maximal subsemigroups of
  type~\ref{item-remove-r}]\label{prop-remove-r}
  Let $T$ be a subset of $S$ such
  that $S \setminus T \subseteq J$. Then $T$ is a maximal subsemigroup of $S$ of
  type~\emph{\ref{item-remove-r}} if and only if there exists a proper non-empty
  subset $B \subsetneq J / \R$ such that $T \cap J$ is the union of the
  $\R$-classes in $B$, and $(J / \R) \setminus B$ is a vertex in $\Delta(S, J)$
  that is not adjacent to a vertex of degree $1$.
\end{prop}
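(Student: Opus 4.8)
The plan is to deduce this proposition from Proposition~\ref{prop-remove-l} by passing to the opposite monoid. Let $S^{\mathrm{op}}$ denote the monoid with the same underlying set as $S$ but with multiplication reversed, so that the product of $x$ and $y$ in $S^{\mathrm{op}}$ equals $yx$ in $S$. First I would record the elementary facts that make this substitution legitimate: $S^{\mathrm{op}}$ is again a finite monoid, with the same identity and hence the same group of units $G$; a subset of $S$ is a subsemigroup of $S$ if and only if it is a subsemigroup of $S^{\mathrm{op}}$, so maximality is preserved; and Green's relations satisfy $\L^{S} = \R^{S^{\mathrm{op}}}$, $\R^{S} = \L^{S^{\mathrm{op}}}$, and consequently $\J^{S} = \J^{S^{\mathrm{op}}}$ with the same $\J$-class partial order. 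In particular $J$ is a regular $\J$-class of $S^{\mathrm{op}}$ that is covered by $G$, so Proposition~\ref{prop-remove-l} applies to $S^{\mathrm{op}}$ and $J$.

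The crux of the argument is to verify that the graph used by Proposition~\ref{prop-remove-l} for $S^{\mathrm{op}}$ is literally the graph $\Delta(S, J)$. By Green's Lemma, the action of $G$ on the $\L^{S^{\mathrm{op}}}$-classes of $J$ by right $S^{\mathrm{op}}$-multiplication is exactly its action on the $\R^{S}$-classes of $J$ by left $S$-multiplication, and dually for the $\R^{S^{\mathrm{op}}}$-classes; hence the two monoids produce the same orbits and therefore the same vertex set. For the edges, an $\H$-class $L \cap R$ depends only on the underlying set, $\H^{S^{\mathrm{op}}} = \H^{S}$, and an $\H$-class is a group precisely when it contains an idempotent; since the idempotents of $S$ and $S^{\mathrm{op}}$ coincide, the group $\H$-classes are identical and the edge relation is unchanged. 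Thus $\Delta(S^{\mathrm{op}}, J) = \Delta(S, J)$, merely with the two bicomponents interchanged, and all derived notions --- vertices, adjacency, degree, and vertices of degree $1$ --- transfer verbatim.

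With these identifications in hand, applying Proposition~\ref{prop-remove-l} to $S^{\mathrm{op}}$ and translating back completes the proof. A subset $T$ with $S \setminus T \subseteq J$ is a maximal subsemigroup of $S^{\mathrm{op}}$ of type~\ref{item-remove-l} if and only if there is a proper non-empty $A \subsetneq J / \L^{S^{\mathrm{op}}}$ with $T \cap J$ equal to the union of the $\L^{S^{\mathrm{op}}}$-classes in $A$ and with $(J / \L^{S^{\mathrm{op}}}) \setminus A$ a vertex of $\Delta(S^{\mathrm{op}}, J)$ that is not adjacent to a vertex of degree $1$. Since $J / \L^{S^{\mathrm{op}}} = J / \R$, a union of $\L^{S^{\mathrm{op}}}$-classes is a union of $\R$-classes; because the types are mutually exclusive in both monoids, type~\ref{item-remove-l} over $S^{\mathrm{op}}$ is precisely type~\ref{item-remove-r} over $S$; and maximal subsemigroups of $S^{\mathrm{op}}$ are exactly those of $S$. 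Setting $B = A$ then yields the claimed characterization.

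The main obstacle I anticipate is not any single hard step but the bookkeeping of the second paragraph: one must check carefully that every ingredient of $\Delta$ --- the two $G$-actions, the resulting orbit decompositions, and the group-$\H$-class edge condition --- is genuinely symmetric under reversing the multiplication, and in particular that the special convention distinguishing the orbits $\{J\}$ of $\L$- and of $\R$-classes when $J$ consists of a single $\H$-class behaves correctly under the swap. Once this self-duality of the construction is established, the proposition is immediate.
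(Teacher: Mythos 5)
Your proof is correct, but it takes a different route from the paper. The paper gives no standalone proof of Proposition~\ref{prop-remove-r}: together with Propositions~\ref{prop-rectangle} and~\ref{prop-remove-l}, it is justified by specialising the results of~\cite[Section~3]{Donoven2016aa} to a $\J$-class covered by the group of units (Proposition~\ref{prop-remove-l} is attributed to Corollary~3.15 of that reference, and Proposition~\ref{prop-remove-r} is its left--right dual in the same framework). You instead treat Proposition~\ref{prop-remove-l} as a black box and transport it through the opposite monoid $S^{\mathrm{op}}$, and your verification of the transfer is complete and accurate: subsemigroups, maximality, idempotents, $\H$-classes and the $\J$-order are unchanged; $\L$- and $\R$-classes transpose, so $J$ remains a regular $\J$-class covered by $G$; right translation by $g \in G$ in $S^{\mathrm{op}}$ on an $\R^{S}$-class is left translation in $S$, so the two orbit decompositions, and hence the vertices, edges, and degrees of $\Delta(S^{\mathrm{op}}, J)$ and $\Delta(S, J)$, coincide up to interchanging the bicomponents; and the mutual exclusivity of the types makes type~\ref{item-remove-l} over $S^{\mathrm{op}}$ coincide with type~\ref{item-remove-r} over $S$. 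What your approach buys is self-containedness relative to this paper: given Proposition~\ref{prop-remove-l}, no dual statement from the external reference is required. It is also more widely applicable than the one duality the paper does make explicit, namely Lemma~\ref{lem-l-or-r}, whose $^{*}$-argument applies only to regular $\ast$-semigroups, whereas the opposite-monoid argument works for every finite monoid, which is exactly the generality the proposition demands.
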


By Proposition~\ref{prop-remove-l}, the number of maximal subsemigroups of $S$
of type~\ref{item-remove-l} is the number of orbits of $\L$-classes that are
adjacent in $\Delta(S, J)$ only to orbits of $\R$-classes with degree at least
$2$.  In the case that every orbit of $\R$-classes has degree $2$ or more in
$\Delta(S, J)$, then the number of maximal subsemigroups of
type~\ref{item-remove-l} is simply the number of orbits of $\L$-classes.  By
Proposition~\ref{prop-remove-r}, the analogous statements hold for maximal
subsemigroups of type~\ref{item-remove-r}.

On the other hand, the number of maximal subsemigroups is restricted when the
group of units acts transitively.

\begin{lem}\label{lem-transitive}
  If $G$ acts transitively on the $\L$-classes of $J$, then no maximal
  subsemigroups of types~\emph{\ref{item-rectangle}}
  or~\emph{\ref{item-remove-l}} arise from
  $J$. Similarly, if $G$ acts transitively on the $\R$-classes of $J$, then no
  maximal subsemigroups of types~\emph{\ref{item-rectangle}}
  or~\emph{\ref{item-remove-r}} arise from $J$.
\end{lem}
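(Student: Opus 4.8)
The plan is to translate the hypothesis into a statement about the bipartite graph $\Delta(S, J)$ and then read off the conclusion directly from Propositions~\ref{prop-rectangle} and~\ref{prop-remove-l}. The crucial observation is that the vertices of the $\L$-bicomponent of $\Delta(S, J)$ are precisely the orbits of $\L$-classes of $J$ under the right multiplication action of $G$. Thus, if $G$ acts transitively on the $\L$-classes of $J$, there is exactly one such orbit, and so the $\L$-bicomponent consists of a single vertex, namely the set $J / \L$ itself.

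First I would dispose of type~\ref{item-rectangle}. By Proposition~\ref{prop-rectangle}, a maximal subsemigroup of this type arising from $J$ requires a \emph{proper non-empty} subset $A \subsetneq J / \L$ that is a union of vertices of $\Delta(S, J)$. Since the $\L$-bicomponent contains only the single vertex $J / \L$, the only unions of such vertices are $\varnothing$ and $J / \L$; neither is a proper non-empty subset. Hence no subset $A$ meeting the requirements of Proposition~\ref{prop-rectangle} exists, and no maximal subsemigroup of type~\ref{item-rectangle} arises from $J$.

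Next I would handle type~\ref{item-remove-l}. By Proposition~\ref{prop-remove-l}, a maximal subsemigroup of this type requires a proper non-empty subset $A \subsetneq J / \L$ for which $(J / \L) \setminus A$ is a vertex of $\Delta(S, J)$. As the only vertex in the $\L$-bicomponent is $J / \L$, the condition that $(J / \L) \setminus A$ be a vertex forces $(J / \L) \setminus A = J / \L$, and hence $A = \varnothing$, contradicting the non-emptiness of $A$. Thus no maximal subsemigroup of type~\ref{item-remove-l} arises from $J$ either.

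Finally, the dual statements follow from the identical argument with the roles of $\L$ and $\R$ interchanged, using the left multiplication action of $G$ on the $\R$-classes, the corresponding single-vertex $\R$-bicomponent of $\Delta(S, J)$, and Propositions~\ref{prop-rectangle} and~\ref{prop-remove-r}. I do not anticipate a genuine obstacle here: once the single-vertex observation is in place, both conclusions are immediate consequences of the proper-non-empty constraints already built into the characterising propositions, so the only care required is in correctly identifying transitivity with the vanishing of one bicomponent to a single vertex.
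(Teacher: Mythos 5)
Your proof is correct, and it works within the same framework as the paper's: both translate transitivity into the statement that $\Delta(S, J)$ has a single vertex of $\L$-classes (namely $J/\L$ itself) and then appeal to the characterising propositions. The execution differs, though. The paper first determines the full structure of the graph: since $J$ is regular there are no isolated vertices, so every $\R$-vertex has degree $1$ and is adjacent to the unique $\L$-vertex; hence the bicomponents are the only maximal independent subsets, so Corollary~\ref{cor-rectangle} gives zero maximal subsemigroups of type~\ref{item-rectangle}, and the unique $\L$-vertex is adjacent to a degree-$1$ vertex, so Proposition~\ref{prop-remove-l} rules out type~\ref{item-remove-l}. You bypass this structural analysis entirely: for type~\ref{item-rectangle} the only unions of $\L$-vertices are $\varnothing$ and $J/\L$, so the required proper non-empty subset $A$ cannot exist, and for type~\ref{item-remove-l} the requirement that $(J/\L)\setminus A$ be a vertex forces $A=\varnothing$. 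Your route is more economical --- it never invokes the regularity of $J$ (which the paper needs for the no-isolated-vertices step), nor any counting of maximal independent subsets or analysis of degrees --- whereas the paper's route yields an explicit picture of $\Delta(S, J)$ as a star centred at the unique $\L$-vertex, which is the kind of structural information it exploits when treating specific monoids later. As a proof of the lemma itself, your version is complete and, if anything, tighter.
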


\begin{proof}
  Suppose that the group of units acts transitively on the $\L$-classes of $J$,
  so that the graph $\Delta(S, J)$ has a single vertex of $\L$-classes. Since
  $J$ is regular, there are no isolated vertices in $\Delta(S, J)$.  Therefore
  each vertex of $\R$-classes has degree $1$, and is adjacent to the unique
  vertex of $\L$-classes. Thus $\Delta(S, J)$ has just two maximal independent
  subsets --- its bicomponents.  By Corollary~\ref{cor-rectangle}, it follows
  that there are no maximal subsemigroups of type~\ref{item-rectangle} arising
  from $J$, and by Proposition~\ref{prop-remove-l}, no maximal subsemigroups of
  type~\ref{item-remove-l} arise from $J$ either.  The proof of the second
  statement is dual.
\end{proof}

When $S$ is a regular $\ast$-monoid, the graph $\Delta(S, J)$ is particularly
easy to describe.  Suppose there exist $\L$-classes $L_{x}$ and $L_{y}$ in $J$,
and a unit $g \in G$, such that $L_{x}g = L_{y}$. Then $$g^{*}R_{x^{*}} =
g^{*}L_{x}^{*} = {(L_{x}g)}^{*} = L_{y}^{*} = R_{y^{*}}.$$ In this way, it is
easy to see that the orbits of $\L$-classes of $J$ are in bijective
correspondence with the orbits of $\R$-classes of $J$. Specifically, the set
$\{L_{x_{1}}, \ldots,L_{x_{n}}\}$ is an orbit of $\L$-classes of $J$ if and only
if $\{L_{x_{1}}^{*}, \ldots,L_{x_{n}}^{*}\} = \{R_{x_{1}^{*}},
\ldots,R_{x_{n}^{*}}\}$ is an orbit of $\R$-classes of $J$.  These two orbits
are adjacent vertices in $\Delta(S, J)$, since the $\H$-class $L_{x} \cap
R_{x^{*}}$ contains the projection $x^{*}x$ for each $x$, and is therefore a
group.  Furthermore, since an element $e \in S$ is an idempotent if and only if
$e^{*}$ is an idempotent, it follows that an $\H$-class $H_{e}$ is a group if
and only if $$H_{e}^{*} = L_{e}^{*} \cap R_{e}^{*} = R_{e^{*}} \cap L_{e^{*}} =
H_{e^{*}}$$ is a group. Thus the function that maps an orbit of $\L$-classes
$\{L_{x_{1}}, \ldots,L_{x_{n}}\}$ to the orbit of $\R$-classes $\{R_{x_{1}^{*}},
\ldots, R_{x_{n}^{*}}\}$, and vice versa, is an automorphism of $\Delta(S, J)$
of order $2$.

The situation is further simplified when every idempotent of $J$ is a projection
(such as when $S$ is inverse). In this case, since the only group $\H$-class of
$L_{x}$ is $L_{x} \cap R_{x^{*}}$, it follows that an orbit of $\L$-classes
$\{L_{x_{1}}, \ldots, L_{x_{n}}\}$ is adjacent in $\Delta(S, J)$ only to the
corresponding orbit of $\R$-classes $\{R_{x_{1}^{*}}, \ldots, R_{x_{n}^{*}}\}$.
Thus every vertex has degree one, and a maximal independent subset of $\Delta(S,
J)$ is formed by choosing one vertex from each edge. Due to this observation,
and using Propositions~\ref{prop-remove-l} and~\ref{prop-remove-r}, we
obtain the following corollary.

\begin{cor}\label{cor-delta-projections}
  Let $S$ be a finite regular $\ast$-monoid with group of units $G$, and let $J$
  be a $\J$-class of $S$ that is covered by $G$ and whose only idempotents are
  projections. Suppose that $\{O_{1}, \ldots, O_{n}\}$ are the orbits of the
  right action of $G$ on the $\L$-classes of $J$.  Then the maximal
  subsemigroups of $S$ arising from $J$ are of
  types~\emph{\ref{item-remove-j}},~\emph{\ref{item-rectangle}},
  or~\emph{\ref{item-intersect}}.
  A maximal subsemigroup of type~\emph{\ref{item-rectangle}} is the union of $S
  \setminus
  J$ and the union of the Green's classes $$\bigset{L}{L \in O_{i},\ i \in A}
  \cup \bigset{L^{*}}{L \in O_{i},\ i \not\in A},$$ where $A$ is any proper
  non-empty subset of $\{1, \ldots, n\}$.  In particular, there are $2^{n} - 2$
  maximal subsemigroups of type~\emph{\ref{item-rectangle}}, and no maximal
  subsemigroups of types~\emph{\ref{item-remove-l}}
  or~\emph{\ref{item-remove-r}}.
\end{cor}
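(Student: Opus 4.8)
The plan is to specialize the general machinery already developed in the excerpt to the regular $\ast$-monoid setting, where every idempotent of $J$ is a projection. The key structural fact I would exploit is the analysis immediately preceding the corollary: when $S$ is a regular $\ast$-monoid, the $^{\ast}$ operation induces an order-$2$ automorphism of $\Delta(S, J)$ that swaps each orbit of $\L$-classes $\{L_{x_1}, \ldots, L_{x_n}\}$ with its corresponding orbit of $\R$-classes $\{R_{x_1^{*}}, \ldots, R_{x_n^{*}}\}$, and these paired orbits are always adjacent (since $L_x \cap R_{x^{*}}$ contains the projection $x^{*}x$). The decisive extra hypothesis is that \emph{every} idempotent of $J$ is a projection: this forces each vertex of $\Delta(S, J)$ to have degree exactly $1$, so the graph is a perfect matching on $2n$ vertices consisting of $n$ disjoint edges.

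First I would record that, because $J$ is covered by $G$ and is regular, Corollary~\ref{cor-delta-projections}'s claim about \emph{which} types can occur reduces to eliminating types~\ref{item-remove-l} and~\ref{item-remove-r}, and to counting type~\ref{item-rectangle}. For the elimination, I would invoke Proposition~\ref{prop-remove-l}: a type~\ref{item-remove-l} subsemigroup requires an orbit $(J/\L) \setminus A$ that is a single vertex not adjacent to a vertex of degree $1$; but in our matching every vertex has degree $1$, so no such vertex exists, and dually for Proposition~\ref{prop-remove-r}. Thus no maximal subsemigroups of types~\ref{item-remove-l} or~\ref{item-remove-r} arise, leaving only~\ref{item-remove-j},~\ref{item-rectangle}, and~\ref{item-intersect} as possibilities.

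Next I would count and describe the type~\ref{item-rectangle} subsemigroups via Proposition~\ref{prop-rectangle} and Corollary~\ref{cor-rectangle}. The maximal independent subsets of a disjoint union of $n$ edges are obtained by selecting exactly one endpoint from each edge, of which there are $2^{n}$; removing the two bicomponents (all $\L$-orbits, all $\R$-orbits) leaves $2^{n} - 2$ maximal subsemigroups of type~\ref{item-rectangle}, matching the stated count. To exhibit the explicit form, I would take a proper non-empty $A \subsetneq \{1, \ldots, n\}$ and, for each edge $\{O_i, O_i^{*}\}$, choose the $\L$-orbit endpoint $O_i$ when $i \in A$ and the $\R$-orbit endpoint $O_i^{*}$ when $i \notin A$; the resulting maximal independent subset corresponds, under Proposition~\ref{prop-rectangle}, precisely to $S \setminus J$ together with the displayed union of Green's classes.

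**The main obstacle** is largely bookkeeping rather than a deep difficulty: I must make sure the translation between ``maximal independent subset of $\Delta(S, J)$'' (as used in Proposition~\ref{prop-rectangle}) and the explicit set $\bigset{L}{L \in O_i,\ i \in A} \cup \bigset{L^{*}}{L \in O_i,\ i \notin A}$ is correct, in particular that each chosen $\L$-orbit-or-$\R$-orbit genuinely contributes the stated union of $\L$- or $\R$-classes and that the decomposition $A \subsetneq J/\L$, $B \subsetneq J/\R$ demanded by Proposition~\ref{prop-rectangle} is recovered from the single index set $A \subseteq \{1, \ldots, n\}$. The one genuine subtlety is the degree-$1$ claim: I would verify carefully that \emph{only} the projection-containing $\H$-class in each $\L$-class is a group. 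This follows because, by hypothesis, the unique group $\H$-class of $L_x$ is $L_x \cap R_{x^{*}}$ (as its idempotent must be a projection, and $x^{*}x$ is the unique projection in $L_x$), so the edge from the orbit of $L_x$ reaches only the orbit of $R_{x^{*}}$, giving degree $1$. With that verified, the corollary is immediate.
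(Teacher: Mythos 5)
Your proposal is correct and takes essentially the same route as the paper: the paper also derives the corollary from the observation that when every idempotent of $J$ is a projection, the only group $\H$-class of $L_{x}$ is $L_{x} \cap R_{x^{*}}$, so every vertex of $\Delta(S, J)$ has degree one and maximal independent subsets are formed by choosing one endpoint of each edge of the resulting perfect matching. It then, exactly as you do, reads off the type~\ref{item-rectangle} subsemigroups and their count from Proposition~\ref{prop-rectangle} and Corollary~\ref{cor-rectangle}, and eliminates types~\ref{item-remove-l} and~\ref{item-remove-r} via Propositions~\ref{prop-remove-l} and~\ref{prop-remove-r}.
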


%%%%%%%%%%%%%%%%%%%%%%%%%%%%%%%%%%%%%%%%%%%%%%%%%%%%%%%%%%%%%%%%%%%%%%%%%%%%%%%%
\subsubsection{Maximal subsemigroups that intersect every $\H$-class:
type~\ref{item-intersect}}\label{sec-intersect-inverse}

To describe maximal subsemigroups of type~\ref{item-intersect} --- i.e.\ those
that intersect each $\H$-class of $S$ non-trivially --- we must use a different
approach from that in Section~\ref{sec-type-234}.  Few of the monoids in
this paper exhibit maximal subsemigroups of type~\ref{item-intersect} that
arise from a $\J$-class covered by the group of units. However, such maximal
subsemigroups do occur in some instances, and in
Proposition~\ref{prop-regularstar-intersect}, we present a result that will
be useful for these cases.

Let $S$ be a finite regular $\ast$-monoid with group of units $G$.  To prove
Proposition~\ref{prop-regularstar-intersect}, we require the following
definition: for a subset $A \subseteq S$, define the \emph{setwise stabilizer
of $A$ in $G$}, $\stab_{G}(A)$, to be the subgroup $\set{g \in G}{Ag = A}$ of
$G$.  Note that $\stab_{G}(A)$ is defined to be the set of elements of $G$ that
stabilize $A$ on the \emph{right}.  However, if we define $A^{*} = \set{a^{*}}{a
\in A}$, then the set of elements of $G$ that stabilize $A$ on the \emph{left}
is equal to $\stab_{G}(A^{*})$, since $$\bigset{g \in G}{gA = A} = \bigset{g \in
G}{A^{*}g^{*} = A^{*}} = {\stab_{G}(A^{*})}^{*} = \stab_{G}(A^{*})^{- 1} =
\stab_{G}(A^{*}).$$ Thus, for a subset $H$ of $S$ that satisfies $H^{*} = H$,
such as for the $\H$-class of a projection, $$\stab_{G}(H) = \set{g \in G}{Hg =
H = gH}.$$ This observation is required in the proof of
Proposition~\ref{prop-regularstar-intersect}.

In Proposition~\ref{prop-regularstar-intersect},  we require the set $e
\stab_{G}(H_{e}) = \bigset{ es }{ s \in \stab_{G}(H_{e}) }$, where $e$ is a
projection of the regular $\ast$-monoid $S$, and the $\J$-class $J_{e}$ is
covered by $G$.  Any submonoid of $S$ that contains both $e$ and $G$ also
contains $e \stab_{G}(H_{e})$.  In particular, every maximal subsemigroup of
type~\ref{item-intersect} arising from $J_{e}$ contains $G$ and all idempotents
in $J_{e}$, and hence contains $e \stab_{G}(H_{e})$.  A stronger result,
necessary for the proof of Proposition~\ref{prop-regularstar-intersect}, is
given by the following lemma.

\begin{lem}\label{lem-stab-subgroup}
  Let $S$ be a finite monoid with group of units $G$, let $e \in E(S)$, and let
  $T$ be a submonoid of $S$ that contains both $e$ and $G$.  Then the set
  $e\stab_{G}(H_{e}^{S})$ is a subgroup of $H_{e}^{T}$.
\end{lem}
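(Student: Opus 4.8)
The plan is to show directly that $e\stab_{G}(H_{e}^{S})$ is a subgroup of $H_{e}^{T}$ by verifying three things: that it is contained in $H_{e}^{T}$, that it is closed under the multiplication of $S$, and that it forms a group. Let me write $N = \stab_{G}(H_{e}^{S})$ throughout for brevity, and recall that $e \in E(S)$ is an idempotent with $H_{e}^{S}$ its $\H$-class in $S$.

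First I would establish that $eN \subseteq T$ and that $eN \subseteq H_{e}^{T}$. The containment in $T$ is immediate: since $T$ contains $e$ and all of $G \supseteq N$, it contains every product $es$ for $s \in N$. For the claim that $eN$ lies in a single $\H$-class of $T$, I would argue that in fact $eN \subseteq H_{e}^{S}$ and that these elements are $\H$-related \emph{within $T$}. Indeed, for $s \in N$ we have $H_{e}^{S} s = H_{e}^{S}$ by definition of the setwise stabilizer, and since $e \in H_{e}^{S}$ this gives $es \in H_{e}^{S}$, so $eN \subseteq H_{e}^{S}$. To upgrade this to $\H$-equivalence in $T$, note that $s \in G \subseteq T$ is a unit of $T$, so $es \,\R^{T}\, e$ (right multiplication by the unit $s$ and by $s^{-1}$ moves between the two), and similarly right-multiplying $e$ by $s$ and $es$ by $s^{-1}$ shows $es \,\L^{T}\, e$ precisely when $s$ stabilizes the relevant left ideal. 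The cleanest route is: since $s$ is a unit of $T$, the maps $x \mapsto xs$ and $x \mapsto xs^{-1}$ are mutually inverse bijections of $T$ preserving both $\L^{T}$ and $\R^{T}$ (Green's Lemma, \cite[Lemmas~2.2.1 and~2.2.2]{Howie1995aa}), so $e$ and $es$ are $\H^{T}$-related. Hence $eN \subseteq H_{e}^{T}$.

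Next I would verify closure and the group structure. For $s, t \in N$, I want $(es)(et) \in eN$. Here I would use that $se \in H_{e}^{S}$ as well: since $s \in N$ stabilizes $H_{e}^{S}$ on both sides (using that $H_{e}^{S}$ is the $\H$-class of an idempotent and the remark preceding the lemma that for a self-conjugate set the left and right stabilizers coincide, though here I only need $se \,\H\, e$ directly), one shows $se$ and $e$ lie in the same group $\H$-class $H_{e}^{S}$, and in a group $\H$-class with identity $e$ we have $e(se) = se$ forcing useful simplifications. Concretely, the key computation is $(es)(et) = e(se)t$, and since $se \in H_{e}^{S}$ with $e$ acting as the identity of that group, $e \cdot (se) = se$, giving $(es)(et) = (se)t = s(et)$; combined with $et \in H_{e}^{S}$ and $e(et)=et$ one massages this into the form $e s't$ with $s't \in N$, using that $N$ is a subgroup of $G$ and that the correspondence $s \mapsto es$ is compatible with the group operation. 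This exhibits $eN$ as closed and as a homomorphic image of the group $N$ under $s \mapsto es$, with identity element $e = e\cdot 1$; since $eN$ lies in the single $\H^{T}$-class $H_{e}^{T}$ which contains the idempotent $e$, that $\H^{T}$-class is a group by \cite[Corollary~2.2.6]{Howie1995aa}, and $eN$ is a subsemigroup of it containing the identity $e$, hence a subgroup.

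The main obstacle I anticipate is the closure computation and, relatedly, pinning down exactly why $s \in N$ stabilizing $H_{e}^{S}$ on the right also behaves well enough on the left to make $se$ collapse correctly. The subtlety is that $\stab_{G}(H_{e}^{S})$ is defined as a \emph{right} stabilizer, so I cannot freely assume $gH_{e}^{S} = H_{e}^{S}$; I must either invoke the self-conjugacy remark ($\stab_{G}(H) = \set{g}{Hg = H = gH}$ when $H^{*}=H$) in the regular $\ast$-setting, or argue more carefully that the only fact needed is $se \,\H^{S}\, e$, which follows because $e \,\R^{S}\, e$ and right-multiplication by the unit $s$ preserves $\R^{S}$-classes while the stabilizer condition controls the $\L^{S}$-class. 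I expect the verification that $\{es : s \in N\}$ is genuinely closed under the product — rather than merely landing in $H_{e}^{S}$ — to be the delicate point, since one must track how the idempotent $e$ interacts with products of the form $se$, and this is where the idempotency $e^{2}=e$ and the group structure of $H_{e}^{S}$ are both essential.
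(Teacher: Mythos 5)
Your overall skeleton (show $e\stab_{G}(H_{e}^{S})$ lands in $H_{e}^{T}$, show closure, then conclude that a non-empty subsemigroup of a finite group is a subgroup) matches the paper's, and your first step ($e\stab_{G}(H_{e}^{S}) \subseteq T$ and $e\stab_{G}(H_{e}^{S}) \subseteq H_{e}^{S}$) is correct. But two of your steps fail as written. First, your ``cleanest route'' to $e\stab_{G}(H_{e}^{S}) \subseteq H_{e}^{T}$ is not valid: Green's Lemma does not say that right translation by a unit $s$ of $T$ preserves $\L^{T}$-classes. It gives $x \mathrel{\R^{T}} xs$, and it maps each $\L^{T}$-class bijectively onto an $\L^{T}$-class, but in general a \emph{different} one; so ``$e$ and $es$ are $\H^{T}$-related'' does not follow from $s$ being a unit (it is false for units outside the stabilizer). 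You do know $es \in H_{e}^{S}$, i.e.\ $e \mathrel{\L^{S}} es$, but Green's relations of a submonoid are not the restrictions of those of $S$, so this does not yet give $e \mathrel{\L^{T}} es$. The missing ingredient is the fact the paper opens its proof with: for an idempotent $e$ of a \emph{finite} monoid, $H_{e}^{T} = T \cap H_{e}^{S}$, because the inverse of any $x \in T \cap H_{e}^{S}$ inside the finite group $H_{e}^{S}$ is a positive power of $x$ and hence lies in $T$, supplying the witnesses for $x \mathrel{\H^{T}} e$. Combined with your correct observation that $e\stab_{G}(H_{e}^{S}) \subseteq T \cap H_{e}^{S}$, this closes the first gap.

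Second, and more seriously, your closure computation rests on the claim that $se \in H_{e}^{S}$ for $s \in \stab_{G}(H_{e}^{S})$, and this is false in general: $\stab_{G}$ is a \emph{right} stabilizer, and since the lemma concerns arbitrary finite monoids, the remark $\set{g \in G}{Hg = H = gH}$ is unavailable (it requires the regular $\ast$ setting with $H^{*} = H$). Concretely, take $S = T = \mathcal{T}_{3}$ (composing left to right), and let $e$ be the idempotent with $1e = 1$ and $2e = 3e = 2$, so that $H_{e}^{S}$ consists of the two maps with image $\{1,2\}$ and kernel classes $\{1\}$, $\{2,3\}$. Then $s = (1\ 2)$ satisfies $H_{e}^{S}s = H_{e}^{S}$, yet $se$ has kernel classes $\{1,3\}$, $\{2\}$, so $se \notin H_{e}^{S}$. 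Hence your identity $(es)(et) = e(se)t = (se)t$ cannot be used, and the subsequent ``massaging'' into the form $es't$ with $s't \in \stab_{G}(H_{e}^{S})$ is never actually carried out. The repair is to bracket the product the other way and use only what you have already proved, namely $es \in H_{e}^{S}$: since $e$ is the identity of the group $H_{e}^{S}$, we get $(es)e = es$, whence $(es)(et) = \big((es)e\big)t = (es)t = e(st)$, and $st \in \stab_{G}(H_{e}^{S})$ because the stabilizer is a subgroup of $G$. This one-line computation is exactly the paper's; it needs no left stabilization at all, and with it your finiteness conclusion goes through.
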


\begin{proof}
  Since $e$ is an idempotent, $H_{e}^{T} = T \cap H_{e}^{S}$. Clearly
  $e\stab_{G}(H_{e}^{S}) \subseteq eG \subseteq T$.  Let $g \in
  \stab_{G}(H_{e}^{S})$. Then $eg \in H_{e}^{S}$ by definition, and
  so $e\stab_{G}(H_{e}^{S}) \subseteq H_{e}^{S}$. Thus $e\stab_{G}(H_{e}^{S})
  \subseteq T \cap H_{e}^{S} = H_{e}^{T}$, and the subset is non-empty since $e
  = e1 \in e\stab_{G}(H_{e}^{S})$, where $1$ is the identity of $S$. Since $S$
  is finite, it remains to show that $e\stab_{G}(H_{e}^{S})$ is closed under
  multiplication. Let $g, g' \in \stab_{G}(H_{e}^{S})$. Since $eg \in
  H_{e}^{S}$ and $e$ is the identity of $H_{e}^{S}$, it follows that $(eg)e =
  eg$. Thus
  \begin{equation*}
    (eg)(eg') = (ege)g' = (eg)g' = e(gg') \in e \stab_{G}(H_{e}^{S}).\qedhere
  \end{equation*}
\end{proof}

The following two technical lemmas are also required for the proof of
Proposition~\ref{prop-regularstar-intersect}.

\begin{lem}[\!\!\mbox{\cite[Theorem~A.2.4]{Rhodes2009aa}}]
  \label{lem-stability}
  Let $S$ be a finite semigroup and let $x, y \in S$. Then $x \J xy$ if and only
  if $x \R xy$, and $x \J yx$ if and only if $x \L yx$.
\end{lem}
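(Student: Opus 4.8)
The plan is to prove the first biconditional, $x \J xy \iff x \R xy$, and to obtain the second, $x \J yx \iff x \L yx$, for free by applying the first to the opposite semigroup $S^{\mathrm{op}}$ (in which products are reversed, so that the roles of $\R$ and $\L$ are interchanged). For the first biconditional, one implication needs nothing: since Green's $\R$-relation is always contained in Green's $\J$-relation, $x \R xy$ immediately gives $x \J xy$. I would also record two ``free'' containments to streamline the bookkeeping. Writing $S^{1}$ for $S$ with an identity adjoined, we always have $xy \in xS^{1}$, so the principal right ideal of $xy$ sits inside that of $x$; hence $x \R xy$ is equivalent to the single membership $x \in xyS^{1}$. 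Likewise $x \J xy$ is equivalent to $x \in S^{1}(xy)S^{1}$. So the entire content of the lemma reduces to the implication: \emph{if} $x \in S^{1}(xy)S^{1}$ \emph{then} $x \in xyS^{1}$.

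To prove this implication, I would start from an expression $x = u(xy)v$ with $u, v \in S^{1}$ and repackage it as $x = axb$, where $a = u$ and $b = yv$ both lie in $S^{1}$; the point of the repackaging is to keep the letter $y$ glued to $x$ on the right while isolating the nuisance factor $u$ on the left. Iterating $x = axb$ gives $x = a^{n}xb^{n}$ for every $n \geq 1$. Since $S^{1}$ is finite, some power $e := a^{n}$ (with $n \geq 1$) is idempotent, and I would work with this $n$. From $x = exb^{n}$, multiplying on the left by $e$ and using $e^{2} = e$ yields $ex = x$, and feeding this back in gives $x = xb^{n}$. Finally, since $b = yv$, we can rewrite $b^{n} = y(vy)^{n-1}v$, so $x = xb^{n} = (xy)\,(vy)^{n-1}v \in xyS^{1}$, which is exactly what was needed.

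The main obstacle is conceptual rather than computational: the hypothesis $x \J xy$ supplies a factor on \emph{both} sides of $xy$, whereas the target $x \R xy$ tolerates a factor only on the right. The mechanism that resolves this tension is the idempotent-power trick enabled by finiteness --- replacing the left factor $u$ by an idempotent $e = a^{n}$ that acts as a left identity on $x$ and therefore vanishes. The second, easy-to-miss ingredient is the bookkeeping choice $b = yv$ rather than absorbing $y$ elsewhere: because $y$ rides along on the right throughout the iteration, the concluding membership lands in $xyS^{1}$ and not merely in $xS^{1}$, which is precisely the difference between $\J$ and $\R$ in this setting.
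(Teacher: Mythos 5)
Your proposal is correct. Note that the paper itself offers no proof of this lemma: it is quoted verbatim from Rhodes and Steinberg (Theorem~A.2.4 of the cited reference), so there is no in-paper argument to compare against. Your argument is the standard proof of stability for finite semigroups, and it is essentially the one found in that reference: the reduction of both relations to the single membership $x \in xyS^{1}$ is right, since $xyS^{1} \subseteq xS^{1}$ and $S^{1}xyS^{1} \subseteq S^{1}xS^{1}$ hold automatically; the iteration $x = a^{n}xb^{n}$ with $a = u$, $b = yv$ is valid; finiteness gives an $n \geq 1$ with $e = a^{n}$ idempotent, whence $ex = e^{2}xb^{n} = exb^{n} = x$ and therefore $x = (ex)b^{n} = xb^{n} = xy(vy)^{n-1}v \in xyS^{1}$; and the second biconditional does follow by passing to the opposite semigroup, under which $\R$ and $\L$ swap while $\J$ is unchanged. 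The one bookkeeping point worth making explicit is that keeping $y$ attached on the right (i.e.\ choosing $b = yv$ rather than, say, absorbing $y$ into the middle) is exactly what makes the final membership land in $xyS^{1}$, and you did flag this.
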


\begin{lem}[\mbox{\text{follows from}~\cite[Proposition 2.3.7]{Howie1995aa}}]
  \label{lem-R-group}
  Let $R$ be an $\R$-class of an arbitrary finite semigroup, and let $x, y \in
  R$. Then $xy \in R$ if and only if $H_{x}$ is a group.
\end{lem}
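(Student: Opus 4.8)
The plan is to prove both implications directly from the basic theory of Green's relations, using stability (Lemma~\ref{lem-stability}) as the one genuinely finite ingredient. Throughout, recall that $xy$ always lies in the principal right ideal $xS \cup \{x\}$ of $x$, so that $R_{xy} \leq R_{x}$ in the $\R$-order; hence proving $xy \in R$ amounts to establishing the reverse inequality, namely that $x$ lies in the principal right ideal of $xy$. Since $x \R y$, the principal right ideals of $x$ and $y$ coincide, so I may write $y = xu$ for some $u \in S^{1}$ (the degenerate case $y = x$ reduces the statement to the well-known fact that $H_{x}$ is a group if and only if $x \R x^{2}$, and is absorbed into the argument below).

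First I would dispatch the implication that $H_{x}$ being a group forces $xy \in R$. Let $e$ be the idempotent identity of the group $H_{x}$. Since $e \in H_{x} \subseteq L_{x}$ and $e$ is idempotent, $e$ acts as a right identity on its $\L$-class, so $xe = x$. On the other hand $e \R x \R y$, so $e$ lies in the principal right ideal of $y$, and I may write $e = yc$ for some $c \in S^{1}$. Combining these gives $x = xe = (xy)c$, which exhibits $x$ in the principal right ideal of $xy$; together with $R_{xy} \leq R_{x}$ this yields $x \R xy$, that is, $xy \in R$.

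For the converse, suppose $xy \in R$, so that $x = (xy)s$ for some $s \in S^{1}$. Substituting $y = xu$ gives $x = x^{2}(us)$, so $x$ lies in the principal right ideal of $x^{2}$ and therefore $x \R x^{2}$. At this point I would invoke stability: since $\R \subseteq \J$, we have $x \J x^{2} = x \cdot x$, and Lemma~\ref{lem-stability} then supplies $x \L x^{2}$ as well. Hence $x^{2} \in R_{x} \cap L_{x} = H_{x}$, and an $\H$-class that contains the product of two of its own elements is necessarily a group (the standard dichotomy, cf.~\cite[Corollary~2.2.6]{Howie1995aa}); thus $H_{x}$ is a group, completing the proof.

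The main obstacle is the forward direction, specifically the passage from $\R$-equivalence to $\H$-equivalence of $x$ and $x^{2}$: the relation $x = x^{2}(us)$ only places $x^{2}$ in the $\R$-class of $x$, whereas membership in $H_{x}$ additionally requires $x^{2} \L x$. This is exactly where finiteness enters, through stability (Lemma~\ref{lem-stability}); in an arbitrary infinite semigroup the two one-sided conditions need not coincide, so the reduction to ``$H_{x}$ contains a square of its own elements'' would break down. Everything else is routine manipulation of principal ideals together with the elementary fact that an idempotent acts as a one-sided identity on its $\L$- and $\R$-classes.
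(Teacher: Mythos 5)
Your proof is correct, and it takes a more self-contained route than the paper, which offers no written argument at all: the lemma there is simply attributed to the Clifford--Miller location theorem \cite[Proposition~2.3.7]{Howie1995aa} --- for $a, b$ in any semigroup, $ab \in R_{a} \cap L_{b}$ if and only if $L_{a} \cap R_{b}$ contains an idempotent --- specialised to $a = x$, $b = y$, where $R_{y} = R_{x}$ makes $L_{a} \cap R_{b} = H_{x}$; the remaining gap, namely that $xy \in R_{x}$ already forces $xy \in R_{x} \cap L_{y}$ in a finite semigroup, is closed by stability exactly as in your argument. What you do differently is re-derive both halves rather than quote them. Your implication ``$H_{x}$ a group $\Rightarrow xy \in R$'' is in substance the easy half of the location theorem (the idempotent $e$ of $H_{x}$ is a right identity on $L_{x}$ and lies in $yS^{1}$, whence $x = xe = (xy)c$), and, as you observe, it needs no finiteness. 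Your converse avoids the location theorem entirely: the reduction $x = (xy)s = x^{2}(us)$ gives $x \R x^{2}$, stability (Lemma~\ref{lem-stability}) upgrades this to $x^{2} \in H_{x}$, and Green's dichotomy finishes the job; note that this dichotomy ($H^{2} \cap H \neq \varnothing$ implies $H$ is a group) is \cite[Theorem~2.2.5]{Howie1995aa} rather than Corollary~2.2.6, which is the idempotent special case --- a harmless slip, since you cite it only as ``cf.''. The trade-off is that the paper's citation is more economical, while your route isolates exactly where finiteness is indispensable --- and it genuinely is: in the bicyclic monoid $\langle p, q \mid pq = 1 \rangle$ one has $p \cdot p \R p$ with $p \in R_{p}$, yet $H_{p} = \{p\}$ is not a group, so the ``only if'' direction fails without stability.
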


\begin{prop}\label{prop-regularstar-intersect}
  Let $S$ be a finite regular $\ast$-monoid with group of units $G$, let $J$ be
  a $\J$-class of $S$ that is covered by $G$, and let $H_{e}^{S}$ be the
  $\H$-class of a projection $e \in J$.  Suppose that $G$ acts transitively on
  the $\R$-classes or the $\L$-classes of $J$, and that $J$ contains one
  idempotent per $\L$-class and one idempotent per $\R$-class (i.e.\ every
  idempotent of $J$ is a projection). Then the maximal subsemigroups of $S$
  arising from $J$ are either:
  \begin{enumerate}[label=\emph{(\alph*)}]
    \item
      $(S \setminus J) \cup GUG = \genset{S \setminus J,\ U}$, for each
      maximal subgroup $U$ of $H_{e}^{S}$ that contains $e
      \stab_{G}(H_{e}^{S})$ \emph{(type~\ref{item-intersect})}, or
    \item
      $S \setminus J$, if no maximal subsemigroups of
      type~\emph{\ref{item-intersect}} exist \emph{(type~\ref{item-remove-j})}.
  \end{enumerate}
\end{prop}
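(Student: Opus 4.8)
The plan is to show that, under the stated hypotheses, the only maximal subsemigroups arising from $J$ have types~\ref{item-remove-j} and~\ref{item-intersect}, and then to set up an explicit bijection between the type-\ref{item-intersect} ones and the maximal subgroups $U$ of $H_{e}^{S}$ containing $e\stab_{G}(H_{e}^{S})$. First I would observe that, by the $*$-automorphism of $\Delta(S, J)$ described before Corollary~\ref{cor-delta-projections}, the orbits of $\L$-classes of $J$ correspond bijectively to the orbits of $\R$-classes; hence $G$ acts transitively on the $\L$-classes if and only if it acts transitively on the $\R$-classes, so I may assume both. Corollary~\ref{cor-delta-projections} already rules out types~\ref{item-remove-l} and~\ref{item-remove-r}, and Lemma~\ref{lem-transitive} rules out type~\ref{item-rectangle}; thus only types~\ref{item-remove-j} and~\ref{item-intersect} survive. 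Case~(b) will then follow from Proposition~\ref{prop-remove-j}, so the whole content is to identify the type-\ref{item-intersect} maximal subsemigroups.

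Next I would record the structural facts needed. Since each $\R$-class (resp.\ $\L$-class) of $J$ contains a unique idempotent, $H_{e}^{S}$ is the only group $\H$-class in $R_{e}$ and in $L_{e}$; tracking one-sided multiplication by units shows that the left and right setwise stabilizers of $H_{e}^{S}$, $R_{e}$, and $L_{e}$ in $G$ all coincide with $\stab_{G}(H_{e}^{S})$, and that every projection of $J$ has the form $geg^{-1}$ for some $g \in G$. The key computation is that, for any subgroup $U$ with $e\stab_{G}(H_{e}^{S}) \subseteq U \subseteq H_{e}^{S}$, one has $GUG \cap H_{e}^{S} = U$. To prove the inclusion $\subseteq$ I would use that a product $ab$ of elements of $J$ lies in $J$ precisely when $L_{a} \cap R_{b}$ is a group $\H$-class (a consequence of Lemmas~\ref{lem-stability} and~\ref{lem-R-group}), which here forces the unit factors to lie in $\stab_{G}(H_{e}^{S})$; then for $g \in \stab_{G}(H_{e}^{S})$ both $eg$ and $ge$ lie in $U$ --- the latter because $geg^{-1}$ is an idempotent of the group $H_{e}^{S}$, hence equals $e$, so that $ge = (eg^{-1})^{-1} \in U$. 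The same product analysis shows that $(S \setminus J) \cup GUG$ is a subsemigroup equal to $\genset{S \setminus J,\ U}$, and that it is proper exactly when $U \subsetneq H_{e}^{S}$; taking $U = H_{e}^{S}$ gives $GH_{e}^{S}G = J$ and hence $\genset{S \setminus J,\ H_{e}^{S}} = S$.

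With these tools in hand I would prove the correspondence in both directions. Given a type-\ref{item-intersect} maximal subsemigroup $M$ arising from $J$, it contains $G$ and $E(S)$, so $U := M \cap H_{e}^{S}$ is a subgroup of $H_{e}^{S}$ containing $e\stab_{G}(H_{e}^{S})$ by Lemma~\ref{lem-stab-subgroup}, and $U \subsetneq H_{e}^{S}$ since otherwise $M \supseteq \genset{S \setminus J,\ H_{e}^{S}} = S$. Normalizing an arbitrary $x \in M \cap J$ to $g^{-1}xh^{-1} \in M \cap H_{e}^{S} = U$ via suitable units $g, h$ (chosen using transitivity so that $g^{-1}xh^{-1} \in H_{e}^{S}$) shows $M \cap J = GUG$, whence $M = (S \setminus J) \cup GUG$; maximality of $M$ then forces $U$ to be a maximal subgroup of $H_{e}^{S}$. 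Conversely, for a maximal subgroup $U$ of $H_{e}^{S}$ containing $e\stab_{G}(H_{e}^{S})$, the proper subsemigroup $(S \setminus J) \cup GUG$ is maximal: any strictly larger subsemigroup $N$ contains some $x \in J \setminus GUG$, and normalizing $x$ produces an element of $(N \cap H_{e}^{S}) \setminus U$, so $N \cap H_{e}^{S} \supsetneq U$ forces $N \cap H_{e}^{S} = H_{e}^{S}$ and hence $N \supseteq \genset{S \setminus J,\ H_{e}^{S}} = S$. Checking that $GUG$ meets every $\H$-class of $J$ and contains every projection confirms this subsemigroup has type~\ref{item-intersect}.

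The main obstacle will be the Green's-class bookkeeping underlying the identity $GUG \cap H_{e}^{S} = U$ and the normalization maps: one must carefully track how one-sided multiplication by units permutes $\L$- and $\R$-classes (using that left multiplication fixes $\L$-classes and right multiplication fixes $\R$-classes), invoke the product criterion from Lemmas~\ref{lem-stability} and~\ref{lem-R-group} at the right moment, and handle the asymmetry between $eg$ and $ge$ via the projection identity $geg^{-1} = e$. Everything else is a routine assembly of Proposition~\ref{prop-remove-j}, Corollary~\ref{cor-delta-projections}, and Lemmas~\ref{lem-transitive} and~\ref{lem-stab-subgroup}.
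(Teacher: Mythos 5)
Your proposal is correct, and for most of its length it follows the paper's proof: the reduction to types~\ref{item-remove-j} and~\ref{item-intersect} (via Lemma~\ref{lem-transitive}, Corollary~\ref{cor-delta-projections}, and Proposition~\ref{prop-remove-j}) is identical, and your two central claims --- that $GUG \cap H_{e}^{S} = U$ and that $(S \setminus J) \cup GUG$ is closed under multiplication --- are exactly the paper's, established from the same ingredients, since your ``product criterion'' is precisely the combination of Lemmas~\ref{lem-stability} and~\ref{lem-R-group} that the paper invokes, used together with the hypothesis that each $\R$-class of $J$ contains a unique idempotent. Where you genuinely depart is the classification step: the paper appeals twice to \cite[Proposition~4]{Graham1968aa}, once to argue that a maximal subsemigroup containing $M_{U} = (S \setminus J) \cup GUG$ meets $H_{e}^{S}$ in a maximal subgroup and hence, by a counting argument using transitivity, equals $M_{U}$, and once more in the converse direction; you replace both appeals by direct normalization, using transitivity to write every $\H$-class of $J$ as $g H_{e}^{S} h$ with $g, h \in G$, so that any $N \supsetneq M_{U}$ contains an element of $(N \cap H_{e}^{S}) \setminus U$, forcing $N \cap H_{e}^{S} = H_{e}^{S}$ and $N \supseteq \genset{S \setminus J,\ H_{e}^{S}} = S$, while conversely $M \cap J = G(M \cap H_{e}^{S})G$ follows the same way, with maximality of $M \cap H_{e}^{S}$ coming from the fact that $(S \setminus J) \cup GVG$ is a proper subsemigroup for every intermediate subgroup $V \supseteq e\stab_{G}(H_{e}^{S})$. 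Your route buys a more self-contained argument (it needs nothing about type~\ref{item-intersect} maximal subsemigroups beyond containment of $G$ and $E(S)$, plus Lemma~\ref{lem-stab-subgroup}), whereas the paper's counting argument is shorter once Graham's structure theorem is granted. Two small repairs when you write this up. First, the right setwise stabilizer of $R_{e}$ (and the left stabilizer of $L_{e}$) in $G$ is all of $G$, because right multiplication by a unit fixes every $\R$-class; the stabilizers that coincide with $\stab_{G}(H_{e}^{S})$ are the left stabilizer of $R_{e}$, the right stabilizer of $L_{e}$, and both stabilizers of $H_{e}^{S}$. Second, for the inclusion $GUG \cap H_{e}^{S} \subseteq U$, the product criterion yields nothing when applied to a factorization of $x = \alpha u \beta$ itself (the two relevant Green's classes are already $L_{e}^{S}$ and $R_{e}^{S}$); you should either apply your closure analysis to the products $ex$ and $xe$, or, more simply, use the bookkeeping you already list: $\alpha u = x\beta^{-1} \in L_{e}^{S} \cap R_{e}^{S}$ forces $\alpha H_{e}^{S} = (\alpha u)H_{e}^{S} = H_{e}^{S}$, and dually for $\beta$, which is how the paper does it.
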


\begin{proof}
  Since $S$ is a regular $\ast$-monoid, $G$ acts transitively on the
  $\L$-classes of $J$ if and only if $G$ acts transitively on the $\R$-classes
  of $J$.  Hence there are no maximal subsemigroups of
  types~\ref{item-rectangle},~\ref{item-remove-l}, or~\ref{item-remove-r}
  arising from $J$, by Lemma~\ref{lem-transitive}. By
  Proposition~\ref{prop-remove-j}, it remains to describe the maximal
  subsemigroups of type~\ref{item-intersect}.

  Let $U$ be a maximal subgroup of $H_{e}^{S}$ that contains
  $e\stab_{G}(H_{e}^{S})$, and define $M_{U} = (S \setminus J) \cup GUG$. To
  prove that $M_{U}$ is a maximal subsemigroup of $S$, we first show that
  $M_{U}$ is a proper subset of $S$, then that it is a subsemigroup, and finally
  that it is maximal in $S$. Since $G$ acts transitively on the $\L$- and
  $\R$-classes of $J$ and $M_{U}$ contains $S \setminus J$, it follows that the
  set $M_{U}$ intersects every $\H$-class of $S$ non-trivially.
  Given that $M_{U}$ is a subsemigroup, and since $G \subseteq S \setminus J$,
  it is obvious that it is generated by $(S \setminus J) \cup U$.

  To prove that $M_{U}$ is a proper subset of $S$, it suffices to show that $GUG
  \cap H_{e}^{S} \subseteq U$. Let $x \in GUG \cap H_{e}^{S}$. Since $x \in
  GUG$, we may write $x = \alpha u \beta$ for some $\alpha, \beta \in G $ and $u
  \in U$.  Since $u, \alpha u \beta \in H_{e}^{S}$, it is straightforward to
  show that $\alpha u$, $u \beta \in H_{e}^{S}$. Thus $$\alpha H_{e}^{S} =
  \alpha (u H_{e}^{S}) = (\alpha u) H_{e}^{S} = H_{e}^{S}, \quad \text{\ and\ }
  \quad H_{e}^{S} \beta = (H_{e}^{S} u) \beta = H_{e}^{S} (u \beta) =
  H_{e}^{S}.$$ In other words, $\alpha$ and $\beta$ stabilize $H_{e}^{S}$ on the
  left and right, respectively.  Thus $\alpha, \beta \in \stab_{G}(H_{e}^{S})$,
  and $$x = ex = e\alpha u e \beta \in \big( e \stab_{G}(H_{e}^{S}) \big) U
  \big( e \stab_{G}(H_{e}^{S}) \big) \subseteq U^{3} = U.$$

  In order to show that $S$ is a subsemigroup, it suffices to show that $xy \in
  M_{U}$ whenever $x, y \in G \cup GUG$, because $S \setminus (G \cup J)$ is an
  ideal of $S$.  If $x \in G$ and $y \in G$, then certainly $xy \in G$.  If $x
  \in G$ and $y \in GUG$, then $xy \in G^{2}UG = GUG$ and $yx \in GUG^{2} =
  GUG$.  For the final case, assume that $x, y \in GUG$ and that $xy \in J$.  By
  definition, $x = \alpha u \beta$ and $y = \sigma v \tau$ for some $\alpha,
  \beta, \sigma, \tau \in G$ and $u, v \in U$.  It suffices to show that $\beta
  \sigma \in \stab_{G}(H_{e}^{S})$, because then
  $$xy =   \alpha u \beta \sigma v \tau
       =   \alpha (ue) \beta \sigma v \tau
       =   \alpha u (e \beta \sigma) v \tau
       \in G U \left( e \stab_{G}(H_{e}^{S}) \right) U G
       \subseteq G U^{3} G = GUG.$$
  Since $H_{e}^{S}$ is a group containing $u$ and $v$, it follows that $u^{*}u =
  vv^{*} = e$. Thus
  $$e \beta \sigma e = u^{*} u \beta \sigma v v^{*}
                     = u^{*} \alpha^{-1} (\alpha u \beta \sigma v \tau)
                       \tau^{-1} v^{*}
                     = u^{*} \alpha^{-1} (x y) \tau^{-1} v^{*}.$$
  Together with $xy = \alpha u (e \beta \sigma e) v \tau$, it
  follows that $e \beta \sigma e \in J$.  By Lemma~\ref{lem-stability}, $e
  \beta \sigma e \in R_{e}^{S}$. Since the elements $e \beta \sigma$ and $e$,
  and their product $e \beta \sigma e$, are all contained in $R_{e}^{S}$,
  Lemma~\ref{lem-R-group} implies that $H_{e \beta \sigma}^{S}$ is a group.  By
  assumption, $R_{e}^{S}$ contains only one group $\H$-class, which is
  $H_{e}^{S}$. Thus $e \beta \sigma \in H_{e}^{S}$, and so
  $H_{e}^{S} \beta \sigma = (H_{e}^{S}e) \beta \sigma = H_{e}^{S}(e \beta
  \sigma) = H_{e}^{S}$, i.e.\ $\beta \sigma \in \stab_{G}(H_{e}^{S})$, as
  required. 

  Let $M$ be a maximal subsemigroup of $S$ that contains $M_{U}$.
  By~\cite[Proposition 4]{Graham1968aa}, $M \cap H_{e}^{S}$ is a maximal
  subgroup of $H_{e}^{S}$, and the intersection of $M$ with any $\H$-class of
  $J$ contains exactly $|M \cap H_{e}^{S}|$ elements. Since $M \cap H_{e}^{S}$
  contains $U$, the maximality of $U$ in $H_{e}^{S}$ implies that $U = M \cap
  H_{e}^{S}$.  Since the group $G$ acts transitively on the $\L$- and
  $\R$-classes of $J$, the intersection of $GUG$ with any $\H$-class of $J$
  contains at least $|U|$ elements.  Thus $|M| \leq |M_{U}|$, and so $M =
  M_{U}$.

  Conversely, suppose that $M$ is a maximal subsemigroup of $S$ of
  type~\ref{item-intersect} arising from $J$.
  By~\cite[Proposition~4]{Graham1968aa}, the intersection $U = M \cap H_{e}^{S}
  = H_{e}^{M}$ is a maximal subgroup of $H_{e}^{S}$, and it contains
  $e\stab_{G}(H_{e}^{S})$ by Lemma~\ref{lem-stab-subgroup}.  Since $M$ contains
  $G$, $U$, and $S \setminus J$, it contains the maximal subsemigroup $M_{U} =
  (S \setminus J) \cup GUG$.  But $M$ is a proper subsemigroup, which implies
  that $M = M_{U}$.
\end{proof}

%%%%%%%%%%%%%%%%%%%%%%%%%%%%%%%%%%%%%%%%%%%%%%%%%%%%%%%%%%%%%%%%%%%%%%%%%%%%%%%%
\subsection{Maximal subsemigroups arising from other $\J$-classes}

The following lemma can be used to find the maximal subsemigroups that arise
from an arbitrary $\J$-class of a finite semigroup. In the later sections, for
conciseness, we will sometimes use this lemma to find the maximal subsemigroups
that arise from a $\J$-class of a monoid that is covered by the group of units.
Additionally, a small number of the diagram monoids in
Section~\ref{sec-diagram} exhibit maximal subsemigroups arising from a
$\J$-class that is neither equal to nor covered by the group of units. The
following lemma will be particularly useful when we determine the maximal
subsemigroups that arise in this case.  Although the results
of~\cite{Donoven2016aa} are, in their full generality, applicable to such cases,
the few examples in this paper do not warrant their use.

\begin{lem}\label{lem-Xi}
  Let $S$ be a finite semigroup, and let $J$ be a $\J$-class of $S$. Suppose
  that there exist
  distinct subsets $X_{1}, \ldots, X_{k} \subseteq J$ such that for
  all $A \subseteq J$, $S = \genset{S \setminus J,\ A}$ if and only if $A
  \cap X_{i} \neq \varnothing$ for all $i \in \{1, \ldots, k\}$. Then the
  maximal subsemigroups of $S$ arising from $J$ are precisely the sets $S
  \setminus X_{i}$ for each $i \in \{1, \ldots, k\}$.
\end{lem}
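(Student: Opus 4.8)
The plan is to recast the statement as a purely combinatorial fact about subsets of $J$ and a closure operator, and then read the answer off the hypothesis. I shall repeatedly use the observation from the discussion following~\cite[Proposition 1]{Graham1968aa} that, since $S\setminus J$ is contained in every maximal subsemigroup arising from $J$, the maximal subsemigroups of $S$ arising from $J$ are exactly the maximal elements of the poset of \emph{proper} subsemigroups of $S$ that contain $S\setminus J$: any subsemigroup properly containing such a subsemigroup again contains $S\setminus J$, so maximality in this poset coincides with maximality in $S$. Via the bijection $M\mapsto M\cap J$, these correspond to the subsets $A\subseteq J$ for which $(S\setminus J)\cup A$ is a proper subsemigroup of $S$.

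First I would set up the relevant closure operator. For $A\subseteq J$ write $\overline{A}=\genset{S\setminus J,\ A}\cap J$, so that $(S\setminus J)\cup A$ is a subsemigroup precisely when $\overline A=A$, and $\genset{S\setminus J,\ \overline A}=\genset{S\setminus J,\ A}$. Call $A$ \emph{non-generating} if $\genset{S\setminus J,\ A}\neq S$, and let $\mathcal N$ be the family of all non-generating subsets of $J$. Then $\mathcal N$ is downward closed and is preserved by $A\mapsto\overline A$, so any subset that is maximal in $\mathcal N$ satisfies $\overline A=A$ and hence yields a subsemigroup. The core step is the correspondence: $(S\setminus J)\cup A$ is a maximal subsemigroup of $S$ arising from $J$ if and only if $A$ is a maximal element of $\mathcal N$. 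For the forward direction, a proper subsemigroup has non-generating intersection with $J$, and if this intersection were properly contained in some $A'\in\mathcal N$ then $(S\setminus J)\cup A'$ would be a strictly larger proper subsemigroup, contradicting maximality. For the backward direction, maximality of $A$ in $\mathcal N$ gives $\overline A=A$ (so $(S\setminus J)\cup A$ is a proper subsemigroup), while any strictly larger subset $A'\subseteq J$ lies outside $\mathcal N$, whence $\genset{S\setminus J,\ A'}=S$; thus $(S\setminus J)\cup A$ admits no proper subsemigroup strictly above it.

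It then remains to describe the maximal elements of $\mathcal N$ using the hypothesis. The stated biconditional says exactly that $A$ is generating if and only if $A$ meets every $X_i$; negating, $A\in\mathcal N$ if and only if $A\cap X_i=\varnothing$ for some $i$, i.e.\ if and only if $A\subseteq J\setminus X_i$ for some $i$. Hence $\mathcal N$ is the downward closure of the finite family $\{J\setminus X_1,\ \ldots,\ J\setminus X_k\}$, and its maximal elements are precisely the inclusion-maximal members of this family. By the correspondence of the previous paragraph, every maximal subsemigroup arising from $J$ therefore has the form $S\setminus X_i=(S\setminus J)\cup(J\setminus X_i)$ for some $i$, which already yields the containment ``every maximal subsemigroup is one of the $S\setminus X_i$''.

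The main obstacle is the reverse containment --- that \emph{each} $S\setminus X_i$ is genuinely a maximal subsemigroup, rather than only those arising from an inclusion-maximal $J\setminus X_i$. This requires that each $J\setminus X_i$ be a maximal element of $\mathcal N$, equivalently that the sets $X_1,\ldots,X_k$ be pairwise incomparable: if one had $X_i\subsetneq X_j$ then $J\setminus X_j\subsetneq J\setminus X_i$ would place $(S\setminus J)\cup(J\setminus X_j)$ strictly inside the proper subsemigroup $S\setminus X_i$, so $S\setminus X_j$ would not be maximal. I would therefore verify, as the crucial step, that the $X_i$ furnished by the hypothesis form an antichain --- that is, that each is a minimal subset of $J$ meeting every generating set; granting this, each $J\setminus X_i$ is maximal in $\mathcal N$, and the backward direction of the correspondence makes every $S\setminus X_i$ a maximal subsemigroup, completing both containments.
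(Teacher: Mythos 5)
Your route is genuinely different from the paper's. The paper works directly inside $S$: it first asserts that no $X_i$ is contained in another, then shows each $S \setminus X_i$ is closed under multiplication (if $x,y \in S\setminus X_i$ but $xy \in X_i$, then $\genset{S\setminus X_i}$ contains $S\setminus J$, a point of every $X_j$ with $j \neq i$, and the point $xy \in X_i$, so by the hypothesis it equals $S$, contradicting that $J\setminus X_i$ misses $X_i$), declares maximality obvious, and finishes with the same converse you give: a maximal $M$ arising from $J$ must miss some $X_i$, so $M \subseteq S\setminus X_i$, whence $M = S\setminus X_i$. Your reformulation --- maximal subsemigroups arising from $J$ correspond, via $M \mapsto M\cap J$, to maximal elements of the family $\mathcal{N}$ of non-generating subsets of $J$, and $\mathcal{N}$ is the downward closure of $\{J\setminus X_1,\ldots,J\setminus X_k\}$ --- is correct (modulo the small slip that for $A' \in \mathcal{N}$ the set $(S\setminus J)\cup A'$ need not itself be closed; one should pass to $\genset{S\setminus J,\ A'}$, which is proper precisely because $A'\in\mathcal{N}$), and it packages cleanly both the ``maximality is then obvious'' step and the forward containment, which you prove completely.

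The gap is exactly where you flagged it: you reduce the reverse containment to the $X_i$ being pairwise incomparable, but only announce that you would verify this, and in fact no such verification is possible, because incomparability does not follow from the stated hypotheses. Take $S = \mathcal{PT}_n$, $J = J_{n-1}$, $X_1 = J_{n-1}\cap\mathcal{T}_n$, $X_2 = J_{n-1}\cap\mathcal{I}_n$, and $X_3 = J_{n-1} = X_1 \cup X_2$: these three sets are distinct, and a subset $A \subseteq J$ meets all three if and only if it meets $X_1$ and $X_2$, so the if-and-only-if hypothesis holds for the family $\{X_1,X_2,X_3\}$; yet $S\setminus X_3 = S \setminus J_{n-1}$ is properly contained in the proper subsemigroup $S \setminus X_1$, so it is not maximal and the lemma's conclusion fails. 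Thus the lemma is false as literally stated; it needs the additional hypothesis that no $X_i$ contains another (equivalently, that each $X_i$ is minimal among subsets of $J$ met by every generating set). The paper's own proof has the identical defect at the identical spot: its opening claim that incomparability follows ``by the definition of the sets $X_i$ and the assumption that they are distinct'' is not a valid deduction, as the example above shows. In every application in the paper the $X_i$ are indeed pairwise incomparable (they are unions of distinct Green's classes, none containing another), so nothing downstream is affected; but to complete your proof you must either add incomparability as a hypothesis or check it in each application --- which is what the paper implicitly does, and what you, more honestly, left visible as an unfinished step.
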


\begin{proof}
  Note that, by the definition of the sets $X_{i}$ and the assumption that they
  are distinct, no set $X_{i}$ is contained in a different set $X_{j}$.  Let $i
  \in \{1, \ldots, k\}$. We show that $S \setminus X_{i}$ is a subsemigroup of
  $S$; its maximality is then obvious. Let $x, y \in S \setminus X_{i}$.  Since
  $S \setminus X_{i}$ does not generate $S$, but it contains $S \setminus J$ and
  an element $x_{j} \in X_{j}$ for each $j \in \n \setminus \{i\}$, it follows
  that $xy \not\in X_{i}$.  Conversely, let $M$ be a maximal subsemigroup of $S$
  arising from $J$. If $M \cap X_{i} \neq \varnothing$ for each $i$ then, by
  assumption, $S = \genset{M} = M$, a contradiction. Thus $M \cap X_{i} =
  \varnothing$ for some $i$.  In other words, $M \subseteq S \setminus X_{i}$.
  By the maximality of $M$ in $S$, it follows that $M = S \setminus X_{i}$.
\end{proof}

Let $S$ be a monoid with group of units $G$, and suppose there exists a
non-empty subset $X \subseteq S \setminus G$ with the property that for any $A
\subseteq S$, $S = \genset{G,\ A}$ if and only if $A \cap X \neq \varnothing$.
This is equivalent to the property that for any $x \in S$, $S =
\genset{G,\ x}$ if and only if $x \in X$. At several instances in
Sections~\ref{sec-transformation} and~\ref{sec-diagram}, we wish to determine
the maximal subsemigroups of a monoid that has such a subset $X$. Let $x \in X$.
By definition, the principal ideal generated by $x$ consists of every element of
$S$ that can be written as a product involving $x$.  Since $S = \genset{G,\ x}$
and $G$ is closed under multiplication, this ideal is $S \setminus G$. Since $x$
was arbitrary, every element of $X$ generates the same principal ideal, and so
$X$ is contained in some $\J$-class $J$ of $S$. Therefore the maximal
subsemigroups of $S$ arise from its $\J$-classes $G$ and $J$; the maximal
subsemigroup that arises from $J$, namely $S \setminus X$, can be found by
applying Lemma~\ref{lem-Xi} with $k = 1$ and $X_{1} = X$.  The preceding
argument is summarised in the following corollary.

\begin{cor}\label{cor-Xi}
  Let $S$ be a finite monoid with group of units $G$, and suppose there exists a
  non-empty subset $X$ of $S \setminus G$ with the property that $S =
  \genset{G,\ x}$ if and only if $x \in X$. Then the only maximal subsemigroup
  of $S$ that does not arise from the group of units is $S \setminus X$.
\end{cor}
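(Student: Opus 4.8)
The plan is to prove Corollary~\ref{cor-Xi} by reducing it directly to Lemma~\ref{lem-Xi}, following exactly the reasoning already sketched in the paragraph preceding the statement. First I would establish that the hypothesised set $X$ is contained in a single $\J$-class of $S$. The key observation is that for any $x \in X$, the assumption $S = \genset{G,\ x}$ means every element of $S$ is a product of copies of $x$ interleaved with units of $G$; since $G$ is the group of units, it is closed under multiplication and consists precisely of the invertible elements, so the principal ideal $SxS \cup Sx \cup xS \cup \{x\}$ generated by $x$ equals exactly $S \setminus G$. As this holds for every $x \in X$, all elements of $X$ generate the same principal ideal, hence are $\J$-related; thus $X \subseteq J$ for a single $\J$-class $J$.

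Next I would argue that the maximal subsemigroups of $S$ arise only from $G$ and from this $\J$-class $J$. By the discussion at the start of Section~\ref{sec-general-results}, a $\J$-class gives rise to a maximal subsemigroup if and only if it intersects every generating set of $S$ non-trivially. The group of units $G$ always has this property, since the non-invertible elements form an ideal. For any other $\J$-class $J'$ distinct from both $G$ and $J$: a generating set can be taken inside $G \cup X \subseteq G \cup J$ (using the defining property of $X$ with a single element $x \in X$), which is disjoint from $J'$, so $J'$ does not intersect every generating set and yields no maximal subsemigroups. Hence the only maximal subsemigroup not arising from $G$ must arise from $J$.

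Finally I would identify that maximal subsemigroup explicitly by invoking Lemma~\ref{lem-Xi} with $k = 1$ and $X_{1} = X$. The hypothesis of Lemma~\ref{lem-Xi} requires that for all $A \subseteq J$, the equality $S = \genset{S \setminus J,\ A}$ holds if and only if $A \cap X \neq \varnothing$. This follows from the defining property of $X$: since $G \subseteq S \setminus J$ and $S \setminus J$ is exactly $\genset{G}$ together with the rest of the complement of $J$, generating $S$ from $S \setminus J$ together with $A \subseteq J$ amounts to adjoining some element of $J$ to $G$, which succeeds precisely when $A$ meets $X$. Lemma~\ref{lem-Xi} then yields that the unique maximal subsemigroup arising from $J$ is $S \setminus X$, completing the proof.

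I expect the main subtlety to lie in the second paragraph, namely in cleanly justifying that no \emph{other} $\J$-class $J'$ contributes a maximal subsemigroup and in correctly matching the generating-set condition of Lemma~\ref{lem-Xi} to the hypothesis on $X$. In particular, care is needed to verify that $S \setminus J \supseteq G$ really does capture everything outside $J$ relevant to generation, and that the equivalence ``$S = \genset{G,\ x} \iff x \in X$'' transfers faithfully to the set-valued condition ``$S = \genset{S \setminus J,\ A} \iff A \cap X \neq \varnothing$'' required by the lemma. Once that bridge is in place, the conclusion is immediate.
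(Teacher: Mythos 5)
Your first two paragraphs are correct and coincide with the paper's own reasoning (the paragraph preceding the corollary): the principal-ideal argument placing $X$ inside a single $\J$-class $J$, and the observation that $G \cup \{x\}$, for any $x \in X$, is a generating set of $S$ contained in $G \cup J$, so no $\J$-class other than $G$ and $J$ can yield maximal subsemigroups. The genuine gap is in your third paragraph, at exactly the step you flagged as the main subtlety: the claim that the element-wise hypothesis ``$S = \genset{G,\ x}$ if and only if $x \in X$'' yields the set-valued hypothesis of Lemma~\ref{lem-Xi}, namely that $S = \genset{S \setminus J,\ A}$ if and only if $A \cap X \neq \varnothing$ for all $A \subseteq J$. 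Your justification --- that generating from $S \setminus J$ together with $A$ ``amounts to adjoining some element of $J$ to $G$, which succeeds precisely when $A$ meets $X$'' --- is circular: it restates the claim to be proved. The substantive direction is that $A \cap X = \varnothing$ forces $\genset{S \setminus J,\ A} \neq S$, and nothing in the hypothesis controls products of several elements of $J \setminus X$ with one another; such a product can remain in $J$ (when the relevant $\H$-classes are non-trivial groups) and land in $X$.

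In fact the implication is false, so the gap cannot be filled: the hypothesis as literally stated is too weak. Take $S = \mathcal{POPI}_{7}$ and $G = \mathcal{C}_{7}$ (Section~\ref{sec-POPIn-PORIn}); write $\zeta = \zeta_{7}$, and let $H = H_{\id_{6}}^{S}$, a cyclic group of order $6$ generated by $\zeta$. Put $X = \set{x \in S \setminus G}{\genset{G,\ x} = S}$, so the corollary's hypothesis holds as soon as $X \neq \varnothing$. By Theorem~\ref{thm-POPIn} the only maximal subsemigroups of $S$ containing $G$ are $M_{p} = \genset{S \setminus J_{6},\ \zeta^{p}}$ for $p \in \{2, 3\}$, and the proof of Proposition~\ref{prop-regularstar-intersect} shows $M_{p} \cap H = \genset{\zeta^{p}}$. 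Hence $\zeta$ and $\zeta^{5}$ lie in neither $M_{2}$ nor $M_{3}$, so $\genset{G,\ \zeta} = \genset{G,\ \zeta^{5}} = S$ and $X \neq \varnothing$; on the other hand $\genset{G,\ \zeta^{2}} \subseteq M_{2}$ and $\genset{G,\ \zeta^{3}} \subseteq M_{3}$, so $\zeta^{2}, \zeta^{3} \notin X$. Since $\zeta^{2}\zeta^{3} = \zeta^{5} \in X$, the set $S \setminus X$ is not even a subsemigroup, and $S$ has \emph{two} maximal subsemigroups not arising from its group of units, namely $M_{2}$ and $M_{3}$. So the statement with the element-wise hypothesis is false, and any proof of it must break somewhere; yours breaks here. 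In fairness, the paper's own discussion has the same defect: it asserts without proof that the element-wise property is equivalent to the set-valued property ``for any $A \subseteq S$, $S = \genset{G,\ A}$ if and only if $A \cap X \neq \varnothing$'', which the example above refutes. Under the set-valued hypothesis your argument does work: if $u, v \notin X$ and $uv \in X$, then $\genset{G,\ u,\ v} \supseteq \genset{G,\ uv} = S$ would force $\{u, v\} \cap X \neq \varnothing$, so $S \setminus X$ is closed under multiplication, and the condition of Lemma~\ref{lem-Xi} follows. That stronger condition is what actually holds in every application in the paper ($X$ is an entire $\J$-class singled out by rank, or, for Theorem~\ref{thm-dual-symmetric}, $S \setminus X = \mathfrak{F}_{n} \cup \set{\alpha \in \mathcal{I}_{n}^{*}}{\rank(\alpha) \leq n - 2}$ is visibly closed), so a correct proof must assume it --- or verify closure of $S \setminus X$ --- rather than attempt to derive it from the single-element property.
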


%%%%%%%%%%%%%%%%%%%%%%%%%%%%%%%%%%%%%%%%%%%%%%%%%%%%%%%%%%%%%%%%%%%%%%%%%%%%%%%%

\section{Partial transformation monoids}\label{sec-transformation}

In this section, we find the maximal subsemigroups of 
the families of monoids of partial transformations defined in
Section~\ref{sec-trans-definitions}.

The maximal subsemigroups of several of the monoids considered in this section
have been described in the literature.  The maximal subsemigroups of
$\mathcal{OP}_{n}$ and $\mathcal{OR}_{n}$ were described
in~\cite{dimitrova2012maximal}, those of $\mathcal{POI}_{n}$ were found
in~\cite{Ganyushkin2003}, and those of $\mathcal{PODI}_{n}$ were found
in~\cite{dimitrova2009maximal}. The maximal subsemigroups of the singular ideal
of~$\mathcal{O}_{n}$ were found in~\cite{dimitrova2008maximal}, and those of
the singular ideal of $\mathcal{PO}_{n}$ in~\cite{dimitrova2012classification}.
Additionally, the maximal subsemigroups of the singular ideal of
$\mathcal{OD}_{n}$ were described in~\cite{dimitrova2008maximal}
and~\cite{gyudzhenov2006maximal}. However, since the group of units of
$\mathcal{OD}_{n}$ is non-trivial, this is a fundamentally different problem
than finding the maximal subsemigroups of $\mathcal{OD}_{n}$.  The maximal
subsemigroups of $\mathcal{PT}_{n}$, $\mathcal{T}_{n}$, and $\mathcal{I}_{n}$
are well-known folklore.  To our knowledge, a description of the maximal
subsemigroups of neither $\mathcal{POD}_{n}$, $\mathcal{OD}_{n}$,
$\mathcal{POP}_{n}$, $\mathcal{POPI}_{n}$, $\mathcal{POR}_{n}$, nor
$\mathcal{PORI}_{n}$ has appeared in the literature.

We reprove the known results in order to demonstrate that they may be obtained
in a largely unified manner, using the tools described in
Section~\ref{sec-general-results}. Furthermore, the descriptions of the maximal
subsemigroups of some of these monoids --- such as those of $\mathcal{PO}_{n}$
and $\mathcal{POD}_{n}$ --- are very closely linked, and so it is instructive to
present such results together, regardless of whether they were previously known.

Let $n \in \N$, $n \geq 2$.  We require some facts and notation that are common
to the submonoids of $\mathcal{PT}_{n}$ defined in
Section~\ref{sec-trans-definitions};  let $S$ be such a monoid.  Then $S$ is
regular, and any generating set for $S$ contains elements of rank $n$ and $n -
1$, but needs not contain elements of smaller rank. The Green's relations on $S$
can be characterised as follows:
\begin{itemize}
  \item
    $\alpha \L \beta$ if and only if $\im(\alpha) = \im(\beta)$,
  \item
    $\alpha \R \beta$ if and only if $\ker(\alpha) = \ker(\beta)$, and
  \item
    $\alpha \J \beta$ if and only if $\rank(\alpha) = \rank(\beta)$,
\end{itemize}
for $\alpha, \beta \in S$.  Note that $\ker(\alpha) = \ker(\beta)$ implies that
$\dom(\alpha) = \dom(\beta)$, by definition.  It is well-known that this is the
characterization of the Green's relations on $\mathcal{PT}_{n}$. Since $S$ is
regular submonoid of $\mathcal{PT}_{n}$, it follows that the characterization of
Green's $\L$- and $\R$-relations on $S$ is as
described~\cite[Proposition~2.4.2]{Howie1995aa}.  It is straightforward to see
that $\J$-equivalence in $S$ is determined by rank.

By the previous paragraph, to describe the maximal subsemigroups of $S$, we must
find those maximal subsemigroups that arise from the group of units, and those
that arise from the $\J$-class containing elements of rank $n - 1$. The results
of Section~\ref{sec-group-of-units} apply in the former case, and the results of
Section~\ref{sec-covered} apply in the latter case.

Notation for the groups of units that appear in this section was defined in
Section~\ref{sec-trans-definitions}.  In order to describe the remaining maximal
subsemigroups, we require the following notation for the Green's classes that
contain partial transformations of rank $n - 1$. Define $$J_{n - 1} =
\bigset{\alpha \in \mathcal{PT}_{n}}{\rank(\alpha) = n - 1}$$ to be the
$\J$-class of $\mathcal{PT}_{n}$ consisting of partial transformations of rank
$n - 1$. A partial transformation of rank $n - 1$ lacks exactly one element from
its image, and is either a partial permutation that lacks one element from its
domain, or is a transformation with a unique non-trivial kernel class,
which contains two points. Thus for distinct $i, j \in \n$, we define the
Green's classes
\begin{itemize}
  \item
    $L_{i} = \bigset{\alpha \in J_{n - 1}}{i \notin \im(\alpha)}$, which is an
    $\L$-class;
  \item
    $R_{i} = \bigset{\alpha \in J_{n - 1}}{i \notin \dom(\alpha)}$, which is an
    $\R$-class consisting of partial permutations; and
  \item
    $R_{\{i, j\}} = \set{\alpha \in J_{n - 1}}{(i, j) \in \ker(\alpha)}$, which
    is an $\R$-class consisting of transformations.
\end{itemize}
An $\H$-class of the form $L_{i} \cap R_{j}$ is a group if and only if $i = j$,
and an $\H$-class of the form $L_{i} \cap R_{\{j, k\}}$ is a group if and only
if $i \in \{j, k\}$.

Let $S$ be one of the submonoids of $\mathcal{PT}_{n}$ defined in
Section~\ref{sec-trans-definitions}.  It follows that the set $J_{n - 1} \cap S$
is a regular $\J$-class of $S$, that the $\L$-classes of $J_{n - 1} \cap S$ are
the sets of the form $L_{i} \cap S$, and that the $\R$-classes of $J_{n - 1}
\cap S$ are those non-empty sets of the form $R_{i} \cap S$ and $R_{\{i, j\}}
\cap S$, for distinct $i, j \in \n$.  Whenever we present a picture of
the graph $\Delta(S, J_{n - 1} \cap S)$, such as the picture of the graph
$\Delta(\mathcal{PO}_{n}, J_{n - 1} \cap
\mathcal{PO}_{n})$ given in Figure~\ref{fig-POn-delta}, we label an $\L$-class
as $L_{i}$ rather than as $L_{i} \cap S$, and so on, in order to avoid
cluttering the image. This approach also has the advantage of emphasizing the
similarities between the graphs of related monoids --- indeed, some graphs may
be obtained as induced subgraphs of others.

Note that the non-trivial kernel class of a order-preserving or -reversing
transformation of rank $n - 1$ has the form $\{i, i + 1\}$ for some $i \in \{1,
\ldots, n - 1\}$, and that the non-trivial kernel class of an 
orientation-preserving or -reversing transformation of rank $n - 1$ has the same
form, or is equal to $\{1, n\}$. Any non-empty subset of $\n$ appears as the
image of some partial transformation in each of the monoids defined in
Section~\ref{sec-trans-definitions}.

Often, the principal obstacle to describing the maximal subsemigroups of $S$ is
to determine the maximal independent subsets of $\Delta(S, J_{n - 1} \cap S)$.
Since the $\J$-class of $S$ to be considered in each case is $J_{n - 1} \cap S$,
throughout the remainder of this section, we write $\Delta(S)$ in place of
$\Delta(S, J_{n - 1} \cap S)$.  To determine the maximal independent subsets of
$\Delta(S)$, we must calculate the actions of the group of units $G$ of $S$ on
the $\R$-classes and $\L$-classes of $J_{n - 1} \cap S$. The orbits of
$\L$-classes correspond to the orbits of $G$ on $\n$, in the following way: if
$\Omega \subseteq \n$ is an orbit of $G$ on $\n$, then $\bigset{L_{i} \cap S}{i
\in \Omega}$ is an orbit of $G$ on $(J_{n - 1} \cap S) / \L$, and vice versa.
In the same way, the orbits of $\R$-classes that contain partial permutations of
rank $n - 1$ correspond to the orbits of $G$ on the indices $\bigset{i}{R_{i}
\cap S \neq \varnothing}$.  Finally, the orbits of $\R$-classes that contain
transformations of rank $n - 1$ correspond to orbits of $G$ on the sets
$\bigset{\{i, j\}}{i \neq j,\ R_{\{i, j\}} \cap S \neq \varnothing}$. The
actions of $\{\id_{n}\}$, $\genset{\gamma_{n}}$, $\mathcal{C}_{n}$,
$\mathcal{D}_{n}$, and $\mathcal{S}_{n}$ on these sets are easy to understand.

%%%%%%%%%%%%%%%%%%%%%%%%%%%%%%%%%%%%%%%%%%%%%%%%%%%%%%%%%%%%%%%%%%%%%%%%%%%%%%%%
\subsection{$\mathcal{PT}_{n}$, $\mathcal{T}_{n}$, and
$\mathcal{I}_{n}$}\label{sec-PTn-Tn-In}

First we find the maximal subsemigroups of the monoids $\mathcal{PT}_{n}$,
$\mathcal{T}_{n}$, and $\mathcal{I}_{n}$; their maximal subsemigroups are
well-known folklore, but we include the following result for completeness, and
as a gentle introduction to the application of the results of
Section~\ref{sec-general-results}.

\begin{thm}\label{thm-maximals-PTn-Tn-In}
  Let $n \in \N$, $n \geq 2$, be arbitrary.
  \begin{enumerate}[label=\emph{(\alph*)}]
    \item
      The maximal subsemigroups of $\mathcal{PT}_{n}$, the partial
      transformation monoid of degree $n$, are the sets:
      \begin{enumerate}[label=\emph{(\roman*)}]
        \item
          $(\mathcal{PT}_{n} \setminus \mathcal{S}_{n}) \cup U$,
          where $U$ is a maximal subgroup of $\mathcal{S}_{n}$
          \emph{(type~\ref{item-intersect})};
        \item
          $\mathcal{PT}_{n} \setminus \bigset{ \alpha \in \mathcal{T}_{n} }{
           \rank(\alpha) = n - 1}$
           \emph{(type~\ref{item-remove-r})}; and
        \item
          $\mathcal{PT}_{n} \setminus \bigset{ \alpha \in \mathcal{I}_{n} }{
          \rank(\alpha) = n - 1}$
          \emph{(type~\ref{item-remove-r})}.
      \end{enumerate}
    \item
      The maximal subsemigroups of $\mathcal{T}_{n}$, the full transformation
      monoid of degree $n$, are the sets:
      \begin{enumerate}[label=\emph{(\roman*)}]
        \item
          $(\mathcal{T}_{n} \setminus \mathcal{S}_{n}) \cup U$,
          where $U$ is a maximal subgroup of $\mathcal{S}_{n}$
          \emph{(type~\ref{item-intersect})}; and
        \item
          $\mathcal{T}_{n} \setminus \set{\alpha \in \mathcal{T}_{n} }{
          \rank(\alpha) = n -
          1}$
          \emph{(type~\ref{item-remove-j})}.
      \end{enumerate}
    \item
      The maximal subsemigroups of $\mathcal{I}_{n}$, the symmetric inverse
      monoid of degree $n$, are the sets:
      \begin{enumerate}[label=\emph{(\roman*)}]
        \item
          $(\mathcal{I}_{n} \setminus \mathcal{S}_{n}) \cup U$,
          where $U$ is a maximal subgroup of $\mathcal{S}_{n}$
          \emph{(type~\ref{item-intersect})}; and
        \item
          $\mathcal{I}_{n} \setminus \set{\alpha \in \mathcal{I}_{n} }{
          \rank(\alpha) = n - 1}$
          \emph{(type~\ref{item-remove-j})}.
      \end{enumerate}
  \end{enumerate}
  In particular, for $n \geq 2$, there are $s_{n} + 2$ maximal subsemigroups of
  $\mathcal{PT}_{n}$, and $s_{n} + 1$ maximal subsemigroups of both
  $\mathcal{T}_{n}$ and $\mathcal{I}_{n}$, where $s_{n}$ is the number of
  maximal subgroups of the symmetric group of degree $n$.  The monoid
  $\mathcal{PT}_{1} = \mathcal{I}_{1}$ is a semilattice of order 2: its
  maximal subsemigroups are each of its singleton subsets.
\end{thm}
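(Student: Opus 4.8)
The plan is to treat the three monoids together using the framework of Section~\ref{sec-covered}. By the preliminary discussion, any generating set for $S \in \{\mathcal{PT}_n, \mathcal{T}_n, \mathcal{I}_n\}$ meets both the group of units $\mathcal{S}_n$ and the rank-$(n-1)$ class $J_{n-1} \cap S$, and these are the only two $\J$-classes from which maximal subsemigroups can arise. For the group of units I would apply Corollary~\ref{cor-group-of-units}: since $\mathcal{S}_n$ is non-trivial for $n \geq 2$, this immediately yields the $s_n$ maximal subsemigroups $(S \setminus \mathcal{S}_n) \cup U$ of type~\ref{item-intersect}, one for each maximal subgroup $U$ of $\mathcal{S}_n$, giving item~(i) in each part. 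Everything else comes from analysing $J_{n-1} \cap S$ via the graph $\Delta(S)$.

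In each case $\mathcal{S}_n$ acts transitively on $\n$, hence transitively on the $\L$-classes $L_i \cap S$, so by Lemma~\ref{lem-transitive} no maximal subsemigroups of type~\ref{item-rectangle} or~\ref{item-remove-l} arise. It then remains to compute the orbits of $\R$-classes and the edges of $\Delta(S)$. For $\mathcal{T}_n$ the only rank-$(n-1)$ $\R$-classes are the $R_{\{i,j\}} \cap \mathcal{T}_n$, forming a single orbit, and for $\mathcal{I}_n$ the only ones are the $R_i \cap \mathcal{I}_n$, again a single orbit; in both cases $\Delta(S)$ is a single edge joining one $\L$-vertex to one $\R$-vertex, so there is no proper non-empty union of $\R$-orbits and Proposition~\ref{prop-remove-r} yields no maximal subsemigroups of type~\ref{item-remove-r}. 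For $\mathcal{PT}_n$, however, there are two orbits of $\R$-classes, the partial-permutation classes $R_i$ and the transformation classes $R_{\{i,j\}}$, and the $\L$-vertex is joined to both; thus the $\L$-vertex has degree $2$ while each $\R$-vertex has degree $1$. Applying Proposition~\ref{prop-remove-r}, removing either $\R$-orbit leaves a single $\R$-vertex whose unique neighbour, the $\L$-vertex, has degree $2$, so both choices give maximal subsemigroups of type~\ref{item-remove-r}: these are exactly the sets in parts (a)(ii) and (a)(iii).

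To exclude type~\ref{item-intersect} from $J_{n-1} \cap S$ I would invoke Lemma~\ref{lem-no-type-intersect}(b): in each monoid every rank-$(n-1)$ element is a product of a rank-$(n-1)$ idempotent with a unit (a partial identity or pair-collapsing idempotent, composed with a suitable permutation), so $J_{n-1} \cap S \subseteq \genset{\mathcal{S}_n,\ E(S)}$ and no such maximal subsemigroup can occur. For the inverse monoid $\mathcal{I}_n$ one could instead apply Proposition~\ref{prop-regularstar-intersect}, checking that $e\stab_{\mathcal{S}_n}(H_e^{\mathcal{I}_n})$ already equals the whole group $H_e^{\mathcal{I}_n} \cong \mathcal{S}_{n-1}$, so that no proper maximal subgroup contains it. With types~\ref{item-rectangle}--\ref{item-intersect} now settled, Proposition~\ref{prop-remove-j} completes the classification: for $\mathcal{T}_n$ and $\mathcal{I}_n$ none of these types arises from $J_{n-1} \cap S$, so $S \setminus (J_{n-1} \cap S)$ is the unique maximal subsemigroup of type~\ref{item-remove-j}, giving parts (b)(ii) and (c)(ii), whereas for $\mathcal{PT}_n$ the existence of the type~\ref{item-remove-r} subsemigroups means $\mathcal{PT}_n \setminus J_{n-1}$ is not maximal. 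Finally I would total the counts as $s_n + 2$, $s_n + 1$, $s_n + 1$, and dispose of $n = 1$ by inspecting the two-element semilattice $\mathcal{PT}_1 = \mathcal{I}_1$, whose maximal subsemigroups are its two singletons. The main obstacle is the accurate determination of $\Delta(\mathcal{PT}_n)$ and the careful use of the degree-$1$ adjacency condition in Proposition~\ref{prop-remove-r}, which is precisely what distinguishes $\mathcal{PT}_n$, where the central $\L$-vertex has degree $2$, from $\mathcal{T}_n$ and $\mathcal{I}_n$.
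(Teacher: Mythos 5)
Your proof is correct, but it follows a genuinely different route from the paper's own proof. The paper argues by generation: it invokes the classical facts that $\mathcal{T}_{n}$ and $\mathcal{I}_{n}$ are each generated by $\mathcal{S}_{n}$ together with \emph{any single} element of rank $n-1$, so Corollary~\ref{cor-Xi} immediately yields $S \setminus J_{n-1}$ as the unique maximal subsemigroup arising from $J_{n-1}$ in those two cases; for $\mathcal{PT}_{n}$ it observes that a subset $A \subseteq J_{n-1}$ generates $\mathcal{PT}_{n}$ together with $\mathcal{PT}_{n} \setminus J_{n-1}$ if and only if $A$ contains both a transformation and a partial permutation of rank $n-1$, whence Lemma~\ref{lem-Xi} with $k=2$, $X_{1} = J_{n-1} \cap \mathcal{T}_{n}$ and $X_{2} = J_{n-1} \cap \mathcal{I}_{n}$ produces exactly the two type~\ref{item-remove-r} subsemigroups. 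You instead run the $\Delta(S)$ machinery of Section~\ref{sec-type-234} uniformly for all three monoids: Lemma~\ref{lem-transitive} kills types~\ref{item-rectangle} and~\ref{item-remove-l}, the orbit count on $\R$-classes (one orbit for $\mathcal{T}_{n}$ and $\mathcal{I}_{n}$, two for $\mathcal{PT}_{n}$) settles type~\ref{item-remove-r} via Proposition~\ref{prop-remove-r}, Lemma~\ref{lem-no-type-intersect}(b) excludes type~\ref{item-intersect} (your factorisation of each rank-$(n-1)$ element as a rank-$(n-1)$ idempotent times a unit is valid in all three monoids), and Proposition~\ref{prop-remove-j} then decides type~\ref{item-remove-j}. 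This is precisely the alternative the paper sketches in the remark following its proof, but only for $\mathcal{PT}_{n}$ (cf.\ Figure~\ref{fig-PTn-delta}); you extend it to $\mathcal{T}_{n}$ and $\mathcal{I}_{n}$ as well, and your secondary check via Proposition~\ref{prop-regularstar-intersect} that $e\stab_{\mathcal{S}_{n}}(H_{e}^{\mathcal{I}_{n}})$ is all of $H_{e}^{\mathcal{I}_{n}}$ is also correct. The trade-off: the paper's route is shorter and needs no separate exclusion of type~\ref{item-intersect}, since Lemma~\ref{lem-Xi} characterises \emph{all} maximal subsemigroups arising from $J_{n-1}$ at once, but it leans on known generating-set facts; your route is more mechanical and uniform across the three monoids and derives the answer from the graph alone. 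One small imprecision: in Proposition~\ref{prop-remove-r} the degree condition concerns the \emph{removed} vertex $(J/\R)\setminus B$ (it must not be adjacent to a degree-$1$ vertex), not the remaining one; in $\Delta(\mathcal{PT}_{n})$ both $\R$-vertices are adjacent only to the degree-$2$ $\L$-vertex, so your conclusion stands.
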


\begin{proof}
  Since the group of units of $\mathcal{PT}_{n}$, $\mathcal{T}_{n}$, and
  $\mathcal{I}_{n}$ is $\mathcal{S}_{n}$, it follows by
  Corollary~\ref{cor-group-of-units} that the maximal subsemigroups arising from
  the group of units are, in each case, those described in the statement of the
  theorem.  It is well-known that $\mathcal{T}_{n}$ and $\mathcal{I}_{n}$ can
  each be generated by the symmetric group $\mathcal{S}_{n}$, along with
  any one of their elements of rank $n - 1$.  Thus, by Corollary~\ref{cor-Xi},
  for each of these two monoids, the unique maximal subsemigroup arising from
  its $\J$-class of rank $n - 1$ has type~\ref{item-remove-j}.

  It remains to consider those maximal subsemigroups that arise from the
  $\J$-class $J_{n - 1}$ of $\mathcal{PT}_{n}$.  Since $\mathcal{PT}_{n} =
  \genset{\mathcal{T}_{n},\ \mathcal{I}_{n}}$, the partial transformation
  monoid of degree $n$ can generated by $\mathcal{S}_{n}$, along with \emph{any}
  partial permutation of rank $n - 1$ and \emph{any} transformation of
  rank $n - 1$. Since $\mathcal{T}_{n}$ and $\mathcal{I}_{n}$ are both
  subsemigroups of $\mathcal{PT}_{n}$, any generating set for $\mathcal{PT}_{n}$
  contains \emph{both} a transformation and a partial permutation of rank $n -
  1$. Thus, using Lemma~\ref{lem-Xi} with $k = 2$, $X_{1} = J_{n - 1} \cap
  \mathcal{T}_{n}$, and $X_{2} = J_{n - 1} \cap \mathcal{I}_{n}$, it follows
  that the maximal subsemigroups of $\mathcal{PT}_{n}$ arising from $J_{n - 1}$
  are those given in the theorem.
  \end{proof}

The description of the maximal subsemigroups of $\mathcal{PT}_{n}$ that arise
from its $\J$-class $J_{n - 1}$ can also be obtained by using the graph
$\Delta(\mathcal{PT}_{n})$ and the results of Section~\ref{sec-type-234}.  Since
$\mathcal{PT}_{n}$ is generated by its units and its idempotents of rank $n
- 1$, Lemma~\ref{lem-no-type-intersect} implies that no maximal subsemigroups of
type~\ref{item-intersect} arise from $J_{n - 1}$.  The action of
$\mathcal{S}_{n}$ on the $\L$-classes of $J_{n - 1}$ is transitive, and so by
Lemma~\ref{lem-transitive} there are no maximal subsemigroups of
types~\ref{item-rectangle} or~\ref{item-remove-l}. However, there are two orbits
under the action of $\mathcal{S}_{n}$ on the $\R$-classes of $J_{n - 1}$: one
orbit contains the $\R$-classes of transformations, and the other contains the
$\R$-classes of partial permutations.  These two orbits of $\R$-classes are
adjacent in $\Delta(\mathcal{PT}_{n})$ to the unique orbit of $\L$-classes; a
picture of $\Delta(\mathcal{PT}_{n})$ is shown in Figure~\ref{fig-PTn-delta}.
Thus, by Proposition~\ref{prop-remove-r}, there are two maximal subsemigroups of
type~\ref{item-remove-r} arising from $J_{n - 1}$, each formed by removing the
$\R$-classes from one of these orbits.

%%%%%%%%%%%%%%%%%%%%%%%%%%%%%%%%%%%%%%%%%%%%%%%%%%%%%%%%%%%%%%%%%%%%%%%%%%%%%%%%
% Figure of Delta for partial transformation monoid
%%%%%%%%%%%%%%%%%%%%%%%%%%%%%%%%%%%%%%%%%%%%%%%%%%%%%%%%%%%%%%%%%%%%%%%%%%%%%%%%
\begin{figure}
  \begin{center}
    \begin{tikzpicture}
      % L-classes
      \node[rounded corners,rectangle,draw,fill=blue!20]
        (1)  at (2,  0) {$\bigset{L_{i}}{i \in \n}$};

      % R-classes
      \node[rounded corners,rectangle,draw,fill=blue!20]
        (11) at (0,  3) {$\bigset{R_{i}}{i \in \n}$};
      \node[rounded corners,rectangle,draw,fill=blue!20]
        (12) at (5,  3) {$\bigset{R_{\{i, j\}}}{ i, j \in \n,\ i \neq j}$};

      % edges
      \edge{1}{11};
      \edge{1}{12};
    \end{tikzpicture}
  \end{center}
  \caption{The graph $\Delta(\mathcal{PT}_{n})$.}\label{fig-PTn-delta}
\end{figure}
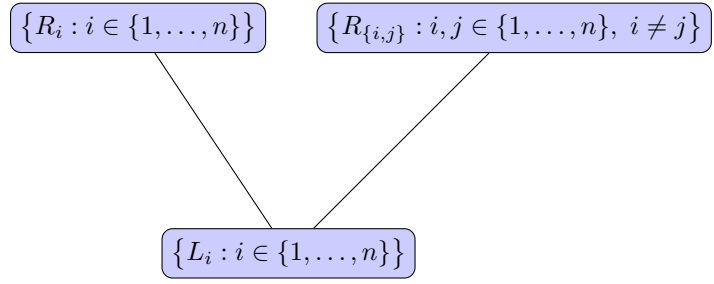

%%%%%%%%%%%%%%%%%%%%%%%%%%%%%%%%%%%%%%%%%%%%%%%%%%%%%%%%%%%%%%%%%%%%%%%%%%%%%%%%
\subsection{$\mathcal{PO}_{n}$ and $\mathcal{POD}_{n}$}\label{sec-POn-PODn}

The maximal subsemigroups of $\mathcal{PO}_{n}$ were described
in~\cite{dimitrova2012classification}.
To our knowledge, the maximal subsemigroups of
$\mathcal{POD}_{n}$ have not been described in the literature.  Using our
approach, we find that the maximal subsemigroups of $\mathcal{POD}_{n}$ are
closely linked to those of $\mathcal{PO}_{n}$.

Let $n \in \N$, $n \geq 2$.  To state the main results in this section we
require the following notation. If $S \in \{\mathcal{PO}_{n},
\mathcal{POD}_{n}\}$, then $J_{n - 1} \cap S$ is a regular $\J$-class of $S$,
the $\L$-classes of $J_{n - 1}\cap S$ are $\bigset{L_{i}\cap S}{i \in \n}$, and
the $\R$-classes are $\bigset{R_{i}\cap S}{i \in \n}$ and $\bigset{R_{\{i, i +
1\}}\cap S}{i \in \{1, \ldots, n - 1 \}}$. Note that $\mathcal{POD}_{n}$ is
generated by $\mathcal{PO}_{n}$ and the permutation $\gamma_{n}$ that reverses
the usual order on $\n$. More information about $\mathcal{PO}_{n}$ can be found
in~\cite{Gomes1992}.

The main results of this section are the following theorems.

\begin{thm}\label{thm-POn}
  Let $n \in \N$, $n \geq 2$, be arbitrary and let $\mathcal{PO}_{n}$ be
  the monoid of order-preserving partial transformations on $\n$
  with the usual order. Then the maximal subsemigroups of $\mathcal{PO}_{n}$
  are:
  \begin{enumerate}[label=\emph{(\alph*)}]
    \item
      $\mathcal{PO}_{n} \setminus \{ \id_{n} \}$
      \emph{(type~\ref{item-remove-j})};
    \item
      the union of $\mathcal{PO}_{n} \setminus J_{n - 1}$ and the union of
      $$\bigset{ L_{i} \cap \mathcal{PO}_{n}}{i \in A} \cup \bigset{R_{i} \cap
      \mathcal{PO}_{n}}{i \notin A} \cup \bigset{R_{\{ i, i + 1\}} \cap
      \mathcal{PO}_{n}}{i, i + 1 \notin A},$$ where $A$ is any non-empty proper
      subset of $\n$
      \emph{(type~\ref{item-rectangle})} and;
    \item
      $\mathcal{PO}_{n} \setminus R$, where $R$ is any $\R$-class in $J_{n - 1}
      \cap \mathcal{PO}_{n}$
      \emph{(type~\ref{item-remove-r})}.
  \end{enumerate}
  The monoid $\mathcal{PO}_{1} = \mathcal{PT}_{1}$ is a semilattice of order
  $2$: its maximal subsemigroups are each of its singleton subsets. In
  particular, for $n \in \N$, there are $2 ^ {n} + 2n - 2$ maximal subsemigroups
  of $\mathcal{PO}_{n}$. 
\end{thm}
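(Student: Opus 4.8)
The plan is to apply the machinery of Section~\ref{sec-general-results} to the only two $\J$-classes of $\mathcal{PO}_n$ from which maximal subsemigroups can arise. As observed in the preamble to this section, every generating set for $\mathcal{PO}_n$ meets the group of units $G = \{\id_n\}$ and the rank-$(n-1)$ $\J$-class $J_{n-1} \cap \mathcal{PO}_n$, but need not meet any $\J$-class of smaller rank; so these are the only relevant $\J$-classes. The case $n=1$ is dealt with by the direct observation that $\mathcal{PO}_1 = \mathcal{PT}_1$ is the two-element semilattice, so assume $n \geq 2$. Since $G$ is trivial, Corollary~\ref{cor-group-of-units} immediately supplies the single maximal subsemigroup $\mathcal{PO}_n \setminus \{\id_n\}$ of type~\ref{item-remove-j}, giving part~(a); all remaining maximal subsemigroups arise from $J_{n-1} \cap \mathcal{PO}_n$, which is covered by $G$ because rank $n-1$ sits immediately below rank $n$ in the $\J$-order.

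The heart of the argument is the analysis of the bipartite graph $\Delta(\mathcal{PO}_n)$. Because $G$ is trivial, each orbit is a single Green's class, so the vertices are the $\L$-classes $L_i$ ($i \in \n$) and the $\R$-classes $R_i$ ($i \in \n$) and $R_{\{i, i+1\}}$ ($1 \leq i \leq n-1$). Using the group-$\H$-class criteria recorded before this subsection, the edges are precisely $L_i \sim R_i$, $L_i \sim R_{\{i,i+1\}}$, and $L_{i+1} \sim R_{\{i,i+1\}}$; I would then record the degrees, noting in particular that each $R_i$ has degree $1$, while each $L_i$ has degree at least $2$ and each $R_{\{i,i+1\}}$ has degree $2$. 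Before counting, I first dispose of type~\ref{item-intersect}: since every $\H$-class of the regular $\J$-class $J_{n-1} \cap \mathcal{PO}_n$ is trivial (indeed $\mathcal{PO}_n$ is $\H$-trivial), Lemma~\ref{lem-no-type-intersect}(a) rules out any such subsemigroup.

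The main obstacle, and the step I expect to require the most care, is to enumerate the maximal independent subsets of $\Delta(\mathcal{PO}_n)$, since these govern the type~\ref{item-rectangle} subsemigroups through Corollary~\ref{cor-rectangle}. I would prove that sending a subset $A \subseteq \n$ to the vertex set
$$
I_A = \bigset{L_i}{i \in A} \cup \bigset{R_i}{i \notin A} \cup \bigset{R_{\{i, i+1\}}}{i, i+1 \notin A}
$$
defines a bijection between the subsets of $\n$ and the maximal independent subsets of $\Delta(\mathcal{PO}_n)$. Independence of $I_A$ is a direct check against the three edge-types above; for the converse — that every maximal independent set $I$ equals some $I_A$ — I would set $A = \set{i}{L_i \in I}$ and use the degree-$1$ vertices $R_i$ together with the neighbourhoods $\{L_i, L_{i+1}\}$ of the $R_{\{i,i+1\}}$ to show that the $\R$-part of $I$ is forced. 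This yields $2^n$ maximal independent subsets, the two bicomponents being $I_{\n}$ and $I_{\varnothing}$, so Corollary~\ref{cor-rectangle} (with the explicit form from Proposition~\ref{prop-rectangle}) produces exactly $2^n - 2$ subsemigroups of type~\ref{item-rectangle}, indexed by the non-empty proper subsets $A$ and described as in part~(b).

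Finally, types~\ref{item-remove-l} and~\ref{item-remove-r} are read off the degree data via Propositions~\ref{prop-remove-l} and~\ref{prop-remove-r}. Each $L_i$ is adjacent to the degree-$1$ vertex $R_i$, so no $\L$-class may be removed and there are no type~\ref{item-remove-l} subsemigroups; dually, since every $\L$-class has degree at least $2$, no $\R$-class is adjacent to a degree-$1$ vertex, so each of the $2n-1$ $\R$-classes $R$ yields a type~\ref{item-remove-r} subsemigroup $\mathcal{PO}_n \setminus R$, establishing part~(c). Summing the contributions — one of type~\ref{item-remove-j}, $2^n - 2$ of type~\ref{item-rectangle}, and $2n-1$ of type~\ref{item-remove-r} — gives the total $2^n + 2n - 2$.
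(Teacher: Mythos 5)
Your proof is correct and takes essentially the same approach as the paper's: Corollary~\ref{cor-group-of-units} for part (a), Lemma~\ref{lem-no-type-intersect} to exclude type~\ref{item-intersect} (you invoke $\H$-triviality where the paper invokes idempotent-generation of $\mathcal{PO}_{n}$; both hypotheses hold), the same bijection $A \mapsto I_{A}$ between subsets of $\n$ and maximal independent subsets of $\Delta(\mathcal{PO}_{n})$ (this is precisely the paper's Lemma~\ref{lem-POn-independent-subsets}, proved by the same forcing argument from the degree-$1$ vertices $R_{i}$ and the neighbourhoods of the $R_{\{i, i+1\}}$), and the same degree arguments via Propositions~\ref{prop-remove-l} and~\ref{prop-remove-r} for parts (b) and (c). The only step you leave tacit is that $\mathcal{PO}_{n} \setminus J_{n - 1}$ itself is not maximal (i.e.\ no type~\ref{item-remove-j} subsemigroup arises from $J_{n - 1} \cap \mathcal{PO}_{n}$), which follows at once from Proposition~\ref{prop-remove-j} since type~\ref{item-rectangle} subsemigroups exist --- the paper records this in one closing sentence.
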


\begin{thm}\label{thm-PODn}
  Let $n \in \N$, $n \geq 2$, be arbitrary. Let $\mathcal{POD}_{n}$ be the
  monoid of order-preserving or -reversing partial transformations on $\n$ with
  the usual order, and let $\gamma_{n}$ be the permutation of degree $n$ that
  reverses this order.  Then the maximal subsemigroups of $\mathcal{POD}_{n}$
  are:
  \begin{enumerate}[label=\emph{(\alph*)}]
    \item
      $\mathcal{POD}_{n} \setminus \{\gamma_{n}\}$
      \emph{(type~\ref{item-intersect})};
    \item
      the union of $\mathcal{POD}_{n} \setminus J_{n - 1}$ and the union of
      \begin{multline*}
        \bigset{(L_{i} \cup L_{n - i + 1}) \cap \mathcal{POD}_{n}}{i \in A} \cup
        \bigset{(R_{i} \cup R_{n - i + 1}) \cap \mathcal{POD}_{n}}{i \notin A}\\
        \cup \bigset{(R_{\{ i, i + 1 \}} \cup R_{\{n - i, n - i + 1\}}) \cap
        \mathcal{POD}_{n}}{i, i + 1 \notin A},
      \end{multline*}
      where $A$ is any non-empty proper subset of $\{1, \ldots, \ceiling{n /
      2}\}$ \emph{(type~\ref{item-rectangle})} and;
    \item
      $\mathcal{POD}_{n} \setminus (R \cup \gamma_{n} R)$,
      where $R$ is any $\R$-class in $J_{n - 1} \cap \mathcal{POD}_{n}$
      \emph{(type~\ref{item-remove-r})}; in particular:
      \begin{enumerate}[label=\emph{(\roman*)}]
        \item
          $\mathcal{POD}_{n}\setminus (R_{i}\cup R_{n - i + 1})$,
          for $i \in \{1, \ldots, \ceiling{n / 2}\}$; and
        \item
          $\mathcal{POD}_{n}\setminus (R_{\{i, i + 1\}}\cup R_{\{n - i, n - i +
          1\}})$,
          for $i \in \{1, \ldots, \floor{n / 2}\}$.
        \end{enumerate}
  \end{enumerate}
  The monoid $\mathcal{POD}_{1} = \mathcal{PT}_{1}$ is a semilattice of order
  $2$: its maximal subsemigroups are each of its singleton subsets.
  In particular, for $n \in \N$, there are $2 ^ {\ceiling{n / 2}} + n - 1$
  maximal subsemigroups of $\mathcal{POD}_{n}$. 
\end{thm}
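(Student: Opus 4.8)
The plan is to mirror the analysis of $\mathcal{PO}_{n}$ from Theorem~\ref{thm-POn}, exploiting the fact that $\mathcal{POD}_{n} = \genset{\mathcal{PO}_{n},\ \gamma_{n}}$ and that its group of units is $G = \genset{\gamma_{n}} \cong C_{2}$. As for every monoid in this section, only the group of units and the rank-$(n-1)$ $\J$-class $J = J_{n-1} \cap \mathcal{POD}_{n}$ give rise to maximal subsemigroups, so I would treat these two $\J$-classes separately. The graph $\Delta(\mathcal{POD}_{n})$ should be obtainable from $\Delta(\mathcal{PO}_{n})$ (Figure~\ref{fig-POn-delta}) by folding together the Green's classes swapped by the order-reversing involution $\gamma_{n}$, which is exactly what produces the passage from $2^{n}$ to $2^{\ceiling{n/2}}$ in the counts.

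For the group of units: since $G \cong C_{2}$ is non-trivial, Corollary~\ref{cor-group-of-units} together with Lemma~\ref{lem-maximals-cyclic} (the only maximal subgroup of $C_{2}$ being $\{\id_{n}\}$) immediately yields the single maximal subsemigroup $(\mathcal{POD}_{n} \setminus G) \cup \{\id_{n}\} = \mathcal{POD}_{n} \setminus \{\gamma_{n}\}$, of type~\ref{item-intersect}. This is part (a), and one checks directly that $\gamma_{n}$, being the unique non-identity unit, is indecomposable, so this set is indeed a maximal subsemigroup.

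Turning to $J$, the first genuinely new step---and one of the two main obstacles---is to rule out maximal subsemigroups of type~\ref{item-intersect} arising from $J$. Unlike in $\mathcal{PO}_{n}$, the group $\H$-classes of $J$ are \emph{not} trivial: the group $\H$-class $L_{i} \cap R_{i}$ consists of the identity on $\n \setminus \{i\}$ together with its unique order-reversal, so it has order $2$, and similarly for the transverse group $\H$-classes. Hence hypothesis (a) of Lemma~\ref{lem-no-type-intersect} fails and I would instead verify hypothesis (b): writing each order-reversing element of rank $n-1$ as $\gamma_{n}\beta$ with $\beta \in \mathcal{PO}_{n}$ order-preserving, and invoking the known fact (see~\cite{Gomes1992}) that the rank-$(n-1)$ elements of $\mathcal{PO}_{n}$ lie in $\genset{E(\mathcal{PO}_{n})}$, gives $J \subseteq \genset{\gamma_{n},\ E(\mathcal{POD}_{n})}$, so no type~\ref{item-intersect} subsemigroups arise from $J$.

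It then remains to compute $\Delta(\mathcal{POD}_{n})$ and apply Section~\ref{sec-type-234}. From $L_{i}\gamma_{n} = L_{n-i+1}$, $\gamma_{n}R_{i} = R_{n-i+1}$, and $\gamma_{n}R_{\{i,i+1\}} = R_{\{n-i,\,n-i+1\}}$, the vertices are the orbits $\{L_{i},L_{n-i+1}\}$ and $\{R_{i},R_{n-i+1}\}$ for $i \in \{1,\ldots,\ceiling{n/2}\}$, and $\{R_{\{i,i+1\}},R_{\{n-i,n-i+1\}}\}$ for $i \in \{1,\ldots,\floor{n/2}\}$, with adjacencies inherited from $\Delta(\mathcal{PO}_{n})$. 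The second obstacle is the careful bookkeeping of the central orbits: for odd $n$ the orbits of $L_{(n+1)/2}$ and $R_{(n+1)/2}$ are singletons, while for even $n$ the central kernel orbit $\{R_{\{n/2,\,n/2+1\}}\}$ is a singleton of degree $1$. I would check that in every case each $\L$-orbit has degree at least $2$, while each partial-permutation $\R$-orbit $\{R_{i},R_{n-i+1}\}$ is a private degree-$1$ neighbour of the corresponding $\L$-orbit. Given this, the maximal independent subsets of $\Delta(\mathcal{POD}_{n})$ biject with subsets $A \subseteq \{1,\ldots,\ceiling{n/2}\}$ (take the $\L$-orbits indexed by $A$ together with the $\R$-orbits not adjacent to them), so Corollary~\ref{cor-rectangle} gives the $2^{\ceiling{n/2}}-2$ subsemigroups of type~\ref{item-rectangle} in (b); Proposition~\ref{prop-remove-l} gives none of type~\ref{item-remove-l} since every $\L$-orbit meets a degree-$1$ vertex; and Proposition~\ref{prop-remove-r}, since no $\L$-orbit has degree $1$, shows that removing each of the $\ceiling{n/2}$ orbits $\{R_{i},R_{n-i+1}\}$ and each of the $\floor{n/2}$ orbits $\{R_{\{i,i+1\}},R_{\{n-i,n-i+1\}}\}$ yields a type~\ref{item-remove-r} subsemigroup, giving exactly (c)(i) and (c)(ii). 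Summing, $1 + (2^{\ceiling{n/2}}-2) + (\ceiling{n/2}+\floor{n/2}) = 2^{\ceiling{n/2}} + n - 1$, as claimed.
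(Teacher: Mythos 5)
Your proposal is correct and follows essentially the same route as the paper's joint proof of Theorems~\ref{thm-POn} and~\ref{thm-PODn}: Corollary~\ref{cor-group-of-units} for the group of units, hypothesis (b) of Lemma~\ref{lem-no-type-intersect} (via idempotent generation of $\mathcal{PO}_{n}$ and $\mathcal{POD}_{n} = \genset{\mathcal{PO}_{n},\ \gamma_{n}}$) to exclude type~\ref{item-intersect}, and then the analysis of $\Delta(\mathcal{POD}_{n})$ as the $\gamma_{n}$-folding of $\Delta(\mathcal{PO}_{n})$ together with Propositions~\ref{prop-rectangle}--\ref{prop-remove-r}, where your inline description of the maximal independent subsets is exactly the paper's Lemma~\ref{lem-PODn-independent-subsets}. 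Your explicit observation that the $\H$-classes of $J_{n-1} \cap \mathcal{POD}_{n}$ have order $2$, so that hypothesis (a) of Lemma~\ref{lem-no-type-intersect} fails and (b) is genuinely needed, is a correct detail the paper leaves implicit.
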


The most substantial part of the proof of Theorems~\ref{thm-POn}
and~\ref{thm-PODn} is the description of the maximal independent subsets of
$\Delta(\mathcal{PO}_{n})$ and $\Delta(\mathcal{POD}_{n})$, respectively.  Since
$\mathcal{PO}_{n}$ has a trivial group of units, the orbits of $\L$-classes and
$\R$-classes of $J_{n - 1} \cap \mathcal{PO}_{n}$ are singletons, and
$\Delta(\mathcal{PO}_{n})$ is isomorphic to the Graham-Houghton graph of the
principal factor of $J_{n - 1} \cap \mathcal{PO}_{n}$.  A picture of
$\Delta(\mathcal{PO}_{n})$ is shown in Figure~\ref{fig-POn-delta}.

%%%%%%%%%%%%%%%%%%%%%%%%%%%%%%%%%%%%%%%%%%%%%%%%%%%%%%%%%%%%%%%%%%%%%%%%%%%%%%%%
% Figure of Delta for POn
%%%%%%%%%%%%%%%%%%%%%%%%%%%%%%%%%%%%%%%%%%%%%%%%%%%%%%%%%%%%%%%%%%%%%%%%%%%%%%%%

\begin{figure}
  \begin{center}
    \begin{tikzpicture}
      % L-classes
      \node[rounded corners,rectangle,draw,fill=blue!20]
        (1)  at (0, 0) {$\{L_{1}\}$};
      \node[rounded corners,rectangle,draw,fill=blue!20]
        (2)  at (4, 0) {$\{L_{2}\}$};
      \node[rounded corners,rectangle,draw,fill=blue!20]
        (3)  at (12, 0) {$\{L_{n-1}\}$};
      \node[rounded corners,rectangle,draw,fill=blue!20]
        (4)  at (16,0) {$\{L_{n}\}$};

      % R-classes of partial perms
      \node[rounded corners,rectangle,draw,fill=blue!20]
        (11) at (0,  3) {$\{R_{1}\}$};
      \node[rounded corners,rectangle,draw,fill=blue!20]
        (12) at (4,  3) {$\{R_{2}\}$};
      \node[rounded corners,rectangle,draw,fill=blue!20]
        (13) at (12,  3) {$\{R_{n-1}\}$};
      \node[rounded corners,rectangle,draw,fill=blue!20]
        (14) at (16, 3) {$\{R_{n}\}$};

      % R-classes of transformations
      \node[rounded corners,rectangle,draw,fill=green!20]
        (21) at (2,  3) {$\{R_{\{1,2\}}\}$};
      \node[rounded corners,rectangle,draw,fill=green!20]
        (22) at (6,  3) {$\{R_{\{2,3\}}\}$};
      \node[rounded corners,rectangle,draw,fill=green!20]
        (23) at (14, 3) {$\{R_{\{n-1,n\}}\}$};

      % Blank spots
      \node (31) at (6.833,  1.75){};
      \node (32) at (7.333,     1){};
      \node (33) at (11.166, 1.25){};
      \node (34) at (10.666,    2){};

      \node (40) at (9, 1.5) {\ldots};

      % straight edges
      \edge{1}{11};
      \edge{2}{12};
      \edge{3}{13};
      \edge{4}{14};

      % diagonal edges
      \edge{1}{21};
      \edge{2}{21};
      \edge{2}{22};
      \edge{3}{23};
      \edge{4}{23};

      % fading edges
      \edge{22}{31};
      \draw[dashed] (22) -- (32);
      \edge{3}{33};
      \draw[dashed] (3)  -- (34);
    \end{tikzpicture}
  \end{center}
  \caption{The graph $\Delta(\mathcal{PO}_{n})$.}\label{fig-POn-delta}
\end{figure}

%%%%%%%%%%%%%%%%%%%%%%%%%%%%%%%%%%%%%%%%%%%%%%%%%%%%%%%%%%%%%%%%%%%%%%%%%%%%%%%%

The group of units of $\mathcal{POD}_{n}$ is $\genset{\gamma_{n}}$, which equals
$\{ \id_{n}, \gamma_{n} \}$.  Thus
$\Delta(\mathcal{POD}_{n})$ is isomorphic to a quotient of
$\Delta(\mathcal{PO}_{n})$. Since $\genset{\gamma_{n}}$ has $\ceiling{n / 2}$
orbits on the set $\n$, there are $\ceiling{n / 2}$ corresponding orbits of
$\L$-classes, and $\ceiling{n / 2}$ orbits of $\R$-classes of partial
permutations.  Furthermore, there are $\floor{n / 2}$ orbits of
$\genset{\gamma_{n}}$ on the set $\bigset{\{i, i + 1\}}{i \in \{ 1,
\ldots, n - 1 \}}$, and these orbits correspond to $\floor{n / 2}$ orbits of
$\R$-classes of transformations.  A picture of $\Delta(\mathcal{POD}_{n})$ is
shown in Figure~\ref{fig-PODn-delta-odd} for odd $n$, and in
Figure~\ref{fig-PODn-delta-even} for even $n$; see these pictures for a
description of the edges of these graphs.  In the case that $n$ is odd, the
graph $\Delta(\mathcal{POD}_{n})$ is isomorphic to
$\Delta(\mathcal{PO}_{\ceiling{n / 2}})$.

%%%%%%%%%%%%%%%%%%%%%%%%%%%%%%%%%%%%%%%%%%%%%%%%%%%%%%%%%%%%%%%%%%%%%%%%%%%%%%%%
% Figure of Delta for PODn, n odd
%%%%%%%%%%%%%%%%%%%%%%%%%%%%%%%%%%%%%%%%%%%%%%%%%%%%%%%%%%%%%%%%%%%%%%%%%%%%%%%%

\begin{figure}
  \begin{center}
    \begin{tikzpicture}
      % L-classes
      \node[rounded corners,rectangle,draw,fill=blue!20]
        (1)  at (0,  0) {$\{L_{1}, L_{n}\}$};
      \node[rounded corners,rectangle,draw,fill=blue!20]
        (2)  at (5,  0) {$\{L_{2}, L_{n-1}\}$};
      \node[rounded corners,rectangle,draw,fill=blue!20]
        (3)  at (15.5, 0) {$\{L_{(n + 1) / 2}\}$};

      % R-classes of partial perms
      \node[rounded corners,rectangle,draw,fill=blue!20]
        (6)  at (0,  3) {$\{R_{1}, R_{n}\}$};
      \node[rounded corners,rectangle,draw,fill=blue!20]
        (8)  at (5,  3) {$\{R_{2}, R_{n-1}\}$};
      \node[rounded corners,rectangle,draw,fill=blue!20]
        (11) at (15.5, 3) {$\{R_{(n + 1) / 2}\}$};

      % R-classes of transformations
      \node[rounded corners,rectangle,draw,fill=green!20]
        (7)  at (2.5,  4) {$\{R_{\{1,2\}}, R_{\{n-1,n\}}\}$};
      \node[rounded corners,rectangle,draw,fill=green!20]
        (9)  at (7.5,  4) {$\{R_{\{2,3\}}, R_{\{n-2,n-1\}}\}$};
      \node[rounded corners,rectangle,draw,fill=green!20]
        (20) at (12.5, 4) {$\{R_{\{(n - 1) / 2,
                                   (n + 1) / 2\}},
                              R_{\{(n + 1) / 2,
                                   (n + 3) / 2\}}\}$};

      % Blank spots
      \node (15) at (8.75,   2) {};
      \node (16) at (9.375,  1) {};
      \node (18) at (11.25,  2) {};
      \node (19) at (10.625, 1) {};
      \node (21) at (10, 2) {\ldots};

      % straight edges
      \edge{1}{6};
      \edge{2}{8};
      \edge{3}{11};

      % diagonal edges
      \edge{1}{7};
      \edge{2}{7};
      \edge{2}{9};
      \edge{3}{20};

      % fading edges
      \edge{9}{15};
      \draw[dashed] (9) -- (16);
      \edge{20}{18};
      \draw[dashed] (20) -- (19);
    \end{tikzpicture}
  \end{center}
  \caption{The graph $\Delta(\mathcal{POD}_{n})$, when $n$ is
  odd.}\label{fig-PODn-delta-odd}
\end{figure}
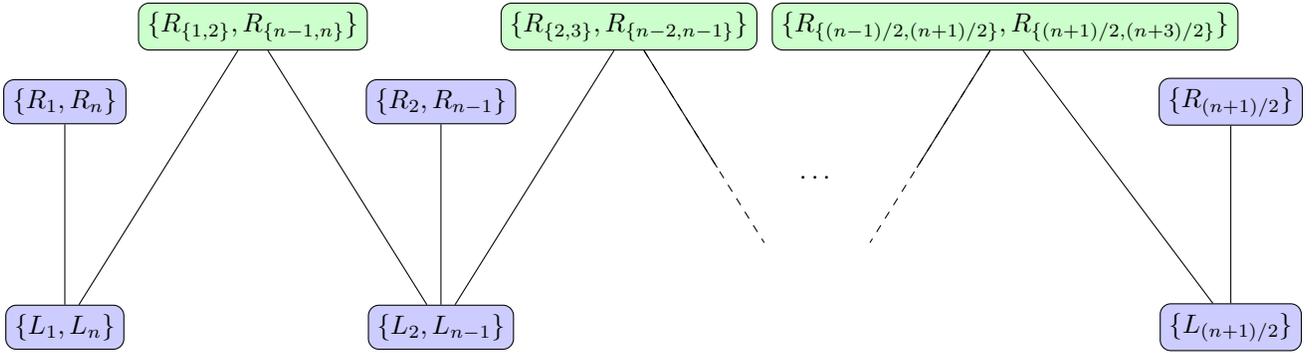

%%%%%%%%%%%%%%%%%%%%%%%%%%%%%%%%%%%%%%%%%%%%%%%%%%%%%%%%%%%%%%%%%%%%%%%%%%%%%%%%

%%%%%%%%%%%%%%%%%%%%%%%%%%%%%%%%%%%%%%%%%%%%%%%%%%%%%%%%%%%%%%%%%%%%%%%%%%%%%%%%
% Figure of Delta for PODn, n even
%%%%%%%%%%%%%%%%%%%%%%%%%%%%%%%%%%%%%%%%%%%%%%%%%%%%%%%%%%%%%%%%%%%%%%%%%%%%%%%%

\begin{figure}
  \begin{center}
    \begin{tikzpicture}
      % L-classes
      \node[rounded corners,rectangle,draw,fill=blue!20]
        (1)  at (0,  0) {$\{L_{1}, L_{n}\}$};
      \node[rounded corners,rectangle,draw,fill=blue!20]
        (2)  at (4.5,  0) {$\{L_{2}, L_{n-1}\}$};
      \node[rounded corners,rectangle,draw,fill=blue!20]
        (3)  at (13.95, 0) {$\{L_{n / 2}, L_{n / 2 + 1}\}$};

      % R-classes of partial perms
      \node[rounded corners,rectangle,draw,fill=blue!20]
        (6)  at (0,  3) {$\{R_{1}, R_{n}\}$};
      \node[rounded corners,rectangle,draw,fill=blue!20]
        (8)  at (4.5,  3) {$\{R_{2}, R_{n-1}\}$};
      \node[rounded corners,rectangle,draw,fill=blue!20]
        (11) at (13.95, 3) {$\{R_{n / 2}, R_{n / 2 + 1}\}$};

      % R-classes of transformations
      \node[rounded corners,rectangle,draw,fill=green!20]
        (7)  at (2.25,  4) {$\{R_{\{1, 2\}}, R_{\{n - 1, n\}}\}$};
      \node[rounded corners,rectangle,draw,fill=green!20]
        (9)  at (6.75,  4) {$\{R_{\{2, 3\}}, R_{\{n - 2, n - 1\}}\}$};
      \node[rounded corners,rectangle,draw,fill=green!20]
        (20) at (11.25, 4) {$\{R_{\{n / 2 - 1, n / 2\}},
                              R_{\{n / 2 + 1, n / 2 + 2\}}\}$};

      \node[rounded corners,rectangle,draw,fill=green!20]
        (30) at (15.9, 4) {$\{R_{\{n / 2, n / 2 + 1\}}\}$};

      % Blank spots
      \node (15) at (7.875,   2) {};
      \node (16) at (8.4375,  1) {};
      \node (18) at (10.125,  2) {};
      \node (19) at (9.5625,  1) {};
      \node (21) at (9, 2) {\ldots};

      % straight edges
      \edge{1}{6};
      \edge{2}{8};
      \edge{3}{11};

      % diagonal edges
      \edge{1}{7};
      \edge{2}{7};
      \edge{2}{9};
      \edge{3}{20};
      \edge{3}{30};

      % fading edges
      \edge{9}{15};
      \draw[dashed] (9) -- (16);
      \edge{20}{18};
      \draw[dashed] (20) -- (19);
    \end{tikzpicture}
  \end{center}
  \caption{The graph $\Delta(\mathcal{POD}_{n})$, when $n$ is
  even.}\label{fig-PODn-delta-even}
\end{figure}
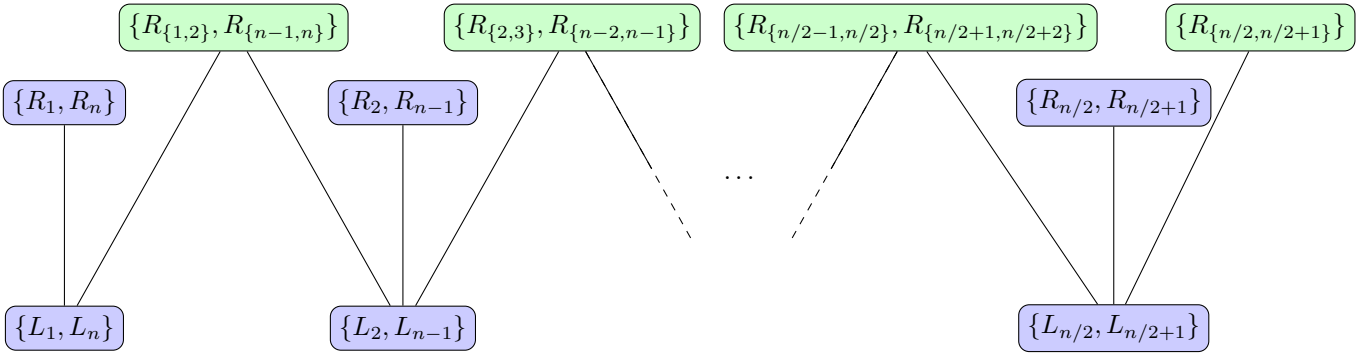

%%%%%%%%%%%%%%%%%%%%%%%%%%%%%%%%%%%%%%%%%%%%%%%%%%%%%%%%%%%%%%%%%%%%%%%%%%%%%%%%

Given these descriptions of $\Delta(\mathcal{PO}_{n})$
and $\Delta(\mathcal{POD}_{n})$, the following lemmas are established.

\begin{lem}\label{lem-POn-independent-subsets}
  Let $K$ be any collection of vertices of the graph $\Delta(\mathcal{PO}_{n})$.
  Then $K$ is a maximal independent subset of $\Delta(\mathcal{PO}_{n})$ if and
  only if
    $$K = \bigset{\{L_{i}\cap\mathcal{PO}_{n}\}}{i \in A}
     \cup \bigset{\{R_{i}\cap\mathcal{PO}_{n}\}}{i \not\in A}
     \cup \bigset{\{R_{\{i, i + 1\}}\cap\mathcal{PO}_{n}\}}{i, i + 1 \not\in
     A}$$
  for some subset $A$ of $\n$.
\end{lem}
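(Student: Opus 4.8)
The plan is to first read off the adjacency structure of $\Delta(\mathcal{PO}_n)$ precisely, and then verify each implication of the stated equivalence directly. From Figure~\ref{fig-POn-delta}, or equivalently from the fact recorded earlier that $L_i \cap R_j$ is a group exactly when $i = j$ and $L_i \cap R_{\{j,k\}}$ is a group exactly when $i \in \{j,k\}$, the edges of $\Delta(\mathcal{PO}_n)$ are exactly the pairs $\{L_i\}$--$\{R_i\}$ for $i \in \n$, together with $\{L_i\}$--$\{R_{\{i,i+1\}}\}$ and $\{L_{i+1}\}$--$\{R_{\{i,i+1\}}\}$ for $i \in \{1,\ldots,n-1\}$. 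The structural facts I would emphasise are that the single neighbour of each partial-permutation vertex $\{R_i\}$ is $\{L_i\}$, and that the neighbours of each transformation vertex $\{R_{\{i,i+1\}}\}$ are exactly $\{L_i\}$ and $\{L_{i+1}\}$; in particular, every neighbour of a fixed $\L$-class vertex $\{L_i\}$ is an $\R$-class vertex whose condition for lying in a set of the displayed form entails $i \notin A$.

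For the ``if'' direction, I would fix $A \subseteq \n$, let $K$ be the set given by the displayed formula, and argue independence and maximality separately. Independence follows because all edges run between the two bicomponents, so it suffices to note that whenever $i \in A$ (so $\{L_i\} \in K$), every neighbour of $\{L_i\}$ requires $i \notin A$ for membership and hence lies outside $K$. For maximality I would use the elementary fact that a maximal independent subset is precisely an independent subset in which every excluded vertex has a neighbour inside it: a vertex $\{L_i\}$ with $i \notin A$ is blocked by $\{R_i\} \in K$; a vertex $\{R_i\}$ with $i \in A$ is blocked by $\{L_i\} \in K$; and a transformation vertex $\{R_{\{i,i+1\}}\}$ excluded from $K$ satisfies $i \in A$ or $i+1 \in A$, so it is blocked by $\{L_i\}$ or $\{L_{i+1}\}$ accordingly.

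For the ``only if'' direction, I would start from an arbitrary maximal independent subset $K$ and set $A = \bigset{i \in \n}{\{L_i\} \in K}$, so that the $\L$-part of $K$ already agrees with the formula. It then remains to show that the two families of $\R$-class vertices are forced. Since $\{R_i\}$ has the unique neighbour $\{L_i\}$, independence gives $\{R_i\} \notin K$ when $i \in A$, while maximality gives $\{R_i\} \in K$ when $i \notin A$, as $\{R_i\}$ would otherwise have no neighbour in $K$ and could be added. The same reasoning applied to $\{R_{\{i,i+1\}}\}$, with its two neighbours $\{L_i\}$ and $\{L_{i+1}\}$, yields $\{R_{\{i,i+1\}}\} \in K$ if and only if both $i \notin A$ and $i+1 \notin A$. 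This is exactly the displayed description of $K$.

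I do not expect a genuinely difficult step here: once the adjacency is recorded correctly, the whole argument rests on the standard characterisation of maximal independent subsets as independent dominating subsets. The only points needing mild care are treating the partial-permutation and transformation vertices uniformly, and the boundary indices $i = 1$ and $i = n$; the latter cause no difficulty, since transformation vertices $\{R_{\{i,i+1\}}\}$ exist only for $i \in \{1,\ldots,n-1\}$, so the reduced-degree vertices $\{L_1\}$ and $\{L_n\}$ are handled automatically by the same case analysis.
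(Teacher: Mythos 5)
Your proof is correct and takes essentially the same approach as the paper: the ``only if'' direction (defining $A$ from the $\L$-class vertices in $K$ and using the adjacency structure to force membership of the $\R$-class vertices) is exactly the paper's argument, and your ``if'' direction merely fills in the verification that the paper dismisses as routine, via the standard characterisation of maximal independent subsets as independent dominating sets.
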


\begin{proof}
  ($\Rightarrow$)
  Suppose that $K$ is a maximal independent subset of
  $\Delta(\mathcal{PO}_{n})$. There exists a set $A \subseteq \n$ of indices
  such that $\bigset{\{L_{i} \cap \mathcal{PO}_{n}\}}{i \in A}$ is the
  collection of $\L$-class vertices in $K$.  Since a vertex of the form $\{R_{i}
  \cap \mathcal{PO}_{n}\}$ is adjacent in $\Delta(\mathcal{PO}_{n})$ only to the
  vertex $\{L_{i}\cap\mathcal{PO}_{n}\}$, it follows by the maximality of $K$
  that $\{R_{i} \cap \mathcal{PO}_{n}\} \in K$ if and only if $i\notin A$.
  Similarly, since an orbit of the form $\{R_{\{i, i + 1\}} \cap
  \mathcal{PO}_{n}\}$ is adjacent in $\Delta(\mathcal{PO}_{n})$ only to the
  orbits $\{L_{i}\cap\mathcal{PO}_{n}\}$ and $\{L_{i +
  1}\cap\mathcal{PO}_{n}\}$, it follows that $\{R_{\{i, i + 1\}} \cap
  \mathcal{PO}_{n}\} \in K$ if and only if $i\notin A$ and $i + 1\notin A$.
  Since we have considered all vertices of $\Delta(\mathcal{PO}_{n})$, it
  follows that $K$ has the required form.

  ($\Leftarrow$)
  From the definition of $\Delta(\mathcal{PO}_{n})$, it is easy to verify that
  $K$ is a maximal independent subset of $\Delta(\mathcal{PO}_{n})$.
\end{proof}

The following lemma can be proved using an argument similar to that used in the
proof of Lemma~\ref{lem-POn-independent-subsets}.

\begin{lem}\label{lem-PODn-independent-subsets}
  Let $K$ be any collection of vertices of the graph
  $\Delta(\mathcal{POD}_{n})$.  Then $K$ is a maximal independent subset of
  $\Delta(\mathcal{POD}_{n})$ if and only if $K$ is equal to
  \begin{multline*}
    \bigset{\{L_{i}\cap\mathcal{POD}_{n}, L_{n -i + 1}\cap\mathcal{POD}_{n}\}}{i
    \in A}
        \cup
    \bigset{\{R_{i}\cap\mathcal{POD}_{n}, R_{n -i +
    1}\cap\mathcal{POD}_{n}\}}{i\notin A}\\
        \cup
    \bigset{\{R_{\{i, i + 1\}}\cap\mathcal{POD}_{n},
              R_{\{n - i, n - i + 1\}}\cap\mathcal{POD}_{n}\}}{i, i + 1 \not\in
              A},
 \end{multline*}
  for some subset $A$ of $\{1, \ldots, \ceiling{n / 2}\}$.
\end{lem}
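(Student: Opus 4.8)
The plan is to follow the proof of Lemma~\ref{lem-POn-independent-subsets} almost verbatim, exploiting the fact that $\Delta(\mathcal{POD}_{n})$ has essentially the same local structure as $\Delta(\mathcal{PO}_{n})$: each partial-permutation $\R$-class vertex has degree one, and each transformation $\R$-class vertex has degree at most two. Before carrying out the main argument I would first record the adjacencies of $\Delta(\mathcal{POD}_{n})$ explicitly, reading them off from Figures~\ref{fig-PODn-delta-odd} and~\ref{fig-PODn-delta-even}. Since an $\H$-class $L_{j}\cap R_{i}$ is a group exactly when $j=i$, the partial-permutation orbit $\{R_{i}\cap\mathcal{POD}_{n},\ R_{n-i+1}\cap\mathcal{POD}_{n}\}$ is adjacent only to the single $\L$-orbit $\{L_{i}\cap\mathcal{POD}_{n},\ L_{n-i+1}\cap\mathcal{POD}_{n}\}$; and since $L_{j}\cap R_{\{i,i+1\}}$ is a group exactly when $j\in\{i,i+1\}$, the transformation orbit $\{R_{\{i,i+1\}}\cap\mathcal{POD}_{n},\ R_{\{n-i,n-i+1\}}\cap\mathcal{POD}_{n}\}$ is adjacent precisely to the $\L$-orbits indexed by $i$ and by $i+1$ (these two orbits coinciding in the degenerate middle case).

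For the forward implication, given a maximal independent subset $K$, I would define $A\subseteq\{1,\dots,\ceiling{n/2}\}$ to be the set of indices $i$ for which the $\L$-orbit $\{L_{i}\cap\mathcal{POD}_{n},\ L_{n-i+1}\cap\mathcal{POD}_{n}\}$ lies in $K$. Because each partial-permutation $\R$-orbit has degree one, the maximality of $K$ forces it into $K$ exactly when its unique $\L$-neighbour (index $i$) is absent, i.e.\ exactly when $i\notin A$. Likewise, because the transformation orbit indexed by $i$ is adjacent only to the $\L$-orbits indexed by $i$ and $i+1$, independence together with maximality forces it into $K$ exactly when both of these $\L$-orbits are absent, i.e.\ when $i,i+1\notin A$. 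Since these three families exhaust the vertices of $\Delta(\mathcal{POD}_{n})$, the set $K$ has the claimed form.

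For the converse, I would take an arbitrary $A\subseteq\{1,\dots,\ceiling{n/2}\}$, form the corresponding set $K$, and verify directly from the adjacency list that $K$ is independent (no $\L$-orbit in $K$ is adjacent to an $\R$-orbit in $K$, by construction) and maximal (every omitted vertex has a neighbour in $K$: an omitted $\L$-orbit of index $i$ has its partner partial-permutation $\R$-orbit in $K$, and an omitted $\R$-orbit has at least one of its defining $\L$-orbits in $K$).

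The only genuine subtlety --- and the main place care is needed --- is the treatment of the boundary vertices: the central $\L$- and $\R$-orbits when $n$ is odd, and the singleton transformation orbit $\{R_{\{n/2,n/2+1\}}\cap\mathcal{POD}_{n}\}$ when $n$ is even. In the latter case the index $i+1=n/2+1$ falls outside $\{1,\dots,\ceiling{n/2}\}$, so the condition ``$i+1\notin A$'' is vacuous and the membership criterion correctly collapses to ``$n/2\notin A$'', matching the fact that this vertex has degree one. Checking that each such degenerate vertex still obeys the stated rule --- and that this is exactly what Figures~\ref{fig-PODn-delta-odd} and~\ref{fig-PODn-delta-even} depict --- is the one step that does not come for free from the $\mathcal{PO}_{n}$ argument; once it is dispatched, the remainder is routine.
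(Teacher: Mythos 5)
Your proposal is correct and follows essentially the same route as the paper, which proves this lemma by the argument of Lemma~\ref{lem-POn-independent-subsets}: read off the adjacencies of $\Delta(\mathcal{POD}_{n})$ (partial-permutation $\R$-orbits of degree one attached to the like-indexed $\L$-orbit, transformation $\R$-orbits attached to the $\L$-orbits indexed by $i$ and $i+1$), then use independence plus maximality to force the stated membership criteria. Your explicit handling of the degenerate middle vertices (the singleton orbits for odd $n$, and $\{R_{\{n/2,\,n/2+1\}}\cap\mathcal{POD}_{n}\}$ of degree one for even $n$, where the condition $i+1\notin A$ is vacuous) is exactly the point the paper leaves implicit, and you resolve it correctly.
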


\proofrefs{thm-POn}{thm-PODn}
  The group of units of $\mathcal{PO}_{n}$ is the trivial group $\{\id_{n}\}$,
  and the group of units of $\mathcal{POD}_{n}$ is $\genset{\gamma_{n}}$, which
  has order $2$. Thus by Corollary~\ref{cor-group-of-units}, the maximal
  subsemigroups that arise from the group of units in each instance are as
  described.
  
  Let $S \in \{ \mathcal{PO}_{n}, \mathcal{POD}_{n} \}$.  Since
  $\mathcal{PO}_{n}$ is idempotent generated~\cite[Theorem~3.13]{Gomes1992}, and
  since $\mathcal{POD}_{n} = \genset{\mathcal{PO}_{n}, \gamma_{n}}$, it follows
  by Lemma~\ref{lem-no-type-intersect} that there are no maximal subsemigroups
  of type~\ref{item-intersect} arising from $J_{n - 1} \cap S$. It follows
  directly from Proposition~\ref{prop-rectangle}, and
  Lemmas~\ref{lem-POn-independent-subsets}
  and~\ref{lem-PODn-independent-subsets}, that the maximal subsemigroups of
  type~\ref{item-rectangle} are those described in the theorems. There exist
  vertices of degree $1$ in $\Delta(S)$ --- the orbits of $\R$-classes of
  partial permutations. Each orbit of $\L$-classes is adjacent to such a vertex.
  Thus by Proposition~\ref{prop-remove-l}, there are no maximal subsemigroups of
  type~\ref{item-remove-l} arising from $S$, but by
  Proposition~\ref{prop-remove-r}, each orbit of $\R$-classes can be removed to
  provide a maximal subsemigroup of type~\ref{item-remove-r}. If $S =
  \mathcal{PO}_{n}$, then there are $2n - 1$ maximal subsemigroups of this type,
  and if $S = \mathcal{POD}_{n}$, then there are $n$.  By
  Proposition~\ref{prop-remove-j}, there is no maximal subsemigroup of
  type~\ref{item-remove-j}.
\qed{}

%%%%%%%%%%%%%%%%%%%%%%%%%%%%%%%%%%%%%%%%%%%%%%%%%%%%%%%%%%%%%%%%%%%%%%%%%%%%%%%%
\subsection{$\mathcal{O}_{n}$ and $\mathcal{OD}_{n}$}\label{sec-On-ODn}

The maximal subsemigroups of the singular ideal of $\mathcal{O}_{n}$ were
incorrectly described and counted in~\cite{xiuliang2000classification}: the
given formula for the number of maximal subsemigroups of the singular ideal of
$\mathcal{O}_{n}$ is correct for $2 \leq n \leq 5$, but gives only a lower bound
when $n \geq 6$. A correct description, although no number, was subsequently
given in~\cite{dimitrova2008maximal}. The maximal subsemigroups of the singular
ideal of $\mathcal{OD}_{n}$ were described in~\cite{gyudzhenov2006maximal}. The
group of units of $\mathcal{O}_{n}$ is trivial, and so the maximal subsemigroups
of its singular ideal correspond in an obvious way to the maximal subsemigroups
of $\mathcal{O}_{n}$. However, the group of units of $\mathcal{OD}_{n}$ is
non-trivial, and the units act on the monoid in such a way as to break the
correspondence between the maximal subsemigroups of the singular part, and the
maximal subsemigroups of $\mathcal{OD}_{n}$ itself.
Thus~\cite{dimitrova2008maximal, gyudzhenov2006maximal} solve an essentially
different problem than the description of the maximal subsemigroups of
$\mathcal{OD}_{n}$.

Let $S \in \{\mathcal{O}_{n}, \mathcal{OD}_{n}\}$.  Then $S$ is a regular monoid
and, by definition, $\mathcal{O}_{n} = \mathcal{PO}_{n} \cap \mathcal{T}_{n}$,
and $\mathcal{OD}_{n} = \mathcal{POD}_{n} \cap \mathcal{T}_{n}$.  From the
description of the Green's classes of $\mathcal{PO}_{n}$ and $\mathcal{POD}_{n}$
given in Section~\ref{sec-POn-PODn}, $J_{n - 1} \cap S$ is a regular $\J$-class
of $S$, the set of $\L$-classes of $J_{n - 1} \cap S$ is $\bigset{L_{i} \cap
S}{i \in \n}$, and the set of $\R$-classes of $J_{n - 1} \cap S$ is
$\bigset{R_{\{i, i + 1\}} \cap S}{i \in \{1, \ldots, n - 1\}}$.

We may identify the Green's classes of $J_{n - 1} \cap \mathcal{O}_{n}$ with
those of $J_{n - 1} \cap \mathcal{PO}_{n}$ that contain transformations,
so that $L_{i} \cap \mathcal{O}_{n}$ corresponds with $L_{i} \cap
\mathcal{PO}_{n}$ and $R_{\{i, i+1\}} \cap \mathcal{O}_{n}$ corresponds with
$R_{\{i, i+1\}} \cap \mathcal{PO}_{n}$.  In this way, we obtain the graph
$\Delta(\mathcal{O}_{n})$ as the subgraph of $\Delta(\mathcal{PO}_{n})$ induced
on those orbits of Green's classes that contain transformations.  A
similar statement relates $\Delta(\mathcal{OD}_{n})$ to the induced subgraph of
$\Delta(\mathcal{POD}_{n})$ on its orbits of Green's classes that contain 
transformations.  Thus the definitions of $\Delta(\mathcal{O}_{n})$ and
$\Delta(\mathcal{OD}_{n})$ are contained in those of $\Delta(\mathcal{PO}_{n})$
and $\Delta(\mathcal{POD}_{n})$.

For $k \in \N$, we define the path graph of order $k$ to be the graph with
vertices $\{1, \ldots, k\}$ and edges $$\bigset{\{i, i + 1\}}{i \in \{1, \ldots,
k - 1\}}.$$ The vertices of degree $1$ in the path graph of order $k$ are the
end-points, $1$ and $k$. It is easy to see that $\Delta(\mathcal{O}_{n})$ is
isomorphic to the path graph of order $2n - 1$, via the isomorphism that maps
the orbit $\{ L_{i} \cap \mathcal{O}_{n} \}$ to the vertex $2i - 1$, and maps
the orbit $\{ R_{\{i, i + 1\}} \cap \mathcal{O}_{n} \}$ to the vertex $2i$.
Similarly, $\Delta(\mathcal{OD}_{n})$ is isomorphic to the path graph of order
$n$. A picture of $\Delta(\mathcal{O}_{n})$ is shown in
Figure~\ref{fig-On-delta}.

%%%%%%%%%%%%%%%%%%%%%%%%%%%%%%%%%%%%%%%%%%%%%%%%%%%%%%%%%%%%%%%%%%%%%%%%%%%%%%%%
% Figure of Delta for On
%%%%%%%%%%%%%%%%%%%%%%%%%%%%%%%%%%%%%%%%%%%%%%%%%%%%%%%%%%%%%%%%%%%%%%%%%%%%%%%%

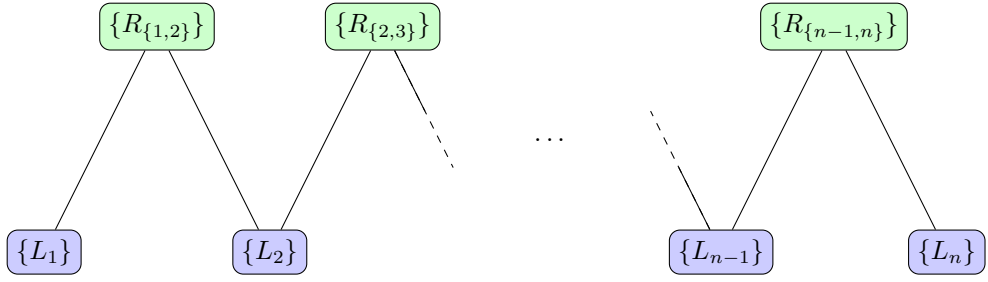
\begin{figure}
  \begin{center}
    \begin{tikzpicture}
      % L-classes
      \node[rounded corners,rectangle,draw,fill=blue!20]
        (1)  at (0, 0) {$\{L_{1}\}$};
      \node[rounded corners,rectangle,draw,fill=blue!20]
        (2)  at (3, 0) {$\{L_{2}\}$};
      \node[rounded corners,rectangle,draw,fill=blue!20]
        (3)  at (9, 0) {$\{L_{n - 1}\}$};
      \node[rounded corners,rectangle,draw,fill=blue!20]
        (4)  at (12,0) {$\{L_{n}\}$};

      % R-classes
      \node[rounded corners,rectangle,draw,fill=green!20]
        (21) at (1.5,  3) {$\{R_{\{1,2\}}\}$};
      \node[rounded corners,rectangle,draw,fill=green!20]
        (22) at (4.5,  3) {$\{R_{\{2,3\}}\}$};
      \node[rounded corners,rectangle,draw,fill=green!20]
        (23) at (10.5, 3) {$\{R_{\{n-1,n\}}\}$};

      % Blank spots
      \node (31) at (5.125, 1.75){};
      \node (32) at (5.5,   1)   {};
      \node (33) at (8.375, 1.25){};
      \node (34) at (8,     2)   {};

      \node (40) at (6.75, 1.5) {\ldots};

      % diagonal edges
      \edge{1}{21};
      \edge{2}{21};
      \edge{2}{22};
      \edge{3}{23};
      \edge{4}{23};

      % fading edges
      \edge{22}{31};
      \draw[dashed] (22) -- (32);
      \edge{3}{33};
      \draw[dashed] (3)  -- (34);
    \end{tikzpicture}
  \end{center}
  \caption{The graph $\Delta(\mathcal{O}_{n})$.}\label{fig-On-delta}
\end{figure}

%%%%%%%%%%%%%%%%%%%%%%%%%%%%%%%%%%%%%%%%%%%%%%%%%%%%%%%%%%%%%%%%%%%%%%%%%%%%%%%%

We can count the number of maximal independent subsets of
$\Delta(\mathcal{O}_{n})$ and $\Delta(\mathcal{OD}_{n})$ using a recurrence
relation.

\begin{lem}\label{lem-path}
  The number $A_{n}$ of maximal independent subsets of the path graph of order
  $n$ satisfies the recurrence relation $A_{1} = 1$, $A_{2} = A_{3} = 2$, and
  $A_{n} = A_{n - 2} + A_{n - 3}$ for $n \geq 4$.
\end{lem}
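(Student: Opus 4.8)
The plan is to obtain the recurrence bijectively by conditioning on how a maximal independent subset $K$ behaves at the endpoint $n$ of the path; write $P_{k}$ for the path graph of order $k$, so that $A_{k}$ counts the maximal independent subsets of $P_{k}$. First I would dispatch the base cases by direct enumeration: the only maximal independent subset of $P_{1}$ is $\{1\}$, giving $A_{1} = 1$; the maximal independent subsets of $P_{2}$ are $\{1\}$ and $\{2\}$, giving $A_{2} = 2$; and those of $P_{3}$ are $\{2\}$ and $\{1, 3\}$, giving $A_{3} = 2$.

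For the inductive step, fix $n \geq 4$. I would first record the standard reformulation, immediate from the definition, that a subset $K$ is a maximal independent subset if and only if it is independent and every vertex is either in $K$ or adjacent to a vertex of $K$ (otherwise such a vertex could be adjoined without breaking independence). Since the endpoint $n$ has $n - 1$ as its only neighbour, this forces $n \in K$ or $n - 1 \in K$, and I would split the count into these two cases. In the case $n \in K$, independence gives $n - 1 \notin K$, and I claim that $K \mapsto K \cap \{1, \ldots, n - 2\}$ is a bijection onto the maximal independent subsets of $P_{n - 2}$; the inverse adjoins the vertex $n$, which dominates $n - 1$. In the case $n \notin K$, maximality forces $n - 1 \in K$ and then independence forces $n - 2 \notin K$, so the tail of $K$ is completely determined; here $K \mapsto K \cap \{1, \ldots, n - 3\}$ is a bijection onto the maximal independent subsets of $P_{n - 3}$, the inverse adjoining the forced vertex $n - 1$. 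Summing the two cases gives $A_{n} = A_{n - 2} + A_{n - 3}$.

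The step requiring genuine care is checking that the domination (maximality) condition transfers correctly across the truncation, in both directions and at the boundary vertex of the subpath. Concretely, in the first case I must verify that a truncated vertex $i \in \{1, \ldots, n - 2\}$ that is undominated within $P_{n - 2}$ is also undominated within $P_{n}$; the only place this could fail is $i = n - 2$, whose extra neighbour in $P_{n}$ is $n - 1$, and this is exactly ruled out by $n - 1 \notin K$. The analogous check in the second case turns on the extra neighbour $n - 2$ of the boundary vertex $n - 3$ being excluded from $K$. Once these boundary verifications are in place, both maps are readily seen to be mutually inverse bijections, and the recurrence follows.
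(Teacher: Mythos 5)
Your proof is correct and follows essentially the same route as the paper: both arguments condition on which of the endpoint $n$ or its neighbour $n-1$ lies in $K$ (exactly one must, since $n$ has degree $1$), and then truncate to reduce to the path graphs of orders $n-2$ and $n-3$ respectively. The only cosmetic difference is that you package the two cases as explicit mutually inverse bijections and spell out the boundary domination checks, whereas the paper states the same correspondence as a pair of inequalities $A_{n} \geq A_{n-2} + A_{n-3}$ and $A_{n} \leq A_{n-2} + A_{n-3}$, leaving those verifications implicit.
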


\begin{proof}
  For $n \in \N$, define $\Gamma_{n}$ to be the path graph of order $n$.  It is
  straightforward to verify that $A_{1} = 1$, and $A_{2} = A_{3} = 2$.
  
  Suppose that $n \geq 4$, and that $K$ is a maximal independent subset of
  $\Gamma_{n - 3}$. Then $K \cup\{n - 1\}$ is a maximal independent subset of
  $\Gamma_{n}$.  Similarly, if $K$ is a maximal independent subset of $\Gamma_{n
  - 2}$, then $K \cup \{ n \}$ is a maximal independent subset of $\Gamma_{n}$.
  In this way, distinct maximal independent subsets of $\Gamma_{n - 3}$ and
  $\Gamma_{n - 2}$ give rise to distinct maximal independent subsets of
  $\Gamma_{n}$. Thus $A_{n} \geq A_{n - 2} + A_{n - 3}$.

  Conversely, suppose that $n \geq 4$ and that $K$ is a maximal independent
  subset of $\Gamma_{n}$.  Since the vertex $n$ has degree $1$ in $\Gamma_{n}$,
  it follows by the maximality of $K$ that precisely one of $n - 1$ and $n$ is a
  member of $K$.  If $n - 1\in K$, then $n - 2 \notin K$, which implies that $K
  \setminus \{n - 1\}$ is a maximal independent subset of $\Gamma_{n - 3}$.
  Otherwise, $K \setminus \{n\}$ is a maximal independent subset of $\Gamma_{n
  - 2}$. Thus $A_{n} \leq A_{n - 2} + A_{n - 3}$.
\end{proof}

Note that $A_{n}$ is equal to the $(n + 6) ^ {\text{th}}$ term of the
Padovan sequence~\cite[\href{http://oeis.org/A000931}{A000931}]{OEIS}.

Given the proof of Lemma~\ref{lem-path}, it is clear that a subset $K \subseteq
\n$ is a maximal independent subset of $\Gamma_{n}$ if and only if $K$ contains
exactly one of $1$ and $2$, $K$ contains no consecutive numbers, and for any $i
\in K$ with $i \leq n - 2$, $K$ contains exactly one of $i + 2$ and $i + 3$.
There are two special maximal
independent subsets of $\Gamma$: the subset of all even vertices, and the subset
of all odd vertices. These maximal independent subsets correspond
to the bicomponents of $\Delta(\mathcal{O}_{n})$ and
$\Delta(\mathcal{OD}_{n})$, and so they are the unique maximal independent
subsets that do not give rise to maximal subsemigroups of $\mathcal{O}_{n}$ and
$\mathcal{OD}_{n}$ of type~\ref{item-rectangle} --- see
Proposition~\ref{prop-rectangle} and Corollary~\ref{cor-rectangle}.

The following theorems are the main results of this section.

\begin{thm}\label{thm-On}
  Let $n \in \N$, $n \geq 2$, be arbitrary and let $\mathcal{O}_{n}$ be the
  monoid of order-preserving transformations on $\n$ with the usual order.  Then
  the maximal subsemigroups of $\mathcal{O}_{n}$ are:
  \begin{enumerate}[label=\emph{(\alph*)}]
    \item
      $\mathcal{O}_{n} \setminus \{\id_{n}\}$
      \emph{(type~\ref{item-remove-j})};
    \item
      the union of $\mathcal{O}_{n} \setminus J_{n - 1}$ and the union of the
      Green's classes in $$\bigset{ L_{(i + 1) / 2} \cap \mathcal{O}_{n}} { i
      \in A,\ i\ \text{is odd}} \cup \bigset{ R_{\{ i / 2, (i / 2) + 1\}} \cap
      \mathcal{O}_{n}}{ i \in A,\ i\ \text{is even}},$$ where $A$ is a maximal
      independent subset of the path graph of order $2n - 1$ that contains both
      odd and even numbers
      \emph{(type~\ref{item-rectangle})};
    \item
      $\mathcal{O}_{n}\setminus L$, where $L$ is any $\L$-class in $J_{n - 1}
      \cap \mathcal{O}_{n}$
      \emph{(type~\ref{item-remove-l})}; and
    \item
      $\mathcal{O}_{n}\setminus R_{\{i, i + 1\}}$, where $i \in \{2, \ldots, n -
      2\}$
      \emph{(type~\ref{item-remove-r})}.
  \end{enumerate}
  In particular, there are $3$ maximal subsemigroups of $\mathcal{O}_{2}$, and
  for $n \geq 3$, there are $A_{2n - 1} + 2n - 4$ maximal subsemigroups of
  $\mathcal{O}_{n}$, where $A_{2n - 1}$ is the $(2n - 1)^{\text{th}}$ term of
  the sequence defined by $A_{1} = 1,\ A_{2} = A_{3} = 2$,\ and $A_{k} = A_{k -
  2} + A_{k - 3}$ for $k \geq 4$.
\end{thm}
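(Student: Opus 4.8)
The plan is to partition the maximal subsemigroups of $\mathcal{O}_n$ according to the $\J$-classes that give rise to them. As established in the preamble to this section, every generating set for $\mathcal{O}_n$ must contain elements of rank $n$ and of rank $n-1$, so the only $\J$-classes meeting every generating set are the (trivial) group of units $\{\id_n\}$ and the regular $\J$-class $J_{n-1} \cap \mathcal{O}_n$. For the group of units, Corollary~\ref{cor-group-of-units} applies directly: since $G = \{\id_n\}$ is trivial, the unique maximal subsemigroup arising from it is $\mathcal{O}_n \setminus \{\id_n\}$, of type~\ref{item-remove-j}, giving part (a). It then remains only to analyse the maximal subsemigroups arising from $J_{n-1} \cap \mathcal{O}_n$.

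First I would eliminate type~\ref{item-intersect}. Because $\mathcal{O}_n$ is $\H$-trivial — each group $\H$-class of $J_{n-1} \cap \mathcal{O}_n$, being of the form $L_i \cap R_{\{i,i+1\}}$, contains a single order-preserving transformation — condition (a) of Lemma~\ref{lem-no-type-intersect} holds, so no maximal subsemigroup of type~\ref{item-intersect} arises. This reduces the task to the union-of-$\H$-classes types~\ref{item-rectangle}, \ref{item-remove-l}, and~\ref{item-remove-r}, all governed by the graph $\Delta(\mathcal{O}_n)$. The essential structural input, recorded above, is that $\Delta(\mathcal{O}_n)$ is isomorphic to the path graph of order $2n-1$, with the $\L$-class vertices $\{L_i\}$ occupying the odd positions $1, 3, \ldots, 2n-1$ and the $\R$-class vertices $\{R_{\{i,i+1\}}\}$ occupying the even positions $2, 4, \ldots, 2n-2$; the two degree-$1$ vertices are the endpoints $\{L_1\}$ and $\{L_n\}$.

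With this description the three types read off from the relevant propositions. For type~\ref{item-rectangle}, Proposition~\ref{prop-rectangle} and Corollary~\ref{cor-rectangle} give a bijection with the maximal independent subsets of $\Delta(\mathcal{O}_n)$ other than its two bicomponents; the bicomponents are exactly the all-odd and all-even subsets, so the survivors are precisely those containing both odd and even positions, yielding the description in (b) and, by Lemma~\ref{lem-path}, a count of $A_{2n-1} - 2$. For type~\ref{item-remove-l}, Proposition~\ref{prop-remove-l} permits removal of any $\L$-class whose vertex is not adjacent to a degree-$1$ vertex; since $\L$-class vertices are adjacent only to $\R$-class vertices, whereas the degree-$1$ vertices are themselves $\L$-class vertices, \emph{every} one of the $n$ $\L$-classes qualifies, giving part (c). For type~\ref{item-remove-r}, Proposition~\ref{prop-remove-r} excludes exactly the two $\R$-classes $R_{\{1,2\}}$ and $R_{\{n-1,n\}}$ adjacent to the endpoints, leaving $R_{\{i,i+1\}}$ for $i \in \{2, \ldots, n-2\}$, i.e.\ $n-3$ subsemigroups, giving part (d). Finally, because type~\ref{item-remove-l} subsemigroups exist, Proposition~\ref{prop-remove-j} precludes a type~\ref{item-remove-j} subsemigroup arising from $J_{n-1}$.

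The main obstacle is the adjacency bookkeeping in $\Delta(\mathcal{O}_n)$ underlying the asymmetry between the $n$ removable $\L$-classes and the merely $n-3$ removable $\R$-classes: this hinges entirely on the fact that the path's endpoints are $\L$-class vertices, so that no $\L$-class vertex but exactly two $\R$-class vertices are adjacent to a degree-$1$ vertex. Summing the contributions $1 + (A_{2n-1} - 2) + n + (n-3) = A_{2n-1} + 2n - 4$ yields the stated count for $n \geq 3$. The case $n = 2$ must be verified separately, since there the type~\ref{item-remove-r} count is $0$ rather than the formal value $n-3 = -1$; a direct inspection of $\mathcal{O}_2 = \{\id_2, c_1, c_2\}$ confirms its $3$ maximal subsemigroups.
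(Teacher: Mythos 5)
Correct, and essentially the paper's proof: you obtain part (a) from Corollary~\ref{cor-group-of-units}, read off types~\ref{item-rectangle}, \ref{item-remove-l} and~\ref{item-remove-r} from the path-graph description of $\Delta(\mathcal{O}_{n})$ via Propositions~\ref{prop-rectangle}, \ref{prop-remove-l} and~\ref{prop-remove-r} together with Corollary~\ref{cor-rectangle} and Lemma~\ref{lem-path} (including the endpoint bookkeeping that gives $n$ removable $\L$-classes but only $n-3$ removable $\R$-classes), and exclude type~\ref{item-remove-j} from $J_{n-1}$ by Proposition~\ref{prop-remove-j}, exactly as the paper does. The only difference is cosmetic: you rule out type~\ref{item-intersect} using $\H$-triviality of $\mathcal{O}_{n}$ (clause (a) of Lemma~\ref{lem-no-type-intersect}), whereas the paper invokes clause (b) via the fact that $\mathcal{O}_{n}$ is generated by its idempotents of rank $n-1$; both are valid.
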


\begin{thm}\label{thm-ODn}
  Let $n \in \N$, $n \geq 3$, be arbitrary. Let $\mathcal{OD}_{n}$ be the monoid
  of order-preserving and -reversing transformations on $\n$ with the usual
  order, and let $\gamma_{n}$ be the permutation of degree $n$ that reverses
  this order. Then the maximal subsemigroups of $\mathcal{OD}_{n}$ are:
  \begin{enumerate}[label=\emph{(\alph*)}]
    \item
      $\mathcal{OD}_{n} \setminus \{\gamma_{n}\}$
      \emph{(type~\ref{item-intersect})};
    \item
      the union of $\mathcal{OD}_{n} \setminus J_{n - 1}$ and the union of the
      Green's classes in
      \begin{multline*}
        \bigset{(L_{(i + 1) / 2} \cup L_{n + 1 - (i + 1) / 2}) \cap
        \mathcal{OD}_{n}}{i \in A,\ i\ \text{is odd}}\\
        \cup
        \bigset{(R_{\{ i / 2, (i / 2) + 1\}} \cup R_{\{ n - (i / 2), n + 1 - (i
        / 2) \}}) \cap \mathcal{OD}_{n}}{i \in A,\ i\ \text{is even}},
      \end{multline*}
      where $A$ is a maximal independent subset of the path graph of order $n$
      that contains both odd and even numbers
      \emph{(type~\ref{item-rectangle})};
    \item
      $\mathcal{OD}_{n} \setminus (L_{i} \cup L_{n -i + 1})$, where
      $\begin{cases}
        i \in \{1, \ldots, (n + 1) / 2\} & \text{if}\ n\ \text{is odd,}\\
        i \in \{1, \ldots, n / 2 - 1\}   & \text{if}\ n\ \text{is even}
      \end{cases}$
      \emph{(type~\ref{item-remove-l})}; and
    \item
      $\mathcal{OD}_{n}\setminus (R_{\{i, i + 1\}} \cup R_{\{n -i, n -i +
      1\}})$, where
      $\begin{cases}
        i \in \{2, \ldots, (n - 3) / 2\} & \text{if}\ n\ \text{is odd,}\\
        i \in \{2, \ldots, n / 2\}       & \text{if}\ n\ \text{is even}
      \end{cases}$
      \emph{(type~\ref{item-remove-r})}.
  \end{enumerate}
  In particular, there are $3$ maximal subsemigroups of $\mathcal{OD}_{3}$, and
  for $n \geq 4$, there are $A_{n} + n - 3$ maximal subsemigroups of
  $\mathcal{OD}_{n}$, where $A_{1} = 1$, $A_{2} = A_{3} = 2$, and $A_{n} = A_{n
  - 2} + A_{n - 3}$ for $n \geq 4$.  The monoid $\mathcal{OD}_{2} =
  \mathcal{T}_{2}$ has $2$ maximal subsemigroups.
\end{thm}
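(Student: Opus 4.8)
The plan is to prove Theorems~\ref{thm-On} and~\ref{thm-ODn} together, treating $S \in \{\mathcal{O}_n, \mathcal{OD}_n\}$ in parallel and using the facts, established just above, that $\Delta(\mathcal{O}_n)$ is the path graph of order $2n-1$ and $\Delta(\mathcal{OD}_n)$ is the path graph of order $n$, together with the isomorphisms sending $\{L_i\} \mapsto 2i-1$ and $\{R_{\{i, i+1\}}\} \mapsto 2i$. Since any generating set for $S$ must contain elements of ranks $n$ and $n-1$ but none of smaller rank, the only $\J$-classes that can give rise to maximal subsemigroups are the group of units $G$ and $J_{n-1} \cap S$. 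The subsemigroups arising from $G$ are handled by Corollary~\ref{cor-group-of-units}: for $\mathcal{O}_n$ the group is trivial, giving the single type~\ref{item-remove-j} subsemigroup $\mathcal{O}_n \setminus \{\id_n\}$; for $\mathcal{OD}_n$ it is $\genset{\gamma_n}$ of order $2$, whose only maximal subgroup is trivial, giving the single type~\ref{item-intersect} subsemigroup $\mathcal{OD}_n \setminus \{\gamma_n\}$. This yields part~(a) in each case, and it remains to analyse $J_{n-1} \cap S$, which is regular and covered by $G$.

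First I would eliminate type~\ref{item-intersect}: since $\mathcal{O}_n$ is idempotent-generated and $\mathcal{OD}_n = \genset{\mathcal{O}_n, \gamma_n}$, we have $S = \genset{G,\ E(S)}$, so Lemma~\ref{lem-no-type-intersect}(b) applies. Note that for $\mathcal{OD}_n$ part~(a) of that lemma is \emph{not} available: the $\J$-class $J_{n-1} \cap \mathcal{OD}_n$ has non-trivial group $\H$-classes, since an order-preserving rank-$(n-1)$ idempotent together with its order-reversing partner forms a group of order $2$; hence the idempotent-generation argument is essential here. The type~\ref{item-rectangle} subsemigroups then follow from Proposition~\ref{prop-rectangle} and Corollary~\ref{cor-rectangle}: by Lemma~\ref{lem-path} the relevant path graph has $A_{2n-1}$ (respectively $A_n$) maximal independent subsets, and discarding the two bicomponents---the all-$\L$ (all-odd-vertex) and all-$\R$ (all-even-vertex) subsets---leaves precisely the maximal independent subsets containing vertices of both parities, which translate under the stated isomorphisms into the Green's-class descriptions in part~(b).

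For types~\ref{item-remove-l} and~\ref{item-remove-r} I would invoke Propositions~\ref{prop-remove-l} and~\ref{prop-remove-r}, which permit removing a single orbit exactly when that orbit is not adjacent to a vertex of degree $1$; in a path the degree-$1$ vertices are its two endpoints, so the crux is locating them. For $\mathcal{O}_n$ both endpoints are the $\L$-vertices $L_1$ and $L_n$, so every $\L$-class is removable (the $n$ subsemigroups of part~(c)), while exactly the two $\R$-orbits $R_{\{1,2\}}$ and $R_{\{n-1,n\}}$ adjacent to these endpoints are not, leaving $R_{\{i,i+1\}}$ for $2 \le i \le n-2$ (part~(d)). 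For $\mathcal{OD}_n$ the graph is the $\genset{\gamma_n}$-fold of the $\mathcal{O}_n$ path; since $\gamma_n$ acts on the $2n-1$ path vertices by $v \mapsto 2n-v$, one endpoint is always the folded $\L$-orbit $\{L_1, L_n\}$, but the other is the fixed vertex $n$, which is the $\L$-orbit $\{L_{(n+1)/2}\}$ when $n$ is odd and the $\R$-orbit $\{R_{\{n/2, n/2+1\}}\}$ when $n$ is even. This parity split is the main obstacle: for odd $n$ both endpoints are $\L$-orbits, so all $\ceiling{n/2}$ $\L$-orbits are removable while the two $\R$-orbits meeting the endpoints are excluded; for even $n$ the $\L$-orbit $\{L_{n/2}, L_{n/2+1}\}$ adjacent to the $\R$-endpoint becomes non-removable whereas that $\R$-endpoint itself becomes removable. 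Tracking these carefully through the isomorphism produces exactly the index ranges in parts~(c) and~(d).

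Finally, because types~\ref{item-rectangle}--\ref{item-remove-r} do occur for the stated ranges of $n$, Proposition~\ref{prop-remove-j} shows that $S \setminus J_{n-1}$ is not maximal, so no further type~\ref{item-remove-j} subsemigroup arises. Summing the four families gives $A_{2n-1} + 2n - 4$ for $\mathcal{O}_n$ (for $n \ge 3$) and $A_n + n - 3$ for $\mathcal{OD}_n$ (for $n \ge 4$). The degenerate small cases $\mathcal{O}_2$, $\mathcal{OD}_3$, and $\mathcal{OD}_2 = \mathcal{T}_2$---where the type~\ref{item-remove-r} index range is empty rather than of the nominal size---would be checked by hand to confirm the stated counts of $3$, $3$, and $2$.
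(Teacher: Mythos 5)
Your proposal is correct and follows essentially the same route as the paper's proof: the group of units is handled via Corollary~\ref{cor-group-of-units}, type~\ref{item-intersect} is ruled out by idempotent generation together with Lemma~\ref{lem-no-type-intersect}, type~\ref{item-rectangle} comes from Proposition~\ref{prop-rectangle}, Corollary~\ref{cor-rectangle} and Lemma~\ref{lem-path}, types~\ref{item-remove-l} and~\ref{item-remove-r} are determined by locating the degree-one endpoints of the path graphs $\Delta(\mathcal{O}_{n})$ and $\Delta(\mathcal{OD}_{n})$ via Propositions~\ref{prop-remove-l} and~\ref{prop-remove-r}, and type~\ref{item-remove-j} is excluded by Proposition~\ref{prop-remove-j}. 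Your explicit parity analysis of the folded path's endpoints (the fixed vertex $n$ being an $\L$-orbit for odd $n$ and an $\R$-orbit for even $n$), and your observation that part~(a) of Lemma~\ref{lem-no-type-intersect} is unavailable for $\mathcal{OD}_{n}$ because its rank-$(n-1)$ group $\H$-classes have order $2$, are correct refinements of details the paper leaves implicit.
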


\proofrefs{thm-On}{thm-ODn}
  Let $S \in \{\mathcal{O}_{n}, \mathcal{OD}_{n}\}$.  The group of units of
  $\mathcal{O}_{n}$ is trivial and the group of units of $\mathcal{OD}_{n}$ is
  $\genset{\gamma_{n}}$, so by Corollary~\ref{cor-group-of-units}, the maximal
  subsemigroups that arise from the group group of units of $S$ are as stated.
  
  Since $\mathcal{O}_{n}$ is generated by its idempotents of
  rank $n - 1$~\cite{Aizenstat1962aa} and
  $\mathcal{OD}_{n} = \genset{\mathcal{O}_{n},
  \gamma_{n}}$~\cite{Fernandes2005aa}, it follows by
  Lemma~\ref{lem-no-type-intersect} that there are no maximal subsemigroups of
  type~\ref{item-intersect} arising from $J_{n - 1} \cap S$.

  We have already noted that $\Delta(\mathcal{O}_{n})$ and
  $\Delta(\mathcal{OD}_{n})$ are paths of length $2n - 1$ and $n$, respectively.
  It follows from Proposition~\ref{prop-rectangle} that the maximal
  subsemigroups of type~\ref{item-rectangle} are those described in the
  theorems.  By Corollary~\ref{cor-rectangle} and Lemma~\ref{lem-path}, the
  number of maximal subsemigroups of type~\ref{item-rectangle} is $A_{2n - 1} -
  2$ for $\mathcal{O}_{n}$, and $A_{n} - 2$ for $\mathcal{OD}_{n}$.

  To describe the maximal subsemigroups of types~\ref{item-remove-l}
  and~\ref{item-remove-r}, it suffices to identify the two vertices of
  $\Delta(S)$ that are adjacent to the end-points of $\Delta(S)$. From
  this, the description of the maximal subsemigroups of
  type~\ref{item-remove-l} follows from Proposition~\ref{prop-remove-l}, and
  the description of the maximal subsemigroups of type~\ref{item-remove-r}
  follows from Proposition~\ref{prop-remove-r}. In particular, the total number
  of both types of maximal subsemigroups is two less than the number of vertices
  of $\Delta(S)$.

  By Proposition~\ref{prop-remove-j}, and since $n \geq 3$, there is no maximal
  subsemigroup of $S$ of type~\ref{item-remove-j}.
\qed{}

%%%%%%%%%%%%%%%%%%%%%%%%%%%%%%%%%%%%%%%%%%%%%%%%%%%%%%%%%%%%%%%%%%%%%%%%%%%%%%%%
\subsection{$\mathcal{POI}_{n}$ and $\mathcal{PODI}_{n}$}\label{sec-POIn-PODIn}

The maximal subsemigroups of
$\mathcal{POI}_{n}$ are described and counted
in~\cite[Theorem~2]{Ganyushkin2003}, and the maximal subsemigroups of
$\mathcal{PODI}_{n}$ are described and counted
in~\cite[Theorem~4]{dimitrova2009maximal}. Additional information about
$\mathcal{POI}_{n}$ may be found in~\cite{Fernandes2001,
Ganyushkin2003}.

Let $S \in \{\mathcal{POI}_{n}, \mathcal{PODI}_{n}\}$.  Then $S$ is an inverse
monoid, and $J_{n - 1} \cap S$ is a $\J$-class of $S$. By definition,
$\mathcal{POI}_{n} = \mathcal{PO}_{n} \cap \mathcal{I}_{n}$, and
$\mathcal{PODI}_{n} = \mathcal{POD}_{n} \cap \mathcal{I}_{n}$, and so given the
description of the Green's classes of $\mathcal{PO}_{n}$ and $\mathcal{POD}_{n}$
from Section~\ref{sec-POn-PODn}, the set of $\L$-classes of $J_{n - 1} \cap S$
is $\bigset{L_{i} \cap S}{i \in \n}$, and the set of $\R$-classes of $J_{n - 1}
\cap S$ is $\bigset{R_{i} \cap S}{i \in \n}$.

In Theorem~\ref{thm-POIn} we describe the maximal subsemigroups of
$\mathcal{POI}_{n}$, and in Theorem~\ref{thm-PODIn} we describe those of
$\mathcal{PODI}_{n}$.

\begin{thm}\label{thm-POIn}
  Let $n \in \N$, $n \geq 2$, be arbitrary and let $\mathcal{POI}_{n}$ be the
  inverse monoid of order-preserving partial permutations on $\n$ with the usual
  order. Then the maximal subsemigroups of $\mathcal{POI}_{n}$ are:
  \begin{enumerate}[label=\emph{(\alph*)}]
    \item
      $\mathcal{POI}_{n} \setminus \{ \id_{n} \}$
      \emph{(type~\ref{item-remove-j})}; and
    \item
      the union of $\mathcal{POI}_{n}\setminus J_{n - 1}$ and the union of
      $$\bigset{L_{i}\cap \mathcal{POI}_{n}}{i \in A} \cup \bigset{R_{i}\cap
      \mathcal{POI}_{n}}{i \not\in A},$$
      where $A$ is any non-empty proper subset of $\n$
      \emph{(type~\ref{item-rectangle})}.
  \end{enumerate}
  In particular, for $n \geq 2$, there are $2 ^ {n} - 1$ maximal subsemigroups
  of $\mathcal{POI}_{n}$. The monoid $\mathcal{POI}_{1} = \mathcal{PT}_{1}$ is a
  semilattice of order $2$: its maximal subsemigroups are each of its singleton 
  subsets.
\end{thm}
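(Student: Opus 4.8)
The plan is to apply the framework of Section~\ref{sec-general-results} essentially verbatim, exploiting that $\mathcal{POI}_{n}$ is an inverse monoid and hence a regular $\ast$-monoid in which every idempotent is a projection. As explained at the start of Section~\ref{sec-transformation}, the only $\J$-classes that can give rise to maximal subsemigroups are the group of units $G = \{\id_{n}\}$ and the rank $n-1$ class $J_{n-1} \cap \mathcal{POI}_{n}$; the latter is covered by $G$, since the $\J$-classes are ordered by rank and the only class strictly above $J_{n-1}$ is $G$. I would handle these two sources in turn.

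For the group of units, Corollary~\ref{cor-group-of-units} applies with $G$ trivial, giving at once the unique subsemigroup $\mathcal{POI}_{n} \setminus \{\id_{n}\}$ of type~\ref{item-remove-j}; this is part~(a). For $J_{n-1} \cap \mathcal{POI}_{n}$ I would invoke Corollary~\ref{cor-delta-projections}. Because $G$ is trivial, the orbits of $\L$-classes are the $n$ singletons $\{L_{i} \cap \mathcal{POI}_{n}\}$, so the corollary yields exactly $2^{n} - 2$ maximal subsemigroups of type~\ref{item-rectangle} and none of types~\ref{item-remove-l} or~\ref{item-remove-r}. To match the stated form of part~(b), I would use the identity $(L_{i} \cap \mathcal{POI}_{n})^{*} = R_{i} \cap \mathcal{POI}_{n}$, which holds because $\ast$ is inversion and therefore interchanges domains and images of partial permutations; the generic $L$/$L^{*}$ description in the corollary then becomes precisely $\bigset{L_{i} \cap \mathcal{POI}_{n}}{i \in A} \cup \bigset{R_{i} \cap \mathcal{POI}_{n}}{i \notin A}$ for proper non-empty $A \subseteq \n$.

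It remains to exclude the other possible types arising from $J_{n-1}$. There are no maximal subsemigroups of type~\ref{item-intersect}: an order-preserving partial permutation is determined by its domain and image, so each $\H$-class of $J_{n-1} \cap \mathcal{POI}_{n}$ is trivial, and Lemma~\ref{lem-no-type-intersect}(a) applies. Moreover, since for $n \geq 2$ there is at least one proper non-empty subset $A$ of $\n$, type~\ref{item-rectangle} subsemigroups do exist, so by Proposition~\ref{prop-remove-j} the set $\mathcal{POI}_{n} \setminus J_{n-1}$ is not maximal and no type~\ref{item-remove-j} subsemigroup arises from $J_{n-1}$. Adding the single subsemigroup from $G$ to the $2^{n} - 2$ of type~\ref{item-rectangle} gives the count $2^{n} - 1$; the degenerate case $\mathcal{POI}_{1} = \mathcal{PT}_{1}$ I would check by hand. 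There is no genuinely hard step here, as the theorem is a clean specialisation of the established machinery; the only points needing care are verifying the hypotheses of Corollary~\ref{cor-delta-projections} and translating its generic conclusion into the $L_{i}$/$R_{i}$ notation via the $\ast$-identification.
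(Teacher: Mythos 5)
Your proposal is correct and follows essentially the same route as the paper's proof: Corollary~\ref{cor-group-of-units} for the group of units, Corollary~\ref{cor-delta-projections} (with the identification $(L_{i} \cap \mathcal{POI}_{n})^{*} = R_{i} \cap \mathcal{POI}_{n}$) for the $\J$-class $J_{n-1} \cap \mathcal{POI}_{n}$, and Lemma~\ref{lem-no-type-intersect} via $\H$-triviality to rule out type~\ref{item-intersect}. Your additional explicit appeal to Proposition~\ref{prop-remove-j} to exclude type~\ref{item-remove-j} from $J_{n-1}$ is a point the paper leaves implicit, but it is the same argument.
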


\begin{proof}
  Since $\mathcal{POI}_{n}$ is $\H$-trivial, there are no maximal subsemigroups
  of type~\ref{item-intersect} by Lemma~\ref{lem-no-type-intersect}, and by
  Corollary~\ref{cor-group-of-units}, the maximal subsemigroup arising from the
  group of units is formed by removing the identity. The fact that the group of
  units is trivial implies that there are $n$ orbits of $\L$-classes of $J_{n -
  1} \cap \mathcal{POI}_{n}$, each one being a singleton.  Since the $\R$-class
  $R_{i} \cap \mathcal{POI}_{n}$ is equal to ${(L_{i} \cap
  \mathcal{POI}_{n})}^{- 1}$, it follows by
  Corollary~\ref{cor-delta-projections} that the maximal subsemigroups of
  $\mathcal{POI}_{n}$ arising from $J_{n - 1} \cap \mathcal{POI}_{n}$ are those
  described in the theorem.
\end{proof}

\begin{thm}\label{thm-PODIn}
  Let $n \in \N$, $n \geq 3$, be arbitrary. Let $\mathcal{PODI}_{n}$ be the
  inverse monoid of order-preserving and order-reversing partial permutations on
  $\n$ with the usual order, and let $\gamma_{n}$ be the permutation of degree
  $n$ that reverses this order.  For $i, j \in \n$ define $\alpha_{i, j}$ to be
  the order-preserving partial permutation with domain $\n \setminus \{i\}$ and
  image $\n \setminus \{j\}$, and define $\beta_{i, j}$ to be the
  order-reversing partial permutation with this domain and image. Then the
  maximal subsemigroups of $\mathcal{PODI}_{n}$ are:
  \begin{enumerate}[label=\emph{(\alph*)}]
    \item
      $\mathcal{PODI}_{n}\setminus \{\gamma_{n}\}$
      \emph{(type~\ref{item-intersect})};
    \item
      $(\mathcal{PODI}_{n} \setminus J_{n - 1}) \cup I_{A}$, where $n$ is even,
      \begin{multline*}
        I_{A} = \bigset{\alpha_{i, j},\ \beta_{i, n - j + 1},\ \beta_{n - i + 1,
        j},\ \alpha_{n - i + 1, n - j + 1}}{i, j \in A\ \text{or}\ i, j \not\in
        A}\\ \cup \bigset{\beta_{i, j},\ \alpha_{i, n - j + 1},\ \alpha_{n - i +
        1, j},\ \beta_{n - i + 1, n - j + 1}}{i \in A, j \not\in A\ \text{or}\ i
        \not\in A, j \in A},
      \end{multline*}
      and $A$ is any subset of $\{2, \ldots, n / 2\}$
      \emph{(type~\ref{item-intersect})}; and
    \item
      the union of $\mathcal{PODI}_{n} \setminus J_{n - 1}$ and the union of
      $$\bigset{(L_{i} \cup L_{n - i + 1})\cap \mathcal{PODI}_{n}}{i \in A}
        \cup
        \bigset{(R_{i}\cup R_{n - i + 1})\cap \mathcal{PODI}_{n}}{i \notin A},$$
      where $A$ is any non-empty proper subset of $\{1, \ldots, \ceiling{n /
      2}\}$ \emph{(type~\ref{item-rectangle})}.
  \end{enumerate}
  There are $2$ maximal subsemigroups of $\mathcal{PODI}_{2} = \mathcal{I}_{2}$.
  In particular, for $n \geq 2$ there are $3 \cdot 2 ^ {(n / 2) - 1} - 1$
  maximal subsemigroups of $\mathcal{PODI}_{n}$ when $n$ is even, and $2 ^ {(n +
  1) / 2} - 1$ when $n$ is odd. The monoid $\mathcal{PODI}_{1} =
  \mathcal{PT}_{1}$ is a semilattice of order $2$: its maximal subsemigroups are
  each of its singleton subsets. 
\end{thm}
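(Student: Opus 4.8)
The monoid $S = \mathcal{PODI}_{n}$ is inverse, so every idempotent of its $\J$-class $J := J_{n-1} \cap S$ is a projection, and $J$ is covered by the group of units $G = \genset{\gamma_{n}} = \{\id_{n}, \gamma_{n}\}$. The plan is to treat the two relevant $\J$-classes in turn. Since $G$ is cyclic of order $2$, its unique maximal subgroup is trivial, so by Corollary~\ref{cor-group-of-units} the only maximal subsemigroup arising from $G$ is $S \setminus \{\gamma_{n}\}$, of type~\ref{item-intersect} --- this is part (a). For $n \geq 3$ the $\H$-class $H_{i,j}$ with domain $\n \setminus \{i\}$ and image $\n \setminus \{j\}$ consists of exactly $\alpha_{i,j}$ and $\beta_{i,j}$, and is a group precisely when $i = j$, namely the order-$2$ group $\{\alpha_{i,i}, \beta_{i,i}\}$ whose identity is the projection $\alpha_{i,i}$. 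The action of $G$ on $\n$ has the $\ceiling{n/2}$ orbits $\{i, n-i+1\}$, so there are $\ceiling{n/2}$ orbits of $\L$-classes of $J$.

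Applying Corollary~\ref{cor-delta-projections} (valid since $S$ is a regular $\ast$-monoid, $J$ is covered by $G$, and every idempotent of $J$ is a projection) immediately yields the $2^{\ceiling{n/2}} - 2$ maximal subsemigroups of type~\ref{item-rectangle} indexed by the non-empty proper subsets $A \subseteq \{1, \ldots, \ceiling{n/2}\}$ of part (c), and shows that no subsemigroups of types~\ref{item-remove-l} or~\ref{item-remove-r} arise from $J$; since type~\ref{item-rectangle} subsemigroups exist, Proposition~\ref{prop-remove-j} rules out type~\ref{item-remove-j}. The remaining, and substantial, task is to classify the type~\ref{item-intersect} subsemigroups arising from $J$, where the even/odd dichotomy appears. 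Here $G$ is not transitive on the $\L$-classes, so Proposition~\ref{prop-regularstar-intersect} does not apply and a direct analysis is needed.

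By \cite[Proposition~4]{Graham1968aa}, any type~\ref{item-intersect} subsemigroup $M$ arising from $J$ meets every $\H$-class of $J$ in a single element, with $M \cap H_{i,i} = \{\alpha_{i,i}\}$ (the trivial maximal subgroup of the order-$2$ group $H_{i,i}$). Thus $M \cap J$ is a \emph{selection} choosing one of $\alpha_{i,j}, \beta_{i,j}$ for each pair $(i,j)$, with $\alpha_{i,i}$ on the diagonal. I would record which products land in $J$: two elements of $J$ multiply into $J$ only when the image-index of the first equals the domain-index of the second, chaining as $(i,j)\cdot(j,l) \mapsto (i,l)$ with order-type multiplying by $\alpha\alpha = \alpha$, $\alpha\beta = \beta\alpha = \beta$, $\beta\beta = \alpha$; moreover $\gamma_{n}\alpha_{i,j} = \beta_{n-i+1,j}$, $\alpha_{i,j}\gamma_{n} = \beta_{i,n-j+1}$, and likewise with $\alpha, \beta$ interchanged, while all other products drop rank and lie in $S \setminus J \subseteq M$. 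Encoding the selection by a sign $\sigma(i,j) \in \{\pm 1\}$ ($+$ for $\alpha$, $-$ for $\beta$), closure of $M = (S \setminus J) \cup (M \cap J)$ is therefore equivalent to $\sigma$ being multiplicative, $\sigma(i,j)\sigma(j,l) = \sigma(i,l)$, together with the $\gamma_{n}$-compatibility $\sigma(n-i+1,j) = -\sigma(i,j)$ forced by $\gamma_{n} \in M$. Multiplicativity with $\sigma(i,i) = +$ forces $\sigma(i,j) = \tau(i)\tau(j)$ for some $\tau \colon \n \to \{\pm 1\}$, unique up to global sign, and $\gamma_{n}$-compatibility becomes $\tau(n-i+1) = -\tau(i)$.

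This last equation is the crux, and extracting it cleanly is the main obstacle. For odd $n$ the fixed index $m = (n+1)/2$ would require $\tau(m) = -\tau(m)$, which is impossible, so no type~\ref{item-intersect} subsemigroup arises from $J$; the count is then $1 + (2^{(n+1)/2} - 2) = 2^{(n+1)/2} - 1$. For even $n$ the condition pairs $\{i, n-i+1\}$ with opposite signs, so $\tau$ is determined up to global sign by its values on $\{1, \ldots, n/2\}$; normalising so that $1 \notin A$ identifies these selections with the subsets $A \subseteq \{2, \ldots, n/2\}$, and unwinding $\sigma(i,j) = \tau(i)\tau(j)$ (choosing $\alpha_{i,j}$ iff $\tau(i) = \tau(j)$) recovers exactly the sets $I_{A}$ of part (b). There are $2^{n/2 - 1}$ of these, giving the total $1 + (2^{n/2} - 2) + 2^{n/2 - 1} = 3 \cdot 2^{n/2 - 1} - 1$. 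Each $M_{A} = (S \setminus J) \cup I_{A}$ is a subsemigroup by construction, and it is maximal: any maximal subsemigroup containing it must arise from $J$, be of type~\ref{item-intersect}, and by \cite[Proposition~4]{Graham1968aa} meet each $\H$-class of $J$ in a single element, hence coincide with $M_{A}$; the same proposition gives the converse, that every type~\ref{item-intersect} subsemigroup arising from $J$ has this selection form. The cases $\mathcal{PODI}_{1}$ and $\mathcal{PODI}_{2} = \mathcal{I}_{2}$ are handled directly.
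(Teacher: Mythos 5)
Your proposal is correct and takes essentially the same approach as the paper: the group of units and type~\ref{item-rectangle} cases are handled identically via Corollary~\ref{cor-group-of-units} and Corollary~\ref{cor-delta-projections}, and for type~\ref{item-intersect} your sign formalism ($\sigma(i,j)=\tau(i)\tau(j)$ with $\tau(n-i+1)=-\tau(i)$, and the contradiction at the fixed index $(n+1)/2$ when $n$ is odd) is exactly the paper's argument that $\delta_{i,i}=\alpha_{i,i}$, $\delta_{i,j}=\delta_{i,1}\delta_{1,j}$, and $\delta_{i,j}\in M$ if and only if $\gamma_{n}\delta_{i,j},\ \delta_{i,j}\gamma_{n}\in M$, rewritten multiplicatively. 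If anything, your $\{\pm 1\}$-encoding makes the closure verification for $(\mathcal{PODI}_{n}\setminus J_{n-1})\cup I_{A}$ --- which the paper dismisses as ``tedious, but routine'' --- completely transparent.
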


\begin{proof}
  Since the group of units of $\mathcal{PODI}_{n}$ is $\genset{\gamma_{n}}$,
  which contains two elements, it follows by Corollary~\ref{cor-group-of-units}
  that the maximal subsemigroup arising from the group of units of
  $\mathcal{PODI}_{n}$ is $\mathcal{PODI}_{n} \setminus \{\gamma_{n}\}$.

  The graph $\Delta(\mathcal{PODI}_{n})$ may be obtained as the induced subgraph
  of $\Delta(\mathcal{POD}_{n})$ on those orbits of Green's classes that contain
  partial permutations. In particular, the orbits of $\genset{\gamma_{n}}$ on
  the $\L$-classes of $J_{n - 1} \cap \mathcal{PODI}_{n}$ are the sets
  $\big\{L_{i} \cap \mathcal{PODI}_{n},\ L_{n - i + 1} \cap
  \mathcal{PODI}_{n}\big\}$, for $i \in \{1, \ldots, \ceiling{n / 2}\}$.
  By Corollary~\ref{cor-delta-projections}, the maximal subsemigroups of
  $\mathcal{PODI}_{n}$ that arise from $J_{n - 1} \cap \mathcal{PODI}_{n}$ are
  those of type~\ref{item-rectangle} described in the theorem --- of which there
  are $2 ^ {\ceiling{n / 2}} - 2$ --- along with any maximal subsemigroups of
  type~\ref{item-intersect}.

  Suppose that $M$ is a maximal subsemigroup of $\mathcal{PODI}_{n}$ of
  type~\ref{item-intersect} arising from $J_{n - 1} \cap \mathcal{PODI}_{n}$.
  By~\cite[Proposition~4, Case~1]{Graham1968aa}, the intersection of $M$ with
  each $\H$-class of $J_{n - 1} \cap \mathcal{PODI}_{n}$ is non-empty, and each
  of these intersections has some common size, $q$. Since an $\H$-class in $J_{n
  - 1} \cap \mathcal{PODI}_{n}$ contains two elements, and $M$ is a proper
  subsemigroup of $\mathcal{PODI}_{n}$ that lacks only elements from $J_{n - 1}
  \cap \mathcal{PODI}_{n}$, it follows that $q = 1$.  In other words, the
  intersection of $M$ with each $\H$-class of $J_{n - 1} \cap
  \mathcal{PODI}_{n}$ contains a single element. For $i, j \in \n$, let
  $\delta_{i, j}$ denote the unique element of $M$ that is contained in the
  $\H$-class $\mathcal{PODI}_{n} \cap (L_{i} \cap R_{j}) = \{ \alpha_{i, j},
  \beta_{i, j} \}$ of $\mathcal{PODI}_{n}$.  In other words, $M \cap (L_{i} \cap
  R_{j}) = \{\delta_{i, j}\}$.

  Since $M$ contains $\gamma_{n}$, it follows that $\delta_{i, j} \in M$ if and
  only if $\delta_{i, j} \gamma_{n},\ \gamma_{n} \delta_{i, j},\ \gamma_{n}
  \delta_{i, j} \gamma_{n} \in M$. In particular, $\alpha_{i, j} \in M$ if and
  only if $\alpha_{i, j},\ \beta_{i, n - j + 1},\ \beta_{n - i + 1, j},\
  \alpha_{n - i + 1, n - j + 1} \in M$, and $\beta_{i, j} \in M$ if and only if
  $\beta_{i, j},\ \alpha_{i, n - j + 1},\ \alpha_{n - i + 1, j},\ \beta_{n - i +
  1, n - j + 1} \in M$.  For odd $n$, this leads to the contradictory statement
  that $\alpha_{(n + 1) / 2, (n + 1) / 2} \in M$ if and only if $\beta_{(n + 1)
  / 2, (n + 1) / 2} \in M$, and so $n$ is even.

  Given these observations, in order to describe $M$, it suffices to specify
  $\delta_{i, j}$ for each $i, j \in \{1, \ldots, n / 2\}$. Indeed, our
  description can be even more concise.  We observe that $\delta_{i, i} =
  \alpha_{i, i}$, since $M$ contains every idempotent of $\mathcal{PODI}_{n}$.
  This implies that $\delta_{i, j}\delta_{j, i} = \delta_{i, i} = \alpha_{i,
  i}$, and so $\delta_{i, j} = \alpha_{i, j}$ if and only if $\delta_{j, i} =
  \alpha_{j, i}$.  Furthermore, $\delta_{i, j} = \delta_{i, 1} \delta_{1, j}$.
  Thus, to specify $\delta_{i, j}$ for each $i, j \in \n$, it suffices to
  specify $\delta_{1, i}$ for each $i \in \{2, \ldots, n / 2\}$.  Let $A =
  \bigset{i \in \{2, \ldots, n / 2\}}{\delta_{1, i} = \beta_{1, j}}$. A routine
  calculation shows that $M = (\mathcal{PODI}_{n} \setminus J_{n - 1}) \cup
  I_{A}$, where $I_{A}$ is the set defined in the statement of the theorem.

  Conversely, for an even number $n \geq 4$ and a subset $A \subseteq \{2,
  \ldots, n / 2\}$, it is tedious, but routine, to verify that
  $(\mathcal{PODI}_{n} \setminus J_{n - 1}) \cup I_{A}$ is a subsemigroup of
  $\mathcal{PODI}_{n}$; by construction, it intersects every $\H$-class of
  $\mathcal{PODI}_{n}$ non-trivially.
  Any maximal subsemigroup of $\mathcal{PODI}_{n}$ that contains
  $(\mathcal{PODI}_{n} \setminus J_{n - 1}) \cup I_{A}$ has
  type~\ref{item-intersect}, and so by the preceding arguments,  we see that it
  is equal to $(\mathcal{PODI}_{n} \setminus J_{n - 1}) \cup I_{A}$.  Thus
  $(\mathcal{PODI}_{n} \setminus J_{n - 1}) \cup I_{A}$ is a maximal
  subsemigroup of $\mathcal{PODI}_{n}$ of type~\ref{item-intersect}.
  
  For two subsets $A, A' \subseteq \{2, \ldots, n / 2\}$, it is clear from the
  definitions that $I_{A} = I_{A'}$ if and only if $A = A'$. Thus there are $2 ^
  {(n / 2) - 1}$ maximal subsemigroups of type~\ref{item-intersect} when $n$ is
  even, and none when $n$ is odd.
\end{proof}

%%%%%%%%%%%%%%%%%%%%%%%%%%%%%%%%%%%%%%%%%%%%%%%%%%%%%%%%%%%%%%%%%%%%%%%%%%%%%%%%
\subsection{$\mathcal{POP}_{n}$ and $\mathcal{POR}_{n}$}\label{sec-POPn-PORn}

As far as we are aware, the maximal subsemigroups of $\mathcal{POP}_{n}$ and
$\mathcal{POR}_{n}$ have not been previously considered in the literature.  To
state the results of this section,
we require the following notation. Let $S \in \{\mathcal{POP}_{n},
\mathcal{POR}_{n} \}$.  Then $J_{n - 1} \cap S$ is a regular $\J$-class of $S$.
The $\L$-classes of $J_{n - 1} \cap S$ are the sets $L_{i} \cap S$ for each $i
\in \n$, and the $\R$-classes of $J_{n - 1} \cap S$ are the sets $R_{i} \cap S$
for each $i \in \n$ and $R_{\{i,i+1\}} \cap S$ for each $i \in \{1, \ldots, n -
1\}$, along with the set $R_{\{1, n\}} \cap S$. The group of units of
$\mathcal{POP}_{n}$ is $\mathcal{C}_{n}$, and the group of units of
$\mathcal{POR}_{n}$ is $\mathcal{D}_{n}$ --- see
Section~\ref{sec-trans-definitions} for the definitions of these groups.

The following theorems are the main results of this section.

\begin{thm}\label{thm-POPn}
  Let $n \in \N$, $n \geq 2$, be arbitrary and let $\mathcal{POP}_{n}$
  be the monoid of orientation-preserving partial transformations on $\n$ with
  the usual order.  Then the maximal subsemigroups of $\mathcal{POP}_{n}$ are:
  \begin{enumerate}[label=\emph{(\alph*)}]
    \item
      $(\mathcal{POP}_{n} \setminus \mathcal{C}_{n}) \cup L$, where $L$ is a
      maximal subgroup of the cyclic group $\mathcal{C}_{n}$
      \emph{(type~\ref{item-intersect})};
    \item
      $\mathcal{POP}_{n} \setminus \bigset{\alpha \in \mathcal{OP}_{n}}{
      \rank(\alpha) = n - 1}$
      \emph{(type~\ref{item-remove-r})}; and
    \item
      $\mathcal{POP}_{n} \setminus \set{\alpha \in \mathcal{POPI}_{n}}{
      \rank(\alpha) = n - 1}$
      \emph{(type~\ref{item-remove-r})}.
  \end{enumerate}
  The monoid $\mathcal{POP}_{1} = \mathcal{PT}_{1}$ is a semilattice of order
  $2$: its maximal subsemigroups are each of its singleton subsets.  In
  particular, for $n \in \N$, there are $|\mathbb{P}_{n}| + 2$ maximal
  subsemigroups of $\mathcal{POP}_{n}$, where $\mathbb{P}_{n}$ is the set of
  primes that divide $n$. 
\end{thm}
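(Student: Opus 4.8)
The plan is to run the general machinery of Section~\ref{sec-general-results} on the only two $\J$-classes of $\mathcal{POP}_{n}$ that meet every generating set, namely the group of units $\mathcal{C}_{n}$ and the rank-$(n-1)$ class $J_{n-1}\cap\mathcal{POP}_{n}$. For the units, Corollary~\ref{cor-group-of-units} shows that the maximal subsemigroups arising from $\mathcal{C}_{n}$ are precisely the sets $(\mathcal{POP}_{n}\setminus\mathcal{C}_{n})\cup U$ for $U$ a maximal subgroup of $\mathcal{C}_{n}$, and these have type~\ref{item-intersect}; Lemma~\ref{lem-maximals-cyclic} identifies them with the prime divisors of $n$, giving the $|\mathbb{P}_{n}|$ subsemigroups of part~(a).

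Next I would compute the graph $\Delta(\mathcal{POP}_{n})$. Since $\mathcal{C}_{n}$ acts transitively on $\n$, it acts transitively on the $\L$-classes $\{L_{i}\cap\mathcal{POP}_{n}:i\in\n\}$, on the partial-permutation $\R$-classes $\{R_{i}\cap\mathcal{POP}_{n}:i\in\n\}$, and on the transformation $\R$-classes, whose kernel classes are the $n$ cyclically consecutive pairs $\{i,i+1\}$ together with $\{1,n\}$ (the $n$-cycle permutes these transitively, sending $\{n-1,n\}$ to $\{1,n\}$ and $\{1,n\}$ to $\{1,2\}$). Hence $\Delta(\mathcal{POP}_{n})$ has three vertices: one orbit $\mathcal{L}$ of $\L$-classes and two orbits $\mathcal{R}_{pp},\mathcal{R}_{tr}$ of $\R$-classes. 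Using that $L_{i}\cap R_{i}$ and $L_{i}\cap R_{\{i,i+1\}}$ are groups, it is the path $\mathcal{R}_{pp}-\mathcal{L}-\mathcal{R}_{tr}$ with $\mathcal{L}$ of degree $2$ and each $\R$-orbit of degree $1$, so it is isomorphic to $\Delta(\mathcal{PT}_{n})$. Transitivity on the $\L$-classes together with Lemma~\ref{lem-transitive} rules out types~\ref{item-rectangle} and~\ref{item-remove-l}. Each $\R$-orbit, when removed, is adjacent only to $\mathcal{L}$ (degree $2$), so Proposition~\ref{prop-remove-r} yields exactly two subsemigroups of type~\ref{item-remove-r}: removing $\mathcal{R}_{tr}$ deletes the orientation-preserving full transformations of rank $n-1$, i.e.\ $\{\alpha\in\mathcal{OP}_{n}:\rank(\alpha)=n-1\}$ (part~(b)), and removing $\mathcal{R}_{pp}$ deletes $\{\alpha\in\mathcal{POPI}_{n}:\rank(\alpha)=n-1\}$ (part~(c)). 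Since such subsemigroups exist, Proposition~\ref{prop-remove-j} rules out type~\ref{item-remove-j}.

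The delicate point, and the main obstacle, is to show that no maximal subsemigroup of type~\ref{item-intersect} arises from $J_{n-1}\cap\mathcal{POP}_{n}$. Here Lemma~\ref{lem-no-type-intersect}(a) is unavailable, since the group $\H$-classes in this $\J$-class are cyclic of order $n-1$, hence nontrivial for $n\geq 3$; so I would instead apply Lemma~\ref{lem-no-type-intersect}(b) after verifying $J_{n-1}\cap\mathcal{POP}_{n}\subseteq\genset{\mathcal{C}_{n},\ E(\mathcal{POP}_{n})}$. The transformation part is covered by the known fact that $\mathcal{OP}_{n}=\genset{\mathcal{C}_{n},\ E(\mathcal{OP}_{n})}$ (an $n$-cycle and a rank-$(n-1)$ idempotent suffice), and the partial-permutation part by the analogous fact for $\mathcal{POPI}_{n}$: for a partial identity $\epsilon_{i}$ of rank $n-1$, composing with powers of the $n$-cycle $(1\ 2\ \ldots\ n)$ sweeps out an orbit of orientation-preserving partial permutations, and the rest of the rank-$(n-1)$ partial permutations are obtained from further such products. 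Granting this generation claim, Lemma~\ref{lem-no-type-intersect} eliminates type~\ref{item-intersect} from $J_{n-1}$.

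Finally I would assemble the count: the $|\mathbb{P}_{n}|$ subsemigroups from the units together with the two from $J_{n-1}$ give $|\mathbb{P}_{n}|+2$ in total, and I would treat the degenerate case $\mathcal{POP}_{1}=\mathcal{PT}_{1}$ by hand as the order-$2$ semilattice. Everything except the idempotent-and-units generation step is a direct reading of $\Delta(\mathcal{POP}_{n})$ through the propositions of Section~\ref{sec-type-234}, so that step is where the real work lies.
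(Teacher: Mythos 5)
Your handling of the group of units and of types~\ref{item-remove-j}--\ref{item-remove-r} coincides with the paper's proof: Corollary~\ref{cor-group-of-units} and Lemma~\ref{lem-maximals-cyclic} give part~(a), and the graph $\Delta(\mathcal{POP}_{n})$ (a single orbit of $\L$-classes, two orbits of $\R$-classes), combined with Lemma~\ref{lem-transitive}, Proposition~\ref{prop-remove-r} and Proposition~\ref{prop-remove-j}, gives parts~(b) and~(c) and excludes the other union-of-$\H$-classes types. The genuine gap is exactly the step you flagged with ``granting this generation claim'': your elimination of type~\ref{item-intersect} rests on the assertion that the rank-$(n-1)$ elements of $\mathcal{POPI}_{n}$ lie in $\genset{\mathcal{C}_{n},\ E(\mathcal{POPI}_{n})}$, and that assertion is false. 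Since $\mathcal{POPI}_{n}$ is inverse, its idempotents are precisely the partial identities $\varepsilon_{A}$ for $A \subseteq \n$. Writing $\sigma = (1\ 2\ \ldots\ n)$, one has $\sigma^{k}\varepsilon_{A} = \varepsilon_{A\sigma^{-k}}\sigma^{k}$ and $\varepsilon_{A}\varepsilon_{B} = \varepsilon_{A \cap B}$, so every element of $\genset{\mathcal{C}_{n},\ E(\mathcal{POPI}_{n})}$ has the form $\varepsilon_{B}\sigma^{k}$, i.e.\ is a restriction of a power of $\sigma$. There are only $n^{2}$ such elements of rank $n-1$, whereas $J_{n-1} \cap \mathcal{POPI}_{n}$ contains $n^{2}(n-1)$ elements; concretely, for $n = 3$ the orientation-preserving partial permutation with domain $\{1,2\}$ mapping $1 \mapsto 2$, $2 \mapsto 1$ is not a restriction of any power of $(1\ 2\ 3)$, and no amount of ``further such products'' inside $\mathcal{POPI}_{n}$ can produce it.

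The underlying point is that the inclusion $J_{n-1} \cap \mathcal{POP}_{n} \subseteq \genset{\mathcal{C}_{n},\ E(\mathcal{POP}_{n})}$ genuinely requires idempotents with non-trivial kernel, even to reach the injective elements of $J_{n-1}$. The paper gets this in one line: $\mathcal{PO}_{n}$ is idempotent generated~\cite[Theorem~3.13]{Gomes1992} and $\mathcal{POP}_{n} = \genset{\mathcal{PO}_{n},\ \mathcal{C}_{n}}$, so part~(b) of Lemma~\ref{lem-no-type-intersect} applies immediately. If you want to keep your two-part structure, the partial-permutation half can be repaired: given $\alpha \in J_{n-1} \cap \mathcal{POPI}_{n}$ with $\dom(\alpha) = \n \setminus \{i\}$, extend $\alpha$ to a full transformation $g$ by sending $i$ to the image under $\alpha$ of the cyclic successor of $i$; then $g \in \mathcal{OP}_{n}$ and $\alpha = \varepsilon_{\n \setminus \{i\}}\, g$, which reduces the claim to $\mathcal{OP}_{n} \subseteq \genset{\mathcal{C}_{n},\ E(\mathcal{POP}_{n})}$ --- the transformation half you already have. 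With that repair, your argument is correct and otherwise the same as the paper's.
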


\begin{thm}\label{thm-PORn}
  Let $n \in \N$, $n \geq 2$, be arbitrary and let $\mathcal{POR}_{n}$
  be the monoid of orientation-preserving and -reversing partial transformations
  on $\n$ with the usual order. Then the maximal subsemigroups of
  $\mathcal{POR}_{n}$ are:
  \begin{enumerate}[label=\emph{(\alph*)}]
    \item
      $(\mathcal{POR}_{n} \setminus \mathcal{D}_{n}) \cup L$, where $L$ is a
      maximal subgroup of the group $\mathcal{D}_{n}$
      \emph{(type~\ref{item-intersect})};
    \item
      $\mathcal{POR}_{n} \setminus \set{\alpha \in \mathcal{OR}_{n}}{
      \rank(\alpha) = n - 1}$
      \emph{(type~\ref{item-remove-r})};
      and
    \item
      $\mathcal{POR}_{n} \setminus \bigset{\alpha \in \mathcal{PORI}_{n}}{
      \rank(\alpha) = n - 1}$
      \emph{(type~\ref{item-remove-r})}.
  \end{enumerate}
  In particular, there are $3$ maximal subsemigroups of $\mathcal{POR}_{2}$, and
  for $n \geq 3$, there are $\sum_{p \in \mathbb{P}_{n}} p + 3$ maximal
  subsemigroups of $\mathcal{POR}_{n}$, where $\mathbb{P}_{n}$ is the set of the
  primes that divide $n$.  The monoid $\mathcal{POR}_{1} = \mathcal{PT}_{1}$ is
  a semilattice of order $2$: its maximal subsemigroups are each of its
  singleton subsets.
\end{thm}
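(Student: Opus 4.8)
The plan is to mirror the treatment of $\mathcal{PT}_{n}$ in Theorem~\ref{thm-maximals-PTn-Tn-In}. By the discussion at the start of Section~\ref{sec-transformation}, for $S \in \{\mathcal{POP}_{n}, \mathcal{POR}_{n}\}$ the only $\J$-classes giving rise to maximal subsemigroups are the group of units $G$ and the rank-$(n-1)$ class $J_{n-1} \cap S$, so I would treat these two sources in turn.

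First I would dispose of the maximal subsemigroups arising from $G$. The group of units of $\mathcal{POP}_{n}$ is $\mathcal{C}_{n}$ and that of $\mathcal{POR}_{n}$ is $\mathcal{D}_{n}$, both non-trivial, so by Corollary~\ref{cor-group-of-units} these are exactly the sets $(S \setminus G) \cup U$ for each maximal subgroup $U$ of $G$, all of type~\ref{item-intersect}; this is part (a) of each theorem. Their number is counted by Lemma~\ref{lem-maximals-cyclic} (giving $|\mathbb{P}_{n}|$ for $\mathcal{C}_{n}$) and by Lemma~\ref{lem-maximals-dihedral} (giving $1 + \sum_{p \in \mathbb{P}_{n}} p$ for $\mathcal{D}_{n}$, valid when $n \geq 3$). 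The degenerate case $\mathcal{D}_{2} = \genset{\gamma_{2}}$, which has a single maximal subgroup, I would treat separately to recover the count $3$ for $\mathcal{POR}_{2}$.

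Next, for $J_{n-1} \cap S$ I would apply Lemma~\ref{lem-Xi} with $k = 2$, taking $X_{1} = \{\alpha \in \mathcal{OP}_{n} : \rank(\alpha) = n-1\}$ and $X_{2} = \{\alpha \in \mathcal{POPI}_{n} : \rank(\alpha) = n-1\}$ for $\mathcal{POP}_{n}$, and the corresponding $\mathcal{OR}_{n}$- and $\mathcal{PORI}_{n}$-sets for $\mathcal{POR}_{n}$. These are the rank-$(n-1)$ full transformations and the rank-$(n-1)$ partial permutations; they are disjoint and each is a union of $\R$-classes, so the resulting subsemigroups $S \setminus X_{1}$ and $S \setminus X_{2}$ are precisely parts (b) and (c), of type~\ref{item-remove-r}. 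Verifying the hypothesis of Lemma~\ref{lem-Xi} splits into two directions. For the "if" direction I would invoke the known generation facts that $\mathcal{OP}_{n}$ (resp.\ $\mathcal{OR}_{n}$) is generated by its group of units together with any single rank-$(n-1)$ transformation, that $\mathcal{POPI}_{n}$ (resp.\ $\mathcal{PORI}_{n}$) is generated by its group of units together with any single rank-$(n-1)$ partial permutation, and that $\mathcal{POP}_{n} = \genset{\mathcal{OP}_{n}, \mathcal{POPI}_{n}}$ (resp.\ $\mathcal{POR}_{n} = \genset{\mathcal{OR}_{n}, \mathcal{PORI}_{n}}$); since $S \setminus J_{n-1} \supseteq G$, any $A \subseteq J_{n-1}$ meeting both $X_{1}$ and $X_{2}$ then generates $S$ together with $S \setminus J_{n-1}$. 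For the "only if" direction I would give the routine rank/kernel/domain argument: reading a factorisation of a rank-$(n-1)$ full transformation from the left, the first non-unit factor must itself be a full transformation of rank $n-1$, since full domain forces that factor to be full and non-increase of rank forces its rank to be $n-1$; hence every generating set meets $X_{1}$. Dually, since the kernel of a product contains the kernel of its left factor, an injective rank-$(n-1)$ element cannot be written without an injective factor of rank $n-1$, so every generating set meets $X_{2}$.

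Finally I would tally the totals: part (a) contributes $|\mathbb{P}_{n}|$ (resp.\ $1 + \sum_{p \in \mathbb{P}_{n}} p$) and parts (b), (c) contribute $2$, giving $|\mathbb{P}_{n}| + 2$ for $\mathcal{POP}_{n}$ and $3 + \sum_{p \in \mathbb{P}_{n}} p$ for $\mathcal{POR}_{n}$ when $n \geq 3$, with the small-case bookkeeping for $n \in \{1, 2\}$ handled as above. The main obstacle is the "if" direction of the Lemma~\ref{lem-Xi} hypothesis, namely establishing (or locating in the literature) that a single rank-$(n-1)$ element together with the cyclic or dihedral group of units generates the relevant full order/orientation monoid; the counting and the "only if" direction are then routine. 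It is worth noting an equivalent route via $\Delta(S)$: since $\mathcal{C}_{n}$ and $\mathcal{D}_{n}$ act transitively on $\n$ and on the cyclically adjacent pairs, $\Delta(S)$ has a single orbit of $\L$-classes adjacent to two orbits of $\R$-classes, exactly as for $\Delta(\mathcal{PT}_{n})$; Lemma~\ref{lem-transitive} then excludes types~\ref{item-rectangle} and~\ref{item-remove-l}, Proposition~\ref{prop-remove-r} yields the two type-\ref{item-remove-r} subsemigroups, and type~\ref{item-intersect} is ruled out by Lemma~\ref{lem-no-type-intersect} once $J_{n-1} \cap S \subseteq \genset{G,\ E(S)}$ is verified.
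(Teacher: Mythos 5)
Your overall architecture (units via Corollary~\ref{cor-group-of-units}; the $\J$-class $J_{n-1}\cap\mathcal{POR}_n$ via Lemma~\ref{lem-Xi} with $X_1$ the rank-$(n-1)$ transformations and $X_2$ the rank-$(n-1)$ partial permutations) is reasonable, and your ``only if'' direction is correct. The gap is in the ``if'' direction: the generation fact you invoke for $\mathcal{PORI}_n$ is \emph{false} for $n \geq 4$, and this is contradicted by Theorem~\ref{thm-PORIn} of this very paper. It is not true that $\mathcal{PORI}_n = \genset{\mathcal{D}_n,\ \beta}$ for an \emph{arbitrary} rank-$(n-1)$ partial permutation $\beta$: take $\beta = \tau_n$; then $\genset{\mathcal{D}_n,\ \tau_n}$ is contained in the proper (indeed maximal) subsemigroup $\genset{\mathcal{PORI}_n\setminus J_{n-1},\ \zeta_n^{p},\ \tau_n}$ of type~\ref{item-intersect}, so it cannot equal $\mathcal{PORI}_n$. (The same failure occurs for $\mathcal{POPI}_n$ with $\beta = \zeta_n^{p}$.) This is exactly where the analogy with $\mathcal{PT}_n$ breaks down: $\mathcal{I}_n = \genset{\mathcal{S}_n,\ \beta}$ holds for \emph{any} rank-$(n-1)$ partial permutation because the symmetric group is large enough to move $\beta$ into a group $\H$-class and power it to an idempotent, whereas $\mathcal{D}_n$ is too small, and $\genset{\mathcal{D}_n,\ \beta}$ may meet the group $\H$-class $H_{\id_{n-1}}$ (dihedral of order $2(n-1)$) in a proper subgroup --- which is precisely why $\mathcal{PORI}_n$ has type~\ref{item-intersect} maximal subsemigroups at all. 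So, as written, your verification of the hypothesis of Lemma~\ref{lem-Xi} does not go through; generation by units plus one element holds only for \emph{some} rank-$(n-1)$ partial permutations, whereas Lemma~\ref{lem-Xi} needs it for every element of $X_2$.

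The hypothesis of Lemma~\ref{lem-Xi} is nevertheless true, but it must be proved inside $\mathcal{POR}_n$, not inside $\mathcal{PORI}_n$. For instance: once $\mathcal{OR}_n \subseteq \genset{\mathcal{POR}_n\setminus J_{n-1},\ \alpha}$ (your claim for $\mathcal{OR}_n$ is correct, and is established in the proof of Theorem~\ref{thm-ORn}), pick $\gamma \in \mathcal{OR}_n$ extending the inverse partial permutation $\beta^{-1}$ (such an extension exists: repeat the value taken at a cyclically adjacent point of the missing domain point); then $\beta\gamma$ is the rank-$(n-1)$ partial identity on $\dom(\beta)$, its $\mathcal{D}_n$-translates give all rank-$(n-1)$ partial identities, and every element of $X_2$ factors as such a partial identity times an element of $\mathcal{OR}_n$. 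Alternatively --- and this is the paper's actual proof --- the route you sketch in your final sentence avoids the issue entirely: since $\mathcal{PO}_n$ is idempotent generated and $\mathcal{POR}_n = \genset{\mathcal{PO}_n,\ \mathcal{D}_n}$, we get $J_{n-1}\cap\mathcal{POR}_n \subseteq \genset{\mathcal{D}_n,\ E(\mathcal{POR}_n)}$, so Lemma~\ref{lem-no-type-intersect} excludes type~\ref{item-intersect}; Lemma~\ref{lem-transitive} excludes types~\ref{item-rectangle} and~\ref{item-remove-l}; Proposition~\ref{prop-remove-r} yields exactly parts (b) and (c); and Proposition~\ref{prop-remove-j} excludes type~\ref{item-remove-j}. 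Note that the step you defer there (``once $J_{n-1} \cap S \subseteq \genset{G,\ E(S)}$ is verified'') is supplied by this idempotent-generation fact, and cannot be obtained by the $\mathcal{I}_n$-style pumping argument you rely on elsewhere.
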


\proofrefs{thm-POPn}{thm-PORn}
  Let $S \in \{ \mathcal{POP}_{n}, \mathcal{POR}_{n} \}$, and let $G$ be the
  group of units of $S$.  The maximal subsemigroups arising from the group of
  units follow by Lemma~\ref{lem-maximals-cyclic},
  Lemma~\ref{lem-maximals-dihedral}, and Corollary~\ref{cor-group-of-units}.
  Since $\mathcal{PO}_{n}$ is idempotent
  generated~\cite[Theorem~3.13]{Gomes1992}, and $S = \genset{\mathcal{PO}_{n},\
  G}$, it follows by Lemma~\ref{lem-no-type-intersect} that there are no maximal
  subsemigroups of type~\ref{item-intersect} arising from $J_{n - 1} \cap S$.

  The remainder of the proof is similar to the proof of
  Theorem~\ref{thm-maximals-PTn-Tn-In} concerning the maximal subsemigroups of
  $\mathcal{PT}_{n}$.  The group of units $G$ of $S$ acts transitively on the
  $\L$-classes of $J_{n - 1} \cap S$, and so there are no maximal subsemigroups
  of types~\ref{item-rectangle} and~\ref{item-remove-l} by
  Lemma~\ref{lem-transitive}. On the other hand, $G$ has two orbits on the set
  of $\R$-classes of $J_{n - 1} \cap S$: it transitively permutes the
  $\R$-classes of transformations, and it transitively permutes the
  $\R$-classes of partial permutations.  By Proposition~\ref{prop-remove-r}, the
  two maximal subsemigroups of $S$ of type~\ref{item-remove-r} are found by
  removing either the partial permutations, or the transformations, of
  rank $n - 1$. By Proposition~\ref{prop-remove-j}, there is no maximal
  subsemigroup of type~\ref{item-remove-j}.
\qed{}

%%%%%%%%%%%%%%%%%%%%%%%%%%%%%%%%%%%%%%%%%%%%%%%%%%%%%%%%%%%%%%%%%%%%%%%%%%%%%%%%
\subsection{$\mathcal{OP}_{n}$ and $\mathcal{OR}_{n}$}\label{sec-OPn-ORn}

The maximal subsemigroups of $\mathcal{OP}_{n}$ and $\mathcal{OR}_{n}$ were
originally described in~\cite{dimitrova2012maximal}.  We restate these results
in the following theorems.

\begin{thm}\label{thm-OPn}
  Let $n \in \N$, $n \geq 2$, be arbitrary and let $\mathcal{OP}_{n}$
  be the monoid of orientation-preserving transformations on $\n$ with the usual
  order.  Then the maximal subsemigroups of $\mathcal{OP}_{n}$ are:
  \begin{enumerate}[label=\emph{(\alph*)}]
    \item
      $(\mathcal{OP}_{n} \setminus \mathcal{C}_{n}) \cup L$, where $L$ is a
      maximal subgroup of the cyclic group $\mathcal{C}_{n}$
      \emph{(type~\ref{item-intersect})}; and
    \item
      $\mathcal{OP}_{n} \setminus \set{\alpha \in \mathcal{OP}_{n}}{
      \rank(\alpha) = n - 1}$
      \emph{(type~\ref{item-remove-j})}.
  \end{enumerate}
  In particular, for $n \geq 2$, there are $|\mathbb{P}_{n}| + 1$ maximal
  subsemigroups of $\mathcal{OP}_{n}$, where $\mathbb{P}_{n}$ is the set of
  primes that divide $n$
\end{thm}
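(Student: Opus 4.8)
The plan is to follow the same strategy as in the proofs of Theorems~\ref{thm-POPn} and~\ref{thm-PORn}, exploiting the fact that $\mathcal{OP}_{n}$ is a regular submonoid of $\mathcal{PT}_{n}$: every generating set contains elements of rank $n$ and rank $n - 1$ but no elements of smaller rank, so it suffices to describe the maximal subsemigroups arising from the group of units $\mathcal{C}_{n}$ and those arising from the $\J$-class $J_{n - 1} \cap \mathcal{OP}_{n}$. For the group of units, I would combine Lemma~\ref{lem-maximals-cyclic} (the maximal subgroups of $\mathcal{C}_{n}$ are $\genset{\alpha^{p}}$, one for each prime divisor $p$ of $n$) with Corollary~\ref{cor-group-of-units}; this immediately yields the $|\mathbb{P}_{n}|$ maximal subsemigroups of type~\ref{item-intersect} described in part~(a).

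The substance of the argument concerns the $\J$-class $J_{n - 1} \cap \mathcal{OP}_{n}$, which is covered by $\mathcal{C}_{n}$ since rank $n$ is the only $\J$-class above it. First I would rule out type~\ref{item-intersect}: because $\mathcal{O}_{n}$ is idempotent generated~\cite{Aizenstat1962aa} and $\mathcal{OP}_{n} = \genset{\mathcal{O}_{n},\ \mathcal{C}_{n}}$, we have $J_{n - 1} \subseteq \genset{\mathcal{C}_{n},\ E(\mathcal{OP}_{n})}$, so Lemma~\ref{lem-no-type-intersect}(b) applies. Next I would note that $\mathcal{OP}_{n} \subseteq \mathcal{T}_{n}$, so every rank-$(n - 1)$ element is a transformation whose unique nontrivial kernel class is either $\{i, i + 1\}$ for some $i \in \{1, \ldots, n - 1\}$ or else $\{1, n\}$; thus the $\R$-classes of $J_{n - 1} \cap \mathcal{OP}_{n}$ are the $n$ sets $R_{\{i, i + 1\}} \cap \mathcal{OP}_{n}$ and $R_{\{1, n\}} \cap \mathcal{OP}_{n}$, while the $\L$-classes are $L_{i} \cap \mathcal{OP}_{n}$ for $i \in \n$. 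The $n$-cycle $(1\ 2\ \ldots\ n)$ permutes the $\L$-classes transitively and permutes these $n$ kernel classes transitively, sending $\{i, i + 1\}$ to $\{i + 1, i + 2\}$ and wrapping $\{1, n\}$ around to $\{1, 2\}$. Hence $\mathcal{C}_{n}$ acts transitively on both the $\L$-classes and the $\R$-classes of $J_{n - 1} \cap \mathcal{OP}_{n}$, and Lemma~\ref{lem-transitive} rules out types~\ref{item-rectangle},~\ref{item-remove-l}, and~\ref{item-remove-r}; equivalently, $\Delta(\mathcal{OP}_{n})$ is a single edge joining the unique orbit of $\L$-classes to the unique orbit of $\R$-classes.

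Having excluded types~\ref{item-rectangle}--\ref{item-intersect} from $J_{n - 1} \cap \mathcal{OP}_{n}$, Proposition~\ref{prop-remove-j} shows that $\mathcal{OP}_{n} \setminus (J_{n - 1} \cap \mathcal{OP}_{n})$ is the unique maximal subsemigroup arising from this $\J$-class, and it has type~\ref{item-remove-j}; this is part~(b). Counting gives $|\mathbb{P}_{n}|$ maximal subsemigroups from the group of units and exactly one from $J_{n - 1}$, for a total of $|\mathbb{P}_{n}| + 1$. The only points genuinely requiring care are the transitivity of the cyclic action on the $\R$-classes --- in particular that the wraparound kernel class $\{1, n\}$ lies in the same orbit as the consecutive classes $\{i, i + 1\}$ --- and the fact that $\mathcal{OP}_{n}$ is generated by $\mathcal{O}_{n}$ together with its group of units; everything else is a direct application of the machinery of Section~\ref{sec-general-results}. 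The argument is essentially identical to that for $\mathcal{POP}_{n}$, the sole difference being that here there is a single orbit of $\R$-classes (all consisting of transformations) rather than two, which is why $J_{n - 1}$ contributes one subsemigroup of type~\ref{item-remove-j} instead of two of type~\ref{item-remove-r}.
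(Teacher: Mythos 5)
Your proof is correct, but it takes a genuinely different route from the paper's own proof of Theorem~\ref{thm-OPn}. The paper (in the combined proof of Theorems~\ref{thm-OPn} and~\ref{thm-ORn}) does not use the graph $\Delta(\mathcal{OP}_{n})$ or Lemma~\ref{lem-transitive} at all for the $\J$-class $J_{n-1}$: instead it shows directly that $\genset{\mathcal{C}_{n},\ \alpha} = \mathcal{OP}_{n}$ for \emph{every} $\alpha \in J_{n-1} \cap \mathcal{OP}_{n}$, by translating $\alpha$ by units into the group $\H$-class of each rank-$(n-1)$ idempotent $\varepsilon$ of $\mathcal{O}_{n}$ (so that a power of the translate equals $\varepsilon$), and then invoking idempotent generation of $\mathcal{O}_{n}$ and $\mathcal{OP}_{n} = \genset{\mathcal{C}_{n},\ \mathcal{O}_{n}}$; Corollary~\ref{cor-Xi} then yields in one stroke that $\mathcal{OP}_{n} \setminus J_{n-1}$ is the unique maximal subsemigroup not arising from the units. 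You instead rule out the types one by one --- type~\ref{item-intersect} via Lemma~\ref{lem-no-type-intersect}(b), and types~\ref{item-rectangle}--\ref{item-remove-r} via transitivity of the $\mathcal{C}_{n}$-action on the $n$ $\L$-classes and the $n$ $\R$-classes (including the wraparound kernel class $\{1, n\}$) using Lemma~\ref{lem-transitive} --- and then conclude with Proposition~\ref{prop-remove-j}; this is exactly the strategy the paper uses for $\mathcal{POP}_{n}$ and $\mathcal{POR}_{n}$, specialised to the case where there is a single orbit of $\R$-classes. Both arguments rest on the same external facts ($\mathcal{O}_{n}$ is generated by its idempotents of rank $n-1$, and $\mathcal{OP}_{n} = \genset{\mathcal{O}_{n},\ \mathcal{C}_{n}}$), and both correctly count $|\mathbb{P}_{n}| + 1$. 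The trade-off: the paper's generation argument is shorter and avoids any analysis of the Green's class orbit structure, while your approach gives a uniform treatment alongside Theorems~\ref{thm-POPn} and~\ref{thm-PORn} and makes explicit \emph{why} the partial monoids acquire two extra maximal subsemigroups of type~\ref{item-remove-r} where $\mathcal{OP}_{n}$ has only the single one of type~\ref{item-remove-j}.
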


\begin{thm}\label{thm-ORn}
  Let $n \in \N$, $n \geq 2$, be arbitrary and let $\mathcal{OR}_{n}$ be the
  monoid of orientation-preserving and orientation-reversing transformations on
  $\n$ with the usual order. Then the maximal subsemigroups of
  $\mathcal{OR}_{n}$ are:
  \begin{enumerate}[label=\emph{(\alph*)}]
    \item
      $(\mathcal{OR}_{n} \setminus \mathcal{D}_{n}) \cup L$, where $L$ is a
      maximal subgroup of the group $\mathcal{D}_{n}$
      \emph{(type~\ref{item-intersect})}; and
    \item
      $\mathcal{OR}_{n} \setminus \set{\alpha \in \mathcal{OR}_{n}}{
      \rank(\alpha) = n - 1}$
      \emph{(type~\ref{item-remove-j})}.
  \end{enumerate}
  In particular, there are $4$ maximal subsemigroups of $\mathcal{OR}_{2}$, and
  for $n \geq 3$, there are $\sum_{p \in \mathbb{P}_{n}} p + 2$ maximal
  subsemigroups of $\mathcal{OR}_{n}$, where $\mathbb{P}_{n}$ is the set of the
  primes that divide $n$.
\end{thm}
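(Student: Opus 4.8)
The plan is to reprove both theorems together in the same style as Theorems~\ref{thm-POPn} and~\ref{thm-PORn}, the key point being that $\mathcal{OP}_{n}$ and $\mathcal{OR}_{n}$ contain no proper partial transformations. Throughout, let $S \in \{\mathcal{OP}_{n}, \mathcal{OR}_{n}\}$ and let $G$ be its group of units ($\mathcal{C}_{n}$ or $\mathcal{D}_{n}$, respectively). As in the general setup for submonoids of $\mathcal{PT}_{n}$, the only $\J$-classes of $S$ that give rise to maximal subsemigroups are $G$ and $J_{n-1} \cap S$, the latter being a regular $\J$-class covered by $G$.

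First I would dispatch the group of units. By Corollary~\ref{cor-group-of-units}, the maximal subsemigroups of $S$ arising from $G$ are the sets $(S \setminus G) \cup L$ for each maximal subgroup $L$ of $G$, all of type~\ref{item-intersect}. Applying Lemma~\ref{lem-maximals-cyclic} to $\mathcal{C}_{n}$ yields $|\mathbb{P}_{n}|$ of them for $\mathcal{OP}_{n}$, while applying Lemma~\ref{lem-maximals-dihedral} to $\mathcal{D}_{n}$ yields $1 + \sum_{p \in \mathbb{P}_{n}} p$ of them for $\mathcal{OR}_{n}$ (when $n \geq 3$).

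Next I would analyse $J_{n-1} \cap S$. Since $\mathcal{O}_{n}$ is idempotent generated and $S = \genset{\mathcal{O}_{n},\ G} \subseteq \genset{G,\ E(S)}$, Lemma~\ref{lem-no-type-intersect}(b) forbids type~\ref{item-intersect} from arising here. As for $\mathcal{POP}_{n}$ and $\mathcal{POR}_{n}$, the group $G$ acts transitively on the $\L$-classes of $J_{n-1} \cap S$, since these correspond to the omitted image point and both $\mathcal{C}_{n}$ and $\mathcal{D}_{n}$ are transitive on $\n$; hence Lemma~\ref{lem-transitive} rules out types~\ref{item-rectangle} and~\ref{item-remove-l}. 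The decisive difference from the partial case is that every element of $J_{n-1} \cap S$ is a full transformation, so the $\R$-classes are indexed \emph{solely} by the non-trivial kernel classes $\{i, i+1\}$ and $\{1, n\}$; the $n$-cycle in $G$ permutes these $n$ consecutive pairs cyclically, so $G$ is transitive on all the $\R$-classes. Thus Lemma~\ref{lem-transitive} also rules out type~\ref{item-remove-r} --- in contrast with $\mathcal{POP}_{n}$ and $\mathcal{POR}_{n}$, where a second orbit of $\R$-classes (those of partial permutations) produced type~\ref{item-remove-r} subsemigroups. With no maximal subsemigroups of types~\ref{item-rectangle}--\ref{item-intersect} arising from $J_{n-1} \cap S$, Proposition~\ref{prop-remove-j} gives that $S \setminus J_{n-1}$ is a maximal subsemigroup of type~\ref{item-remove-j}, which is exactly the set of elements of rank not equal to $n-1$ described in each theorem.

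Summing the two contributions then yields $|\mathbb{P}_{n}| + 1$ for $\mathcal{OP}_{n}$ and $\bigl(1 + \sum_{p \in \mathbb{P}_{n}} p\bigr) + 1 = \sum_{p \in \mathbb{P}_{n}} p + 2$ for $\mathcal{OR}_{n}$ with $n \geq 3$; the small degree $\mathcal{OR}_{2}$, where $\mathcal{D}_{2}$ fails to be dihedral, would be handled by direct inspection. I expect the main obstacle to be the $\R$-class count: one must verify carefully that, because there are no partial permutations of rank $n-1$ in these full-transformation monoids, the $\R$-classes form a single $G$-orbit --- this is precisely what collapses the two type~\ref{item-remove-r} subsemigroups of the partial monoids into the single type~\ref{item-remove-j} subsemigroup here.
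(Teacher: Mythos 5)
Your proposal is correct, but it follows a genuinely different route from the paper's proof of this theorem. The paper handles $J_{n-1} \cap S$ by a generation argument: given $\alpha$ of rank $n-1$, it finds units $\sigma, \tau \in G$ such that $\sigma\alpha\tau$ lies in the group $\H$-class of a rank-$(n-1)$ idempotent $\varepsilon$ of $\mathcal{O}_{n}$, so that $\varepsilon = (\sigma\alpha\tau)^{k}$ for some $k \in \N$; since $\mathcal{O}_{n}$ is generated by its idempotents of rank $n-1$ and $S = \genset{G,\ \mathcal{O}_{n}}$, this shows $S = \genset{G,\ \alpha}$ if and only if $\rank(\alpha) = n - 1$, and Corollary~\ref{cor-Xi} then finishes in one stroke, simultaneously ruling out types~\ref{item-rectangle}--\ref{item-intersect} and establishing maximality of $S \setminus J_{n-1}$. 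You instead transplant the proof of Theorems~\ref{thm-POPn} and~\ref{thm-PORn}: exclude type~\ref{item-intersect} via Lemma~\ref{lem-no-type-intersect}(b), verify that $G$ acts transitively on the $\L$-classes and --- the decisive point, which you identify correctly --- on the $n$ $\R$-classes indexed by the cyclically consecutive kernel pairs $\{i, i+1\}$ and $\{1, n\}$, so that Lemma~\ref{lem-transitive} excludes types~\ref{item-rectangle}--\ref{item-remove-r}, and then invoke Proposition~\ref{prop-remove-j}. Both routes rest on exactly the same two external facts (Aizenstat's idempotent generation of $\mathcal{O}_{n}$ and $S = \genset{G,\ \mathcal{O}_{n}}$), and both are complete, since the classification~\ref{item-remove-j}--\ref{item-intersect} together with the observation that only $G$ and $J_{n-1} \cap S$ can give rise to maximal subsemigroups leaves no other possibilities. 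What the paper's argument buys is brevity and a stronger by-product, namely that \emph{every} element of rank $n-1$ together with $G$ generates $S$; what yours buys is the structural explanation you highlight: in $\mathcal{POP}_{n}$ and $\mathcal{POR}_{n}$ the $\R$-classes split into two $G$-orbits (partial permutations versus transformations), producing two type~\ref{item-remove-r} subsemigroups, whereas here the single orbit collapses these into the one type~\ref{item-remove-j} subsemigroup. Your deferral of $\mathcal{OR}_{2}$ to direct inspection is also reasonable, since Lemma~\ref{lem-maximals-dihedral} requires $n \geq 3$ and the paper's proof is silent on that case as well.
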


\proofrefs{thm-OPn}{thm-ORn}
  Let $S \in \{\mathcal{OP}_{n}, \mathcal{OR}_{n}\}$, and let $G$ be the group
  of units of $S$. The group of units of $\mathcal{OP}_{n}$ is $\mathcal{C}_{n}$
  and the group of units of $\mathcal{OR}_{n}$ is $\mathcal{D}_{n}$.  Therefore
  the description of the maximal subsemigroups arising from $G$ follows by
  Corollary~\ref{cor-group-of-units} and Lemma~\ref{lem-maximals-cyclic}, or
  Lemma~\ref{lem-maximals-dihedral}, as appropriate.  Let $\alpha \in J_{n - 1}
  \cap S$, and let $\varepsilon$ be an idempotent of $\mathcal{O}_{n}$ of rank
  $n - 1$.  Clearly there exist permutations $\sigma, \tau \in G$ such that
  $\ker(\sigma \alpha \tau) = \ker(\varepsilon)$ and $\im(\sigma \alpha \tau) =
  \im(\varepsilon)$, i.e. $\sigma \alpha \tau \in H_{\varepsilon}^{S}$.
  Therefore $\varepsilon = (\sigma \alpha \tau)^{k}$ for some $k \in \N$.  Since
  $\mathcal{O}_{n}$ is generated by its idempotents of rank $n -
  1$~\cite{Aizenstat1962aa} and $S = \genset{G,\ \mathcal{O}_{n}}$, it follows
  that $S = \genset{G,\ \alpha}$ if and only if $\alpha \in J_{n - 1} \cap S$.
  The result follows by Corollary~\ref{cor-Xi}.
\qed{}

%%%%%%%%%%%%%%%%%%%%%%%%%%%%%%%%%%%%%%%%%%%%%%%%%%%%%%%%%%%%%%%%%%%%%%%%%%%%%%%%
\subsection{$\mathcal{POPI}_{n}$ and
$\mathcal{PORI}_{n}$}\label{sec-POPIn-PORIn}

The maximal subsemigroups of the inverse monoids $\mathcal{POPI}_{n}$ and
$\mathcal{PORI}_{n}$ have not been previously determined in the literature, as
far as we are aware. These monoids exhibit maximal subsemigroups of
type~\ref{item-intersect} arising from a $\J$-class covered by the group of
units, and to which we can apply the results of
Section~\ref{sec-intersect-inverse}.

Let $S \in \{\mathcal{POPI}_{n}, \mathcal{PORI}_{n}\}$. Then $J_{n - 1} \cap S$
is a regular $\J$-class of $S$ consisting of partial permutations. By
definition, $\mathcal{POPI}_{n} = \mathcal{POP}_{n} \cap \mathcal{I}_{n}$ and
$\mathcal{PORI}_{n} = \mathcal{POR}_{n} \cap \mathcal{I}_{n}$. Therefore the
group of units of $\mathcal{POPI}_{n}$ is $\mathcal{C}_{n}$ and the group of
units of $\mathcal{PORI}_{n}$ is $\mathcal{D}_{n}$, and given the description of
the Green's classes of $\mathcal{POP}_{n}$ and $\mathcal{POR}_{n}$ in
Section~\ref{sec-POPn-PORn}, it follows that the $\L$-classes and $\R$-classes
of $J_{n - 1} \cap S$ are $\bigset{L_{i} \cap S}{i \in \n}$ and $\bigset{R_{i}
\cap S}{i \in \n}$, respectively. See Section~\ref{sec-trans-definitions} for
more information about this notation.

In the following theorems, which are the main results of this section, we use
Proposition~\ref{prop-regularstar-intersect} to describe the maximal
subsemigroups of $\mathcal{POPI}_{n}$ and $\mathcal{PORI}_{n}$.

\begin{thm}\label{thm-POPIn}
  Let $n \in \N$, $n \geq 3$, be arbitrary and let $\mathcal{POPI}_{n}$ be the
  inverse monoid of orientation-preserving partial permutations of
  $\n$ with the usual order. For $k \in \N$, let $\mathbb{P}_{k}$ denote the set
  of all primes that divide $k$. Then the maximal subsemigroups of
  $\mathcal{POPI}_{n}$ are:
  \begin{enumerate}[label=\emph{(\alph*)}]
    \item
      $(\mathcal{POPI}_{n} \setminus \mathcal{C}_{n}) \cup U$, where $U$ is a
      maximal subgroup of $\mathcal{C}_{n}$
      \emph{(type~\ref{item-intersect})}, and
    \item
      $\genset{\mathcal{POPI}_{n} \setminus J_{n - 1},\ \zeta_{n}^{p}}$,
      where $p \in \mathbb{P}_{n - 1}$ and the partial permutation
      $\zeta_{n}$ is defined by
      $$\zeta_{n} =
      \begin{pmatrix}
        1 & 2 & \cdots & n - 2 & n - 1 & n\\
        2 & 3 & \cdots & n - 1 &     1 & -
        % TODO should we define two-line notation for partial transformations?
      \end{pmatrix}$$
      \emph{(type~\ref{item-intersect})}.
  \end{enumerate}
  In particular, there are $|\mathbb{P}_{n}| + |\mathbb{P}_{n - 1}|$ maximal
  subsemigroups of $\mathcal{POPI}_{n}$ for $n \geq 3$. For $i \in \{1, 2\}$,
  $\mathcal{POPI}_{n} = \mathcal{I}_{n}$; see
  Theorem~\ref{thm-maximals-PTn-Tn-In}.
\end{thm}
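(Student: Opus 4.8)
The plan is to split the maximal subsemigroups of $\mathcal{POPI}_n$ according to the two $\J$-classes that intersect every generating set, namely the group of units $G=\mathcal{C}_n$ (rank $n$) and $J_{n-1}\cap\mathcal{POPI}_n$ (rank $n-1$); as recorded in the preamble to this section, no $\J$-class of smaller rank meets every generating set, so these are the only sources of maximal subsemigroups. The units contribute the subsemigroups in part~(a): since $G=\mathcal{C}_n$ is a non-trivial cyclic group, Corollary~\ref{cor-group-of-units} gives the sets $(\mathcal{POPI}_n\setminus\mathcal{C}_n)\cup U$ of type~\ref{item-intersect}, one for each maximal subgroup $U$ of $\mathcal{C}_n$, and by Lemma~\ref{lem-maximals-cyclic} there are exactly $|\mathbb{P}_n|$ of these. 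The remaining work is to determine the maximal subsemigroups arising from $J_{n-1}\cap\mathcal{POPI}_n$, which I will obtain from Proposition~\ref{prop-regularstar-intersect}.

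To apply that proposition I first verify its hypotheses for $S=\mathcal{POPI}_n$ and $J=J_{n-1}\cap\mathcal{POPI}_n$. The monoid is inverse, hence a regular $*$-monoid in which every idempotent is a projection, so $J$ has exactly one idempotent per $\L$-class and per $\R$-class. Because $\J$-equivalence in $S$ is determined by rank, the only $\J$-class strictly above $J$ is $G$, so $J$ is covered by the group of units. Finally, $G=\mathcal{C}_n$ permutes the indices $\{1,\dots,n\}$ transitively, and hence acts transitively on the $\L$-classes $L_i\cap S$ of $J$ (and, by the regular-$*$ structure, on the $\R$-classes). Thus Proposition~\ref{prop-regularstar-intersect} applies: the maximal subsemigroups arising from $J$ are either the sets $\genset{\mathcal{POPI}_n\setminus J,\ U}$ of type~\ref{item-intersect}, for each maximal subgroup $U$ of $H_e^S$ containing $e\stab_G(H_e^S)$, or the single set $\mathcal{POPI}_n\setminus J$ of type~\ref{item-remove-j} if no such $U$ exists.

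The crux is then to identify $H_e^S$ and the subgroup $e\stab_G(H_e^S)$ for a convenient projection $e$. Take $e$ to be the identity map on $\{1,\dots,n-1\}$, i.e.\ the projection with $\dom(e)=\im(e)=\{1,\dots,n-1\}$. Its $\H$-class $H_e^S$ consists of the orientation-preserving permutations of the linearly ordered set $\{1,\dots,n-1\}$, which form a cyclic group of order $n-1$ generated by $\zeta_n$ (exactly as the group of units of $\mathcal{POPI}_{n-1}$ is $\mathcal{C}_{n-1}$). I then compute the stabilizer: since $H_e^S g$ has image $\im(e)g$ and the same domain as $H_e^S$, we have $g\in\stab_G(H_e^S)$ if and only if $g$ fixes $\im(e)=\{1,\dots,n-1\}$ setwise, equivalently $g$ fixes the point $n$; as the only power of the $n$-cycle fixing a point is the identity, $\stab_G(H_e^S)=\{\id_n\}$ and so $e\stab_G(H_e^S)=\{e\}$ is trivial. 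Consequently every maximal subgroup of $H_e^S\cong\mathcal{C}_{n-1}$ satisfies the containment condition, and by Lemma~\ref{lem-maximals-cyclic} these are exactly $\genset{\zeta_n^p}$ for $p\in\mathbb{P}_{n-1}$, giving the type~\ref{item-intersect} subsemigroups $\genset{\mathcal{POPI}_n\setminus J_{n-1},\ \zeta_n^p}$ of part~(b). Since $n\geq 3$ forces $\mathbb{P}_{n-1}\neq\varnothing$, at least one such subsemigroup exists, so the type~\ref{item-remove-j} alternative does not occur. Adding the $|\mathbb{P}_n|$ subsemigroups from the units to the $|\mathbb{P}_{n-1}|$ from $J_{n-1}$ yields the stated total of $|\mathbb{P}_n|+|\mathbb{P}_{n-1}|$. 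The only step requiring genuine care is the identification of $H_e^S$ with $\genset{\zeta_n}$ and the verification that its setwise stabilizer in $\mathcal{C}_n$ is trivial, since this is precisely what collapses the abstract output of Proposition~\ref{prop-regularstar-intersect} onto the explicit generators $\zeta_n^p$.
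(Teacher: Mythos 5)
Your proposal is correct and follows essentially the same route as the paper's own proof: the group of units is handled via Corollary~\ref{cor-group-of-units} and Lemma~\ref{lem-maximals-cyclic}, and the $\J$-class $J_{n-1}\cap\mathcal{POPI}_{n}$ via Proposition~\ref{prop-regularstar-intersect} applied to the projection $e=\id_{n-1}$, whose $\H$-class is the cyclic group $\genset{\zeta_{n}}$ of order $n-1$ with trivial setwise stabilizer in $\mathcal{C}_{n}$. Your additional remarks (verifying the hypotheses of the proposition explicitly, and noting that $\mathbb{P}_{n-1}\neq\varnothing$ rules out the type~\ref{item-remove-j} alternative) are sound and merely make explicit what the paper leaves implicit.
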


\begin{proof}
  By Corollary~\ref{cor-group-of-units} and Lemma~\ref{lem-maximals-cyclic}, the
  maximal subsemigroups arising from the group of units are those described in
  the statement of the theorem, and there are $|\mathbb{P}_{n}|$ of them.  It
  remains to describe the maximal subsemigroups that arise from $J_{n - 1} \cap
  \mathcal{POPI}_{n}$.
  
  Define $$H = H_{\id_{n - 1}}^{\mathcal{POPI}_{n}} = \bigset{ \alpha \in
  \mathcal{POPI}_{n}}{\dom(\alpha) = \im(\alpha) = \{1, \ldots, n - 1\}}.$$ 
  Then $H$ is a group $\H$-class of $\mathcal{POPI}_{n}$ contained in the
  $\J$-class $J_{n - 1} \cap \mathcal{POPI}_{n}$. Note that $H$ is isomorphic to
  the cyclic group of order $n - 1$, and is generated by $\zeta_{n}$.
  The setwise
  stabilizer $\stab_{\mathcal{C}_{n}}(H)$ is equal to the pointwise stabilizer
  of $n$ in $\mathcal{C}_{n}$.  Since this stabilizer is trivial,
  Proposition~\ref{prop-regularstar-intersect} implies that any maximal subgroup
  $U$ of $H$ gives rise to a maximal subsemigroup $\genset{\mathcal{POPI}_{n}
  \setminus J_{n - 1},\ U}$ of $\mathcal{POPI}_{n}$.  By
  Lemma~\ref{lem-maximals-cyclic}, the maximal subgroups of $H$ are
  $\genset{\zeta_{n}^{p}}$ for each $p \in \mathbb{P}_{n - 1}$.  It follows from
  Proposition~\ref{prop-regularstar-intersect} that these are the only maximal
  subsemigroups to arise from $J_{n - 1} \cap \mathcal{POPI}_{n}$.
\end{proof}

\begin{thm}\label{thm-PORIn}
  Let $n \in \N$, $n \geq 4$, be arbitrary and let $\mathcal{PORI}_{n}$ be the
  inverse monoid of orientation-preserving or -reversing partial
  permutations of $\n$ with the usual order. For $k \in \N$, let
  $\mathbb{P}_{k}$ denote the set of all primes that divide $k$.  Then the
  maximal subsemigroups of $\mathcal{PORI}_{n}$ are:
  \begin{enumerate}[label=\emph{(\alph*)}]
    \item
      $(\mathcal{PORI}_{n} \setminus \mathcal{D}_{n}) \cup U$, where $U$ is a
      maximal subgroup of $\mathcal{D}_{n}$ \emph{(type~\ref{item-intersect})},
      and
    \item
      $\genset{\mathcal{PORI}_{n} \setminus J_{n - 1},\ \zeta_{n}^{p},\ 
               \tau_{n}}$,
      where $p \in \mathbb{P}_{n - 1}$, and the partial permutations $\zeta_{n}$
      and $\tau_{n}$ are defined by
      $$\zeta_{n} =
      \begin{pmatrix}
        1 & 2 & \cdots & n - 2 & n - 1 & n\\
        2 & 3 & \cdots & n - 1 & 1     & -
      \end{pmatrix},
      \quad \text{and} \quad
      \tau_{n} =
      \begin{pmatrix}
        1     & 2     & \cdots & n - 1 & n\\
        n - 1 & n - 2 & \cdots & 1     & -
      \end{pmatrix}$$
      \emph{(type~\ref{item-intersect})}.
  \end{enumerate}
  For $i \in \{1, 2, 3\}$, $\mathcal{PORI}_{n} = \mathcal{I}_{n}$.  In
  particular, there are $1 + |\mathbb{P}_{n - 1}| + \sum_{p \in \mathbb{P}_{n}}
  p$ maximal subsemigroups of $\mathcal{PORI}_{n}$ for $n \geq 3$.
\end{thm}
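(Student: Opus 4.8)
The plan is to follow closely the structure of the proof of Theorem~\ref{thm-POPIn}, treating the group of units and the $\J$-class $J_{n-1} \cap \mathcal{PORI}_{n}$ separately. For the group of units: since the group of units of $\mathcal{PORI}_{n}$ is $\mathcal{D}_{n}$, Corollary~\ref{cor-group-of-units} together with Lemma~\ref{lem-maximals-dihedral} immediately yields the subsemigroups in part~(a) and shows that there are $1 + \sum_{p \in \mathbb{P}_{n}} p$ of them, all of type~\ref{item-intersect}. The remaining work is to describe the maximal subsemigroups arising from $J := J_{n-1} \cap \mathcal{PORI}_{n}$.

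To handle $J$, I would apply Proposition~\ref{prop-regularstar-intersect}, first checking its hypotheses. The monoid $\mathcal{PORI}_{n}$ is inverse, hence a regular $\ast$-monoid in which every idempotent is a projection, so $J$ has exactly one idempotent per $\L$-class and one per $\R$-class. The $\J$-classes are ordered by rank with nothing strictly between rank $n-1$ and rank $n$, so $J$ is covered by $\mathcal{D}_{n}$. Finally, $\mathcal{D}_{n}$ contains the $n$-cycle and so acts transitively on $\n$, hence transitively on both the $\L$-classes $\{L_{i} \cap \mathcal{PORI}_{n}\}$ and the $\R$-classes $\{R_{i} \cap \mathcal{PORI}_{n}\}$ of $J$. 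Thus Proposition~\ref{prop-regularstar-intersect} applies, and the maximal subsemigroups arising from $J$ are precisely the sets $\genset{\mathcal{PORI}_{n} \setminus J,\ U}$ for each maximal subgroup $U$ of $H := H_{\id_{n-1}}^{\mathcal{PORI}_{n}}$ that contains $e\stab_{\mathcal{D}_{n}}(H)$, where $e = \id_{n-1}$.

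The crux is to identify $H$ and to compute $e\stab_{\mathcal{D}_{n}}(H)$. I would show that $H$ consists of the orientation-preserving or -reversing bijections of $\{1, \ldots, n-1\}$, regarded as partial permutations undefined at $n$: the orientation-preserving ones are the $n-1$ rotations generated by $\zeta_{n}$, and the orientation-reversing ones are the $n-1$ reflections, so that $H = \genset{\zeta_{n},\ \tau_{n}}$ is dihedral of order $2(n-1)$ (the hypothesis $n \geq 4$ guarantees $n - 1 \geq 3$, so Lemma~\ref{lem-maximals-dihedral} will apply to $H$). For the stabilizer, I would argue as in the proof of Theorem~\ref{thm-POPIn} that $\stab_{\mathcal{D}_{n}}(H)$ equals the pointwise stabilizer of $n$ in $\mathcal{D}_{n}$; this has order two, generated by the reflection $\sigma \in \mathcal{D}_{n}$ that fixes $n$ and acts as $i \mapsto n - i$ on $\{1, \ldots, n-1\}$. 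Since $e\sigma = \tau_{n}$, I obtain $e\stab_{\mathcal{D}_{n}}(H) = \genset{\tau_{n}}$.

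Finally I would apply Lemma~\ref{lem-maximals-dihedral} to $H$: its maximal subgroups are the rotation subgroup $\genset{\zeta_{n}}$ and the subgroups $\genset{\zeta_{n}^{p},\ \tau_{n}\zeta_{n}^{i}}$ for each prime $p \mid (n-1)$ and $0 \leq i \leq p-1$. The rotation subgroup contains no reflection, and $\genset{\zeta_{n}^{p},\ \tau_{n}\zeta_{n}^{i}}$ contains $\tau_{n}$ exactly when $i \equiv 0 \pmod{p}$, i.e. $i = 0$; hence the maximal subgroups of $H$ containing $\genset{\tau_{n}}$ are precisely $\genset{\zeta_{n}^{p},\ \tau_{n}}$ for $p \in \mathbb{P}_{n-1}$. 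These give the $|\mathbb{P}_{n-1}|$ subsemigroups of part~(b), and since $n-1 \geq 3$ ensures at least one such subsemigroup exists, there is no further maximal subsemigroup $\mathcal{PORI}_{n} \setminus J$ of type~\ref{item-remove-j}. Adding the counts gives the stated total $1 + |\mathbb{P}_{n-1}| + \sum_{p \in \mathbb{P}_{n}} p$. The main obstacle is the explicit determination of the group $\H$-class $H$ as a dihedral group and the verification that $e\stab_{\mathcal{D}_{n}}(H) = \genset{\tau_{n}}$; once these are established, the identification of the relevant maximal subgroups is a routine consequence of Lemma~\ref{lem-maximals-dihedral}.
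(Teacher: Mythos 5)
Your proposal is correct and follows essentially the same route as the paper's proof: the group of units is handled by Corollary~\ref{cor-group-of-units} and Lemma~\ref{lem-maximals-dihedral}, and the $\J$-class $J_{n-1}\cap\mathcal{PORI}_{n}$ is handled by Proposition~\ref{prop-regularstar-intersect} applied to the dihedral $\H$-class $H=\genset{\zeta_{n},\tau_{n}}$ with $\id_{n-1}\stab_{\mathcal{D}_{n}}(H)=\{\id_{n-1},\tau_{n}\}$, so that the relevant maximal subgroups are exactly $\genset{\zeta_{n}^{p},\tau_{n}}$ for $p\in\mathbb{P}_{n-1}$. Your explicit verification of the hypotheses of Proposition~\ref{prop-regularstar-intersect} and of the congruence condition $i\equiv 0\pmod{p}$ only makes explicit what the paper leaves implicit.
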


\begin{proof}
  The group of units of $\mathcal{PORI}_{n}$ is $\mathcal{D}_{n}$, and so by
  Corollary~\ref{cor-group-of-units} and Lemma~\ref{lem-maximals-dihedral}, the
  maximal subsemigroups arising from the group of units are as stated in the
  theorem, and there are $1 + \sum_{p \in \mathbb{P}_{n}} p$ of them.

  Define $$H = H_{\id_{n - 1}}^{\mathcal{PORI}_{n}} = \bigset{ \alpha \in
  \mathcal{PORI}_{n}}{\dom(\alpha) = \im(\alpha) = \{1, \ldots, n - 1\}}.$$
  Then $H$ is a group $\H$-class of $\mathcal{PORI}_{n}$ contained in the
  $\J$-class $J_{n - 1} \cap \mathcal{POPI}_{n}$.
  Note that since $n \geq 4$, $H$ is a dihedral group of order $2(n - 1)$, and
  it is generated by the partial permutations $\zeta_{n}$ and $\tau_{n}$.
  An element of
  $\mathcal{D}_{n}$ belongs to the setwise stabilizer
  $\stab_{\mathcal{D}_{n}}(H)$ if and only if it stabilizes the set $\{1,
  \ldots, n - 1\}$. Equivalently, $\stab_{\mathcal{D}_{n}}(H)$ consists of those
  permutations in $\mathcal{D}_{n}$ that fix the point $n$.  Thus
  $\stab_{\mathcal{D}_{n}}(H)$ contains only $\id_{n}$ and the permutation in
  $\mathcal{D}_{n}$ that fixes $n$ and reverses the order of $\{1, \ldots, n -
  1\}$.  In particular,
  $$\id_{n - 1} \stab_{\mathcal{D}_{n}}(H) =
  \bigset{\id_{n - 1} \cdot h}{h \in \stab_{\mathcal{D}_{n}}(H)} =
  \{\id_{n - 1}, \tau_{n} \}.$$
  Since any subgroup of $H$ contains $\id_{n - 1}$, it follows from
  Proposition~\ref{prop-regularstar-intersect} that the maximal subsemigroups
  arising from $\mathcal{PORI}_{n} \cap J_{n - 1}$ are
  $\genset{\mathcal{PORI}_{n} \setminus J_{n - 1},\ U}$, for each maximal
  subgroup $U$ of $H$ that contains $\tau_{n}$.  By
  Lemma~\ref{lem-maximals-dihedral}, the maximal subgroups of $H$ are
  $\genset{\zeta_{n}}$ and the subgroups $\genset{\zeta_{n}^{p},\ 
  \tau_{n} \zeta_{n}^{i}}$, where $p \in \mathbb{P}_{n - 1}$ and
  $0 \leq i \leq p - 1$.  Thus
  the maximal subgroups of $H$ that contain $\tau_{n}$ are those in the latter
  form
  where $i = 0$. It follows that the maximal subsemigroups arising from
  $\mathcal{PORI}_{n} \cap J_{n - 1}$ are those stated in the theorem, and that
  there are $|\mathbb{P}_{n - 1}|$ such maximal subsemigroups.
\end{proof}

%%%%%%%%%%%%%%%%%%%%%%%%%%%%%%%%%%%%%%%%%%%%%%%%%%%%%%%%%%%%%%%%%%%%%%%%%%%%%%%%
\section{Diagram monoids}\label{sec-diagram}

In this section, we determine the maximal subsemigroups of
those monoids of partitions defined in Section~\ref{sec-diagram-definitions}.
It is clear that each of these monoids is closed under the $^{\ast}$ operation,
and is therefore a regular $\ast$-monoid.  Furthermore, the monoid
$\mathcal{I}_{n}^{*}$ and its submonoid $\mathfrak{F}_{n}$ are inverse; the
remaining monoids are not inverse, in general.

There exists a natural injective function from the partial transformation
monoid of degree $n$ to the partition monoid of degree $n$.  Thus we may think
of partitions as generalisations of transformations.  In this way,
the notion of a \emph{planar} partition is a generalisation of the notion of an
\emph{order-preserving} partial transformation, and the notion of an
\emph{annular} partition is a generalisation of the notion of an
\emph{orientation-preserving} partial transformation.

Let $\alpha \in \mathcal{P}_{n}$.  We define $\ker(\alpha)$, the \emph{kernel}
of $\alpha$, to be the restriction of the equivalence $\alpha$ to $\n$. We also
define $\dom(\alpha)$, the \emph{domain} of $\alpha$, to be the subset of $\n$
comprising those points that are contained in a transverse block of $\alpha$.
Given these definitions, we define $\coker(\alpha) = \ker(\alpha^{*})$ and
$\codom(\alpha) = \dom(\alpha^{*})$, the \emph{cokernel} and \emph{codomain} of
$\alpha$, respectively. For the majority of the monoids defined in
Section~\ref{sec-diagram-definitions}, the Green's relations are completely
determined by domain, kernel, and rank.
\begin{lem}
  [\!\!\mbox{\cite{Mazorchuk1998aa},~\cite[Theorem~17]{Wilcox2007aa},
  \cite[Theorem~5]{maltcev2007}, and~\cite[Theorem~2.4]{Dolinka2017251}}]
  \label{lem-green-partition}
  Let $S \in \{ \mathcal{P}_{n}, \mathcal{PB}_{n}, \mathcal{B}_{n},
  \mathcal{I}_{n}^{*}, \mathcal{M}_{n}, \mathcal{J}_{n} \}$ and let $\alpha,
  \beta \in S$.
  Then:
  \begin{enumerate}[label=\emph{(\alph*)}, ref=(\alph*)]
    \item\label{item-partition-green-r}
      $\alpha \R \beta$ if and only if $\dom(\alpha) = \dom(\beta)$ and
      $\ker(\alpha) = \ker(\beta)$;
    \item\label{item-partition-green-l}
      $\alpha \L \beta$ if and only if $\alpha^{*} \R \beta^{*}$, i.e.\
      $\alpha \L \beta$ if and only if
      $\codom(\alpha) = \codom(\beta)$ and  $\coker(\alpha) = \coker(\beta)$;
      and
    \item\label{item-partition-green-j}
      $\alpha \J \beta$ if and only if $\rank(\alpha) = \rank(\beta)$.
  \end{enumerate}
\end{lem}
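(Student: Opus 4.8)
The plan is to treat the three parts according to their difficulty: part~\ref{item-partition-green-l} will follow formally from part~\ref{item-partition-green-r}, so the substance lies in establishing \ref{item-partition-green-r} and \ref{item-partition-green-j}. For \ref{item-partition-green-l}, recall from Section~\ref{sec-definitions} that in a regular $\ast$-semigroup $x \R y$ if and only if $x^{*} \L y^{*}$; since each $S$ in the list is a regular $\ast$-monoid, and since the $\ast$ operation interchanges $\dom$ with $\codom$ and $\ker$ with $\coker$ by the very definitions of $\codom$ and $\coker$, applying \ref{item-partition-green-r} to $\alpha^{*}$ and $\beta^{*}$ immediately yields \ref{item-partition-green-l}. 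Thus I would prove \ref{item-partition-green-r} and \ref{item-partition-green-j} directly and obtain \ref{item-partition-green-l} as a corollary.

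For the forward implication of \ref{item-partition-green-r}, the key observation is a monotonicity property of the product: inspecting the definition of $\alpha\gamma$ in Section~\ref{sec-diagram-definitions}, right multiplication can neither enlarge the domain nor refine the kernel, i.e.\ $\dom(\alpha\gamma) \subseteq \dom(\alpha)$ and $\ker(\alpha) \subseteq \ker(\alpha\gamma)$, because a top point can only lose its transverse connection under multiplication, while two top points already identified by $\alpha$ remain identified. If $\alpha \R \beta$ then $\beta = \alpha\gamma$ and $\alpha = \beta\delta$ for some $\gamma, \delta \in S$; applying the observation in both directions forces $\dom(\alpha) = \dom(\beta)$ and $\ker(\alpha) = \ker(\beta)$.

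For the converse of \ref{item-partition-green-r}, I would exhibit explicit witnesses built from the $\ast$ operation. Suppose $\dom(\alpha) = \dom(\beta)$ and $\ker(\alpha) = \ker(\beta)$, and set $\gamma = \alpha^{*}\beta$ and $\delta = \beta^{*}\alpha$, both of which lie in $S$ since $S$ is closed under $\ast$ and multiplication. Writing $e = \alpha\alpha^{*}$, the unique projection in the $\R$-class of $\alpha$, a direct computation from the multiplication rule shows that $e$ acts as a left identity on every element whose domain and kernel coincide with those of $\alpha$; in particular $e\beta = \beta$, whence $\alpha\gamma = (\alpha\alpha^{*})\beta = \beta$, and symmetrically $\beta\delta = \alpha$. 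Thus $\alpha \R \beta$, and \ref{item-partition-green-l} follows as described above.

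Finally, for \ref{item-partition-green-j}, the forward direction is again monotonicity: since $\rank(\alpha\beta) \le \min\{\rank(\alpha), \rank(\beta)\}$, rank is constant on principal ideals and hence on $\J$-classes, so $\alpha \J \beta$ implies $\rank(\alpha) = \rank(\beta)$. For the converse, because $S$ is finite we have $\D = \J$, so it suffices to produce $\xi \in S$ with $\alpha \R \xi \L \beta$, so that $\alpha \D \beta$ and hence $\alpha \J \beta$. I would take $\xi$ to carry the top (domain and kernel) of $\alpha$ and the bottom (codomain and cokernel) of $\beta$, connecting the $r = \rank(\alpha) = \rank(\beta)$ transverse blocks of $\alpha$ to those of $\beta$ by $r$ through-strings; by \ref{item-partition-green-r} and \ref{item-partition-green-l} this $\xi$ is then $\R$-related to $\alpha$ and $\L$-related to $\beta$. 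The main obstacle is checking that such a $\xi$ can be realised \emph{inside} the constrained monoids: for $\mathcal{B}_{n}$ and $\mathcal{PB}_{n}$ the block-size bound is automatic, since the through-strings are size-$2$ blocks and the non-transverse parts are inherited from $\alpha$ and $\beta$; the delicate case is $\mathcal{M}_{n}$ and $\mathcal{J}_{n}$, where $\xi$ must be planar, and here I would use that the transverse blocks of a planar partition are linearly ordered by the canonical order on $\n \cup \np$, so joining the $i$-th through-string of $\alpha$ to the $i$-th through-string of $\beta$ produces a planar diagram.
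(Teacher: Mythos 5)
Your proposal is correct, but it necessarily differs from the paper, because the paper offers no proof of this lemma at all: it is stated as a known result, assembled from four separate references (Mazorchuk for $\mathcal{B}_{n}$ and $\mathcal{PB}_{n}$, Wilcox for $\mathcal{P}_{n}$, Maltcev--Mazorchuk for $\mathcal{I}_{n}^{*}$, and Dolinka et al.\ for $\mathcal{M}_{n}$ and $\mathcal{J}_{n}$). What your argument buys is a single self-contained proof that treats all six monoids uniformly through the regular $\ast$-structure: the deduction of part~\ref{item-partition-green-l} from part~\ref{item-partition-green-r} via $x \R y \Leftrightarrow x^{*} \L y^{*}$, the monotonicity of domain, kernel and rank under multiplication for the forward implications, the witness $\gamma = \alpha^{*}\beta$ together with the fact that the projection $e = \alpha\alpha^{*}$ (which has the same domain and kernel as $\alpha$) is a left identity on everything with that domain and kernel, and, for part~\ref{item-partition-green-j}, the finiteness fact $\D = \J$ plus an explicit middle element $\xi$. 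All of these steps are sound, and your identification of the only delicate point --- realising $\xi$ inside the planar monoids $\mathcal{J}_{n}$ and $\mathcal{M}_{n}$ --- is the right one; the order-preserving matching of through-strings does yield a planar diagram, and one can check in addition that the inherited non-transverse blocks of $\alpha$ and $\beta$ cannot cross the new transverse blocks. Two small blemishes, neither fatal: the phrase ``rank is constant on principal ideals'' is not literally true (rank is only bounded above by the rank of the generator; mutual containment of the two principal ideals is what forces equality), and in the planar case ``$i$-th through-string'' should be interpreted in the drawing's left-to-right sense, since the paper's canonical order reverses the primed points, so matching by canonical order on both rows would actually produce crossings.
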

Parts~\ref{item-partition-green-r} and~\ref{item-partition-green-l} of
Lemma~\ref{lem-green-partition} hold for all of the submonoids of
$\mathcal{P}_{n}$ defined in Section~\ref{sec-diagram-definitions}, since each
is a regular submonoid of
$\mathcal{P}_{n}$~\cite[Proposition~2.4.2]{Howie1995aa}.  The condition $\alpha
\J \beta$ if and only if $\rank(\alpha) = \rank(\beta)$ can be shown to hold for
the remaining monoids defined in Section~\ref{sec-diagram-definitions}, with
the exception of $\mathfrak{F}_{n}$ --- although here the condition does hold
for uniform block bijections of ranks $n$ or $n - 1$. For $n \in \N$ and $k \in
\{0, 1, \ldots, n\}$, we define $J_{k} = \set{\alpha \in
\mathcal{P}_{n}}{\rank(\alpha) = k}$ to be the $\J$-class of $\mathcal{P}_{n}$
that comprises the partitions of rank $k$.

In general, if $S$ is any of the diagram monoids defined in
Section~\ref{sec-diagram-definitions}, then $S$ has a unique $\J$-class $J$ that
is covered by
the group of units of $S$.  In several cases, to determine the maximal
subsemigroups of $S$ that arise from $J$, we require the graph $\Delta(S, J)$,
as defined in Section~\ref{sec-type-234}.  Given a description of the
$\L$-classes and $\R$-classes of $J$, to describe $\Delta(S, J)$ it remains to
describe the action of the group of units on the $\R$-classes of $J$.  Since $S$
is a regular $\ast$-monoid, a description of the action of the group of units on
the $\L$-classes of $J$ is obtained as a consequence; see
Section~\ref{sec-type-234}.  We observe that, for
$\alpha \in \mathcal{P}_{n}$ and $\sigma \in \mathcal{S}_{n}$,
\begin{equation}\label{eq-partition-action}
  \dom(\sigma\alpha) = \bigset{i\sigma^{-1}}{i \in \dom(\alpha)}
\quad
\text{and}
\quad
\ker(\sigma\alpha) = \bigset{\big(i\sigma^{-1}, j\sigma^{-1}\big)}{(i, j) \in
\ker(\alpha)}.
\end{equation}
Given this description and Lemma~\ref{lem-green-partition}, the action of a
subgroup of $\mathcal{S}_{n}$ on the $\R$-classes of a particular $\J$-class is
straightforward to determine.

%%%%%%%%%%%%%%%%%%%%%%%%%%%%%%%%%%%%%%%%%%%%%%%%%%%%%%%%%%%%%%%%%%%%%%%%%%%%%%%%
\subsection{The partition monoid $\mathcal{P}_{n}$}\label{sec-Pn}

Let $n \in \N$, $n \geq 2$. We require the following information about the
Green's classes of $\mathcal{P}_{n}$ in the $\J$-class $J_{n - 1}$.
Let $\alpha \in J_{n - 1}$. By definition, $\alpha$ contains $n - 1$ transverse
blocks. Since each transverse block contains at least two points, and there are
only $2n$ points in $\n \cup \np$, there are few possible
combinations of kernel and domain for $\alpha$. In particular, either
$\ker(\alpha)$ is trivial and $\dom(\alpha) = \n \setminus \{i\}$ for some $i
\in \n$, or $\dom(\alpha) = \n$ and $\{i, j\}$ is the unique non-trivial kernel
class of $\alpha$, for some distinct $i, j \in \n$.  By
Lemma~\ref{lem-green-partition}, these properties describe the $\R$-classes of
$J_{n - 1}$.  Since the $\L$-classes and $\R$-classes of a regular
$\ast$-semigroup correspond via the $^{\ast}$ operation, analogous statements
hold for the $\L$-classes of $J_{n - 1}$. Thus, for distinct $i, j \in \n$, we
make the following definitions:
\begin{itemize}
  \item
    $R_{i} = \bigset{\alpha \in J_{n - 1}}{\dom(\alpha) = \n \setminus \{i\}}$,
    an $\R$-class;
  \item
    $R_{\{i, j\}} = \bigset{\alpha \in J_{n - 1}}{(i, j) \in \ker(\alpha)}$, an
    $\R$-class;
  \item
    $L_{i} = R_{i}^{*} = \bigset{\alpha \in J_{n - 1}}{\codom(\alpha) = \n
    \setminus \{i\}}$, an $\L$-class;
  \item
    $L_{\{i, j\}} = R_{\{i, j\}}^{*} = \bigset{\alpha \in J_{n - 1}}{(i, j) \in
    \coker(\alpha)}$, an $\L$-class.
\end{itemize}
An $\H$-class of the form $L_{i} \cap R_{j}$ is a group if and only if $i = j$,
an $\H$-class of the form $L_{i} \cap R_{\{j, k\}}$ or $R_{i} \cap L_{\{j, k\}}$
is a group if and only if $i \in \{j, k\}$, and an $\H$-class of the form
$L_{\{i, j\}} \cap R_{\{k, l\}}$ is a group if and only if $\{i, j\} = \{k,
l\}$.

The main result of this section is the following theorem.

\begin{thm}\label{thm-partition}
  Let $n \in \N$, $n \geq 2$, and let $\mathcal{P}_{n}$ be the partition monoid
  of degree $n$. Then the maximal subsemigroups of $\mathcal{P}_{n}$ are:
  \begin{enumerate}[label=\emph{(\alph*)}]
    \item
      $(\mathcal{P}_{n} \setminus \mathcal{S}_{n}) \cup U$, where $U$ is a
      maximal subgroup of $\mathcal{S}_{n}$ \emph{(type~\ref{item-intersect})};
    \item
      $\mathcal{P}_{n} \setminus \bigset{\alpha \in
      \mathcal{P}_{n}}{\rank(\alpha) = n - 1\ \text{and}\ \ker(\alpha)\ \text{is
      trivial}}$
      \emph{(type~\ref{item-remove-r})};
    \item
      $\mathcal{P}_{n} \setminus \bigset{\alpha \in
      \mathcal{P}_{n}}{\rank(\alpha) = n - 1\ \text{and}\ \dom(\alpha) = \n}$
      \emph{(type~\ref{item-remove-r})};
    \item
      $\mathcal{P}_{n} \setminus \bigset{\alpha \in
      \mathcal{P}_{n}}{\rank(\alpha) = n - 1\ \text{and}\ \coker(\alpha)\
      \text{is trivial}}$
      \emph{(type~\ref{item-remove-l})}; and
    \item
      $\mathcal{P}_{n} \setminus \bigset{\alpha \in
      \mathcal{P}_{n}}{\rank(\alpha) = n - 1\ \text{and}\ \codom(\alpha) = \n}$
      \emph{(type~\ref{item-remove-l})}.
  \end{enumerate}
  In particular, for $n \geq 2$, there are $s_{n} + 4$ maximal subsemigroups of
  $\mathcal{P}_{n}$, where $s_{n}$ denotes the number of
  maximal subgroups of the symmetric group of degree $n$.  The partition monoid
  of degree $1$ is a semilattice of order 2: its maximal subsemigroups are each
  of its singleton subsets.
\end{thm}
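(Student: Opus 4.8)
The plan is to apply the regular $\ast$-monoid machinery of Section~\ref{sec-general-results}, following closely the treatment of $\mathcal{PT}_{n}$ in Theorem~\ref{thm-maximals-PTn-Tn-In}; the essential difference is that here the $^{\ast}$ operation doubles the relevant orbit structure. After disposing of $n = 1$ directly (a two-element semilattice), assume $n \geq 2$. It is well known that $\mathcal{P}_{n}$ is generated by $\mathcal{S}_{n}$ together with idempotents of rank $n - 1$, and that its ideal of non-units is idempotent generated; from this two facts follow. First, there is a generating set for $\mathcal{P}_{n}$ contained in $\mathcal{S}_{n} \cup J_{n - 1}$, so no $\J$-class of rank at most $n - 2$ meets every generating set, whence the only $\J$-classes giving rise to maximal subsemigroups are $\mathcal{S}_{n} = J_{n}$ and $J_{n - 1}$. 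Second, $J_{n - 1} \subseteq \genset{\mathcal{S}_{n},\ E(\mathcal{P}_{n})}$. Corollary~\ref{cor-group-of-units} then produces exactly the $s_{n}$ maximal subsemigroups of type~\ref{item-intersect} in part~(a), arising from the group of units.

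Turning to $J_{n - 1}$, I would first rule out type~\ref{item-intersect}: since $J_{n - 1} \subseteq \genset{\mathcal{S}_{n},\ E(\mathcal{P}_{n})}$, Lemma~\ref{lem-no-type-intersect}(b) applies. (When $n = 2$ one may instead observe that the $\H$-classes of $J_{1}$ are trivial.) It then remains to determine the graph $\Delta(\mathcal{P}_{n}) = \Delta(\mathcal{P}_{n}, J_{n - 1})$ and to read off the maximal subsemigroups that are unions of $\H$-classes.

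Computing $\Delta(\mathcal{P}_{n})$ is the heart of the argument. By~\eqref{eq-partition-action} and Lemma~\ref{lem-green-partition}, the group $\mathcal{S}_{n}$ acts transitively on $\n$ and on the two-element subsets of $\n$; hence there are exactly two orbits of $\R$-classes --- the orbit $\bigset{R_{i}}{i \in \n}$ (trivial kernel) and the orbit $\bigset{R_{\{i, j\}}}{i \neq j}$ (full domain) --- and, by the $^{\ast}$ correspondence of Section~\ref{sec-type-234}, exactly two orbits of $\L$-classes. To find the edges I would invoke the four group-$\H$-class criteria recorded immediately before the theorem: each of $L_{i} \cap R_{i}$, $L_{i} \cap R_{\{i, k\}}$, $R_{i} \cap L_{\{i, k\}}$, and $L_{\{i, j\}} \cap R_{\{i, j\}}$ is a group, so all four pairs of orbits are adjacent and $\Delta(\mathcal{P}_{n})$ is the complete bipartite graph $K_{2, 2}$. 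I expect this edge count to be the main obstacle: a priori one might guess a sparser graph, and it is precisely the presence of all four edges that yields four --- rather than fewer --- maximal subsemigroups from $J_{n - 1}$.

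Finally I would read off the remaining maximal subsemigroups from $K_{2, 2}$. Its only maximal independent subsets are its two bicomponents, so Corollary~\ref{cor-rectangle} gives $2 - 2 = 0$ subsemigroups of type~\ref{item-rectangle}. Every vertex of $K_{2, 2}$ has degree $2$, hence none is adjacent to a vertex of degree $1$; Proposition~\ref{prop-remove-r} therefore yields one maximal subsemigroup for each of the two $\R$-orbits, namely parts~(b) and~(c), and applying the $^{\ast}$ operation through Lemma~\ref{lem-l-or-r} (equivalently, Proposition~\ref{prop-remove-l} on the two $\L$-orbits) transforms these into the two type~\ref{item-remove-l} subsemigroups (d) and (e). Since subsemigroups of types~\ref{item-remove-l} and~\ref{item-remove-r} arise from $J_{n - 1}$, Proposition~\ref{prop-remove-j} shows that $\mathcal{P}_{n} \setminus J_{n - 1}$ is not maximal, so there is no subsemigroup of type~\ref{item-remove-j}. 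Summing the $s_{n}$ subsemigroups from the group of units with the four arising from $J_{n - 1}$ gives $s_{n} + 4$, as required.
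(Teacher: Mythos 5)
Your proposal is correct and follows essentially the same route as the paper's proof: both restrict attention to $\mathcal{S}_{n}$ and $J_{n-1}$ via the idempotent generation of $\mathcal{P}_{n}\setminus\mathcal{S}_{n}$, apply Corollary~\ref{cor-group-of-units} and Lemma~\ref{lem-no-type-intersect}, identify $\Delta(\mathcal{P}_{n}, J_{n-1})$ as the complete bipartite graph on the two orbits of $\R$-classes and two orbits of $\L$-classes, and then read off the answer from Corollary~\ref{cor-rectangle}, Proposition~\ref{prop-remove-r}, Lemma~\ref{lem-l-or-r}, and Proposition~\ref{prop-remove-j}. The only difference is cosmetic: you verify the four edges explicitly from the group-$\H$-class criteria, where the paper appeals to Figure~\ref{fig-Pn-delta}.
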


\begin{proof}
  By~\cite[Section~6]{East2011ab}, the ideal $\mathcal{P}_{n} \setminus
  \mathcal{S}_{n}$ is generated by its idempotents of rank $n - 1$.  Thus, since
  $\J$-equivalence in $\mathcal{P}_{n}$ is determined by rank, the maximal
  subsemigroups of $\mathcal{P}_{n}$ arise from its group of units,
  $\mathcal{S}_{n}$, and the $\J$-class $J_{n - 1}$.  It follows by
  Corollary~\ref{cor-group-of-units} that the maximal subsemigroups that arise
  from $\mathcal{S}_{n}$ are those stated in the theorem.

  By Lemma~\ref{lem-no-type-intersect}, there are no maximal subsemigroups
  arising from $J_{n - 1}$ of type~\ref{item-intersect}.  It is clear
  from~\eqref{eq-partition-action} that $\mathcal{S}_{n}$ transitively permutes
  the $\R$-classes of $J_{n - 1}$ with trivial kernel, and it transitively
  permutes the $\R$-classes of $J_{n - 1}$ with domain $\n$. Thus there are two
  orbits of $\R$-classes of $J_{n - 1}$ under the action of $\mathcal{S}_{n}$;
  therefore there are two corresponding orbits of $\L$-classes.  A picture
  of $\Delta(\mathcal{P}_{n}, J_{n - 1})$ is shown in Figure~\ref{fig-Pn-delta}.

  Since the two bicomponents are the only maximal independent subsets of
  $\Delta(\mathcal{P}_{n}, J_{n - 1})$, Corollary~\ref{cor-rectangle} implies
  that there are no maximal subsemigroups of type~\ref{item-rectangle}.  Each
  vertex of $\Delta(\mathcal{P}_{n}, J_{n - 1})$ has degree $2$, and so by
  Proposition~\ref{prop-remove-r}, there are two maximal subsemigroups of
  type~\ref{item-remove-r}, formed by removing each orbit of $\R$-classes in
  turn, and by Lemma~\ref{lem-l-or-r}, the corresponding maximal subsemigroups
  of type~\ref{item-remove-l} are those stated in the theorem.  By
  Proposition~\ref{prop-remove-j}, there are no maximal subsemigroups of
  type~\ref{item-remove-j}.
\end{proof}

%%%%%%%%%%%%%%%%%%%%%%%%%%%%%%%%%%%%%%%%%%%%%%%%%%%%%%%%%%%%%%%%%%%%%%%%%%%%%%%%
% Figure of Delta for Pn
%%%%%%%%%%%%%%%%%%%%%%%%%%%%%%%%%%%%%%%%%%%%%%%%%%%%%%%%%%%%%%%%%%%%%%%%%%%%%%%%

\begin{figure}
  \begin{center}
    \begin{tikzpicture}
      % L-classes
      \node[rounded corners,rectangle,draw,fill=blue!20]
        (1)  at (0,  0) {$\bigset{L_{i}}{i \in \n}$};
      \node[rounded corners,rectangle,draw,fill=green!20]
        (2)  at (5,  0) {$\bigset{L_{\{i, j\}}}{ i, j \in \n,\ i \neq j}$};

      % R-classes
      \node[rounded corners,rectangle,draw,fill=blue!20]
        (11) at (0,  4) {$\bigset{R_{i}}{i \in \n}$};
      \node[rounded corners,rectangle,draw,fill=green!20]
        (12) at (5,  4) {$\bigset{R_{\{i, j\}}}{ i, j \in \n,\ i \neq j}$};

      % \dots

      \edge{1}{11};
      \edge{2}{12};
      \edge{1}{12};
      \edge{2}{11};
    \end{tikzpicture}
  \end{center}
  \caption{The graph $\Delta(\mathcal{P}_{n}, J_{n - 1})$.}\label{fig-Pn-delta}
\end{figure}
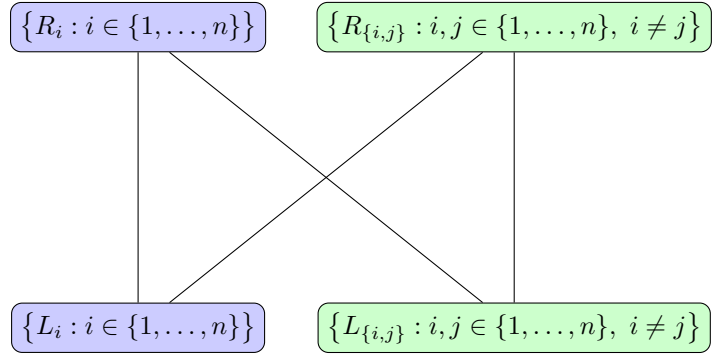

%%%%%%%%%%%%%%%%%%%%%%%%%%%%%%%%%%%%%%%%%%%%%%%%%%%%%%%%%%%%%%%%%%%%%%%%%%%%%%%%
\subsection{The partial Brauer monoid $\mathcal{PB}_{n}$}\label{sec-PBn}

The symmetric inverse monoid degree $n$ embeds in the partition monoid
$\mathcal{P}_{n}$,
via the injective homomorphism $\phi$ that maps a partial
permutation $\alpha$ to the partition whose non-trivial blocks are $\{i,
(i\alpha)'\}$ for each $i \in \dom(\alpha)$.  Thus we define
$\mathcal{I}_{n}$ to be copy of the symmetric inverse monoid of degree $n$
embedded in $\mathcal{P}_{n}$ by $\phi$. Clearly $\mathcal{I}_{n}$ is a
submonoid of $\mathcal{PB}_{n}$.

To describe the maximal subsemigroups of $\mathcal{PB}_{n}$, we require a
description of the elements of $\mathcal{PB}_{n}$ whose rank is at least $n -
2$.  Partitions of degree $n$ that have rank $n$ are units, and the group of
units of $\mathcal{PB}_{n}$ is $\mathcal{S}_{n}$.

Let $\alpha \in J_{n - 1} \cap \mathcal{PB}_{n}$.  By definition, $\alpha$
contains precisely $n - 1$ transverse blocks of size two, and two
singleton blocks $\{i\}$ and $\{j'\}$, for some $i, j \in \n$. Therefore
$\alpha$ is the image of some partial permutation under the embedding $\phi$.
Since $\alpha$ was arbitrary, and $\mathcal{I}_{n} \subseteq \mathcal{PB}_{n}$,
it follows that $J_{n - 1} \cap \mathcal{PB}_{n} = J_{n - 1} \cap
\mathcal{I}_{n}$.

Let $\alpha \in \mathcal{PB}_{n}$, and suppose that $\rank(\alpha) = n - 2$.
Then $\alpha$ contains $n - 2$ transverse blocks, which leaves a pair of points
of $\n$ and a pair of points of $\np$ that are not contained in transverse
blocks.  Each of these pairs forms either a block of size $2$, or two singleton
blocks. In particular, $\dom(\alpha)$ lacks some two points $i$ and $j$, and
either $\ker(\alpha)$ is trivial, or $\{i, j\}$ is the unique non-trivial kernel
class of $\alpha$.  A similar statement holds for the codomain and cokernel of
$\alpha$.

\begin{thm}\label{thm-partial-brauer}
  Let $n \in \N$, $n \geq 2$, be arbitrary and let $\mathcal{PB}_{n}$ be the
  partial Brauer monoid of degree $n$. Then the maximal subsemigroups of
  $\mathcal{PB}_{n}$ are:
  \begin{enumerate}[label=\emph{(\alph*)}]
    \item
      $(\mathcal{PB}_{n} \setminus \mathcal{S}_{n}) \cup U$, where $U$ is a
      maximal subgroup of $\mathcal{S}_{n}$
      \emph{(type~\ref{item-intersect})};
    \item
      $\mathcal{PB}_{n} \setminus \set{\alpha \in
      \mathcal{PB}_{n}}{\rank(\alpha) = n - 1}$
      \emph{(type~\ref{item-remove-j})};
    \item
      $\mathcal{PB}_{n} \setminus \set{\alpha \in
      \mathcal{PB}_{n}}{\rank(\alpha) = n - 2\ \text{and}\ \ker(\alpha)\
      \text{is non-trivial}}$
      \emph{(type~\ref{item-remove-r})}; and
    \item
      $\mathcal{PB}_{n} \setminus \set{\alpha \in
      \mathcal{PB}_{n}}{\rank(\alpha) = n - 2\ \text{and}\ \coker(\alpha)\
      \text{is non-trivial}}$
      \emph{(type~\ref{item-remove-l})}.
  \end{enumerate}
  In particular, for $n \geq 2$, there are $s_{n} + 3$ maximal subsemigroups of
  $\mathcal{PB}_{n}$, where $s_{n}$ denotes the number of
  maximal subgroups of the symmetric group of degree $n$. The partial Brauer
  monoid $\mathcal{PB}_{1} = \mathcal{P}_{1}$ is a semilattice of order $2$: its
  maximal subsemigroups are each of its singleton subsets.
\end{thm}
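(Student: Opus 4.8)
The plan is to follow the template used for $\mathcal{P}_n$ in Theorem~\ref{thm-partition}, but with one essential new feature: because $J_{n-1}\cap\mathcal{PB}_n = J_{n-1}\cap\mathcal{I}_n$ consists entirely of partial permutations, the elements with non-trivial kernel or cokernel have descended one rank, so a maximal subsemigroup now arises from $J_{n-2}$, a $\J$-class that is \emph{not} covered by the group of units. I would begin by recording that the ideal $\mathcal{PB}_n\setminus\mathcal{S}_n$ is generated by its idempotents of ranks $n-1$ and $n-2$ (a known fact for the partial Brauer monoid). Since $\J$-equivalence is determined by rank by Lemma~\ref{lem-green-partition}\ref{item-partition-green-j}, this forces every generating set to meet $\mathcal{S}_n$, $J_{n-1}$, and $J_{n-2}$, and no lower $\J$-class, so the maximal subsemigroups arise exactly from these three $\J$-classes. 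The group of units is handled immediately by Corollary~\ref{cor-group-of-units}, yielding the subsemigroups in part~(a).

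For $J_{n-1}$, the only $\J$-class strictly above it is $\mathcal{S}_n$, so it is covered by the group of units; by the established description its $\R$-classes are indexed by the domains $\n\setminus\{i\}$ and its $\L$-classes by the codomains $\n\setminus\{i\}$, and $\mathcal{S}_n$ permutes each set transitively. Hence Lemma~\ref{lem-transitive} rules out maximal subsemigroups of types~\ref{item-rectangle}, \ref{item-remove-l}, and~\ref{item-remove-r}, while the idempotent generation together with Lemma~\ref{lem-no-type-intersect}(b) rules out type~\ref{item-intersect}. Proposition~\ref{prop-remove-j} then yields the single maximal subsemigroup $\mathcal{PB}_{n}\setminus\set{\alpha\in\mathcal{PB}_{n}}{\rank(\alpha)=n-1}$ of part~(b).

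The substantial work is at $J_{n-2}$, where I would apply Lemma~\ref{lem-Xi} with $k=2$, taking $X_1$ to be the rank-$(n-2)$ elements with non-trivial kernel and $X_2$ those with non-trivial cokernel. The crux is the claim that $\mathcal{PB}_n=\genset{\mathcal{PB}_n\setminus J_{n-2},\,A}$ if and only if $A$ meets both $X_1$ and $X_2$. For necessity I would prove the key structural lemma that in $\mathcal{PB}_n$ the product of two elements with trivial kernel again has trivial kernel; here the restriction to blocks of size at most $2$ is decisive, since it prevents a ``cup'' in a left factor from linking two top points (such a block would have size at least $3$). Because rank is submultiplicative and every element of rank at least $n-1$ has trivial kernel, any product lying in $X_1$ must use a rank-$(n-2)$ factor of non-trivial kernel, which can only come from $A$; the dual statement handles $X_2$. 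For sufficiency, observe that $\mathcal{S}_n$ together with $J_{n-1}$ already produces every rank-$(n-2)$ partial permutation, while multiplying a single element of $A\cap X_1$ on the left and right by permutations yields all pure caps and multiplying one of $A\cap X_2$ yields all pure cups; products of a cap and a cup give the remaining rank-$(n-2)$ elements, so all of $J_{n-2}$ is generated. Lemma~\ref{lem-Xi} then delivers exactly $\mathcal{PB}_n\setminus X_1$ and $\mathcal{PB}_n\setminus X_2$ as the maximal subsemigroups from $J_{n-2}$; these are parts~(c) and~(d). Since $\mathcal{PB}_n$ is a regular $\ast$-monoid with $X_2 = X_1^{*}$, the type~\ref{item-remove-l} subsemigroup of~(d) is the image under $^{*}$ of the type~\ref{item-remove-r} subsemigroup of~(c), consistently with Lemma~\ref{lem-l-or-r}.

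Counting gives $s_n$ subsemigroups from $\mathcal{S}_n$, one from $J_{n-1}$, and two from $J_{n-2}$, for a total of $s_n+3$, with the degenerate case $n=1$ handled directly. The main obstacle is the necessity half of the $J_{n-2}$ analysis, namely the kernel-composition lemma: the transitivity and covering machinery of Section~\ref{sec-covered} dispatches everything else, but this lemma is exactly where the specific combinatorics of $\mathcal{PB}_n$ --- blocks of size at most two --- must be invoked, and it is what makes $J_{n-2}$ rather than $J_{n-1}$ the source of the ``union of $\H$-classes'' subsemigroups, in contrast with $\mathcal{P}_n$.
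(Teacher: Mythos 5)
Your proposal has the same skeleton as the paper's proof: restrict attention to $\mathcal{S}_n$, $J_{n-1}$ and $J_{n-2}$ via the rank-$\geq n-2$ generation fact, handle the units with Corollary~\ref{cor-group-of-units}, and apply Lemma~\ref{lem-Xi} at $J_{n-2}$ with exactly the sets $X_1$ and $X_2=X_1^{*}$ (your part~(b), argued via Lemma~\ref{lem-transitive}, Lemma~\ref{lem-no-type-intersect} and Proposition~\ref{prop-remove-j}, replaces the paper's shorter route --- $\genset{\mathcal{PB}_n\setminus J_{n-1},\,\alpha}\supseteq\genset{\mathcal{S}_n,\,\alpha}\supseteq\mathcal{I}_n$, then Lemma~\ref{lem-Xi} with $k=1$ --- but both are valid). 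However, your justification of the key structural lemma misidentifies the mechanism. The restriction to blocks of size at most $2$ is \emph{not} what makes trivial-kernel elements closed under multiplication: this holds in the full partition monoid $\mathcal{P}_n$, and the paper invokes precisely that stronger fact. A cup $\{k',l'\}$ in the left factor can never by itself link two points of $\n$ in a product; tracing a path in $\alpha'\cup\beta'$ between two points of $\n$, every passage between two \emph{distinct} middle points inside a block of $\beta'$ witnesses a non-trivial kernel class of $\beta$, and if no such passage occurs the path collapses into a single block of $\alpha'$, witnessing a non-trivial kernel class of $\alpha$. Cokernel blocks of $\alpha$, and block sizes, are irrelevant. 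The lemma you need is true, but the reason you give for it would not survive being written out.

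The more serious problem is the sufficiency half of your Lemma~\ref{lem-Xi} application. You claim that multiplying a single element of $A\cap X_1$ on the left and right by \emph{permutations} yields all pure caps. But $X_1$ also contains elements whose kernel \emph{and} cokernel are both non-trivial, and $\mathcal{S}_n a\mathcal{S}_n$ preserves both properties; so if $A=\{a\}$ with $a$ of this mixed type (such an $A$ meets both $X_1$ and $X_2$, so Lemma~\ref{lem-Xi} obliges you to handle it), your argument produces no pure cap at all, and the generation claim is unproven for this case. The repair is easy and uses only elements you already have: if $\{k',l'\}$ is the cup of $a$, multiply $a$ on the right by the rank-$(n-1)$ projection whose singleton blocks are $\{k\}$ and $\{k'\}$; the resulting product has rank $n-2$, retains the kernel class of $a$, and has trivial cokernel, i.e.\ it is a pure cap, after which permutations give all pure caps, and dually on the left for pure cups. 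With that amendment (and a correct proof of the kernel lemma) your argument goes through and delivers the same classification and count $s_n+3$ as the paper.
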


\begin{proof}
  By~\cite{Dolinka2017251}, $\mathcal{PB}_{n}$ is
  generated by its elements with rank at least $n - 2$, and any generating set
  contains elements of ranks $n$, $n - 1$, and $n - 2$. By
  Lemma~\ref{lem-green-partition}, Green's $\J$-relation in $\mathcal{PB}_{n}$
  is determined by rank. Thus, the $\J$-classes of $\mathcal{PB}_{n}$ from which
  there arise maximal subsemigroups are its group of units,
  $J_{n - 1} \cap \mathcal{PB}_{n}$, and $J_{n - 2} \cap
  \mathcal{PB}_{n}$. The group of units of $\mathcal{PB}_{n}$ is
  $\mathcal{S}_{n}$, and so by Corollary~\ref{cor-group-of-units}, the maximal
  subsemigroups that arise from the group of units are those stated.

  Since the $\J$-class $J_{n - 1} \cap \mathcal{PB}_{n}$ is covered by the group
  of units, $\mathcal{PB}_{n} \setminus J_{n - 1}$ is a subsemigroup of
  $\mathcal{PB}_{n}$.  Let $\alpha \in J_{n - 1} \cap \mathcal{PB}_{n} = J_{n -
  1} \cap \mathcal{I}_{n}$ be arbitrary. As stated in the proof of
  Theorem~\ref{thm-maximals-PTn-Tn-In}, $\mathcal{I}_{n}$ is generated by its
  group of units $\mathcal{S}_{n}$ along with any element of rank $n - 1$. Thus
  $\genset{\mathcal{PB}_{n} \setminus J_{n - 1},\ \alpha} \supseteq
  \genset{\mathcal{PB}_{n} \setminus J_{n - 1},\ \mathcal{I}_{n}} =
  \mathcal{PB}_{n}$.  By using Lemma~\ref{lem-Xi} with $k = 1$ and $X_{1} = J_{n
  - 1} \cap \mathcal{PB}_{n}$, we find that the unique maximal subsemigroup of
  $\mathcal{PB}_{n}$ arising from this $\J$-class has type~\ref{item-remove-j}.

  In order to determine the maximal subsemigroups of $\mathcal{PB}_{n}$ that
  arise from its $\J$-class of rank $n - 2$, we define the subsets
  \begin{align*}
    X & = \set{\alpha \in \mathcal{PB}_{n}}{\rank(\alpha) = n - 2\
    \text{and}\ \ker(\alpha)\ \text{is non-trivial}},\ \text{and}\\
    X^{*} = \set{\alpha^{*}}{\alpha \in X} & = \set{\alpha \in
    \mathcal{PB}_{n}}{\rank(\alpha) = n - 2 \ \text{and}\ \coker(\alpha)\
    \text{is non-trivial}}.
  \end{align*}
  Note that $X$ is a union of $\R$-classes of $\mathcal{PB}_{n}$, and
  $X^{*}$ is a union of $\L$-classes.  Let $A$ be a subset of $J_{n - 2}
  \cap \mathcal{PB}_{n}$ such that $(\mathcal{PB}_{n} \setminus J_{n - 2}) \cup
  A$ generates $\mathcal{PB}_{n}$.  Let $\alpha \in X$ be arbitrary.  Then
  $\alpha$ can the written as a product $\alpha = \beta_{1} \cdots \beta_{k}$ of
  some of these generators.  Clearly the generators $\beta_{1}, \ldots,
  \beta_{k}$ have rank at least $n - 2$. Every element in $\mathcal{PB}_{n}$ of
  rank $n$ and $n - 1$ has a trivial kernel, and the subset of partitions with
  trivial kernel in $\mathcal{P}_{n}$ forms a subsemigroup. Thus there exists
  some $r \in \{1, \ldots, k\}$ such that $\rank(\beta_{r}) = n - 2$ and
  $\ker(\beta_{r})$ is non-trivial --- in other words, $\beta_{r} \in X$.  A
  dual argument shows that $A \cap X^{*} \neq \varnothing$. Conversely, for
  any subset $A$ of $J_{n - 2} \cap \mathcal{PB}_{n}$ that intersects $X$
  and $X^{*}$ non-trivially, we have $\mathcal{PB}_{n} =
  \genset{\mathcal{PB}_{n} \setminus J_{n - 2},\ A}$.  By
  Lemma~\ref{lem-Xi}, the maximal subsemigroups of $\mathcal{PB}_{n}$ arising
  from $J_{n - 2} \cap \mathcal{PB}_{n}$ are $\mathcal{PB}_{n} \setminus X$
  and $\mathcal{PB}_{n} \setminus X^{*}$; these maximal subsemigroups have
  types~\ref{item-remove-r} and~\ref{item-remove-l}, respectively.
\end{proof}

%%%%%%%%%%%%%%%%%%%%%%%%%%%%%%%%%%%%%%%%%%%%%%%%%%%%%%%%%%%%%%%%%%%%%%%%%%%%%%%%
\subsection{The Brauer monoid $\mathcal{B}_{n}$ and the uniform block bijection
monoid $\mathfrak{F}_{n}$}\label{sec-Bn-Fn}

The main results of this section are the following theorems, which describe the
maximal subsemigroup of $\mathcal{B}_{n}$ and $\mathfrak{F}_{n}$.

\begin{thm}\label{thm-Brauer}
  Let $n \in \N$, $n \geq 2$, and let $\mathcal{B}_{n}$ be the Brauer monoid of
  degree $n$. Then the maximal subsemigroups of $\mathcal{B}_{n}$ are:
  \begin{enumerate}[label=\emph{(\alph*)}]
    \item
      $(\mathcal{B}_{n} \setminus \mathcal{S}_{n}) \cup U$, where $U$ is a
      maximal subgroup of $\mathcal{S}_{n}$ \emph{(type~\ref{item-intersect})};
      and
    \item
      $\mathcal{B}_{n} \setminus \set{\alpha \in \mathcal{B}_{n}}{\rank(\alpha)
      = n - 2}$ \emph{(type~\ref{item-remove-j})}.
  \end{enumerate}
  In particular, for $n \geq 2$, there are $s_{n} + 1$ maximal subsemigroups of
  $\mathcal{B}_{n}$, where $s_{n}$ denotes the number of
  maximal subgroups of the symmetric group of degree $n$.
\end{thm}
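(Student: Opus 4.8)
The plan is to follow the same template as the proof of Theorem~\ref{thm-maximals-PTn-Tn-In}, exploiting the fact that in $\mathcal{B}_{n}$ every block has size $2$, so the rank of any element has the same parity as $n$. Consequently the ranks occurring in $\mathcal{B}_{n}$ are $n, n-2, n-4, \ldots$, there is \emph{no} $\J$-class of rank $n-1$, and the $\J$-class $J_{n-2} \cap \mathcal{B}_{n}$ is covered by the group of units $\mathcal{S}_{n}$. By Lemma~\ref{lem-green-partition}, $\J$-equivalence is governed by rank. I would first record that $\mathcal{B}_{n}$ is generated by elements of rank at least $n-2$; this is the key structural input and means that the only $\J$-classes intersecting every generating set, hence the only ones giving rise to maximal subsemigroups, are $\mathcal{S}_{n}$ and $J_{n-2}\cap\mathcal{B}_{n}$.

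The maximal subsemigroups arising from the group of units are immediate from Corollary~\ref{cor-group-of-units}: since $\mathcal{S}_{n}$ is non-trivial for $n \geq 2$, they are exactly the sets $(\mathcal{B}_{n} \setminus \mathcal{S}_{n}) \cup U$ for each maximal subgroup $U$ of $\mathcal{S}_{n}$, and these have type~\ref{item-intersect}; there are $s_{n}$ of them.

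For the remaining $\J$-class I would apply Corollary~\ref{cor-Xi} with $X = J_{n-2}\cap\mathcal{B}_{n}$, so the task reduces to verifying that $\mathcal{B}_{n} = \genset{\mathcal{S}_{n},\ x}$ if and only if $x \in X$. The ``only if'' direction is routine: a unit $x$ yields only $\mathcal{S}_{n}$, while an element of rank at most $n-4$ cannot produce any rank-$(n-2)$ partition, because multiplication by units preserves rank and products of non-units cannot increase it. The substance is the ``if'' direction, namely that $\mathcal{S}_{n}$ together with a \emph{single arbitrary} element $x$ of rank $n-2$ generates $\mathcal{B}_{n}$; I expect this to be the main obstacle. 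To prove it I would first note that, by~\eqref{eq-partition-action} and Lemma~\ref{lem-green-partition}, $\mathcal{S}_{n}$ acts transitively on the $\R$-classes of $J_{n-2}\cap\mathcal{B}_{n}$ (each indexed by the unique non-transverse block $\{i,j\}\subseteq\n$) and, via the $^{*}$ operation, transitively on its $\L$-classes. Using that left (respectively right) multiplication by a unit fixes the $\L$-class (respectively $\R$-class) and permutes the others, I can choose $\sigma,\tau \in \mathcal{S}_{n}$ with $\sigma x \tau$ lying in a group $\H$-class $H_{e}$ of the regular $\J$-class $J_{n-2}$; a suitable power of $\sigma x \tau$ then equals the idempotent $e$, so $e \in \genset{\mathcal{S}_{n},\ x}$. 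It remains to invoke the classical fact that $\mathcal{B}_{n}$ is generated by $\mathcal{S}_{n}$ together with one idempotent of rank $n-2$ (conjugating such an idempotent by $\mathcal{S}_{n}$ recovers all the ``cup-cap'' generators), giving $\genset{\mathcal{S}_{n},\ x} \supseteq \genset{\mathcal{S}_{n},\ e} = \mathcal{B}_{n}$. With the hypothesis of Corollary~\ref{cor-Xi} verified, its conclusion yields the unique remaining maximal subsemigroup $\mathcal{B}_{n} \setminus X = \mathcal{B}_{n} \setminus \set{\alpha \in \mathcal{B}_{n}}{\rank(\alpha)=n-2}$, which has type~\ref{item-remove-j}; counting then gives $s_{n} + 1$ maximal subsemigroups in total.
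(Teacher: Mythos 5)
Your proposal is correct and follows essentially the same route as the paper's proof: Corollary~\ref{cor-group-of-units} handles the group of units, then one shows that $\mathcal{S}_{n}$ together with any single element $\alpha$ of rank $n-2$ generates $\mathcal{B}_{n}$ by translating $\alpha$ by units into a group $\H$-class, taking a power to obtain a projection of rank $n-2$, and invoking the known fact (cited in the paper from~\cite{auinger2012krohn}) that $\mathcal{S}_{n}$ plus one such projection generates $\mathcal{B}_{n}$, after which Corollary~\ref{cor-Xi} finishes the argument. The only cosmetic difference is that the paper multiplies on one side only, choosing $\tau\in\mathcal{S}_{n}$ so that $\alpha\tau$ has non-transverse blocks $\{i,j\}$ and $\{i',j'\}$, whereas you translate on both sides; your explicit remarks on rank parity and on the ``only if'' direction of the hypothesis of Corollary~\ref{cor-Xi} are fine but left implicit in the paper.
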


\begin{thm}\label{thm-factorisable}
  Let $n \in \N$, $n \geq 2$, and let $\mathfrak{F}_{n}$ be the uniform block
  bijection monoid of degree $n$. Then the maximal subsemigroups of
  $\mathfrak{F}_{n}$ are:
  \begin{enumerate}[label=\emph{(\alph*)}]
    \item
      $(\mathfrak{F}_{n} \setminus \mathcal{S}_{n}) \cup U$, where $U$ is a
      maximal subgroup of $\mathcal{S}_{n}$ \emph{(type~\ref{item-intersect})};
      and
    \item
      $\mathfrak{F}_{n} \setminus \set{\alpha \in
      \mathfrak{F}_{n}}{\rank(\alpha) = n - 1}$
      \emph{(type~\ref{item-remove-j})}.
  \end{enumerate}
  In particular, for $n \geq 2$, there are $s_{n} + 1$ maximal subsemigroups of
  $\mathfrak{F}_{n}$, where $s_{n}$ is the number of maximal subgroups of the
  symmetric group of degree $n$.
\end{thm}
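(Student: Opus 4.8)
The plan is to follow the same two-step strategy used for $\mathcal{T}_{n}$ and $\mathcal{I}_{n}$ in Theorem~\ref{thm-maximals-PTn-Tn-In}: first read off the maximal subsemigroups arising from the group of units, and then show that the only other relevant $\J$-class is $J_{n - 1} \cap \mathfrak{F}_{n}$, from which a single maximal subsemigroup of type~\ref{item-remove-j} arises. Since $\mathfrak{F}_{n}$ is inverse with group of units $\mathcal{S}_{n}$, Corollary~\ref{cor-group-of-units} immediately yields the $s_{n}$ maximal subsemigroups of the form $(\mathfrak{F}_{n} \setminus \mathcal{S}_{n}) \cup U$ in part~(a), each of type~\ref{item-intersect}. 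For the remaining analysis I first record the structure of $J_{n - 1} \cap \mathfrak{F}_{n}$: since every block of a block bijection is transverse, each element of $\mathfrak{F}_{n}$ has $\dom = \codom = \n$, and an element of rank $n - 1$ has exactly one non-trivial kernel class $\{i, j\}$ and one non-trivial cokernel class $\{k, l\}$. By Lemma~\ref{lem-green-partition} the $\R$-classes (resp.\ $\L$-classes) of this $\J$-class are therefore indexed by the unordered pairs giving the kernel (resp.\ cokernel), and $\J$-equivalence is governed by rank for ranks $n$ and $n - 1$, so $J_{n - 1} \cap \mathfrak{F}_{n}$ is covered by the group of units.

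The crux is the generation claim: for $x \in \mathfrak{F}_{n}$, one has $\mathfrak{F}_{n} = \genset{\mathcal{S}_{n},\ x}$ if and only if $\rank(x) = n - 1$, and I expect this to be the main obstacle. I would argue the ``if'' direction as follows. The monoid $\mathfrak{F}_{n}$ is factorisable, i.e.\ $\mathfrak{F}_{n} = \mathcal{S}_{n} E(\mathfrak{F}_{n})$~\cite{fitzgerald2003presentation}, and its projections correspond to set partitions of $\n$ with rank equal to the number of blocks; the rank $n - 1$ projections are the ``merges'' of a single pair $\{i, j\}$, and every projection of rank less than $n$ is a product of such merges (any set partition is obtained by repeatedly coarsening pair-merges). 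Hence $E(\mathfrak{F}_{n})$, and therefore $\mathfrak{F}_{n}$, is generated by $\mathcal{S}_{n}$ together with the rank $n - 1$ projections. Since $\mathcal{S}_{n}$ permutes these projections transitively by conjugation, $\mathcal{S}_{n}$ together with any one rank $n - 1$ projection $e$ already generates $\mathfrak{F}_{n}$. Given an arbitrary $x$ of rank $n - 1$, the transitivity of the $\mathcal{S}_{n}$-action on the $\R$- and $\L$-classes lets me choose $\sigma, \tau \in \mathcal{S}_{n}$ with $\sigma x \tau$ lying in the group $\H$-class of a rank $n - 1$ projection $e$; as this $\H$-class is a finite group with identity $e$, a suitable power of $\sigma x \tau$ equals $e$, so $e \in \genset{\mathcal{S}_{n},\ x}$ and thus $\genset{\mathcal{S}_{n},\ x} = \mathfrak{F}_{n}$. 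The ``only if'' direction is the easy observation that multiplication never increases rank, so if $\rank(x) = n$ then $\genset{\mathcal{S}_{n}, x} = \mathcal{S}_{n}$, and if $\rank(x) \leq n - 2$ then $\genset{\mathcal{S}_{n}, x}$ contains no element of rank $n - 1$; in either case it is proper.

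With the generation claim in hand, I would apply Corollary~\ref{cor-Xi} with $X = J_{n - 1} \cap \mathfrak{F}_{n}$ (a full $\J$-class, whence $\mathfrak{F}_{n} \setminus X = \mathfrak{F}_{n} \setminus J_{n - 1}$) to conclude that the unique maximal subsemigroup not arising from the group of units is $\mathfrak{F}_{n} \setminus \set{\alpha \in \mathfrak{F}_{n}}{\rank(\alpha) = n - 1}$, of type~\ref{item-remove-j}, which is part~(b). Combining the two families gives $s_{n} + 1$ maximal subsemigroups in total. As a sanity check consistent with this route, part~(b) can instead be verified directly: since $\mathcal{S}_{n}$ acts transitively on both the $\R$- and the $\L$-classes of $J_{n - 1} \cap \mathfrak{F}_{n}$, Lemma~\ref{lem-transitive} rules out types~\ref{item-rectangle}--\ref{item-remove-r}, while factorisability together with Lemma~\ref{lem-no-type-intersect}(b) rules out type~\ref{item-intersect}, so Proposition~\ref{prop-remove-j} shows $\mathfrak{F}_{n} \setminus J_{n - 1}$ is maximal of type~\ref{item-remove-j}.
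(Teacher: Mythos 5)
Your proposal is correct, and its skeleton is the same as the paper's: part (a) follows from Corollary~\ref{cor-group-of-units}, and part (b) is obtained by showing that $\mathfrak{F}_{n} = \genset{\mathcal{S}_{n},\ x}$ if and only if $\rank(x) = n - 1$, and then applying Corollary~\ref{cor-Xi}. The one genuine difference is how the generation claim is justified. The paper simply cites \cite{maltcev2007} for the fact that $\mathcal{S}_{n}$ together with any single projection of rank $n - 1$ generates $\mathfrak{F}_{n}$, and then, exactly as you do, moves an arbitrary element of rank $n - 1$ into the group $\H$-class of such a projection by multiplying by suitable units and taking a power. You instead derive the cited fact from factorisability, $\mathfrak{F}_{n} = \mathcal{S}_{n} E(\mathfrak{F}_{n})$, together with the observation that the idempotents of $\mathfrak{F}_{n}$ form the lattice of set partitions of $\n$ under join, so every non-identity idempotent is a product of pair-merges, which $\mathcal{S}_{n}$ permutes transitively by conjugation; this buys a proof that is self-contained modulo factorisability, at the cost of re-proving a known result. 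You are also more explicit than the paper about the ``only if'' direction (a unit generates nothing new, and an element of rank at most $n - 2$ can never produce an element of rank $n - 1$, since rank is non-increasing under multiplication); this direction is indeed required by the hypothesis of Corollary~\ref{cor-Xi} and is left implicit in the paper, so making it explicit is a small improvement rather than a deviation.
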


Let $n \in \N$, $n \geq 2$. By~\cite{auinger2012krohn}, $\mathcal{B}_{n}$ is
generated by $\mathcal{S}_{n}$ and any projection of rank $n - 2$, and
by~\cite[Section~5]{maltcev2007}, $\mathfrak{F}_{n}$ is generated by
$\mathcal{S}_{n}$ and any projection of rank $n - 1$. These facts are used in
the following proof.\newline

\proofrefs{thm-Brauer}{thm-factorisable}
  The group of units of $\mathcal{B}_{n}$ and $\mathfrak{F}_{n}$ is
  $\mathcal{S}_{n}$.  By Corollary~\ref{cor-group-of-units}, the maximal
  subsemigroups that arise from the group of units in each case are those
  described in the theorems.

  Let $\alpha \in J_{n - 2} \cap \mathcal{B}_{n}$. The non-transverse blocks of
  $\alpha$ are $\{i, j\}$ and $\{k',l'\}$ for some $i, j, k, l \in \n$ with $i
  \neq j$ and $k \neq l$.  Let $\tau \in \mathcal{S}_{n}$ be a permutation that
  contains the blocks $\{k, i'\}$ and $\{l, j'\}$. Therefore the non-transverse
  blocks of $\alpha \tau$ are $\{i, j\}$ and $\{i', j'\}$, and so $(\alpha
  \tau)^{m}$ is a projection of rank $n - 2$ for some $m \in \N$. Thus
  $\genset{\mathcal{S}_{n},\ \alpha} \supseteq \genset{\mathcal{S}_{n},\ (\alpha
  \tau)^{m}} = \mathcal{B}_{n}$, and so $\mathcal{B}_{n} =
  \genset{\mathcal{S}_{n},\ \alpha}$.  By a similar argument, $\mathfrak{F}_{n}
  = \genset{\mathcal{S}_{n},\ \beta}$ for any uniform block bijection of rank $n
  - 1$. By Corollary~\ref{cor-Xi}, the remaining maximal subsemigroups are those
  stated in the theorems.
\qed{}

%%%%%%%%%%%%%%%%%%%%%%%%%%%%%%%%%%%%%%%%%%%%%%%%%%%%%%%%%%%%%%%%%%%%%%%%%%%%%%%%
\subsection{The dual symmetric inverse monoid
$\mathcal{I}_{n}^{*}$}\label{sec-dual}

The maximal subsemigroups of the dual symmetric inverse monoid were
first described in~\cite[Theorem~19]{maltcev2007}. We reprove this result in the
following theorem.

\begin{thm}\label{thm-dual-symmetric}
  Let $n \in \N$, $n \geq 3$, and let $\mathcal{I}_{n}^{*}$ be the dual
  symmetric inverse monoid of degree $n$. Then the maximal subsemigroups of
  $\mathcal{I}_{n}^{*}$ are:
  \begin{enumerate}[label=\emph{(\alph*)}]
    \item
      $(\mathcal{I}_{n}^{*} \setminus \mathcal{S}_{n}) \cup U$, where $U$ is a
      maximal subgroup of $\mathcal{S}_{n}$ \emph{(type~\ref{item-intersect})};
      and
    \item
      $\mathcal{I}_{n}^{*} \setminus \set{\alpha \in
      \mathcal{I}_{n}^{*}}{\rank(\alpha) = n - 1\ \text{and}\ \alpha\ 
      \text{is not uniform}}$
      \emph{(type~\ref{item-intersect})}.
  \end{enumerate}
  In particular, for $n \geq 3$ there are $s_{n} + 1$ maximal subsemigroups of
  $\mathcal{I}_{n}^{*}$, where $s_{n}$ is the number of maximal subgroups of the
  symmetric group of degree $n$.  For $n \in \{1, 2\}$, $\mathcal{I}_{n}^{*} =
  \mathfrak{F}_{n}$; see Theorem~\ref{thm-factorisable}.
\end{thm}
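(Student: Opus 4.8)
The plan is to apply the machinery of Section~\ref{sec-general-results} to the two relevant $\J$-classes. Since the group of units of $\mathcal{I}_{n}^{*}$ is $\mathcal{S}_{n}$, Corollary~\ref{cor-group-of-units} immediately yields the type-\ref{item-intersect} maximal subsemigroups in part~(a), of which there are $s_{n}$. I would then argue that the \emph{only} other $\J$-class contributing is $J_{n-1}\cap \mathcal{I}_{n}^{*}$: the dual symmetric inverse monoid is generated by $\mathcal{S}_{n}$ together with its elements of rank $n-1$ (indeed by $\mathcal{S}_{n}$ and a single idempotent of that rank, see~\cite{fitzgerald1998dual, maltcev2007}), so by the generation criterion of Section~\ref{sec-general-results} no generating set need meet any $\J$-class of rank at most $n-2$, while every generating set must meet $J_{n-1}$ because $\mathcal{I}_{n}^{*}\setminus J_{n-1}$ is a proper subsemigroup (as $J_{n-1}$ is covered by $\mathcal{S}_{n}$). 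Hence all remaining maximal subsemigroups arise from $J\coloneqq J_{n-1}\cap \mathcal{I}_{n}^{*}$.

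Next I would verify that $J$ satisfies the hypotheses of Proposition~\ref{prop-regularstar-intersect}. The monoid is inverse, hence a regular $\ast$-monoid in which every idempotent of $J$ is a projection; $J$ is covered by $\mathcal{S}_{n}$; and by Lemma~\ref{lem-green-partition}\ref{item-partition-green-r} the $\R$-classes of $J$ are indexed by the unique non-trivial kernel pair $\{i,j\}\subseteq \n$, on which $\mathcal{S}_{n}$ clearly acts transitively. Proposition~\ref{prop-regularstar-intersect} then tells me that the maximal subsemigroups arising from $J$ are the sets $(\mathcal{I}_{n}^{*}\setminus J)\cup \mathcal{S}_{n}U\mathcal{S}_{n}$, one for each maximal subgroup $U$ of the group $\H$-class $H_{e}$ of a projection $e\in J$ that contains $e\stab_{\mathcal{S}_{n}}(H_{e})$ --- or else the single type-\ref{item-remove-j} subsemigroup $\mathcal{I}_{n}^{*}\setminus J$ if no such $U$ exists.

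The crux, and the step I expect to be most delicate, is the explicit computation of $H_{e}$ and of $e\stab_{\mathcal{S}_{n}}(H_{e})$. Choosing the uniform projection $e$ whose only non-singleton block is $\{n-1,n,(n-1)',n'\}$, the group $\H$-class $H_{e}$ is isomorphic to $\mathcal{S}_{n-1}$ (permuting the $n-1$ blocks of $e$), while $\stab_{\mathcal{S}_{n}}(H_{e})$ is the setwise stabiliser of $\{n-1,n\}$ in $\mathcal{S}_{n}$. A direct calculation then shows that $e\stab_{\mathcal{S}_{n}}(H_{e})$ is the subgroup of $H_{e}\cong \mathcal{S}_{n-1}$ fixing the large block, i.e.\ the point stabiliser $\mathcal{S}_{n-2}$, of order $(n-2)!$. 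Since a point stabiliser is already a maximal subgroup of $\mathcal{S}_{n-1}$ for $n\geq 3$, the unique maximal subgroup $U$ of $H_{e}$ containing it is $U=e\stab_{\mathcal{S}_{n}}(H_{e})$ itself. Thus exactly one maximal subsemigroup, of type~\ref{item-intersect}, arises from $J$, and $\mathcal{I}_{n}^{*}\setminus J$ is not maximal.

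Finally I would identify this subsemigroup with the set in part~(b). As $e$ and every element of $\stab_{\mathcal{S}_{n}}(H_{e})$ preserve uniformity, $U$ consists of uniform block bijections, so $\mathcal{S}_{n}U\mathcal{S}_{n}\subseteq \mathfrak{F}_{n}$; by Proposition~\ref{prop-regularstar-intersect} it meets each $\H$-class of $J$ in exactly $|U|=(n-2)!$ elements. A short count shows that each such $\H$-class contains precisely $(n-2)!$ uniform elements (the non-trivial kernel and cokernel pairs must lie in a common block, while the remaining $n-2$ singletons may be matched arbitrarily), so $\mathcal{S}_{n}U\mathcal{S}_{n}\cap J$ is exactly the set of uniform elements of $J$. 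Therefore the subsemigroup equals $\mathcal{I}_{n}^{*}\setminus\{\alpha : \rank(\alpha)=n-1,\ \alpha\ \text{is not uniform}\}$, as claimed, giving $s_{n}+1$ maximal subsemigroups in total. The reduction to $\mathfrak{F}_{n}$ for $n\in\{1,2\}$ is handled by Theorem~\ref{thm-factorisable}.
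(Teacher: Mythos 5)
Your proposal is correct in its main line, but it takes a genuinely different route from the paper's proof. The paper's argument is essentially two lines: it quotes \cite[Proposition~16]{maltcev2007}, which states that $\mathcal{I}_{n}^{*} = \genset{\mathcal{S}_{n},\ \alpha}$ if and only if $\alpha$ has rank $n-1$ and is not uniform, and then applies Corollary~\ref{cor-Xi} with $X = J_{n - 1} \cap (\mathcal{I}_{n}^{*} \setminus \mathfrak{F}_{n})$, so that the unique maximal subsemigroup not arising from the group of units is $\mathcal{I}_{n}^{*} \setminus X$. You instead run the regular-$\ast$ machinery of Proposition~\ref{prop-regularstar-intersect} --- precisely the alternative the paper mentions, without details, in the remark following the theorem. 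Your execution of that alternative is sound: the hypotheses hold ($\mathcal{I}_{n}^{*}$ is inverse, $J_{n-1} \cap \mathcal{I}_{n}^{*}$ is covered by $\mathcal{S}_{n}$, and the $\R$-classes are indexed by the $2$-subsets $\{i,j\}$ of $\n$, on which $\mathcal{S}_{n}$ acts transitively); $H_{e} \cong \mathcal{S}_{n-1}$ acting on the $n-1$ blocks of $e$; $e\stab_{\mathcal{S}_{n}}(H_{e})$ is the stabiliser of the point corresponding to the block $\{n-1, n, (n-1)', n'\}$, hence itself a maximal subgroup of $H_{e}$ for $n \geq 3$, so it is the unique admissible $U$; and your count of $(n-2)!$ uniform elements per $\H$-class against the $|U| = (n-2)!$ elements of $\mathcal{S}_{n}U\mathcal{S}_{n}$ per $\H$-class correctly identifies $\mathcal{S}_{n}U\mathcal{S}_{n} \cap J_{n-1}$ with the set of uniform elements. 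What the paper's route buys is brevity, at the cost of outsourcing the real work to an external generation result; what your route buys is a proof self-contained within the paper's general framework, which moreover explains \emph{why} the complement in part (b) is exactly the set of non-uniform elements and exhibits the type~\ref{item-intersect} structure directly.

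One parenthetical claim you make is false and should be struck: $\mathcal{I}_{n}^{*}$ is \emph{not} generated by $\mathcal{S}_{n}$ together with a single idempotent of rank $n-1$. Every idempotent of $\mathcal{I}_{n}^{*}$ is uniform (its blocks have the form $B \cup B'$ for $B \subseteq \n$), so $\genset{\mathcal{S}_{n},\ e}$ consists of uniform block bijections; indeed, by \cite[Section~5]{maltcev2007} (used in the paper's Section~\ref{sec-Bn-Fn}) this subsemigroup is exactly $\mathfrak{F}_{n} \subsetneq \mathcal{I}_{n}^{*}$. Had your claim been true, Lemma~\ref{lem-no-type-intersect}(b) would forbid any maximal subsemigroup of type~\ref{item-intersect} arising from $J_{n-1} \cap \mathcal{I}_{n}^{*}$, contradicting part (b) of the theorem and your own conclusion. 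Fortunately nothing downstream depends on it: the statement you actually need --- and implicitly use --- is only that $\genset{\mathcal{S}_{n} \cup (J_{n-1} \cap \mathcal{I}_{n}^{*})} = \mathcal{I}_{n}^{*}$, which is what guarantees that no maximal subsemigroups arise from the $\J$-classes of rank at most $n-2$.
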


\begin{proof}
  The group of units of $\mathcal{I}_{n}^{*}$ is $\mathcal{S}_{n}$.  By
  Corollary~\ref{cor-group-of-units}, the maximal subsemigroups arising from the
  group of units are those described. By~\cite[Proposition~16]{maltcev2007},
  $\mathcal{I}_{n}^{*} = \genset{\mathcal{S}_{n},\ \alpha}$ if and only if
  $\alpha \in J_{n - 1} \cap (\mathcal{I}_{n}^{*} \setminus \mathfrak{F}_{n})$.
  Using Corollary~\ref{cor-Xi} with $X = J_{n - 1} \cap (\mathcal{I}_{n}^{*}
  \setminus \mathfrak{F}_{n})$, the result follows.
\end{proof}

The maximal subsemigroup of $\mathcal{I}_{n}^{*}$ that arises from its
$\J$-class $J_{n - 1} \cap \mathcal{I}_{n}^{*}$ can also be found by using
Proposition~\ref{prop-regularstar-intersect}, since $\mathcal{S}_{n}$ acts
transitively on the $\R$-classes of $J_{n - 1} \cap \mathcal{I}_{n}^{*}$,
and each of the idempotents in this $\J$-class is a projection.

%%%%%%%%%%%%%%%%%%%%%%%%%%%%%%%%%%%%%%%%%%%%%%%%%%%%%%%%%%%%%%%%%%%%%%%%%%%%%%%%
\subsection{The Jones monoid $\mathcal{J}_{n}$ and the annular Jones monoid
$\mathcal{AJ}_{n}$}\label{sec-jones}

Let $n \in \N$.  In this section, we determine the maximal subsemigroups of the
Jones monoid $\mathcal{J}_{n}$ (also known as the Temperley-Lieb monoid), and
the annular Jones monoid $\mathcal{AJ}_{n}$.
Since the planar partition monoid of degree $n$ is isomorphic to the Jones
monoid of degree $2n$~\cite{Halverson2005869}, by determining the maximal
subsemigroups of $\mathcal{J}_n$ we obtain those of $\mathcal{PP}_n$.

Suppose that $n \geq 2$.
By~\cite{Borisavljevic2002aa}, $\mathcal{J}_{n}$ is generated by the
identity partition $\id_{n}$ and its projections of rank $n - 2$. By
Lemma~\ref{lem-green-partition}, the set $J_{n - 2} \cap \mathcal{J}_{n}$ is a
$\J$-class of $\mathcal{J}_{n}$, and since there are no elements of
$\mathcal{J}_{n}$ with rank $n - 1$, it follows that this $\J$-class is covered
by the group of units $\{\id_{n}\}$. Note that $\mathcal{J}_{n}$ is
$\H$-trivial, since it consists of planar partitions.

To describe the maximal subsemigroups of $\mathcal{J}_{n}$ that arise from its
$\J$-class of rank $n - 2$ partitions, we require the graph
$\Delta(\mathcal{J}_{n}, J_{n - 2}\cap\mathcal{J}_{n})$, which we hereafter
refer to as $\Delta(\mathcal{J}_{n})$. Thus we require a description of the
Green's classes of $J_{n - 2} \cap \mathcal{J}_{n}$.  Let $\alpha \in J_{n - 2}
\cap \mathcal{J}_{n}$.  Then $\alpha$ has $n - 2$ transverse blocks, and these
contain two points. By planarity, the remaining blocks are of the form $\{i, i +
1\}$ and $\{j', (j + 1)'\}$ for some $i, j \in \{1, \ldots, n - 1\}$.  By
Lemma~\ref{lem-green-partition}, the $\J$-class $J_{n - 2} \cap \mathcal{J}_{n}$
contains $n - 1$ $\R$-classes and $n - 1$ $\L$-classes. For $i \in \{1, \ldots,
n - 1\}$, we define
\begin{itemize}
  \item
    $R_{i} = \bigset{\alpha \in \mathcal{J}_{n}}{\rank(\alpha) = n - 2\
    \text{and}\ \{i, i+1 \}\ \text{is a block of}\ \alpha}$, an $\R$-class; and
  \item
    $L_{i} = \bigset{\alpha \in \mathcal{J}_{n}}{\rank(\alpha) = n - 2\
    \text{and}\ \{i', (i+1)' \}\ \text{is a block of}\ \alpha}$, an $\L$-class.
\end{itemize}
The intersection of the $\L$-class $L_{i}$ and the $\R$-class $R_{j}$ is a group
if and only if $|i - j| \leq 1$.  Since the group of units of $\mathcal{J}_{n}$
is trivial, its action on the $\L$-classes and $\R$-classes of $J_{n - 2} \cap
\mathcal{J}_{n}$ is trivial. A picture of $\Delta(\mathcal{J}_{n})$ is shown in
Figure~\ref{fig-Jn-delta}. 

%%%%%%%%%%%%%%%%%%%%%%%%%%%%%%%%%%%%%%%%%%%%%%%%%%%%%%%%%%%%%%%%%%%%%%%%%%%%%%%%
% Figure of Delta for Jn
%%%%%%%%%%%%%%%%%%%%%%%%%%%%%%%%%%%%%%%%%%%%%%%%%%%%%%%%%%%%%%%%%%%%%%%%%%%%%%%%

\begin{figure}
  \begin{center}
    \begin{tikzpicture}
      % L-classes
      \node[rounded corners,rectangle,draw,fill=blue!20]
        (1)  at (0,  0) {$\{ L_{1} \}$};
      \node[rounded corners,rectangle,draw,fill=blue!20]
        (2)  at (2,  0) {$\{ L_{2} \}$};
      \node[rounded corners,rectangle,draw,fill=blue!20]
        (3)  at (4,  0) {$\{ L_{3} \}$};
      \node[rounded corners,rectangle,draw,fill=blue!20]
        (4)  at (6,  0) {$\{ L_{4} \}$};
      \node[rounded corners,rectangle,draw,fill=blue!20]
        (5)  at (11, 0) {$\{ L_{n - 2} \}$};
      \node[rounded corners,rectangle,draw,fill=blue!20]
        (6)  at (13, 0) {$\{ L_{n - 1} \}$};

      % R-classes
      \node[rounded corners,rectangle,draw,fill=blue!20]
        (11) at (0,  3.2) {$\{ R_{1} \}$};
      \node[rounded corners,rectangle,draw,fill=blue!20]
        (12) at (2,  3.2) {$\{ R_{2} \}$};
      \node[rounded corners,rectangle,draw,fill=blue!20]
        (13) at (4,  3.2) {$\{ R_{3} \}$};
      \node[rounded corners,rectangle,draw,fill=blue!20]
        (14) at (6,  3.2) {$\{ R_{4} \}$};
      \node[rounded corners,rectangle,draw,fill=blue!20]
        (15) at (11, 3.2) {$\{ R_{n - 2} \}$};
      \node[rounded corners,rectangle,draw,fill=blue!20]
        (16) at (13, 3.2) {$\{ R_{n - 1} \}$};

      % \dots

      \node (99) at (8.5, 1.5) {$\cdots$};

      % blank spots
      \node (21) at (7.5, 0.8) {};
      \node (22) at (7.5, 2.4) {};

      \node (31) at (9.5, 0.8) {};
      \node (32) at (9.5, 2.4) {};

      \node (40) at (7,  1.6) {};
      \node (50) at (10, 1.6) {};

      % straight edges
      \edge{1}{11};
      \edge{2}{12};
      \edge{3}{13};
      \edge{4}{14};
      \edge{5}{15};
      \edge{6}{16};

      % top left -> bottom right diagonal edges
      \edge{1}{12}
      \edge{2}{13}
      \edge{3}{14}
      \edge{5}{16}

      % top right -> bottom left diagonal edges
      \edge{2}{11}
      \edge{3}{12}
      \edge{4}{13}
      \edge{6}{15}

      % fading edges
      \draw[dashed] (4)  -- (22);
      \edge{4}{40};
      \draw[dashed] (14) -- (21);
      \edge{14}{40};
      \draw[dashed] (5)  -- (32);
      \edge{5}{50};
      \draw[dashed] (15) -- (31);
      \edge{15}{50};

    \end{tikzpicture}
  \end{center}
  \caption{The graph $\Delta(\mathcal{J}_{n}, \mathcal{J}_{n} \cap J_{n -
  2})$.}\label{fig-Jn-delta}
\end{figure}
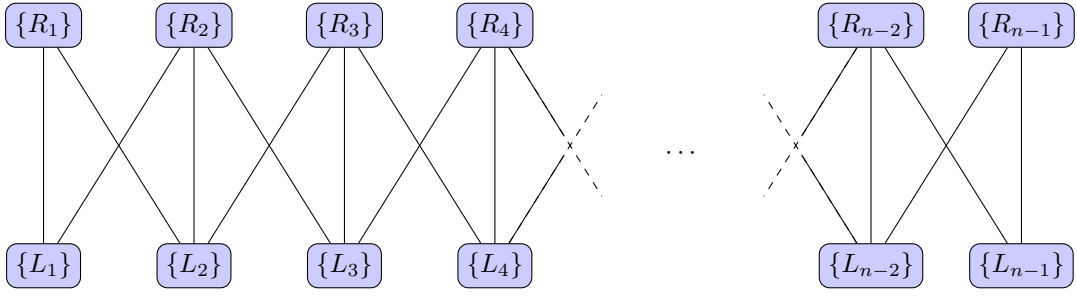

%%%%%%%%%%%%%%%%%%%%%%%%%%%%%%%%%%%%%%%%%%%%%%%%%%%%%%%%%%%%%%%%%%%%%%%%%%%%%%%%

The maximal independent subsets of $\Delta(\mathcal{J}_{n})$ are counted in the
following lemma.

\begin{lem}\label{lem-Jn-delta-number}
  Let $n \geq 2$.  The number of maximal independent subsets of
  $\Delta(\mathcal{J}_{n})$ is $2F_{n - 1}$, where $F_{n - 1}$ is the $(n -
  1)^{\text{th}}$ term of the Fibonacci sequence, defined by $F_{1} = F_{2} = 1$
  and $F_{k} = F_{k - 1} + F_{k - 2}$ for $k \geq 3$.
\end{lem}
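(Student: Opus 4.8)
The plan is to set up an explicit bijection between the maximal independent subsets of $\Delta(\mathcal{J}_{n})$ and a combinatorially tractable family of words, and then count those words by a Fibonacci-type recurrence. Writing $m = n - 1$, recall from the description of $\Delta(\mathcal{J}_{n})$ that its vertices are the singletons $\{L_{1}\}, \ldots, \{L_{m}\}$ and $\{R_{1}\}, \ldots, \{R_{m}\}$, and that $\{L_{i}\}$ is adjacent to $\{R_{j}\}$ precisely when $|i - j| \leq 1$. To a set $K$ of vertices I would associate the word $w = w_{1} \cdots w_{m}$ over the alphabet $\{L, R, 0\}$, where $w_{i} = L$ if $\{L_{i}\} \in K$, $w_{i} = R$ if $\{R_{i}\} \in K$, and $w_{i} = 0$ otherwise; this is well defined on independent $K$, because $\{L_{i}\}$ and $\{R_{i}\}$ are adjacent and so at most one of them lies in $K$.

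The first step is to translate independence and maximality into conditions on $w$. Since $\{L_{i}\}$ and $\{R_{j}\}$ are adjacent only for $|i - j| \leq 1$, independence of $K$ is equivalent to $w$ containing neither $LR$ nor $RL$ at a pair of consecutive positions. For maximality I would examine each vertex outside $K$: if $w_{i} \in \{L, R\}$ then the opposite-type vertex at index $i$ already has a neighbour in $K$ (namely the vertex recorded by $w_i$), so no condition arises, and the only constraints come from the indices $i$ with $w_{i} = 0$, where both $\{L_{i}\}$ and $\{R_{i}\}$ must retain a neighbour in $K$. Carrying this out shows that maximality is equivalent to the requirements that $w_{1} \neq 0$ and $w_{m} \neq 0$, and that every interior $0$ has one neighbour equal to $L$ and the other equal to $R$ (so each $0$ occurs in a factor $L0R$ or $R0L$; in particular two $0$s are never adjacent). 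Combined with the independence condition, this forces the admissible words to be exactly the alternating concatenations $B_{1}\, 0\, B_{2}\, 0 \cdots 0\, B_{k}$, in which each $B_{t}$ is a nonempty block of a single letter, consecutive blocks use opposite letters, and consecutive blocks are separated by exactly one $0$.

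With this characterisation in hand, let $a_{m}$ denote the number of admissible words of length $m$; I claim that $a_{m} = a_{m - 1} + a_{m - 2}$ for $m \geq 3$, with $a_{1} = a_{2} = 2$. The base cases are immediate ($L$ and $R$ for $m = 1$; $LL$ and $RR$ for $m = 2$). For the recurrence I would split on the length of the final block $B_{k}$: if it is at least $2$, then deleting its last letter is a bijection onto the admissible words of length $m - 1$ (the inverse duplicates the last letter); if it is exactly $1$, then $k \geq 2$ and deleting the trailing factor $0 B_{k}$ is a bijection onto the admissible words of length $m - 2$ (the inverse appends a $0$ followed by the letter opposite to the current last letter). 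As these two cases are disjoint and exhaustive, the recurrence follows. Since $2 F_{m}$ satisfies the same recurrence and the same initial values, induction yields $a_{m} = 2 F_{m} = 2 F_{n - 1}$, as required.

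The main obstacle will be the bookkeeping in the maximality analysis rather than the counting: one must handle the boundary indices $i = 1$ and $i = m$ carefully, where an out-of-range neighbour supplies neither an $L$ nor an $R$ and hence forces $w_{1}, w_{m} \neq 0$, and one must confirm that no obstruction to maximality survives beyond the purely local conditions at the $0$ entries. Once the admissible-word characterisation is pinned down precisely, the two bijections underlying the recurrence are routine to verify.
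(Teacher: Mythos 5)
Your proof is correct, but it is organised differently from the paper's. The paper counts without describing: it fixes the vertex $\{L_{n-1}\}$ (after showing exactly one of $\{L_{n-1}\}$, $\{R_{n-1}\}$ lies in any maximal independent subset), halves the count by the $L$/$R$ symmetry, and establishes $a(n) = a(n-1) + a(n-2)$ by relating subsets of $\Delta(\mathcal{J}_n)$ to subsets of the induced subgraphs isomorphic to $\Delta(\mathcal{J}_{n-1})$ and $\Delta(\mathcal{J}_{n-2})$ --- no structural classification of the subsets is ever proved inside the lemma. You instead prove a complete characterisation first (maximal independent subsets correspond to words $B_1\,0\,B_2\,0\cdots 0\,B_k$ of alternating constant blocks separated by single zeros) and only then count, via a recurrence whose case split (final block of length $\geq 2$ versus length $1$) is in substance identical to the paper's split (contains $\{L_{n-2}\}$ versus contains $\{R_{n-3}\}$). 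Your route costs more bookkeeping in the maximality analysis, but it buys something the paper's proof does not: the paper's description of the maximal independent subsets --- needed to state the type~\ref{item-rectangle} maximal subsemigroups in Theorem~\ref{thm-jones} --- appears only in an unproved remark after Lemma~\ref{lem-Jn-delta-number}, whereas your word characterisation makes that description rigorous and yields the count as a corollary. Your argument also runs uniformly for all $n \geq 2$ and avoids the symmetry-halving step, at the price of verifying the two inverse bijections on words.
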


\begin{proof}
  The result may be verified directly for $n \in \{2, 3\}$.
  Suppose that $n \geq 4$, and let $K$ be a maximal independent subset of
  $\Delta(\mathcal{J}_{n})$. We first show that precisely one of $\{L_{n - 1}\}$
  and $\{R_{n - 1}\}$ is contained in $K$.  Since $\{L_{n - 1}\}$ and $\{R_{n -
  1}\}$ are adjacent in $\Delta(\mathcal{J}_{n})$, they are not both contained
  in $K$. Similarly, at least one of $\{L_{n - 2}\}$ and $\{R_{n - 2}\}$ is not
  contained in $K$. If $\{L_{n - 2}\} \not\in K$, then either $\{L_{n - 1}\} \in
  K$, or the maximality of $K$ implies that $\{R_{n - 1}\} \in K$.  If instead
  $\{R_{n - 2}\} \not\in K$, then it follows similarly that either $\{L_{n -
  1}\} \in K$ or $\{R_{n - 1}\} \in K$.  Therefore we shall count $a(n)$,
  the number of maximal independent subsets of $\Delta(\mathcal{J}_{n})$ that
  contain $\{L_{n - 1}\}$. By symmetry, the total number of maximal independent
  subsets is $2a(n)$.
  
  For $i \in \{1, 2\}$, define $\Lambda_{n - i}$ be the induced subgraph of
  $\Delta(\mathcal{J}_{n})$ on the vertices $\big\{ \{L_{1}\}, \ldots, \{L_{n -
  1 - i}\}, \{R_{1}\}, \ldots, \{R_{n - 1 - i}\} \big\}$. Clearly
  $\Lambda_{n - i}$ is isomorphic to $\Delta(\mathcal{J}_{n - i})$, and so the
  number of maximal independent subsets of $\Lambda_{n - 1}$ that contain
  $\{L_{n - 2}\}$ is $a(n - 1)$, and the number of maximal independent subsets
  of $\Lambda_{n - 2}$ that contain $\{R_{n - 3}\}$ is $a(n - 2)$.

  Let $K$ be  a maximal independent subset of $\Delta(\mathcal{J}_{n})$
  containing $\{L_{n - 1}\}$.  Certainly $K$ contains neither $\{R_{n - 1}\}$
  nor $\{R_{n - 2}\}$, since these vertices are adjacent to $\{L_{n - 1}\}$ in
  $\Delta(\mathcal{J}_{n})$.  However, $K$ contains precisely one of $\{L_{n -
  2}\}$ or $\{R_{n - 3}\}$: it does not contain both, since $\{L_{n - 2}\}$ and
  $\{R_{n - 3}\}$ are adjacent, but if $\{R_{n - 3}\} \not\in K$, then $K \cup
  \big\{\{L_{n - 2}\}\big\}$ is a maximal independent subset containing $K$, and
  so $\{L_{n - 2}\} \in K$.

  If $K$ contains $\{L_{n - 2}\}$, then $K \setminus \big\{\{L_{n - 1}\}\big\}$
  is a maximal independent subset of $\Lambda_{n - 1}$ that contains $\{L_{n -
  2}\}$, while if $K$ contains $\{R_{n - 3}\}$, then $K \setminus \big\{\{L_{n -
  1}\}\big\}$ is a maximal independent subset of $\Lambda_{n - 2}$ that contains
  $\{R_{n - 3}\}$.  Conversely, maximal independent subsets of $\Lambda_{n - 1}$
  containing $\{L_{n - 2}\}$, and maximal independent subsets of $\Lambda_{n -
  2}$ containing $\{R_{n - 3}\}$, give rise to distinct maximal independent
  subsets of $\Delta(\mathcal{J}_{n})$ that contain $\{L_{n - 1}\}$, via the
  addition of $\{L_{n - 1}\}$.  It follows that $a(n) = a(n - 1) + a(n - 2)$.
  By this recurrence, and since $a(2) = F_{1}$ and $a(3) = F_{2}$, it follows
  that $a(n) = F_{n - 1}$.
\end{proof}

Whilst the maximal independent subsets of $\Delta(\mathcal{J}_{n})$ may be
readily counted, it is more difficult to describe them. Let $n \in \N$, $n \geq
3$ be arbitrary, and let $K$ be a maximal independent subset of
$\Delta(\mathcal{J}_{n})$. As described in the proof of
Lemma~\ref{lem-Jn-delta-number}, $K$ contains precisely one of $\{L_{n - 1}\}$
and $\{R_{n - 1}\}$, and a similar argument shows that $K$ contains precisely
one of $\{L_{1}\}$ and $\{R_{1}\}$. Let $i \in \{1, \ldots, n - 2\}$. If
$\{L_{i}\} \in K$, then either $\{L_{i + 1}\} \in K$ or $\{R_{i + 2}\} \in K$,
while if $\{R_{i}\} \in K$, then either $\{R_{i + 1}\} \in K$, or $\{L_{i + 2}\}
\in K$.  In other words, $K$ contains either $\{L_{1}\}$ or $\{R_{1}\}$, and
each subsequent vertex contains either the same type of Green's class of index
one higher, or it contains the other type of Green's class of index two higher;
the final vertex is either $\{L_{n - 1}\}$ or $\{R_{n - 1}\}$. Conversely, any
subset of vertices of $\Delta(\mathcal{J}_{n})$ satisfying these requirements is
a maximal independent subset of $\Delta(\mathcal{J}_{n})$.

We may now describe the maximal subsemigroups of $\mathcal{J}_{n}$.

\begin{thm}\label{thm-jones}
  Let $n \in \N$, $n \geq 3$, and let $\mathcal{J}_{n}$ be the Jones monoid of
  degree $n$. Then the maximal subsemigroups of $\mathcal{J}_{n}$ are:
  \begin{enumerate}[label=\emph{(\alph*)}]
    \item
      $\mathcal{J}_{n} \setminus \{ \id_{n} \}$
      \emph{(type~\ref{item-remove-j})};
    \item
      The union of $\mathcal{J}_{n} \setminus J_{n - 2}$ and the union of the
      Green's classes contained in a maximal independent subset of
      $\Delta(\mathcal{J}_{n})$ that is not a bicomponent of
      $\Delta(\mathcal{J}_{n})$
      \emph{(type~\ref{item-rectangle})};
    \item
      $\mathcal{J}_{n} \setminus L$, where $L$ is any $\L$-class in
      $\mathcal{J}_{n}$ of rank $n - 2$
      \emph{(type~\ref{item-remove-l})}; and
    \item
      $\mathcal{J}_{n} \setminus R$, where $R$ is any $\R$-class in
      $\mathcal{J}_{n}$ of rank $n - 2$
      \emph{(type~\ref{item-remove-r})}.
  \end{enumerate}
  In particular, for $n \geq 3$, there are $2F_{n - 1} + 2n - 3$ maximal
  subsemigroups of $\mathcal{J}_{n}$, where $F_{n - 1}$ is the $(n -
  1)$\textsuperscript{th} term of the Fibonacci sequence, defined by $F_{1} =
  F_{2} = 1$ and $F_{k} = F_{k - 1} + F_{k - 2}$ for $k \geq 3$.  The Jones
  monoid $\mathcal{J}_{2}$ is a semilattice of order $2$: its maximal
  subsemigroups are each of its singleton subsets.
\end{thm}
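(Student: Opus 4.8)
The plan is to identify the $\J$-classes from which maximal subsemigroups can arise and then feed each into the machinery of Section~\ref{sec-covered}. The group of units of $\mathcal{J}_{n}$ is the trivial group $\{\id_{n}\}$, so Corollary~\ref{cor-group-of-units} immediately yields that the unique maximal subsemigroup arising from it is $\mathcal{J}_{n} \setminus \{\id_{n}\}$, of type~\ref{item-remove-j}; this is part~(a). Because $\mathcal{J}_{n}$ is generated by $\id_{n}$ together with its projections of rank $n - 2$, and because $\J$-equivalence is determined by rank, the only other $\J$-class capable of producing maximal subsemigroups is $J_{n - 2} \cap \mathcal{J}_{n}$, which (since there are no elements of rank $n-1$) is covered by the group of units. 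The whole problem therefore reduces to analysing maximal subsemigroups arising from this single $\J$-class via the graph $\Delta(\mathcal{J}_{n})$.

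First I would eliminate type~\ref{item-intersect}: since $\mathcal{J}_{n}$ is $\H$-trivial, Lemma~\ref{lem-no-type-intersect} rules these out. For type~\ref{item-rectangle}, I would apply Corollary~\ref{cor-rectangle}, which says the number of such subsemigroups is two less than the number of maximal independent subsets of $\Delta(\mathcal{J}_{n})$; by Lemma~\ref{lem-Jn-delta-number} this count is $2F_{n - 1}$, so there are $2F_{n - 1} - 2$ subsemigroups of type~\ref{item-rectangle}, with the explicit description in part~(b) supplied by Proposition~\ref{prop-rectangle} and the combinatorial characterisation of the maximal independent subsets already established before the theorem.

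The one genuinely new verification, and the key step for parts~(c) and~(d), is to confirm that $\Delta(\mathcal{J}_{n})$ has \emph{no} vertex of degree $1$. From the adjacency rule (the $\H$-class $L_{i} \cap R_{j}$ is a group exactly when $|i - j| \leq 1$) together with the index range $i \in \{1, \ldots, n - 1\}$, a direct count shows the four endpoint vertices $\{L_{1}\}, \{L_{n - 1}\}, \{R_{1}\}, \{R_{n - 1}\}$ each have degree $2$ and every interior vertex has degree $3$, so every vertex has degree at least $2$ for $n \geq 3$. Hence the hypothesis of Propositions~\ref{prop-remove-l} and~\ref{prop-remove-r} — that the removed vertex is not adjacent to a vertex of degree $1$ — holds for every vertex. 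Since the group of units is trivial, each orbit of $\L$- or $\R$-classes is a singleton, so removing any one of the $n - 1$ $\L$-classes yields a maximal subsemigroup of type~\ref{item-remove-l} (part~(c)), and symmetrically the $n - 1$ $\R$-classes give those of type~\ref{item-remove-r} (part~(d)).

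Finally, Proposition~\ref{prop-remove-j} shows $\mathcal{J}_{n} \setminus J_{n - 2}$ is not maximal, as maximal subsemigroups of other types do arise from $J_{n - 2}$. Adding the contributions — $1$ from the group of units, $2F_{n - 1} - 2$ of type~\ref{item-rectangle}, and $n - 1$ each of types~\ref{item-remove-l} and~\ref{item-remove-r} — gives the stated total $2F_{n - 1} + 2n - 3$. I expect no real obstacle here: the heavy lifting (enumerating and describing the maximal independent subsets of $\Delta(\mathcal{J}_{n})$) is already done in Lemma~\ref{lem-Jn-delta-number} and its surrounding discussion, so the remaining work is the degree-$\geq 2$ check and a careful assembly of the general results.
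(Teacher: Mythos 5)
Your proposal is correct and follows essentially the same route as the paper's proof: trivial group of units via Corollary~\ref{cor-group-of-units}, exclusion of type~\ref{item-intersect} by $\H$-triviality and Lemma~\ref{lem-no-type-intersect}, the count $2F_{n-1}-2$ of type~\ref{item-rectangle} via Lemma~\ref{lem-Jn-delta-number} and Corollary~\ref{cor-rectangle}, and the degree-at-least-$2$ observation feeding Propositions~\ref{prop-remove-l}, \ref{prop-remove-r}, and~\ref{prop-remove-j}. The only difference is that you spell out the vertex-degree computation (endpoints of degree $2$, interior vertices of degree $3$) that the paper simply asserts, which is a welcome but inessential elaboration.
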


\begin{proof}
  Since the Jones monoid $\mathcal{J}_{n}$ is $\H$-trivial, by
  Corollary~\ref{cor-group-of-units} the unique maximal subsemigroup arising
  from the group of units is formed by removing the identity of
  $\mathcal{J}_{n}$, and by Lemma~\ref{lem-no-type-intersect}, there are no
  maximal subsemigroups of type~\ref{item-intersect}.  By
  Lemma~\ref{lem-Jn-delta-number} and Corollary~\ref{cor-rectangle}, there are
  $2 F_{n - 1} - 2$ maximal subsemigroups of type~\ref{item-rectangle} arising
  from the $\J$-class of rank $n - 2$ in $\mathcal{J}_{n}$; their description is
  a restatement of Proposition~\ref{prop-rectangle}.  Since each vertex of
  $\Delta(\mathcal{J}_{n})$ has degree at least $2$, it follows by
  Proposition~\ref{prop-remove-l} that any $\L$-class of rank $n - 2$ can be
  removed to form a maximal subsemigroup of type~\ref{item-remove-l}, and
  similarly any $\R$-class of rank $n - 2$ can be removed to form a maximal
  subsemigroup of type~\ref{item-remove-r}.  Thus there are $n - 1$ maximal
  subsemigroups of each of these types.  By Proposition~\ref{prop-remove-j},
  there is no maximal subsemigroup of type~\ref{item-remove-j}.
\end{proof}

It remains to describe the maximal subsemigroups of $\mathcal{AJ}_{n}$, the
annular Jones monoid of degree $n$, which was defined in
Section~\ref{sec-diagram-definitions}.  Recall that $\rho_{n} \in
\mathcal{AJ}_{n}$ is the partition of degree $n$ with blocks $\{n, 1'\}$ and
$\{i, (i + 1)'\}$ for $i \in \{1, \ldots, n - 1\}$, and that a partition is
annular if it equals $\rho_{n}^{i} \beta \rho_{n}^{j}$ for some planar partition
$\beta$ and indices $i, j \in \mathbb{Z}$; see
Section~\ref{sec-diagram-definitions}.  To determine the maximal subsemigroups
of $\mathcal{AJ}_{n}$, we require a small generating set for $\mathcal{AJ}_{n}$.

Let $\alpha = \rho_{n}^{i} \beta \rho_{n}^{j} \in \mathcal{AJ}_{n}$ be
arbitrary, where $\beta$ is planar and $i, j \in \Z$.
It follows that $\beta = \rho_{n}^{-i} \alpha \rho_{n}^{-j}
\in \mathcal{AJ}_{n}$, and so $\beta$ is a planar partition whose blocks have
size two. Therefore $\beta \in \mathcal{J}_{n}$, and
$$\mathcal{AJ}_{n} = \bigset{ \rho_{n}^{i} \beta \rho_{n}^{j} }
                            { i, j \in \{ 1, \ldots, n \},\
                              \beta \in \mathcal{J}_{n} }
                   = \genset{\mathcal{J}_{n},\ \rho_{n}}.$$
Given projections $\xi, \zeta \in J_{n - 2} \cap \mathcal{AJ}_{n}$, there exists
some $i \in \n$ such that $\xi = \rho_{n}^{-i} \zeta \rho_{n}^{i}$.  Since
$\mathcal{J}_{n}$ is generated by the identity partition $\id_{n}$ and its
projections of rank $n - 2$~\cite{Borisavljevic2002aa}, it follows that
$\mathcal{AJ}_{n} = \genset{\rho_{n},\ \xi}$, where $\xi$ is an arbitrary
projection in $\mathcal{AJ}_{n}$ of rank $n - 2$.  We proceed by using the same
technique as was used in the proof of Theorems~\ref{thm-Brauer}
and~\ref{thm-factorisable}. Let $\alpha \in J_{n - 2} \cap \mathcal{AJ}_{n}$.
There exists some index $i \in \n$ such that the non-transverse blocks of
$\alpha \rho_{n}^{i}$ are either $\{k, k + 1\}$ and $\{k', (k + 1)'\}$ for some
$k \in \{1, \ldots, n - 1\}$, or are $\{1, n\}$ and $\{1', n'\}$. Thus $(\alpha
\rho_{n}^{i})^{m}$ is a projection for some $m \in \N$. It follows that
$\genset{\rho_{n},\ \alpha} \supseteq \genset{\rho_{n},\ (\alpha
\rho_{n}^{i})^{m}} = \mathcal{AJ}_{n}$.

We may now state and prove the following theorem.

\begin{thm}\label{thm-annular-jones}
  Let $n \in \N$, $n \geq 2$, let $\mathcal{AJ}_{n}$ be the annular Jones monoid
  of degree $n$, let $\rho_{n}$ be the partition of degree $n$ with blocks
  $\{n, 1'\}$ and $\{i, (i+1)'\}$ for $i \in \{1,\ldots,n-1\}$, and let
  $\mathbb{P}_{n}$ be the set of
  primes that divide $n$. Then the maximal
  subsemigroups of $\mathcal{AJ}_{n}$ are:
  \begin{enumerate}[label=\emph{(\alph*)}]
    \item
      $(\mathcal{AJ}_{n} \setminus \genset{\rho_{n}}) \cup
      \genset{\rho_{n}^{d}}$, where $d \in \mathbb{P}_{n}$
      \emph{(type~\ref{item-intersect})}; and
    \item
      $\mathcal{AJ}_{n} \setminus \set{\alpha \in
      \mathcal{AJ}_{n}}{\rank(\alpha) = n - 2}$
      \emph{(type~\ref{item-remove-j})}.
  \end{enumerate}
  In particular, for $n \geq 2$ there are $|\mathbb{P}_{n}| + 1$ maximal
  subsemigroups of $\mathcal{AJ}_{n}$.
\end{thm}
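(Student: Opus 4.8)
The plan is to identify the $\J$-classes of $\mathcal{AJ}_n$ from which maximal subsemigroups arise, and then to treat each in turn using the general results of Section~\ref{sec-general-results}, exactly mirroring the strategy of Theorems~\ref{thm-Brauer} and~\ref{thm-factorisable}. Since every element of $\mathcal{AJ}_n$ is a partition all of whose blocks have size two, the attainable ranks are $n, n-2, n-4, \ldots$, and the group of units is $\genset{\rho_n}$, cyclic of order $n$. The discussion immediately preceding the theorem shows that $\genset{\rho_n,\ \alpha} = \mathcal{AJ}_n$ for every $\alpha \in J_{n-2} \cap \mathcal{AJ}_n$; in particular $\mathcal{AJ}_n$ is generated by its group of units together with any single element of rank $n-2$. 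Consequently there is a generating set disjoint from every $\J$-class of rank less than $n-2$, so by the generation criterion of Section~\ref{sec-general-results} no such $\J$-class meets every generating set non-trivially. Hence the only $\J$-classes giving rise to maximal subsemigroups are $\genset{\rho_n}$ and $J_{n-2} \cap \mathcal{AJ}_n$.

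For the group of units I would apply Corollary~\ref{cor-group-of-units}: since $\genset{\rho_n}$ is a non-trivial cyclic group, its maximal subsemigroups are its maximal subgroups, which by Lemma~\ref{lem-maximals-cyclic} are the subgroups $\genset{\rho_n^{d}}$, one for each prime divisor $d$ of $n$. Each such $U$ yields a maximal subsemigroup $(\mathcal{AJ}_n \setminus \genset{\rho_n}) \cup U$ of type~\ref{item-intersect}, giving precisely the subsemigroups of part~(a) and contributing $|\mathbb{P}_n|$ to the count.

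For the $\J$-class $J_{n-2} \cap \mathcal{AJ}_n$ I would invoke Corollary~\ref{cor-Xi} with $X = \set{\alpha \in \mathcal{AJ}_n}{\rank(\alpha) = n-2}$. The generation fact recalled above says exactly that $\mathcal{AJ}_n = \genset{\genset{\rho_n},\ \alpha}$ if and only if $\alpha \in X$, which is the hypothesis of Corollary~\ref{cor-Xi}; its conclusion is that the unique maximal subsemigroup not arising from the group of units is $\mathcal{AJ}_n \setminus X$, of type~\ref{item-remove-j}. This is part~(b) and adds $1$, yielding $|\mathbb{P}_n| + 1$ maximal subsemigroups in total. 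The argument is essentially routine once the generation result is available; the only point demanding care is that $\genset{\rho_n,\ \alpha} = \mathcal{AJ}_n$ holds for an \emph{arbitrary} rank-$(n-2)$ element $\alpha$, not merely for a projection, and this is precisely what the preceding paragraph supplies by conjugating $\alpha$ by a suitable power of $\rho_n$ to reach a projection before applying the generation result for $\mathcal{J}_n$.
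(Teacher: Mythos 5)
Your proposal is correct and takes essentially the same route as the paper: Corollary~\ref{cor-group-of-units} together with Lemma~\ref{lem-maximals-cyclic} for the maximal subsemigroups arising from $\genset{\rho_n}$, and Corollary~\ref{cor-Xi} applied with $X = J_{n-2}\cap\mathcal{AJ}_n$, using the generation fact $\genset{\rho_n,\,\alpha} = \mathcal{AJ}_n$ for every rank-$(n-2)$ element $\alpha$ established just before the theorem. The only cosmetic difference is that you spell out why lower-rank $\J$-classes contribute nothing (a point Corollary~\ref{cor-Xi} already subsumes), and your phrase ``conjugating $\alpha$'' slightly misdescribes the paper's trick of right-multiplying by $\rho_n^i$ and then taking a power to reach a projection, but neither affects correctness.
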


\begin{proof}
  Since the group of units of $\mathcal{AJ}_{n}$ is $\genset{\rho_{n}}$, a
  cyclic group of order $n$, it follows by Lemma~\ref{lem-maximals-cyclic} and
  Corollary~\ref{cor-group-of-units} that the maximal subsemigroups arising from
  the group of units are those given in the theorem.  As described above,
  $\mathcal{AJ}_{n} = \genset{\rho_{n},\ \alpha}$ if and only if $\alpha \in
  J_{n - 2} \cap \mathcal{AJ}_{n}$. By Corollary~\ref{cor-Xi}, the sole
  remaining maximal subsemigroup is $\mathcal{AJ}_{n} \setminus J_{n - 2}$, as
  required.
\end{proof}

%%%%%%%%%%%%%%%%%%%%%%%%%%%%%%%%%%%%%%%%%%%%%%%%%%%%%%%%%%%%%%%%%%%%%%%%%%%%%%%%
\subsection{The Motzkin monoid $\mathcal{M}_{n}$}\label{sec-motzkin}

Finally, in this section, we describe and count the maximal subsemigroups of the
Motzkin monoid $\mathcal{M}_{n}$.  Let $n \in \N$, $n \geq 2$.
By~\cite[Proposition~4.2]{Dolinka2017251}, $\mathcal{M}_{n}$ is generated by
its elements of rank at least $n - 2$, and any generating set for
$\mathcal{M}_{n}$ contains elements of ranks $n$, $n - 1$, and $n - 2$.  By
Lemma~\ref{lem-green-partition}, Green's $\J$-relation on $\mathcal{M}_{n}$ is
determined by rank, and so the maximal subsemigroups of $\mathcal{M}_{n}$ arise
from the $\J$-classes that correspond to these ranks.  To describe the
maximal subsemigroups of $\mathcal{M}_{n}$, we therefore require a description
of its elements that have rank at least $n - 2$.

Clearly the unique element of $\mathcal{M}_{n}$ of rank $n$ is $\id_{n}$.

An arbitrary element of rank $n - 1$ in $\mathcal{M}_{n}$ has trivial kernel and
cokernel, and is uniquely determined by the point $i$ that it lacks from its
domain and the point $j$ that it lacks from its codomain.  By
Lemma~\ref{lem-green-partition}, this information determines the $\L$- and
$\R$-classes of $J_{n - 1} \cap \mathcal{M}_{n}$.  In other words, for any
element in $J_{n - 1} \cap \mathcal{M}_{n}$, there exist points $i, j \in \n$
such that $\{i\}$ and $\{j'\}$ are its unique singleton blocks. An element of
$J_{n - 1} \cap \mathcal{M}_{n}$ is an idempotent if its domain and codomain
are equal, and so every idempotent in $J_{n - 1} \cap \mathcal{M}_{n}$ is a
projection.

Let $\alpha$ be an arbitrary element of rank $n - 2$ in $\mathcal{M}_{n}$. Then
$\alpha$ lacks two points $i, j \in \n$ from its domain, and either
$\ker(\alpha)$ is trivial, or it contains the unique non-trivial kernel class
$\{i, j\}$, in which case $|i - j| = 1$.  Similarly, $\alpha$ lacks two points
from its codomain, and either $\coker(\alpha)$ is trivial, or it contains a
unique non-trivial class with two consecutive points. By
Lemma~\ref{lem-green-partition}, we obtain a description of the Green's classes
of $\mathcal{M}_{n}$ of rank $n - 2$.

Let $\alpha, \beta \in \mathcal{M}_{n}$ be elements of rank $n - 1$.  There
exist numbers $i, j, k, l \in \n$ such that $\{i\}$ and $\{j'\}$ are the
singleton blocks of $\alpha$ and $\{k\}$ and $\{l'\}$ are the singleton blocks
of $\beta$.  If $j = k$, then $\alpha\beta$ has rank $n - 1$, and its singleton
blocks are $\{i\}$ and $\{l'\}$. Otherwise, $\alpha\beta \in \mathcal{M}_{n}$
has rank $n - 2$, and has trivial kernel and cokernel. Conversely, any element
of $\mathcal{M}_{n}$ of rank $n - 2$ with trivial kernel and cokernel can be
written as the product of two elements of $\mathcal{M}_{n}$ of rank $n - 1$.
By~\cite[Lemma~4.11]{Dolinka2017251}, it follows that $\mathcal{M}_{n}$ is
generated by its elements of ranks $n$ and $n - 1$, along with the collection of
projections of rank $n - 2$ that have non-trivial kernel and cokernel.

The main result of this section is the following theorem.

\begin{thm}\label{thm-motzkin}
  Let $n \in \N$, $n \geq 2$, and let $\mathcal{M}_{n}$ be the Motzkin monoid of
  degree $n$. Then the maximal subsemigroups of $\mathcal{M}_{n}$ are:
  \begin{enumerate}[label=\emph{(\alph*)}]
    \item
      $\mathcal{M}_{n} \setminus \{ \id_{n} \}$
      \emph{(type~\ref{item-remove-j})};
    \item
      The union of $\mathcal{M}_{n} \setminus J_{n - 1}$ and $$\bigcup\limits_{i
      \in A} \bigset{\alpha \in\mathcal{M}_{n}}{\rank(\alpha) = n - 1\
      \text{and}\ \{i\}\ \text{is a block of}\ \alpha} \cup \bigcup\limits_{i
      \not\in A} \bigset{\alpha \in\mathcal{M}_{n}}{\rank(\alpha) = n - 1\
      \text{and}\ \{i'\}\ \text{is a block of}\ \alpha},$$ where $A$ is any
      non-empty proper subset of $\n$ \emph{(type~\ref{item-rectangle})};
    \item
      $\mathcal{M}_{n} \setminus \bigset{\alpha \in J_{n - 2}}{\{i, i + 1\} \
      \text{is a block of}\ \alpha}$ for $i \in \{1, \ldots, n - 1\}$
      \emph{(type~\ref{item-remove-r})}; and
    \item
      $\mathcal{M}_{n} \setminus \bigset{\alpha \in J_{n - 2}}{\{i', (i + 1)'\}
      \ \text{is a block of}\ \alpha}$ for $i \in \{1, \ldots, n - 1\}$
      \emph{(type~\ref{item-remove-l})}.
  \end{enumerate}
  In particular, for $n \geq 2$, there are $2^{n} + 2n - 3$ maximal
  subsemigroups of the Motzkin monoid of degree $n$. The Motzkin monoid
  $\mathcal{M}_{1} = \mathcal{P}_{1}$ is a semilattice of order $2$: its maximal
  subsemigroups are each of its singleton subsets.
\end{thm}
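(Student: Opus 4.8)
The plan is to split the maximal subsemigroups according to the three $\J$-classes that can give rise to them. Since $\J$-equivalence in $\mathcal{M}_n$ is determined by rank (Lemma~\ref{lem-green-partition}) and the generating facts recorded above show that every generating set must meet each of the ranks $n$, $n-1$, and $n-2$, only the group of units $\{\id_n\} = J_n$ and the $\J$-classes $J_{n-1} \cap \mathcal{M}_n$ and $J_{n-2} \cap \mathcal{M}_n$ produce maximal subsemigroups. Because $\mathcal{M}_n$ is $\H$-trivial, Lemma~\ref{lem-no-type-intersect} rules out type~\ref{item-intersect} throughout, and Corollary~\ref{cor-group-of-units} immediately delivers the unique maximal subsemigroup $\mathcal{M}_n \setminus \{\id_n\}$ arising from the trivial group of units, which is part~(a).

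For the $\J$-class $J_{n-1} \cap \mathcal{M}_n$ I would use that it is covered by the group of units, that $\mathcal{M}_n$ is a regular $\ast$-monoid, and that every one of its idempotents is a projection. Since the group of units is trivial, each $\L$- and each $\R$-class of $J_{n-1} \cap \mathcal{M}_n$ is a singleton orbit; and because a group $\H$-class in this $\J$-class is precisely one whose domain and codomain coincide, the graph $\Delta(\mathcal{M}_n, J_{n-1} \cap \mathcal{M}_n)$ is a perfect matching with $n$ edges. Corollary~\ref{cor-delta-projections} then applies verbatim, producing the $2^{n} - 2$ subsemigroups of type~\ref{item-rectangle} of part~(b) and none of types~\ref{item-remove-l} or~\ref{item-remove-r}; since these type~\ref{item-rectangle} subsemigroups exist, Proposition~\ref{prop-remove-j} shows that $\mathcal{M}_n \setminus J_{n-1}$ is not maximal.

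The substantial work lies with the $\J$-class $J_{n-2} \cap \mathcal{M}_n$, which is \emph{not} covered by the group of units, so I would invoke Lemma~\ref{lem-Xi}. For $i \in \{1, \ldots, n-1\}$ set $X_i^{R} = \bigset{\alpha \in J_{n-2}}{\{i, i+1\}\ \text{is a block of}\ \alpha}$ and $X_i^{L} = (X_i^{R})^{\ast} = \bigset{\alpha \in J_{n-2}}{\{i', (i+1)'\}\ \text{is a block of}\ \alpha}$; these are exactly the $\R$-classes of rank $n-2$ with non-trivial kernel and the $\L$-classes of rank $n-2$ with non-trivial cokernel. The aim is to prove that, for $A \subseteq J_{n-2} \cap \mathcal{M}_n$, one has $\mathcal{M}_n = \genset{\mathcal{M}_n \setminus J_{n-2},\ A}$ if and only if $A$ meets each of these $2(n-1)$ sets. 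Granting this, Lemma~\ref{lem-Xi} yields precisely the subsemigroups $\mathcal{M}_n \setminus X_i^{R}$ of type~\ref{item-remove-r} (part~(c)) and $\mathcal{M}_n \setminus X_i^{L}$ of type~\ref{item-remove-l} (part~(d)), and the total is $1 + (2^{n} - 2) + 2(n-1) = 2^{n} + 2n - 3$; the case $n = 1$ is the two-element semilattice, handled separately.

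The hard part will be the necessity half of this criterion: producing a rank $n-2$ element with top cap $\{i, i+1\}$ requires a generator lying in $X_i^{R}$. I would track the kernel along a factorisation $\alpha = \gamma_1 \cdots \gamma_m \in X_i^{R}$. As $\ker(\gamma_1) \subseteq \cdots \subseteq \ker(\alpha)$ and $\ker(\alpha)$ has the single non-trivial class $\{i, i+1\}$, let $\gamma_r$ be the first factor at which $(i, i+1)$ becomes a kernel relation; a short analysis shows $i, i+1 \in \dom(\mu)$ for the prefix $\mu = \gamma_1 \cdots \gamma_{r-1}$, which carries them to a pair $a, a+1$ forming a top cap of $\gamma_r$, so $\gamma_r \in X_a^{R}$. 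The crucial step is that holding the overall rank at $n-2$ forces $\dom(\gamma_r) = \n \setminus \{a, a+1\} \subseteq \codom(\mu)$ through the bound $\rank(\mu\gamma_r) \leq |\codom(\mu) \cap \dom(\gamma_r)|$; since also $a, a+1 \in \codom(\mu)$, the map $\mu$ is surjective, whence $\rank(\mu) = n$ and $\mu = \id_n$ by planarity, giving $a = i$. This is exactly where planarity is indispensable, as it is the obstruction to relocating a cap without losing rank; the statement for $X_i^{L}$ then follows by applying the $\ast$ operation (Lemma~\ref{lem-l-or-r}). For sufficiency I would verify, by a direct planar-diagram computation, that representatives $\alpha \in X_i^{R}$ and $\beta \in X_i^{L}$ together with the elements of rank at least $n-1$ generate the rank $n-2$ projection with caps $\{i, i+1\}$ and $\{i', (i+1)'\}$; since $\mathcal{M}_n$ is generated by $\mathcal{M}_n \setminus J_{n-2}$ together with exactly these projections, the criterion — and hence the theorem — follows.
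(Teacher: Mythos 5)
Your overall strategy coincides with the paper's: parts (a) and (b) are handled exactly as in the paper (Corollary~\ref{cor-group-of-units}, Lemma~\ref{lem-no-type-intersect}, and Corollary~\ref{cor-delta-projections} applied to the matching graph $\Delta(\mathcal{M}_n, J_{n-1}\cap\mathcal{M}_n)$), and for parts (c) and (d) you invoke Lemma~\ref{lem-Xi} with precisely the sets $X_i$ and $X_i^{*}$ that the paper uses; your sufficiency sketch is also workable. The gap is in your necessity argument, at exactly the step you call crucial. The inequality $\rank(\mu\gamma_r) \leq |\codom(\mu) \cap \dom(\gamma_r)|$ is \emph{false} in $\mathcal{M}_n$, even for planar elements of rank $n-2$. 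Take $n = 3$, let $\mu$ have blocks $\{1,1'\}, \{2,3\}, \{2',3'\}$ and let $\gamma$ have blocks $\{1,2\}, \{3,3'\}, \{1',2'\}$; both lie in $\mathcal{M}_3$, and $\codom(\mu) \cap \dom(\gamma) = \{1\} \cap \{3\} = \varnothing$, yet $\mu\gamma$ has blocks $\{1,3'\}, \{2,3\}, \{1',2'\}$ and hence rank $1$. The failure is caused by a transverse block of the product arising from a path that ``bounces'': it enters the middle layer, passes through the kernel class of $\gamma$, returns through the cokernel class of $\mu$, and only then exits to the bottom. Note that here $\gamma \in X_1^{R}$, $\rank(\mu) = \rank(\gamma) = \rank(\mu\gamma) = n-2$, so the hypotheses you cite (``holding the overall rank at $n-2$'', $\gamma_r \in X_a^{R}$) do not rescue the bound. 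What does rescue it in your situation --- and what your proposal never supplies --- is the hypothesis that $(i,i+1)$ enters the kernel at step $r$: the path joining $i$ to $i+1$ must pass through the unique non-trivial kernel class of $\gamma_r$, and since blocks in $\mathcal{M}_n$ have size at most $2$, no transverse block of $\mu\gamma_r$ can reuse that class; only then are all transverse blocks ``direct'' and the counting argument valid. As written, the step rests on a false general claim, so this is a genuine gap, albeit a repairable one.

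It is worth noting that the detour is avoidable, and the paper's proof shows how: an element of $X_i$ has no singleton block contained in $\n$, and a singleton block $\{j\}$ of a factor persists in every right multiple. Hence the \emph{first} factor $\gamma_1$ in a factorisation of $\alpha \in X_i$ cannot have rank $n-1$, so it has rank $n-2$; then Lemma~\ref{lem-stability} gives $\gamma_1 \mathrel{\R} \alpha$, and Lemma~\ref{lem-green-partition} forces $\ker(\gamma_1) = \ker(\alpha)$ and $\dom(\gamma_1) = \dom(\alpha)$, i.e.\ $\gamma_1 \in X_i$ already. In your framing this says $\mu = \id_n$ for free: any proper prefix of rank at least $n-2$ whose kernel does not contain $(i, i+1)$ either has a non-trivial kernel class distinct from $\{i,i+1\}$ (which would persist into $\alpha$, impossible) or has a singleton block in $\n$ (likewise impossible). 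Replacing your rank-bound argument by this observation turns your proposal into a complete proof.
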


\begin{proof}
  The group of units of $\mathcal{M}_{n}$ is the trivial group $\{\id_{n}\}$,
  and so by Corollary~\ref{cor-group-of-units} the sole maximal subsemigroup of
  $\mathcal{M}_{n}$ that arises from its group of units is formed by removing
  it.  Given the above description of the $\J$-class $J_{n - 1} \cap
  \mathcal{M}_{n}$, it follows from Corollary~\ref{cor-delta-projections} that
  the maximal subsemigroups that arise from this $\J$-class are those described
  in the theorem of type~\ref{item-rectangle}, and that there are $2^{n} - 2$ of
  them;  there are no maximal subsemigroups of type~\ref{item-intersect} since
  $\mathcal{M}_{n}$ is $\H$-trivial. It remains to describe the maximal
  subsemigroups that arise from the $\J$-class of rank $n - 2$.

  For $i \in \{1, \ldots, n - 1\}$, we define the subsets
  \begin{align*}
    X_{i} & = \set{\alpha \in \mathcal{M}_{n}}{\rank(\alpha) = n - 2\
    \text{and}\ \{i, i + 1\}\ \text{is a block of}\ \alpha},\ \text{and}\\
    X_{i}^{*} = \set{\alpha^{*}}{\alpha \in X_{i}} & = \set{\alpha \in
    \mathcal{M}_{n}}{\rank(\alpha) = n - 2 \ \text{and}\ \{i', (i + 1)'\}\
    \text{is a block of}\ \alpha}
  \end{align*}
  of the $\J$-class $J_{n - 2} \cap \mathcal{M}_{n}$.  Note that $X_{i}$ is an
  $\R$-class of $\mathcal{M}_{n}$ and $X_{i}^{*}$ is an $\L$-class of
  $\mathcal{M}_{n}$.
  
  Let $A$ be a subset of $J_{n - 2} \cap \mathcal{M}_{n}$ such that
  $(\mathcal{M}_{n} \setminus J_{n - 2}) \cup A$ generates $\mathcal{M}_{n}$.
  Let $i \in \{1,\ldots,n-1\}$ and $\alpha \in X_{i}$ be arbitrary.  Then
  $\alpha$ can be written as a product $\alpha = \beta_{1}\cdots\beta_{k}$ of
  the generators that have rank $n - 1$ or $n - 2$.  If $\rank(\beta_{1}) = n -
  1$, then $\beta_{1}$, and each of its right-multiples, contains a singleton
  block of the form $\{j\}$ for some $j \in \n$. However, $\alpha$ is a
  right-multiple of $\beta_{1}$ and $\alpha$ contains no such block;
  thus $\rank(\beta_{1}) = n - 2 = \rank(\alpha)$.
  Lemmas~\ref{lem-stability} and~\ref{lem-green-partition} imply that
  $\ker(\alpha) = \ker(\beta_{1})$, i.e.\ $A \cap X_{i} \neq \varnothing$.  A
  dual argument shows that $A \cap X_{i}^{*} \neq \varnothing$.

  Conversely, for any subset $A$ of $J_{n - 2} \cap
  \mathcal{M}_{n}$ that intersects $X_{i}$ and $X_{i}^{*}$ non-trivially for all
  $i \in \{1,\ldots,n-1\}$, it is straightforward to see that
  $\genset{\mathcal{M}_{n} \setminus J_{n - 2},\ \cup A}$ contains every
  projection in $J_{n - 2} \cap \mathcal{M}_{n}$, and hence is equal to
  $\mathcal{M}_{n}$. By Lemma~\ref{lem-Xi}, the maximal subsemigroups of
  $\mathcal{M}_{n}$ arising from its $\J$-class of rank $n - 2$ are the sets
  $\mathcal{M}_{n} \setminus X_{i}$ and $\mathcal{M}_{n} \setminus X_{i}^{*}$
  for $i \in \{1, \ldots, n - 1\}$; these maximal subsemigroups have
  types~\ref{item-remove-r} and~\ref{item-remove-l}, respectively.
\end{proof}

%%%%%%%%%%%%%%%%%%%%%%%%%%%%%%%%%%%%%%%%%%%%%%%%%%%%%%%%%%%%%%%%%%%%%%%%%%%%%%%%
%%%%%%%%%%%%%%%%%%%%%%%%%%%%%%%%%%%%%%%%%%%%%%%%%%%%%%%%%%%%%%%%%%%%%%%%%%%%%%%%
\section*{Acknowledgements}

The first author gratefully acknowledges the support of the Glasgow Learning,
Teaching, and Research Fund in partially funding his visit to the third author
in July, 2014.
The second author wishes to acknowledge the support of research
initiation grant $[0076~\vert~2016]$ provided by BITS Pilani, Pilani.
The fourth author wishes to acknowledge the support of his Carnegie Ph.D.
Scholarship from the Carnegie Trust for the Universities of Scotland.

\bibliography{maximals-EKMW}{}
\bibliographystyle{plain}
\end{document}